\newtheorem{thm}{Theorem}[section]
\newtheorem{cor}[thm]{Corollary}
\newtheorem{lem}[thm]{Lemma}
\newtheorem{prop}[thm]{Proposition}
\theoremstyle{definition}
\newtheorem{rem}[thm]{Remark}
\numberwithin{equation}{section}
\newcommand{\Z}{{\mathbb{Z}_+}}
\newcommand{\N}{\mathbb{N}}
\newcommand{\F}{\mathcal F}
\newcommand{\bN}{\beta\mathbb{N}}
\newcommand{\bZ}{\beta\mathbb{Z}_+}
\newcommand{\mZ}{\mathbb Z}
\newcommand{\pn}{\mathcal{P}_f(\N)}
\newcommand{\ff}{Force(\F)}
\begin{document}
\title[Dynamical characterization of C-sets and its application]{Dynamical characterization of C-sets and its application}
\author[J. Li]{Jian Li}
\address{Department of Mathematics\\ University of Science and Technology of China\\
Hefei, Anhui, 230026, P.R. China}
\email{lijian09@mail.ustc.edu.cn}
\date{}

\begin{abstract}
In this paper, we set up a general correspondence between the algebra properties of $\bN$
and the sets defined by dynamical properties.
In particular, we obtain a dynamical characterization of C-sets,
where C-sets are the sets satisfying the strong Central Sets Theorem.
As an application, we show that Rado systems are solvable in C-sets.
\end{abstract}

\subjclass[2010]{Primary 37B20; Secondary 37B05, 05D10}

\keywords{Central sets, C-sets, Rado system}

\maketitle

\section{Introduction}
Throughout this paper, let $\mathbb Z$, $\mathbb Z_+$, $\mathbb N$ and $\mathbb Q$
denote the sets of the integers, the non-negative integers,
the positive integers and the rational numbers, respectively.
Let us recall two celebrated theorems in combinatorial number theory.

\begin{thm}[van de Waerden]
Let $r\in\N$ and $\N=\bigcup_{i=1}^r C_i$. Then there exists
some $i\in\{1,2,\ldots, r\}$ such that
$C_i$ contains arbitrarily long arithmetic progressions.
\end{thm}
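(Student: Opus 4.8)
The plan is to prove this through topological dynamics, in the spirit of Furstenberg and Weiss, rather than through van der Waerden's original combinatorial double induction. Given the partition $\N=\bigcup_{i=1}^r C_i$, I would first encode it as a single point. Define $x$ in the symbolic space $\{1,\dots,r\}^{\N}$ by setting $x(n)=i$ whenever $n\in C_i$, equip this space with the product topology so that it becomes a compact metric space, and let $T$ be the shift. Setting $X=\overline{\{T^n x: n\in\Z\}}$ gives a compact dynamical system $(X,T)$ containing $x$, in which a monochromatic arithmetic progression corresponds to a point whose coordinates agree along an arithmetic progression of indices.

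The heart of the argument is the topological multiple recurrence theorem: for every $k\in\N$ there exist a point $y\in X$ and an $n\in\N$ such that $T^{jn}y$ lies within $\varepsilon$ of $y$ simultaneously for all $0\le j\le k-1$, where $\varepsilon$ is chosen small enough that proximity within $\varepsilon$ forces two points to share the relevant coordinate. I would apply this inside a minimal subsystem $M\subseteq X$ and then transfer the recurrence back to $x$: since $M$ lies in the orbit closure of $x$, there is an $m$ with $T^m x$ near the recurrent point, and continuity of the iterates $T^{jn}$ places $T^{m+jn}x$ in the same cylinder for each $j$. Reading off the coordinate yields $m,n\in\N$ with $x(m)=x(m+n)=\cdots=x(m+(k-1)n)$, that is, a monochromatic arithmetic progression of length $k$ inside some $C_i$.

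This produces, for each $k$, a monochromatic progression of length $k$, but a priori in a color depending on $k$. To obtain a single color class containing arbitrarily long progressions, I would invoke the pigeonhole principle: there are only $r$ colors but infinitely many lengths, so some fixed $i$ admits monochromatic progressions of length $k$ in $C_i$ for infinitely many $k$; since a progression of length $k$ contains progressions of every shorter length, this $C_i$ contains arbitrarily long arithmetic progressions, as required.

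The main obstacle is the multiple recurrence theorem itself, which is not a soft corollary of the classical Birkhoff recurrence theorem but must be established by its own induction on the length $k$. The inductive step takes place inside a minimal system and must show that recurrence for $k-1$ can be \emph{focused} to yield recurrence for $k$, which is precisely the dynamical reformulation of the color-focusing that powers the combinatorial proof. This route therefore relocates rather than dissolves the essential difficulty; its advantage is that the recurrence formulation is exactly the kind of statement that the dynamical dictionary developed in this paper is built to manipulate.
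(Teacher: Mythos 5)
Your proposal is correct, but it is not the route the paper takes. The paper never runs the encoding-plus-multiple-recurrence argument directly: it deduces van der Waerden from the central sets theorem of Furstenberg and Weiss, namely (i) for any finite partition $\N=\bigcup_{i=1}^r C_i$ some cell $C_i$ is central (a consequence of the existence of minimal points proximal to a given point, equivalently of minimal idempotents in $\bN$), and (ii) every central set contains arbitrarily long arithmetic progressions. Your argument is instead the original Furstenberg--Weiss proof: encode the coloring as a point of $\{1,\dots,r\}^{\N}$, apply topological multiple recurrence in a minimal subsystem of the orbit closure, transfer back to $x$ by continuity of finitely many iterates, and finish with a pigeonhole over the $r$ colors. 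Note that this last step is genuinely needed in your route, since the color produced a priori depends on the length $k$, whereas the paper's route has no such step because the same central cell works for every length. What each approach buys: yours is more self-contained, needing only the Birkhoff-type multiple recurrence theorem (whose inductive, color-focusing proof you correctly identify as the real content) plus symbolic encoding, but it yields only van der Waerden. The central-set route costs more machinery --- proximality, minimal points, the Ellis semigroup and minimal idempotents --- but produces a single cell that is simultaneously AP-rich, an IP set, and solves all Rado systems; that stronger conclusion is exactly the framework (central sets, and later C-sets) that the rest of the paper is built to generalize. Both routes, of course, ultimately rest on the same multiple-recurrence focusing phenomenon, so your closing remark that the difficulty is relocated rather than dissolved applies equally to the paper's own citation of Furstenberg and Weiss.
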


For a sequence $\{x_n\}_{n=1}^{\infty}$ in $\N$,
define the finite sums of $\{x_n\}_{n=1}^{\infty}$ as
$$FS(\{x_n\}_{n=1}^{\infty})=\left\{ \sum\nolimits_{n\in \alpha}x_n:\,
\alpha \textrm{ is a nonempty finite subset of }\N\,\right\}.$$
A subset $F$ of $\N$ is called an \emph{IP set}
if there exists a sequence $\{x_n\}_{n=1}^{\infty}$ in $\N$
such that $ FS(\{x_n\}_{n=1}^{\infty})\subset F$.

\begin{thm}[Hindman]
Let $r\in\N$ and $\N=\bigcup_{i=1}^r C_i$. Then there exists
some $i\in\{1,2,\ldots, r\}$ such that $C_i$ is an IP set.
\end{thm}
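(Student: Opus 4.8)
The plan is to deduce Hindman's theorem from the algebraic structure of $\bN$, following the Galvin--Glazer approach, since a single well-chosen ultrafilter will simultaneously pick out the right color class and encode the finite-sum structure. First I would regard $\bN$ as the space of ultrafilters on $\N$ and extend addition to it by declaring, for $p,q\in\bN$ and $A\subseteq\N$, that $A\in p+q$ precisely when $\{n\in\N:A-n\in q\}\in p$, where $A-n=\{m:m+n\in A\}$. The purpose of this definition is to make $(\bN,+)$ a compact Hausdorff right-topological semigroup: associativity is a routine unwinding of the definition, and for each fixed $q$ the translation $p\mapsto p+q$ is continuous.

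Granting this, the next step is to produce an idempotent. By the Ellis--Numakura lemma, every compact Hausdorff right-topological semigroup contains an element $p$ with $p+p=p$; one obtains it by selecting, via Zorn's lemma, a minimal closed subsemigroup and checking that it is a singleton. Fix such an idempotent $p$. Since $\N=\bigcup_{i=1}^r C_i$ and $p$ is an ultrafilter, exactly one color class $C_i$ belongs to $p$, so it suffices to show that any $A\in p$ is an IP set.

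For the extraction, the identity $p+p=p$ says that $A\in p$ forces the shift-stable set $A^\star=\{n\in A:A-n\in p\}$ to lie in $p$ as well. I would then build the sequence $\{x_n\}$ together with a decreasing chain $A=B_1\supseteq B_2\supseteq\cdots$ of members of $p$ recursively: having $B_n\in p$, pick $x_n\in B_n^\star$ and set $B_{n+1}=B_n\cap(B_n-x_n)$, which again lies in $p$ because $B_n-x_n\in p$. A short induction on the length of a sum then shows that for $n_1<\cdots<n_k$ one has $\sum_{j=1}^k x_{n_j}\in B_{n_1}\subseteq A$, whence $FS(\{x_n\}_{n=1}^\infty)\subseteq A$ and $A$ is an IP set.

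The main obstacle is the algebraic and topological scaffolding rather than the combinatorics: verifying that the extended addition is well defined, that each translation $p\mapsto p+q$ is continuous, and that a compact right-topological semigroup must carry an idempotent is where the real work lies. Once the idempotent $p$ is in hand, the selection of the color class is immediate and the recursive construction of the finite-sum sequence is a clean bookkeeping argument.
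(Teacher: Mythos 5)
Your proof is correct, but it takes a genuinely different route from the paper. The paper treats Hindman's theorem as a corollary of the Furstenberg--Weiss dynamical result on central sets: in any finite partition of $\N$ some cell is central, and every central set is an IP set, where centrality is defined via proximality to a minimal point in a dynamical system (equivalently, by Bergelson--Hindman, membership in a \emph{minimal} idempotent of $\bN$). You instead give the purely algebraic Galvin--Glazer argument: Ellis--Namakura produces some idempotent $p\in\bN$, the ultrafilter property hands you a cell $C_i\in p$, and the recursion $x_n\in B_n^\star$, $B_{n+1}=B_n\cap(B_n-x_n)$ shows that every member of an idempotent ultrafilter is an IP set; your induction is sound, since for $n_1<\cdots<n_k$ the tail sum lies in $B_{n_2}\subseteq B_{n_1+1}\subseteq B_{n_1}-x_{n_1}$, so adding $x_{n_1}$ lands the full sum in $B_{n_1}\subseteq A$. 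What each approach buys: yours is more elementary and self-contained --- it needs no dynamics and, notably, no minimality of the idempotent, and it actually proves the stronger statement that every member of \emph{any} idempotent is an IP set; the paper's route invokes the heavier machinery of minimal points but simultaneously yields van der Waerden's theorem (central sets also contain arbitrarily long arithmetic progressions) and introduces the central-sets theme that the rest of the paper generalizes. One small imprecision in your write-up: since the $C_i$ need not be disjoint, you should claim only that \emph{some} $C_i$ lies in $p$ (a finite union belonging to an ultrafilter forces one of its pieces to belong), not that exactly one does; this does not affect the argument.
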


The original proofs of the above two theorems are somewhat complicated by combinatorial methods.
In \cite{FW78,F81} Furstenberg and Weiss found a new way to prove those theorems
by topological dynamics methods.

A subset $F$ of $\N$ is called \emph{central} if there exists a dynamical system $(X,T)$,
a point $x \in X$, a minimal point $y$ which is proximal to $x$, and an open neighborhood $U$
of $y$ such that $F=\{n\in\N: T^nx\in U\}$.
Then van de Waerden Theorem and Hindman Theorem follow from the following result.
\begin{thm}[\cite{FW78, F81}]
(1) Every central set is an IP set and contains arbitrarily long arithmetic progressions.

(2) Let $r\in\N$ and $\N=\bigcup_{i=1}^r C_i$. Then there exists
some $i\in\{1,2,\ldots, r\}$ such that $C_i$ is a central set.
\end{thm}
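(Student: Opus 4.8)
The plan is to route both parts through the enveloping (Ellis) semigroup of a dynamical system and its minimal idempotents. For (2), I would first encode the colouring as a single point: let $\xi\in\{1,\dots,r\}^{\N}$ be given by $\xi(n)=i$ iff $n\in C_i$, and let $X=\overline{\{\sigma^{n}\xi:n\in\N\}}$ be its orbit closure under the shift $\sigma$. The one fact I need is that every point of a system is proximal to some minimal point. Granting it, pick a minimal $\eta\in X$ proximal to $\xi$, put $i=\eta(0)$ and $U=\{\omega\in X:\omega(0)=i\}$; this is a clopen neighbourhood of $\eta$, and since $(\sigma^{n}\xi)(0)=\xi(n)$ we get $C_i=\{n:\sigma^{n}\xi\in U\}$, which is precisely the statement that $C_i$ is central (with $x=\xi$, $y=\eta$). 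So (2) is immediate once minimal proximal points are produced.

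Both the existence of such points and all of part (1) I would extract from a single object. Write $E(X)=\overline{\{T^{n}:n\in\N\}}\subset X^{X}$ for the Ellis semigroup, a compact right-topological semigroup under composition; by the Ellis--Numakura lemma it contains idempotents, and the structure theory of minimal left ideals provides a minimal idempotent $u$. Given central witnesses $F=\{n:T^{n}x\in U\}$, $y$ minimal and proximal to $x$, $U\ni y$ open, the standard fact is that one may take $y=ux$ for a minimal idempotent $u$, so that $uy=u(ux)=ux=y$. The point to exploit is that $u=\lim_{\alpha}T^{m_{\alpha}}$ pointwise for some net; hence whenever $T^{n}y$ lies in the open set $U$ it follows that $T^{m_{\alpha}+n}x\in U$ for all large $\alpha$, and this holds simultaneously for any finite list of such $n$.

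For the IP assertion I would run the standard idempotent (IP-recurrence) induction. Idempotency $u=u^{2}$, together with $uy=y\in U$, $u=\lim_{\alpha}T^{m_{\alpha}}$ and the openness of $U$, allows one to choose increments $n_{1}<n_{2}<\cdots$ so that $T^{\sum_{k\in\alpha}n_{k}}x\in U$ for every nonempty finite $\alpha$; that is, $FS(\{n_{k}\})\subset F$, so $F$ is an IP set. (Equivalently, lifting $u$ to a minimal idempotent $p\in\bN$ one has $F\in p$, and the Galvin--Glazer theorem yields the IP sequence at once.)

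For the arithmetic progressions I would feed the topological multiple Birkhoff recurrence theorem into the transfer described above. Working in the minimal subsystem $M=\overline{\{T^{n}y:n\in\N\}}$, multiple recurrence yields, for each $k$ and each $\varepsilon>0$, a point whose $T^{jd}$-returns stay within $\varepsilon$ of it for $0\le j\le k$; since such multiply recurrent points are dense and the orbit of $y$ is dense in $M$, I can locate $a$ and $d\ge1$ with $T^{a+jd}y\in U$ for all $0\le j\le k$. Applying the transfer to the finite list $n=a+jd$ produces a single large $m_{\alpha}$ with $m_{\alpha}+a+jd\in F$ for $0\le j\le k$, an arithmetic progression of length $k+1$ inside $F$; letting $k\to\infty$ finishes (1). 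The main obstacle throughout is the semigroup input---the existence of minimal idempotents and the realisation of the minimal point proximal to $x$ as $ux$---together with the multiple recurrence theorem; the genuinely delicate conceptual point is the transfer step, which is exactly where proximality of $x$ to the minimal $y$ (encoded by $u=\lim_{\alpha}T^{m_{\alpha}}$) converts recurrence of $y$ into recurrence statements about the point $x$ that actually defines $F$.
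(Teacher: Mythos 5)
A point of order first: the paper does not prove this theorem --- it is quoted from \cite{FW78,F81} as background, and the paper's own contributions begin later --- so there is no internal proof to compare yours against; I am judging your proposal against the classical argument and against the machinery the paper develops in Sections 3--4. On those terms your proposal is correct, and it is essentially the standard Furstenberg--Weiss proof. For (2), coding the partition as a point $\xi$ and invoking the Auslander--Ellis theorem (every point is proximal to a minimal point in its orbit closure) is exactly right, and your realization of that minimal point as $ux$ for a minimal idempotent $u$ is precisely the equivalence the paper records as Proposition \ref{prop:min-id-cen}; the only quibble is indexing (you evaluate at coordinate $0$, so you should index the symbolic space by $\mathbb{Z}_+$ rather than $\N$). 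For (1), the transfer mechanism (proximality encoded by $u=\lim_\alpha T^{m_\alpha}$ with $ux=uy=y$, converting finitely many returns of $y$ to $U$ into simultaneous returns of $x$) is the right engine; the nested-neighbourhood IP induction it powers is standard, and the ultrafilter shortcut you mention in parentheses ($F\in p$ for a minimal idempotent $p\in\bN$, then Galvin--Glazer) is exactly the Bergelson--Hindman characterization stated in the paper's introduction.

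Two technical points you should patch, neither fatal. First, the system witnessing centrality is only assumed compact Hausdorff, while the multiple Birkhoff recurrence theorem you feed in (and your use of $\varepsilon$) is a compact-metric statement; the clean fix is to re-realize $F$ symbolically first --- $x=\mathbf{1}_F\in\{0,1\}^{\Z}$, $y=px$ with $p$ a minimal idempotent containing $F$, $U=[1]$ --- exactly as is done in the proof of Theorem \ref{thm:ess-F-dyna}, and run your whole argument in that metrizable system. Second, you do not need (and should not casually assert) density of multiply recurrent points: a single multiply recurrent point $z$ of the minimal system $M$, together with minimality (some $T^m z\in U$, hence by continuity $T^{m+jd}z\in U$ for a suitable $d$ and all $0\le j\le k$) and density of the orbit of $y$ in $M$, already yields $a,d$ with $T^{a+jd}y\in U$ for $0\le j\le k$, which is all your transfer step requires.
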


Before going on, let us recall some notions.
We call $(S, \cdot)$ a \emph{compact Hausdorff right topological semigroup}
if $S$ is endowed with a topology with respect to which $S$ is a compact
Hausdorff space and for each $t\in S$ the right translation $s\mapsto s\cdot t$ is continuous.
An \emph{idempotent} $t\in S$ is an element satisfying $t\cdot t = t$.
Ellis-Namakura Theorem says that any compact Hausdorff right topological semigroup contains some idempotent.
A subset $I$ of $S$ is called a \emph{left ideal of $S$} if $SI\subset I$,  a \emph{right ideal} if $IS\subset I$,
and a \emph{two sided ideal} (or simply an \emph{ideal}\/) if it is both a left and  right ideal.
A \emph{minimal left ideal} is the left ideal that does not contain any proper left ideal.
Similarly, we can define \emph{minimal right ideal} and \emph{minimal ideal}.
An idempotent in $S$ is called a \emph{minimal idempotent} if it is contained in some minimal left ideal of $S$.

Endowing $\N$ with the discrete topology, we take the points of the Stone-\v{C}ech compactification
$\bN$ of $\N$ to be the ultrafilter on $\N$.
Since $(\N,+)$ is a semigroup, we extend the operation $+$ to $\bN$ such that
$(\bN, +)$ is a compact Hausdorff right topological semigroup.
See \cite{HS98} for an exhaustive treatment of the algebraic structure on $\bN$.

Ellis showed that we can regard $(\bN,\N)$ as a universal point transitive system (\cite{E69}).
One may think that there is a nature connection between the algebra properties of $\bN$
and the sets defined by dynamical properties.
For example, in \cite{BH90} Bergelson and Hindman showed that
\begin{thm}[\cite{BH90}]
A subset $F$ of $\N$ is central if and only if there exists a minimal idempotent $p\in\bN$ such that $F\in p$.
\end{thm}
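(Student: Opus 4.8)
The plan is to build an explicit dictionary between the $p$-limit action of $\bN$ and the dynamical notions in the definition of a central set, and then read the theorem off in both directions. For every system $(X,T)$ and every $x\in X$ I would fix the action $p\cdot x=\lim_{n\to p}T^nx$, i.e.\ the unique $z\in X$ with $\{n\in\N:T^nx\in V\}\in p$ for every neighbourhood $V$ of $z$; compactness makes this well defined, and $p\mapsto p\cdot x$ is just the continuous extension of $n\mapsto T^nx$ to $\bN$. I would then record four facts. (i) Unwinding the definition of $+$ on $\bN$ and of the $p$-limit gives $(p+q)\cdot x=p\cdot(q\cdot x)$, so this is a genuine left action of $(\bN,+)$. (ii) If $U\subseteq X$ is open and $p\cdot x\in U$, then $\{n:T^nx\in U\}\in p$; conversely, if that set lies in $p$ then $p\cdot x\in\overline U$. (iii) $y\in X$ is a minimal point iff $u\cdot y=y$ for some minimal idempotent $u\in\bN$. (iv) $x$ and $y$ are proximal iff $p\cdot x=p\cdot y$ for some $p\in\bN$. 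Here (i) and (ii) are direct computations, while (iii) and (iv) are the standard Ellis--Auslander correspondence coming from the enveloping semigroup.

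For the direction assuming a minimal idempotent $p$ with $F\in p$, I would work inside the universal system itself. Take $X=\bN$ with the action $p\cdot q=p+q$, let $T$ be the shift, and let $x_0$ be the transitive point normalised so that $T^nx_0$ is the principal ultrafilter at $n$, whence $p\cdot x_0=\lim_{n\to p}n=p$. Put $y=p$ and let $U=\widehat F=\{q\in\bN:F\in q\}$, a clopen set containing $y$ because $F\in p$. Since $p$ is a minimal idempotent, $p\cdot y=p+p=p=y$, so $y$ is minimal by (iii); and $p\cdot x_0=p=p\cdot y$, so $x_0$ and $y$ are proximal by (iv). Finally, the principal ultrafilter $n$ lies in $\widehat F$ iff $n\in F$, so $\{n:T^nx_0\in U\}=F$, exhibiting $F$ as central.

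For the converse, suppose $F$ is central, witnessed by $(X,T)$, a point $x$, a minimal point $y$ proximal to $x$, and an open neighbourhood $U$ of $y$ with $F=\{n:T^nx\in U\}$. By (ii) it suffices to produce a minimal idempotent $r\in\bN$ with $r\cdot x\in U$, and for this it is enough to find a minimal idempotent $r$ with $r\cdot x=y$, since $y\in U$. Granting such an $r$, fact (ii) gives $F=\{n:T^nx\in U\}\in r$ with $r$ a minimal idempotent, which is exactly what is required.

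The heart of the matter, and the step I expect to be the main obstacle, is therefore the purely dynamical lemma: \emph{if $y$ is minimal and proximal to $x$, then $r\cdot x=y$ for some minimal idempotent $r\in\bN$.} I would prove this by passing to the product system $(X\times X,T\times T)$ and analysing the orbit closure $Z$ of $(x,y)$: proximality forces $Z$ to meet the diagonal, while minimality of $y$ makes $\overline{Orb(y)}$ a minimal set, and applying an idempotent from a minimal left ideal of $\bN$ together with the transitivity of such an ideal on a minimal set should locate the desired $r$. The genuine difficulty is that the $p$-limit action is a \emph{left} action, whereas the idempotents sit most naturally in left ideals of the form $\bN+u$, so one must move carefully between the two when transporting the identity $r\cdot x=y$ through the semigroup; reconciling this left/right asymmetry is the real content of the argument. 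A secondary, purely cosmetic point is the normalisation of the base point in the universal system so that $T^nx_0$ is genuinely the ultrafilter $n$, which is where the distinction between $\N$ and $\Z$ has to be absorbed.
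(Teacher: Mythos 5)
Your two-way architecture is sound, and it is essentially the route this paper's machinery takes implicitly: the paper never proves this statement (it quotes it from \cite{BH90}), but Lemma \ref{lem:p-lim}, Proposition \ref{prop:min-id-cen}, and the construction in the proof of Theorem \ref{thm:ess-F-dyna} assemble into exactly your two directions. Your forward direction is correct once the base point is normalised as you yourself indicate: work in $\bZ=\bN\cup\{0\}$ with $T=\lambda_1$ and $x_0=0$, so that $T^nx_0=n$ and $N(x_0,\overline{F})=F$; then $p+p=p$ makes $y=p$ a minimal point (its orbit closure is the minimal left ideal $\bN+p$) proximal to $x_0$. One caveat: this witness is non-metrizable, which is legitimate under this paper's definition of a dynamical system (compact Hausdorff), but would not suffice verbatim for Furstenberg's metric definition of central sets; the paper's own device in Theorem \ref{thm:ess-F-dyna} --- take $x=\mathbf{1}_F\in\{0,1\}^{\Z}$, $y=px$, $U=[1]$ --- produces a symbolic, hence metrizable, witness and avoids the issue.

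The genuine gap is the step you flag as the heart and then leave as a sketch: given $y$ minimal and proximal to $x$, produce a \emph{minimal} idempotent $r$ with $rx=ry=y$. Your sketch names the right ingredients but never assembles them, and the assembly is the entire content (it is item (1)$\Rightarrow$(2) of Proposition \ref{prop:min-id-cen}, which the paper cites rather than proves). Here is the missing argument. Let $P=\{p\in\bN: px=py\}$. It is nonempty: proximality gives $(z,z)\in\omega((x,y),T\times T)=\Phi_{(x,y)}(\bN\setminus\N)$, so some $q\in\bN$ satisfies $qx=qy$. It is a left ideal, since $px=py$ implies $(q+p)x=q(px)=q(py)=(q+p)y$ for every $q\in\bN$, using the action identity $(q+p)x=q(px)$. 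Hence $P$ contains the closed left ideal $\bN+q$ and therefore a minimal left ideal $L$ of $\bN$. Now use minimality of $y$: the set $Ly=\Phi_y(L)$ is a nonempty compact subset of $\overline{Orb(y,T)}$, and it is $T$-invariant because $T(py)=(1+p)y$ with $1+p\in L$; since $\overline{Orb(y,T)}$ is a minimal set, $Ly=\overline{Orb(y,T)}\ni y$. Therefore $M=\{p\in L: py=y\}$ is nonempty; it is closed (as $\Phi_y$ is continuous and $X$ is Hausdorff) and is a subsemigroup, since $p,q\in M$ gives $p+q\in L$ and $(p+q)y=p(qy)=py=y$. Ellis--Namakura then yields an idempotent $r\in M\subset L\subset P$; being an idempotent lying in a minimal left ideal, $r$ is by definition a minimal idempotent, and $rx=ry=y$. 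Finally $rx=y\in U$ gives $F=N(x,U)\in r$ by your fact (ii). Note that the left/right asymmetry you single out as ``the real content'' never actually enters: all the proof uses is continuity of $p\mapsto px$ for fixed $x$ and closure of left ideals under left addition, which sit on the same side.
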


A subset $F$ of $\N$ is  called  \emph{quasi-central} if there exists an idempotent $p\in\bN$ with
each element being piecewise syndetic such that $F\in p$.
Of course, every quasi-central set is central, but there exists some quasi-central set which is not central (\cite{HMS96}).
The authors in \cite{BH07} showed a dynamical characterization of quasi-central sets.

\begin{thm}[\cite{BH07}] \label{thm:quasi-cen-dy}
A subset $F$ of $\N$ is quasi-central if and only if
there exists a dynamical system $(X,T)$, a pair of points $x, y\in X$ where
for every open neighborhood $V$ of $y$ the set
$\{n\in\N: T^nx\in V, T^ny\in V\}$ is piecewise syndetic,
and an open neighborhood $U$ of $y$
such that $F=\{n\in \N: T^n x\in U\}$.
\end{thm}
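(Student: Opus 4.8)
The plan is to translate the theorem into the algebra of $\bN$ and then exploit the standard correspondence between ultrafilters and the orbit structure of a compact system. Write $K(\bN)$ for the smallest two-sided ideal of $\bN$. I shall freely use the following facts from \cite{HS98}: a subset $A$ of $\N$ is piecewise syndetic if and only if $\overline A\cap K(\bN)\neq\emptyset$; the closure $\overline{K(\bN)}$ is a two-sided ideal of $\bN$ and equals $\{q\in\bN:\text{every member of }q\text{ is piecewise syndetic}\}$; and, for any compact system $(X,T)$, the assignment $p\mapsto T^px:=p\textrm{-}\lim_{n\in\N}T^nx$ defines, for each fixed $x$, a continuous map $\bN\to X$ satisfying $T^{p+q}x=T^p(T^qx)$. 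In this language $F$ is quasi-central exactly when there is an idempotent $p\in\overline{K(\bN)}$ with $F\in p$, and this is the reformulation I will prove equivalent to the dynamical condition.

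For the passage from quasi-central to the dynamical statement, I would start from an idempotent $p\in\overline{K(\bN)}$ with $F\in p$ and manufacture a subshift. Let $x\in\{0,1\}^{\mZ}$ be the two-sided sequence with $x(n)=1$ for $n\in F$ and $x(n)=0$ otherwise, let $T$ be the shift, let $X$ be the orbit closure of $x$, and let $U=\{\xi\in X:\xi(0)=1\}$, a clopen set. Then $F=\{n\in\N:T^nx\in U\}$ by construction. I would set $y=T^px$; since $p+p=p$ we get $T^py=T^{p+p}x=T^px=y$, and since $F\in p$ the $p$-limit computing the zeroth coordinate gives $y(0)=1$, so $U$ is a neighborhood of $y$. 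Finally, for any basic clopen neighborhood $V$ of $y$ the relations $T^px=y\in V$ and $T^py=y\in V$ give $\{n:T^nx\in V\}\in p$ and $\{n:T^ny\in V\}\in p$, hence their intersection $\{n:T^nx\in V,\ T^ny\in V\}$ lies in $p$ and is therefore piecewise syndetic; an arbitrary open $V\ni y$ contains a basic one, and piecewise syndeticity passes to supersets, so all the required return sets are piecewise syndetic.

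For the converse I would fix the system $(X,T)$ and the points $x,y$ and, for each open $V\ni y$, put $A_V=\{n\in\N:T^nx\in V,\ T^ny\in V\}$, which is piecewise syndetic by hypothesis. Consider $S=\bigcap_{V\ni y}\overline{A_V}\subseteq\bN$. Using $\bigcap_{V\ni y}\overline V=\{y\}$ (regularity of $X$) together with the continuity of $p\mapsto T^px$, one checks that $S=\{p\in\bN:T^px=y\text{ and }T^py=y\}$; the identity $T^{p+q}=T^p\circ T^q$ then shows $S$ is a compact right topological subsemigroup. The key point is to locate an idempotent of $S$ inside $\overline{K(\bN)}$. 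Since each $A_V$ is piecewise syndetic, $\overline{A_V}\cap K(\bN)\neq\emptyset$, so $\overline{A_V}\cap\overline{K(\bN)}\neq\emptyset$; as a finite intersection of the $A_V$ contains some $A_{V_0}$, the family $\{\overline{A_V}\cap\overline{K(\bN)}\}$ has the finite intersection property, and compactness yields a point in $S\cap\overline{K(\bN)}$. Because $\overline{K(\bN)}$ is a two-sided ideal, $S\cap\overline{K(\bN)}$ is a nonempty compact subsemigroup, so by the Ellis--Namakura theorem it contains an idempotent $p$. This $p$ lies in $\overline{K(\bN)}$, so all its members are piecewise syndetic, and taking $V=U$ gives $A_U\in p$ with $A_U\subseteq F$, whence $F\in p$; thus $F$ is quasi-central.

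The main obstacle is the last direction's requirement that the idempotent be found not merely in $S$ but in $S\cap\overline{K(\bN)}$: an arbitrary idempotent of $S$ need not have all its members piecewise syndetic, and it is precisely the piecewise-syndetic recurrence hypothesis on the pair $(x,y)$ that, through the characterization of piecewise syndeticity by $K(\bN)$, lets the finite-intersection argument place an entire idempotent subsemigroup inside $\overline{K(\bN)}$. The remaining steps—continuity and the homomorphism property of $p\mapsto T^p$, and the identification of $S$ with the set of ultrafilters sending both $x$ and $y$ to $y$—are routine given the background in \cite{HS98}.
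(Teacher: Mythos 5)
Your proof is correct, and it is essentially the paper's argument specialized to the family $\F_{ps}$: the paper obtains this theorem as the case $\F=\F_{ps}$ of its general correspondence (Theorem \ref{thm:ess-F-dyna}), whose proof uses the same indicator-sequence subshift with $y=ux$ for one direction, and for the other direction the same semigroup $L=\{q\in\bN: qx=qy=y\}$ intersected with $h(\F_{ps})=\overline{K(\bN)}$, followed by Ellis--Namakura. The only cosmetic difference is that where you invoke the $K(\bN)$-characterization of piecewise syndeticity plus compactness to see $L\cap\overline{K(\bN)}\neq\emptyset$, the paper uses its Lemma \ref{lem:filter-dual-ultra} to place an ultrafilter containing all the sets $N((x,y),V\times V)$ inside the filterdual $\F_{ps}$.
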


A subset $F$ of $\N$ is  called a \emph{D-set} if there exists an idempotent $p\in\bN$ with
each element having positive upper Banach density such that $F\in p$.
It should be noticed that every quasi-central set is a D-set and
there exists some D-set which is not quasi-central (\cite{BD08}).
There is also a dynamical characterization of D-sets.

\begin{thm}[\cite{BD08}] \label{thm:D-set-dyn}
A subset $F$ of $\N$ is a D-set if and only if
there exists a dynamical system $(X,T)$, a pair of  points $x,y\in X$ where for every open neighborhood $V$ of $y$ the set
$\{n\in\N: T^ny\in V\}$ has positive upper Banach density and  $(y,y)$ belongs the
orbit closure of $(x,y)$ in the product system $(X\times X, T\times T)$,
and an open neighborhood $U$ of $y$
such that $F=\{n\in \N: T^n x\in U\}$,
\end{thm}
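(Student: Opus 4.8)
The plan is to run both implications through the left action of $\bN$ on a system $(X,T)$ defined by the $p$-limit, $p\cdot x=p\textrm{-}\lim_n T^nx$ (the unique point every neighbourhood $U$ of which satisfies $\{n:T^nx\in U\}\in p$), together with the set $\Delta\subseteq\bN$ of ultrafilters all of whose members have positive upper Banach density. First I would record the two structural facts that drive everything. The identity $(p+q)\cdot x=p\cdot(q\cdot x)$ makes $p\mapsto p\cdot x$ a left action and, in particular, $p\cdot x\in U$ forces $F:=\{n:T^nx\in U\}\in p$. Moreover, since $d^*$ is shift invariant and partition regular, $\Delta$ is a nonempty closed left ideal of $(\bN,+)$: if $A\in q+p$ with $p\in\Delta$ then some shift $-n+A$ lies in $p$, so $d^*(A)=d^*(-n+A)>0$. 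As a D-set is by definition a member of an idempotent of $\Delta$, the theorem reduces to matching idempotents of $\Delta$ with the stated dynamical data.

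For the forward implication I would use the standard symbolic construction. Given an idempotent $p\in\Delta$ with $F\in p$, take $X=\{0,1\}^{\Z}$ with the shift $T$, the point $x=1_F$, the clopen set $U=\{\xi:\xi(0)=1\}$, and $y=p\cdot x$. Then $\{n\in\N:T^nx\in U\}=F$, and $F\in p$ gives $y\in U$. Idempotency yields $p\cdot y=(p+p)\cdot x=p\cdot x=y$, so $y$ is $p$-fixed; hence for every neighbourhood $V$ of $y$ the return set $\{n:T^ny\in V\}$ lies in $p$ and therefore has positive upper Banach density, while $p\cdot(x,y)=(y,y)$ shows $(y,y)\in\overline{Orb(x,y)}$.

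For the converse, writing $E_V=\{n:T^nx\in V,\ T^ny\in V\}$, the set $G=\bigcap_{V\ni y}\overline{E_V}=\{p\in\bN:p\cdot x=p\cdot y=y\}$ is a closed subsemigroup by the action identity, and it is nonempty exactly because $(y,y)\in\overline{Orb(x,y)}$. Since $\Delta$ is a left ideal, $G\cap\Delta$ is again a closed subsemigroup, and any idempotent $w$ in it satisfies $w\cdot x=y\in U$, so $F\in w$ and $F$ is a D-set. Thus the converse reduces to proving $G\cap\Delta\neq\varnothing$; by compactness, together with the standard fact that every positive-density set belongs to some member of $\Delta$, this follows once I show $d^*(E_V)>0$ for every neighbourhood $V$ of $y$.

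The hard part is precisely this density estimate, and it is where the orbit-closure hypothesis must be married to the return hypothesis on $y$: only $D_V=\{n:T^ny\in V\}$ is assumed large, and $E_V$ could well be sparse in the sense of natural density. The resolution exploits that upper Banach density is the right, relocatable, notion. Choosing $n_k\to\infty$ with $(T^{n_k}x,T^{n_k}y)\to(y,y)$, continuity gives $(T^{n_k+j}x,T^{n_k+j}y)\to(T^jy,T^jy)$ for each fixed $j$, so $j\in D_V$ implies $n_k+j\in E_V$ for all large $k$. For each large $N$ I would pick a length-$N$ window on which $D_V$ has relative density at least $\tfrac12 d^*(D_V)$ and translate it by a suitable $n_k$, reproducing that window inside $E_V$; hence $d^*(E_V)\ge\tfrac12 d^*(D_V)>0$. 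With this, Ellis–Namakura supplies the idempotent $w\in G\cap\Delta$ and the proof closes. I expect the verification that $\Delta$ is a closed left ideal and this relocation argument for $d^*(E_V)$ to be the only genuinely delicate points; the remaining bookkeeping parallels the treatment of quasi-central sets in Theorem \ref{thm:quasi-cen-dy}.
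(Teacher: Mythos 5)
Your proposal is correct, and its skeleton matches the paper's: the symbolic system $(\{0,1\}^{\Z},\sigma)$ with $x=\mathbf{1}_F$, $y=p\cdot x$ for necessity, and Ellis--Namakura applied to $\{p\in\bN: p\cdot x=p\cdot y=y\}\cap h(\F_{pubd})$ for sufficiency. The genuine difference lies in the key step of the sufficiency direction. The paper obtains this theorem as the special case $\F=\F_{pubd}$ of Theorem \ref{thm:ess-F-dyna} combined with Proposition \ref{prop:bF-st-pro}: there, the hypotheses (that $y$ is $\F_{pubd}$-recurrent and $x$ is strongly proximal to $y$) are upgraded to $\F_{pubd}$-strong proximality by lifting the recurrent point $(y,y)$ through the universal system $(\bZ,\lambda_1)$ (Proposition \ref{prop:F-rec-lift}, which needs $h(\F)$ to be a subsemigroup), followed by Lemma \ref{lem:F-rec-bN} and the identity $b\F_{pubd}=\F_{pubd}$. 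You instead prove directly that each joint return set $E_V=\{n\in\N: T^nx\in V,\ T^ny\in V\}$ satisfies $BD^*(E_V)\ge\tfrac12 BD^*(N(y,V))>0$, by copying a good window for $N(y,V)$ into $E_V$ via continuity and the orbit-closure hypothesis. This relocation estimate is a concrete, in-situ proof of exactly the instance of Proposition \ref{prop:bF-st-pro} that is needed; it trades the block-family machinery for the observation that upper Banach density is translation-invariant. What the paper's route buys is uniformity --- one argument serves $\F_{ps}$, $\F_{pubd}$ and $\mathcal{J}$ at once (quasi-central sets, D-sets, C-sets); what yours buys is a self-contained proof of this particular theorem that bypasses the general theory of Sections 4--5.

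Two points to tighten. First, $X$ is only compact Hausdorff, so you cannot extract a sequence $n_k$ with $(T^{n_k}x,T^{n_k}y)\to(y,y)$; but your argument never needs one: for each finite $K\subset N(y,V)$, the set $W=\bigcap_{j\in K}(T^j\times T^j)^{-1}(V\times V)$ is a neighborhood of $(y,y)$, so by the orbit-closure hypothesis it contains some $(T^nx,T^ny)$, and then $n+K\subset E_V$ (this works even if the witnessing index is $n=0$, since $K\subset\N$). Second, your remark that $G$ is nonempty ``exactly because'' $(y,y)\in\overline{Orb((x,y),T\times T)}$ glosses over the degenerate case where the only orbit point in a neighborhood of $(y,y)$ is $(x,y)$ itself (which forces $x=y$); this is harmless, since your final argument deduces $G\cap\Delta\neq\emptyset$ from $BD^*(E_V)>0$ and compactness alone, and never uses that remark.
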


Central sets have substantial combinatorial contents.
In order to describe the properties, we first introduce some notations.
By $\pn$ we denote the set of all nonempty
finite subsets of $\N$. For $\alpha, \beta\in\pn$, we  write $\alpha<\beta$ if $\max \alpha<\min \beta$.
Given a sequence $s_1,s_2,\ldots$ in $\mZ$ or $\mZ^m$ and $\alpha\in\pn$ we let
$s_\alpha=\sum_{n\in\alpha}s_n$ and call the family $(s_\alpha)_{\alpha\in\pn}$ an \emph{IP-system}.
A \emph{homomorphism} $\phi: \pn\to\pn$ is a map such that (1) if $\alpha\cap \beta=\emptyset$, then
$\phi(\alpha)\cap\phi(\beta)=\emptyset$ and (2) $\phi(\alpha\cup\beta)=\phi(\alpha)\cup\phi(\beta)$.
Evidently such a homomorphism is determined by $\phi(\{i\})$ on each $i\in\N$, and then
$\phi(\alpha)=\bigcup_{i\in\alpha}\phi(\{i\})$.
Given an IP-system $\{s_\alpha\}$, an IP-subsystem is defined by a homomorphism $\phi: \pn\to\pn$
and forming $\{s_{\phi(\alpha)}\}\subset\{s_\alpha\}$.
If $r\in\mZ$, we shall denote by $\bar r^{(m)}$ the vector $(r,\ldots,r)\in\mZ^m$.

\begin{prop}[Central Sets Theorem \cite{F81}]
Let $F$ be a central set in $\N$, and for any $m\geq 1$, let $\{s_\alpha\}$ be any IP-system in $\mZ^m$.
We can find an IP-subsystem $\{s_{\phi(\alpha)}\}$ and an IP-system $\{r_\alpha\}$ in $\N$
such that the vector $\bar r^{(m)}_{\alpha}+s_{\phi(\alpha)}\in F^m$ for each $\alpha\in\pn$.
\end{prop}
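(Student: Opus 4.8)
The plan is to route through the algebraic description of central sets. By the characterization of Bergelson and Hindman \cite{BH90}, since $F$ is central there is a minimal idempotent $p\in\bN$ with $F\in p$. I would then lean on the standard ``star set'' calculus for idempotents: writing $F^{\star}=\{x\in F:\,-x+F\in p\}$, where $-x+A=\{y:\,x+y\in A\}$, one has $F^{\star}\in p$, and for every $x\in F^{\star}$ also $-x+F^{\star}\in p$ (see \cite{HS98}). The generators $y_t:=s_{\{t\}}\in\mZ^m$ determine the whole IP-system, and choosing an IP-subsystem amounts to choosing blocks $H_1<H_2<\cdots$ in $\pn$ and setting $\phi(\{n\})=H_n$, so that $s_{\phi(\alpha)}=\sum_{n\in\alpha}s_{H_n}$; writing $r_n=r_{\{n\}}$ we likewise have $r_\alpha=\sum_{n\in\alpha}r_n$.

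First I would construct the blocks $H_n$ and the integers $r_n$ by induction on $n$, maintaining the \emph{stronger} invariant that for every nonempty $\alpha\subseteq\{1,\dots,n\}$ and every coordinate $i\in\{1,\dots,m\}$ one has $r_\alpha+(s_{\phi(\alpha)})_i\in F^{\star}$ (not merely in $F$); the strengthening to $F^{\star}$ is exactly what makes the induction self-propagating. Suppose $H_1,\dots,H_k$ and $r_1,\dots,r_k$ have been chosen. For each nonempty $\alpha'\subseteq\{1,\dots,k\}$ and each $i$ put $c_{\alpha',i}=r_{\alpha'}+(s_{\phi(\alpha')})_i\in F^{\star}$, so that $-c_{\alpha',i}+F^{\star}\in p$, and set
\[
B=F^{\star}\cap\bigcap_{\emptyset\neq\alpha'\subseteq\{1,\dots,k\}}\ \bigcap_{i=1}^{m}\bigl(-c_{\alpha',i}+F^{\star}\bigr),
\]
a finite intersection of members of $p$, so $B\in p$. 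If I can find $r_{k+1}\in\N$ and a block $H_{k+1}$ with $\min H_{k+1}>\max H_k$ such that $r_{k+1}+(s_{H_{k+1}})_i\in B$ for every $i$, then splitting any new $\alpha\ni k+1$ as $\alpha'\cup\{k+1\}$ and using $B\subseteq F^{\star}$ together with $B\subseteq -c_{\alpha',i}+F^{\star}$ shows the invariant persists at stage $k+1$. Running this induction and recalling $F^{\star}\subseteq F$ then yields $\bar r^{(m)}_{\alpha}+s_{\phi(\alpha)}\in F^m$ for every $\alpha\in\pn$, with $\{r_\alpha\}$ an IP-system in $\N$ and $\phi$ a genuine homomorphism (the $H_n$ being pairwise disjoint).

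The hard part will be the selection step itself: for a prescribed $B\in p$ I must produce a \emph{single} shift $r_{k+1}$ and a \emph{single} block $H_{k+1}$ that work simultaneously in all $m$ coordinates, i.e. the vector $\bar r^{(m)}+s_{H_{k+1}}$ (with $r=r_{k+1}$) must land in $B^m$. This is where the real content of the theorem sits; it cannot follow from the idempotent calculus alone, because the conclusion already forces the van der Waerden--strength content recorded in part~(1) of the Furstenberg--Weiss theorem above. This is also precisely where minimality of $p$ enters: a member of a minimal idempotent is piecewise syndetic, and I would feed this largeness, together with the IP-set $\{s_H:\,H\in\pn,\ \min H>\max H_k\}\subseteq\mZ^m$ generated by the vectors $y_t$, into a multidimensional van der Waerden (Gallai-type) / IP-recurrence argument to pin down a common shift $r_{k+1}$ carrying some $s_{H_{k+1}}$ into $B$ in all coordinates at once. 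Equivalently, one can realize this selection dynamically via the IP form of multiple recurrence in the product system, transferring return times from the minimal point to $x$ by proximality, as in \cite{F81}. Establishing this simultaneous selection cleanly is the step I expect to take the most work; the surrounding induction is then routine bookkeeping.
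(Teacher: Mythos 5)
Your proposal should be compared not with a proof of this Proposition in the paper (there is none; the statement is quoted from \cite{F81}) but with the paper's proof of the stronger Theorem \ref{thm:C-set-thm} for C-sets, of which central sets are a special case. Seen that way, your route is genuinely different in its scaffolding but identical in its core. Your star-set induction ($F^{\star}$, the auxiliary sets $B\in p$, and the splitting $\alpha=\alpha'\cup\{k+1\}$) is the standard purely algebraic envelope, as in \cite{DHS08,HS09}; the paper instead realizes $F=N(x,[1])$ with $x=\mathbf{1}_F$, $y=px$, and runs the induction on nested neighborhoods $U_{n+1}$ of $y$ chosen by continuity of $\sigma$ --- these neighborhoods play exactly the role of your sets $B$, and the paper's two displayed inclusions for $\beta'=\beta\cup\{n+1\}$ are your propagation computation. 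Both arguments then stand or fall on the same selection step, which you correctly isolate: each set that arises ($B\in p$ for you, $N((x,y),U_{n+1}\times U_{n+1})\in p$ for the paper) must contain $\bar r^{(m)}+s_\alpha$ coordinatewise for a single $r$ and a single admissible block $\alpha$, i.e.\ must be a J-set. In the paper this is definitional, since the idempotent is taken in $h(\mathcal J)$, and the case of central sets is then covered by the chain central $\Rightarrow$ piecewise syndetic $\Rightarrow$ positive upper Banach density $\Rightarrow$ J-set, where \cite{FK85} (or a Hales--Jewett argument as in the proof of Lemma \ref{lem:J-f-d}) carries the real weight; your version needs precisely the same external input for members of a minimal idempotent. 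What your approach buys is an ultrafilter-only proof with no symbolic dynamics; what the paper's buys is modularity (the hard content is quarantined in the J-set notion, so the same induction yields the stronger C-set theorem). One concrete correction to your fallback: a ``Gallai-type'' multidimensional van der Waerden theorem does \emph{not} suffice for the selection step, since it produces a monochromatic homothetic copy $a+d\{(s_\alpha)_1,\ldots,(s_\alpha)_m\}$ with an uncontrolled dilation $d$, whereas you need $d=1$ and the configuration constrained to the given IP-system; the correct tool is the IP van der Waerden/Hales--Jewett theorem, equivalently the IP multiple recurrence route via proximality from \cite{F81} that you list as your alternative. With that substitution, your plan is sound.
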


Recently, the authors in \cite{DHS08,HS09} proved a stronger version of the Central Sets Theorem,
they call C-sets are the sets satisfying the conclusion of the strong Central Sets Theorem.
Here we will not discuss the strong Central Sets Theorem,
so we adopt an alternative definition of C-sets.

A subset $F$ of $\N$ is called a \emph{J-set} if for every $m\in\N$ and
every IP-system $\{s_\alpha\}$ in $\mZ^m$
there exists $r\in\N$ and $\alpha\in\pn$ such that $\bar r^{(m)}+s_\alpha \in F^m$.
Denote by $\mathcal J$ the collection of all J-sets.
A subset $F$ of $\N$ is called a \emph{C-set} if there exists an idempotent
$p\in\bN$ with each element being J-set such that $F\in p$.
Since every positive upper Banach density set is a J-set (\cite{FK85}), every D-set is a C-set.
But there exists a C-set with zero upper Banach density (\cite{H09}), so this set is not a D-set.

In this paper, we obtain a dynamical characterization of C-sets.

\begin{thm}
A subset $F$ of $\N$ is a C-set if and only if there exists a dynamical system $(X, T)$,
a pair of points $x, y\in X$ where for any open neighborhood $V$ of $y$ the set $\{n\in\N: T^n y\in V\}$ is a J-set
and $(y,y)$ belongs to the orbit closure of $(x,y)$ in the product system $(X\times X,T\times T)$,
and an open neighborhood $U$ of $y$ such that $F=\{n\in\N: T^nx\in U\}$,
\end{thm}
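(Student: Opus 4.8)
The plan is to prove both implications in parallel with the proof of the $D$-set characterization (Theorem~\ref{thm:D-set-dyn}), the only genuinely new input being the structural theory of J-sets. Throughout I use the universal dynamics of $\bN$: for a continuous map $T$ on a compact space $X$ and $z\in X$, write $T^qz=q\text{-}\lim_n T^nz$ for the limit of $T^nz$ along the ultrafilter $q\in\bN$; this is the continuous extension of $n\mapsto T^nz$ and satisfies $T^{q+q'}z=T^q(T^{q'}z)$. Let $\Lambda=\{p\in\bN:\text{every }A\in p\text{ is a J-set}\}$. The two facts about J-sets I would import from \cite{DHS08,HS09} are: (i) $\mathcal J$ is partition regular and closed under supersets, and (ii) $\Lambda$ is a closed two-sided ideal of $(\bN,+)$, in particular a closed subsemigroup and a right ideal. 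By Ellis--Namakura every closed subsemigroup below contains an idempotent.

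For necessity, suppose $F$ is a C-set, witnessed by an idempotent $p\in\Lambda$ with $F\in p$. I would realize $F$ in a subshift: put $\xi=1_F\in\{0,1\}^{\Z}$, let $(X,T)$ be the orbit closure of $\xi$ under the shift, set $x=\xi$, take $U=\{\omega\in X:\omega(0)=1\}$, and define $y=T^px$. Then $F=\{n:T^nx\in U\}$ holds by the definition of $\xi$, and $y\in U$ since $y(0)=p\text{-}\lim_n 1_F(n)=1$ as $F\in p$. That $(y,y)$ lies in the orbit closure of $(x,y)$ follows from $p\text{-}\lim_n(T^nx,T^ny)=(T^px,T^p(T^px))=(y,T^{2p}x)=(y,y)$, using $p+p=p$. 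The one step needing care is (a): for a basic neighbourhood $V$ of $y$, with $G=\{i:F-i\in p\}$ one checks $y(j)=1\iff j\in G$, and for each coordinate $j$ in the finite window defining $V$ idempotency yields $\{n:(n+j\in G)\iff(j\in G)\}\in p$; intersecting these finitely many members of $p$ gives $\{n:T^ny\in V\}\in p\subseteq\mathcal J$, and supersets of J-sets are J-sets, so (a) holds for every neighbourhood.

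For sufficiency, suppose $(X,T)$, $x,y,U$ satisfy (a)--(c). Define $\pi,\sigma:\bN\to X$ by $\pi(q)=T^qx$, $\sigma(q)=T^qy$, and set $E=\sigma^{-1}(y)$ and $W=\pi^{-1}(y)\cap\sigma^{-1}(y)$. The identity $T^{q+q'}=T^q\circ T^{q'}$ makes $E$ and $W$ closed subsemigroups, and $W\neq\emptyset$ is exactly the hypothesis that $(y,y)$ is in the orbit closure of $(x,y)$. By (a), the return sets $R_V=\{n:T^ny\in V\}$ form a filter base inside $\mathcal J$ (as $R_{V}\cap R_{V'}=R_{V\cap V'}$), and since each $R_V\in\mathcal J$ has $\overline{R_V}\cap\Lambda\neq\emptyset$ by partition regularity, a finite-intersection argument in compact $\bN$ produces $q_0\in E\cap\Lambda$. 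Choosing $r\in W$ and forming $q_0+r$, the semigroup identities give $\pi(q_0+r)=T^{q_0}(\pi(r))=T^{q_0}y=\sigma(q_0)=y$ and likewise $\sigma(q_0+r)=y$, so $q_0+r\in W$; since $q_0\in\Lambda$ and $\Lambda$ is a right ideal, $q_0+r\in\Lambda$. Thus $W\cap\Lambda$ is a nonempty closed subsemigroup and contains an idempotent $p$. Finally $p\in W$ gives $\pi(p)=y\in U$, whence $F=\{n:T^nx\in U\}\in p$, while $p\in\Lambda$ gives that every member of $p$ is a J-set; so $p$ witnesses that $F$ is a C-set.

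The hard part is not the dynamical bookkeeping, which is essentially identical to the $D$-set case, but the two structural facts (i) and (ii). Partition regularity of $\mathcal J$ is what upgrades (a) to an actual ultrafilter $q_0\in E\cap\Lambda$, and the right-ideal property of $\Lambda$ is precisely what lets condition (b) be threaded through the product $q_0+r$ so that the final idempotent lands in $W\cap\Lambda$ rather than merely in $W$. If one wanted the argument self-contained, establishing that $\Lambda$ is a closed two-sided ideal (equivalently, that the J-sets are partition regular and stable under the relevant shifts) is the real content to secure; with those in hand the equivalence follows as above.
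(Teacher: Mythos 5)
Your argument is correct, but it is organized almost inversely to the paper's own proof, and you should be aware of where the paper locates the real work. The paper routes the theorem through its general machinery: by Theorem \ref{thm:ess-F-dyna} (essential $\F$-sets are exactly the sets $N(x,U)$ arising from $\F$-strong proximity) and Proposition \ref{prop:bF-st-pro} (when $b\F=\F$, $\F$-strong proximity decouples into ``$y$ is $\F$-recurrent and $x$ is strongly proximal to $y$''), the statement reduces to two purely combinatorial lemmas, and these are where the paper spends essentially all of its effort: Lemma \ref{lem:J-f-d} ($\mathcal J$ is a filterdual, proved via the Hales--Jewett theorem following \cite{HS10}) and Lemma \ref{lem:bJ-J} ($b\mathcal J=\mathcal J$ and multiplication invariance, whence $h(\mathcal J)$ is a closed two-sided ideal of $(\bN,+)$). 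Your ``dynamical bookkeeping'' --- the subshift realization with $y=px$ for necessity; the closed subsemigroups $E$ and $W$, the ultrafilter $q_0\in E\cap\Lambda$ from the finite-intersection argument, and the idempotent in $W\cap\Lambda$ via Ellis--Namakura for sufficiency --- is a correct hands-on redoing of that general machinery in the special case $\F=\mathcal J$: your construction of $q_0$ is Lemma \ref{lem:filter-dual-ultra} applied to the filter base $\{N(y,V)\}$, and your use of the right-ideal property of $\Lambda$ to push $q_0+r$ into $W\cap\Lambda$ replaces the paper's route via $b\mathcal J=\mathcal J$ together with the lifting of $\F$-recurrent points (Proposition \ref{prop:F-rec-lift} and Lemma \ref{lem:F-rec-bN}); the two mechanisms are equivalent, since the paper derives the two-sided ideal property of $h(\mathcal J)$ precisely from $b\mathcal J=\mathcal J$. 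Conversely, the two facts you import from \cite{DHS08,HS09} --- partition regularity of $\mathcal J$ and the ideal property of $\Lambda=h(\mathcal J)$ --- are exactly what the paper proves; they are genuinely available in those references, so there is no mathematical gap, but they are not formal consequences of the definition of a C-set: they are the entire content of the paper's Section 7, with the dynamical part delegated to the framework already built in Sections 3--5. What your route buys is a self-contained, framework-free account of the dynamics; what the paper's buys is a self-contained combinatorial core plus a correspondence general enough to yield the quasi-central and D-set characterizations at the same time.
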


In \cite{F81} Furstenberg used the Central Sets Theorem to show that
any central subset of $\N$ contains solutions to all Rado systems.
Let $A=(a_{ij})$ be a $p\times q$ matrix over  $\mathbb Q$,
the homogeneous system of linear equations
\[A(x_1,\ldots,x_q)^T=0\]
is called \emph{partition regular} if for every $r\in\N$ and
$\N=\bigcup_{i=1}^r C_i$, there exists some $i\in\{1,2,\ldots, r\}$
such that the system has a solution
$(x_1,\ldots,x_q)$ all of whose components lie in $C_i$.
Each such homogeneous system of linear equations is called a \emph{Rado system}.
In \cite{R33} Rado characterized when a homogeneous system of linear equations is partition regular.

\begin{thm}[Rado's Theorem]
Let $A=(a_{ij})$ be a $p\times q$ matrix over  $\mathbb Q$.
Then the system $A(x_1,\ldots,x_q)^T=0$ is  partition regular
if and only if the index set $\{1,2,\ldots,q\}$ can be divided into $l$ disjoint subsets
$I_1,I_2,\ldots,I_l$ and rational numbers $c_j^r$ may be found for $r\in \{1,\ldots,l\}$ and
$j\in I_1\cup\cdots\cup I_r$ such that the following relationships are satisfied:
\begin{align*}
&\sum_{j\in I_1}a_{ij}=0,\\
&\sum_{j\in I_2}a_{ij}=\sum_{j\in I_1}c^1_ja_{ij},\\
&\cdots\\
&\sum_{j\in I_l}a_{ij}=\sum_{j\in I_1\cup I_2\cup\cdots\cup I_{l-1}}c^{l-1}_{j}a_{ij}.\\
\end{align*}
\end{thm}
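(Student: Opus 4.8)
\emph{Setup.} The statement is classical Rado's theorem; the plan is to prove its two implications separately, reading the displayed relations as the \emph{columns condition} on the column vectors $v_j=(a_{1j},\dots,a_{pj})^T\in\mathbb Q^p$ of $A$. Writing the system as $\sum_{j=1}^q x_j v_j=0$, the relations assert exactly that $\sum_{j\in I_1}v_j=0$ and that for $2\le r\le l$ the vector $\sum_{j\in I_r}v_j$ equals the rational combination $\sum_{j\in I_1\cup\cdots\cup I_{r-1}}c_j^{r-1}v_j$ of the earlier columns.

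\emph{Sufficiency.} I would deduce partition regularity from the stronger assertion that every central subset of $\N$ already contains a full solution, since by the Furstenberg--Weiss theorem that some cell of every finite partition $\N=\bigcup_{i=1}^r C_i$ is central, one such cell then carries a solution. To place a solution in a central set $F$ I would induct on the number of blocks $l$, using the Central Sets Theorem as the engine. For $l=1$ the relation $\sum_{j\in I_1}v_j=0$ makes the constant assignment $x_j=n$ (any $n\in F$) a solution. For the inductive step I would apply the Central Sets Theorem to an IP-system in $\mZ^q$ whose coordinates encode the already-committed lower-level values weighted by the coefficients $c_j^{r-1}$; the theorem returns an IP-subsystem $\{s_{\phi(\alpha)}\}$ and an IP-system $\{r_\alpha\}$ in $\N$ with $\bar r^{(q)}_\alpha+s_{\phi(\alpha)}\in F^q$, and the new block's values are read off as these $F$-entries while the next stage runs inside the refined subsystem. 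Summing the contributions, the relations $\sum_{j\in I_r}v_j=\sum_{j<r}c_j^{r-1}v_j$ force a telescoping cancellation that yields $\sum_j x_j v_j=0$.

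\emph{Necessity.} I would argue by contraposition: assuming the columns condition fails, I would build a finite coloring of $\N$ with no monochromatic solution, generalizing the single-equation case. There one fixes a prime $p$ larger than the sum of the absolute values of the coefficients and colors $n$ by the least significant nonzero digit of its base-$p$ expansion; reducing $\sum_j a_j x_j=0$ at the lowest scale carrying a nonzero digit shows that the indices active at that scale must have coefficient-sum $\equiv 0\pmod p$, hence $=0$, producing a subset of columns summing to zero. For the matrix case the blocks $I_r$ would correspond, in reverse, to the successive base-$p$ scales at which the variables first become nonzero, and the failure of the columns condition is converted into an obstruction modulo $p$ at some scale that no monochromatic solution can satisfy.

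\emph{Main obstacle.} The delicate half is necessity: one must turn the purely linear-algebraic failure of the columns condition into a genuine finite coloring, which requires selecting the prime $p$ so that the relevant column relations and the rational coefficients $c_j^{r-1}$ are controlled modulo $p$, and then verifying scale by scale that no single color can support a complete solution. In the sufficiency direction the main bookkeeping hazard is denominator control: since the $c_j^{r-1}$ are only rational, the IP-systems fed into the Central Sets Theorem at each stage must be scaled so that every produced $x_j$ is a genuine element of $\N\cap F$, while the nested induction keeps the values already fixed on $I_1\cup\cdots\cup I_{r-1}$ consistent with all $p$ equations.
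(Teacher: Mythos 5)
First, a point of calibration: the paper does not prove this statement. Rado's Theorem is quoted in the introduction as a classical result, cited to \cite{R33}, and is used as a black box; the paper's own contribution in this direction (Section 8) is the solvability of Rado systems in C-sets, whose two ingredients --- Lemma \ref{lem:n-1c-set} (closure of C-sets under $F\mapsto nF$ and $F\mapsto n^{-1}F$) and Theorem \ref{thm:C-set-thm} (the Central Sets Theorem for C-sets) --- are exactly the machinery your sufficiency half invokes, transplanted back to central sets. So there is no proof in the paper to compare yours against, and your proposal must be judged on its own terms as a proof of the classical theorem.

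Judged that way, your route is the standard and correct one (central sets plus the Central Sets Theorem for sufficiency, base-$p$ digit colorings for necessity), but both halves have genuine gaps: they name the obstacles without supplying the ideas that overcome them. In the sufficiency direction, the missing idea is the explicit parametrized family of solutions that the columns condition yields: setting $x_j=y_r-\sum_{s>r}c_j^{s-1}y_s$ for $j\in I_r$ makes $\sum_{j}x_jv_j=0$ identically in the parameters $y_1,\dots,y_l$ (the two sums cancel precisely because of the displayed relations), and it is this family --- not a block-by-block ``telescoping'' --- that tells you which IP-system in $\mZ^q$ to feed into the Central Sets Theorem so that all $q$ coordinates land in $F$ simultaneously. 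Moreover, since the $c_j^{r-1}$ are only rational, one needs a concrete mechanism to keep every $x_j$ a positive integer of $F$: Furstenberg's method (and this paper's C-set analogue) uses the closure of the relevant class under $F\mapsto nF$ and $F\mapsto n^{-1}F$ (Proposition \ref{prop:cen-set-n}, respectively Lemma \ref{lem:n-1c-set}); your one-line remark that the IP-systems ``must be scaled'' does not supply this. In the necessity direction, the gap is sharper: from a monochromatic solution for the last-nonzero-digit coloring one only extracts the columns-condition relations \emph{modulo} $p$, with coefficients in the field of $p$ elements, so your contrapositive needs the lemma that rational failure persists modulo all sufficiently large primes --- if $v\notin\mathrm{span}_{\mathbb{Q}}(S)$ then $v\notin\mathrm{span}_{\mathbb{F}_p}(S)$ for all large $p$, because the rank of $(S\mid v)$ is witnessed by a nonvanishing minor which survives reduction mod $p$ --- applied to each of the finitely many partitions of $\{1,\dots,q\}$. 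Without that lemma, and without the valuation-by-valuation analysis that actually produces the mod-$p$ relations from a monochromatic solution (your sketch does this only for a single equation), the necessity half remains a plan rather than a proof.
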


Let $F$ be a subset of $\N$, we say Rado systems are \emph{solvable} in $F$
if every Rado system $A(x_1,\ldots,x_q)^T=0$ has a solution
$(x_1,\ldots,x_q)$ all of whose components lie in $F$.

Furstenberg and Weiss improved Rado's result by showing that
\begin{thm}\cite{FW78,F81}
Rado systems are solvable in central sets.
\end{thm}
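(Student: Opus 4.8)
The plan is to combine Rado's columns condition with the Central Sets Theorem. Since the system is assumed partition regular, Rado's Theorem furnishes a partition $\{1,\dots,q\}=I_1\cup\cdots\cup I_l$ together with rationals $c_j^r$ obeying the stated relations. Writing $A_r^{(i)}=\sum_{j\in I_r}a_{ij}$ for the $r$-th block column-sum of row $i$, these relations read $A_1^{(i)}=0$ and $A_{r+1}^{(i)}=\sum_{j\in I_1\cup\cdots\cup I_r}c_j^r a_{ij}$ for each row $i$ and each $r\in\{1,\dots,l-1\}$.

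The structural heart is a linear identity that trivializes the equations once the columns condition holds. For free parameters $w_1,\dots,w_l$, set, for $j\in I_r$,
$$x_j=w_r-\sum_{s=r}^{l-1}c_j^{s}\,w_{s+1}$$
(so $x_j=w_l$ when $j\in I_l$). I would then check, by interchanging the order of summation and feeding in the relations $A_{r+1}^{(i)}=\sum_{j}c_j^r a_{ij}$, that $\sum_{j}a_{ij}x_j=0$ for every row $i$, identically in the $w$'s: the block contribution $\sum_{r}w_{r+1}A_{r+1}^{(i)}$ is cancelled term-by-term against the correction sums. This reduces the theorem to a pure existence statement, with no equations left to solve: it suffices to find $w_1,\dots,w_l\in\N$ making all $q$ of the fixed rational-linear forms $x_j$ land in the central set $F$ and be positive.

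This last step is exactly what the Central Sets Theorem is built to deliver. After clearing denominators (one replaces the generating sequence of the $w$'s by $D$-multiples, where $D$ is a common denominator of the $c_j^s$, so that every $x_j$ is an integer), I would realize $w_1,\dots,w_l$ as values $w_t=z_{\Gamma_t}$ of an IP-system over a nested family of index sets $\Gamma_1\supset\cdots\supset\Gamma_l$. Feeding the $q$ correction forms into the theorem as an IP-system in $\mZ^q$, the theorem produces an IP-subsystem $\{s_{\phi(\alpha)}\}$ together with a diagonal IP-system $\{r_\alpha\}$ whose combinations recover the $x_j$, each guaranteed to lie in $F$. Here additivity of IP-systems over disjoint unions is what collapses the rational-linear combinations into single IP-values lying in $F$, and choosing the index sets with rapidly varying magnitudes forces the leading term $w_r$ to dominate, so that each $x_j>0$ (using that a central set, being an IP set, is unbounded).

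The main obstacle is precisely this matching in the third paragraph: the Central Sets Theorem hands back a single diagonal shift $r_\alpha$ common to all coordinates, whereas the solution family carries a different leading parameter $w_r$ on each block $I_r$. Reconciling the two — by assigning the blocks distinct nested index sets so that each variable effectively receives its own shift while every component still stays inside $F$ and positive — is where the genuine bookkeeping lies. The telescoping identity and Rado's columns condition are, by comparison, the routine ingredients.
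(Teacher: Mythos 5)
Your reduction via the columns condition is correct: with $x_j=w_r-\sum_{s=r}^{l-1}c_j^s w_{s+1}$ for $j\in I_r$, one indeed gets $\sum_j a_{ij}x_j=w_1\sum_{j\in I_1}a_{ij}=0$ identically, so the theorem reduces to finding $w_1,\ldots,w_l$ making all $q$ forms land in the central set $F$ (positivity is then automatic, since $F\subset\N$, so your "rapidly varying magnitudes" device is not needed). The gap is the step you yourself flag as ``where the genuine bookkeeping lies'': it is not bookkeeping, it is the entire content of the proof, and the mechanism you sketch cannot deliver it. The difficulty is a circularity. The later-block parameters must be \emph{inputs} to the Central Sets Theorem: they appear, multiplied by the rationals $c_j^s$, inside the correction forms for earlier blocks, and the input IP-system must be fixed before the theorem is invoked. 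But they must also be \emph{outputs}: for $j\in I_l$ the component is $x_j=w_l$ itself, and the only numbers the theorem certifies to lie in $F$ are the shifted values $r_\alpha+s^{(j)}_{\phi(\alpha)}$ --- never the input values $z_{\Gamma_t}$, and the diagonal $r_\alpha$ cannot be identified with data chosen beforehand. Nested index sets $\Gamma_1\supset\cdots\supset\Gamma_l$ and additivity over disjoint unions do not break this circle: if $w_l=r_\alpha$ is taken from the output, then the block-$I_{l-1}$ forms $w_{l-1}-c_j^{l-1}r_\alpha$ involve an output inside a correction, which no single application can produce. A greedy repair also fails: once $w_l\in F$ is fixed, the corrections $-c_j^{l-1}w_l$ are constants, constants do not form an IP-system, and central sets are not translation invariant, so nothing guarantees any admissible $w_{l-1}$ exists.

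What the argument actually requires, beyond the Central Sets Theorem, is the closure of centrality under dilation and division: for central $F$ and $n\in\N$, both $nF$ and $n^{-1}F$ are central (Proposition \ref{prop:cen-set-n}, from \cite{F81,BHK96}). These properties are what allow one to work inside sets such as $F\cap D\N$ and $D^{-1}F$ (still central, $D$ a common denominator of the $c_j^s$), to produce the later-block parameters as values of an IP-system lying inside $F$ with the required divisibility, and only then to feed those values as the correction system of a further application of the Central Sets Theorem, block by block; this is Furstenberg's method in \cite[pp.\@169--174]{F81}, which is exactly what this paper invokes. Indeed, the structure of Section 8 makes the point: to transfer the result to C-sets the paper must prove \emph{both} Theorem \ref{thm:C-set-thm} (the Central Sets Theorem for C-sets) \emph{and} Lemma \ref{lem:n-1c-set} (closure under $nF$ and $n^{-1}F$); neither suffices alone. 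Your denominator trick --- replacing the generating sequence of the $w$'s by $D$-multiples --- addresses only integrality of the forms, not the requirement that the later-block parameters themselves lie in (a divisibility refinement of) $F$, so it does not substitute for the missing closure properties.
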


Recently, the authors in \cite{BBDF09} extended Furstenberg and Weiss' result to
\begin{thm} \cite{BBDF09}
Rado systems are solvable in D-sets.
\end{thm}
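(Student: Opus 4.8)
The plan is to reduce the problem to Rado's columns condition and then to feed that condition into the Central Sets Theorem, which $F$ satisfies because every D-set is a C-set, and a C-set satisfies the conclusion of the strong Central Sets Theorem and hence of the ordinary Central Sets Theorem quoted above. So let $A=(a_{ij})$ be a $p\times q$ matrix defining a partition regular system. By Rado's Theorem I may partition $\{1,\dots,q\}$ into $I_1,\dots,I_l$ and fix rationals $c_j^{r-1}$ so that $\sum_{j\in I_1}a_{ij}=0$ for every $i$ and $\sum_{j\in I_r}a_{ij}=\sum_{j\in I_1\cup\cdots\cup I_{r-1}}c_j^{r-1}a_{ij}$ for $2\le r\le l$. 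Let $D$ be a common denominator of all the $c_j^{r-1}$. Since $D\N$ lies in every idempotent of $\bN$ (exactly one residue class mod $D$ belongs to a given idempotent, and idempotency forces it to be $D\N$), the set $F\cap D\N$ again belongs to an idempotent all of whose members are J-sets, so it is a C-set; replacing $F$ by $F\cap D\N$ I may assume every element of $F$ is divisible by $D$, which makes all the corrections introduced below integers.

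I would look for a solution of the explicit shape $x_j=y_s-\sum_{r=s+1}^{l}c_j^{r-1}y_r$ for $j\in I_s$, where $y_1,\dots,y_l\in F$ are \emph{base values}, one per group. A direct computation using the columns condition shows that for every row $i$,
\[
\sum_{j}a_{ij}x_j = y_1\sum_{j\in I_1}a_{ij}=0 ,
\]
because the contribution of each $y_r$ with $r\ge 2$ is cancelled between group $I_r$ and the corrections it induces in the earlier groups, while the surviving term $y_1\sum_{j\in I_1}a_{ij}$ vanishes precisely by the first columns relation. Thus any choice of base values $y_s$ for which all the numbers $x_j$ land in $F$ produces a solution with every component in $F$.

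To produce such base values I would process the groups from $I_l$ down to $I_1$, applying the Central Sets Theorem once per group. At the top group the correction sum is empty, so I apply the Central Sets Theorem to the zero IP-system in $\mZ^{|I_l|}$ (carrying one extra coordinate held at $0$) and read off the diagonal IP-system $\{r_\alpha\}$, obtaining $y_l$ as an IP-system inside $F$. Descending to group $I_s$, the values $y_r$ with $r>s$ are already fixed IP-systems, so the corrections $-\sum_{r>s}c_j^{r-1}y_r$ (for $j\in I_s$) form a genuine IP-system $\{s_\alpha\}$ in $\mZ^{|I_s|}$; applying the Central Sets Theorem to it yields an IP-subsystem together with a diagonal IP-system that I take to be $y_s$, so that $y_s-\sum_{r>s}c_j^{r-1}y_r\in F$ holds for all $j\in I_s$ simultaneously. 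The extra zero coordinate forces the diagonal itself into $F\subset D\N$, keeping later corrections integral. After the final application, at the group $I_1$ whose diagonal is $y_1$, I evaluate all the resulting IP-systems at a single index to extract genuine natural numbers $x_j\in F$ solving $A(x_1,\dots,x_q)^T=0$.

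The conceptual heart, and the step I expect to be the main obstacle, is recognizing that the Central Sets Theorem's insistence on a \emph{common} diagonal across the coordinates of each block is exactly compatible with the columns condition: the shared diagonal $y_s$ is the common base value of group $I_s$, and only the diagonal of the zero-sum group $I_1$ survives in the final sum, where it is annihilated by $\sum_{j\in I_1}a_{ij}=0$. The remaining technical burden is bookkeeping: each application of the Central Sets Theorem passes to an IP-subsystem through a homomorphism $\phi$, and these homomorphisms must be composed so that all the base values $y_1,\dots,y_l$ and all the corrections are ultimately defined over one common IP-structure before a single index is chosen.
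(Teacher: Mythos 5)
Your argument is correct, and its engine --- Rado's columns condition fed into iterated applications of the Central Sets Theorem, available here because every D-set is a C-set --- is the same one the paper relies on (the paper proves the stronger C-set statement, from which this D-set theorem follows). Your ansatz $x_j=y_s-\sum_{r=s+1}^{l}c_j^{r-1}y_r$ for $j\in I_s$ does satisfy $\sum_j a_{ij}x_j=y_1\sum_{j\in I_1}a_{ij}=0$, since the coefficient of each $y_r$ with $r\ge 2$ cancels exactly by the $r$-th columns relation, and your descending induction with composed homomorphisms is the standard (and necessary) bookkeeping. Where you genuinely diverge is in the treatment of the rational coefficients $c_j^{r-1}$. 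The paper follows Furstenberg's method from \cite[pp.\@169--174]{F81}, which requires two inputs: the Central Sets Theorem for C-sets (Theorem \ref{thm:C-set-thm}) and the closure property that $nF$ and $n^{-1}F$ are again C-sets (Lemma \ref{lem:n-1c-set}); the latter costs the paper a substantial amount of dynamical work (multiple IP-recurrence, iterative invariance of $\mathcal{J}$-recurrence). You sidestep that closure lemma entirely: since every idempotent in $(\bN,+)$ contains $D\N$, the set $F\cap D\N$ belongs to the same witnessing idempotent and is still a C-set, and your extra zero coordinate forces each base value $y_s$ into $F\cap D\N$, so every correction $c_j^{r-1}y_r$ is an integer. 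This is a correct and strictly more elementary device for clearing denominators --- pure ultrafilter algebra in place of the dilation/division lemmas. What the paper's longer route buys in exchange is Lemma \ref{lem:n-1c-set} itself, a result of independent interest and one of the advertised applications of its dynamical machinery, together with a self-contained dynamical proof of the Central Sets Theorem for C-sets, which you instead import by citing that C-sets satisfy the (strong) Central Sets Theorem \cite{DHS08,HS09}.
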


In this paper, we use the dynamical characterization of C-sets to show that
\begin{thm}
Rado systems are solvable in C-sets.
\end{thm}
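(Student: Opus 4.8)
The plan is to reduce the statement to a Central Sets Theorem for C-sets and then replay the Furstenberg--Weiss argument, since their proof that central sets solve Rado systems uses no property of central sets beyond the Central Sets Theorem itself. First I would fix a Rado system $A(x_1,\dots,x_q)^T=0$ and apply Rado's Theorem to obtain the partition $\{1,\dots,q\}=I_1\cup\cdots\cup I_l$ and the rational coefficients $c_j^r$ witnessing the columns relations. Writing $c^{(j)}$ for the $j$-th column of $A$, these relations say $\sum_{j\in I_1}c^{(j)}=0$ and $\sum_{j\in I_r}c^{(j)}=\sum_{j\in I_1\cup\cdots\cup I_{r-1}}c_j^{r-1}c^{(j)}$ for $2\le r\le l$. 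A direct computation then shows that setting $x_j=w_t-\sum_{r>t}c_j^{r-1}w_r$ for $j\in I_t$ makes $A(x_1,\dots,x_q)^T=0$ an identity in the block-parameters $w_1,\dots,w_l$, so the task becomes choosing $w_1,\dots,w_l$ for which all $q$ values $x_j$ lie in $F$.

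The technical heart is the Central Sets Theorem for C-sets: if $F$ is a C-set, then for every $m\ge 1$ and every IP-system $\{s_\alpha\}$ in $\mZ^m$ there are an IP-subsystem $\{s_{\phi(\alpha)}\}$ and an IP-system $\{r_\alpha\}$ in $\N$ with $\bar r^{(m)}_\alpha+s_{\phi(\alpha)}\in F^m$ for each $\alpha\in\pn$, exactly as in the Proposition but with ``central'' weakened to ``C-set.'' By definition $F$ lies in an idempotent $p\in\bN$ all of whose members are J-sets, which is precisely the defining property giving the strong Central Sets Theorem of \cite{DHS08,HS09}; I would either cite that the strong version specializes to the displayed conclusion, or re-derive it directly from the idempotent/J-set definition by the usual induction on $A^\ast=\{n\in A:-n+A\in p\}$, in which the J-set property of the members of $p$ supplies at each stage the integer $r$ and the finite index set needed to extend $\phi$ and $\{r_\alpha\}$, while idempotency keeps the remaining target in $p$ so the induction continues.

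Granting this tool, I would finish along Deuber's lines. The Central Sets Theorem produces, inside $F$, sets of Deuber type: using $\{r_\alpha\}$ as the leading terms and the coordinates of $s_{\phi(\alpha)}$ as the bounded lower-order increments, one obtains elements of $F$ of the form $c\cdot(\text{leading IP-sum})+\sum(\text{coefficients in }[-p,p])\cdot(\text{later IP-sums})$ for prescribed $m,p,c$. Deuber's linear algebra shows that a system satisfying the columns condition has a solution all of whose components lie in such an $(m,p,c)$-set once $m,p,c$ are chosen large enough to accommodate the $c_j^r$ (after multiplying through by a common denominator $d$ to clear fractions). Reading off the coordinates gives $x_1,\dots,x_q\in F$ solving $A(x_1,\dots,x_q)^T=0$; since $F\subset\N$, integrality and positivity are automatic because the leading $\{r_\alpha\}$ terms dominate.

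I expect the main obstacle to be the second paragraph: matching the strong Central Sets Theorem, which is phrased for arbitrary sequences, to the IP-system-valued conclusion that the Deuber construction consumes, or equivalently carrying out the idempotent induction so that the $(m,p,c)$-structure is realized inside $F$. Once the Central Sets Theorem for C-sets is placed in the stated form, the remaining linear algebra and the denominator-clearing bookkeeping are routine and identical to the central-set case of \cite{F81} and the D-set case of \cite{BBDF09}.
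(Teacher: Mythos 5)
Your second paragraph is fine as far as it goes: the paper proves the Central Sets Theorem for C-sets (Theorem \ref{thm:C-set-thm}) dynamically, via $x=\mathbf{1}_F$, $y=px$ and an induction on neighborhoods using that each $N((x,y),U_n\times U_n)$ is a J-set, but the algebraic idempotent/J-set induction you sketch is a legitimate alternative, and the paper's own closing remark points to \cite{BBDF09,HS09} for exactly that route. The genuine gap is your opening claim that the Furstenberg--Weiss argument ``uses no property of central sets beyond the Central Sets Theorem itself.'' That is false, and the paper says so explicitly: the reduction via \cite[pp.~169--174]{F81} requires \emph{two} inputs, namely Theorem \ref{thm:C-set-thm} \emph{and} Lemma \ref{lem:n-1c-set}, the statement that if $F$ is a C-set then $nF$ and $n^{-1}F$ are C-sets for every $n\in\N$. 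Furstenberg's induction on the blocks $I_1,\ldots,I_l$ applies the Central Sets Theorem not to $F$ itself but to sets of the form $d^{-1}F$, in order to clear the denominators of the $c^r_j$; without the closure lemma the induction cannot even start.

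The same omission resurfaces concretely in your third paragraph. The Central Sets Theorem produces elements $r_\alpha+s^{(i)}_{\phi(\alpha)}$ of $F$, in which the leading term $r_\alpha$ carries coefficient $1$; it does not produce elements of the form $c\cdot(\text{leading IP-sum})+\sum\lambda_j\cdot(\text{later IP-sums})$. To realize an $(m,p,c)$-set inside $F$ one applies the Central Sets Theorem to $c^{-1}F$ and multiplies through by $c$ --- which presupposes that $c^{-1}F$ is again a C-set, i.e.\ Lemma \ref{lem:n-1c-set} again. Your parenthetical about ``multiplying through by a common denominator $d$'' only makes the matrix integral; it does not supply this. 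Note also that this closure property is not a routine add-on: the paper derives it from Theorems \ref{thm:nF-ess} and \ref{thm:n-1F-ess}, Lemma \ref{lem:bJ-J}, and Proposition \ref{prop:J-rec-inv} (iterative invariance of $\mathcal J$-recurrence, which itself rests on the multiple IP-recurrence characterization of Theorem \ref{thm:C-set-thm}'s setting), and the paper's final remark raises the existence of a purely algebraic proof of Lemma \ref{lem:n-1c-set} as an open question. Since your framework is entirely algebraic, you would owe an actual argument for this step, not a citation; as written, your proposal is missing one of the two pillars of the proof.
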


This paper is organized as follows.
In Section 2 we introduce some notations related to Furstenberg families.
In Section 3 the basic properties of the Stone-\v{C}ech compactification of $\N$ are discussed.
In Section 4 we set up a general correspondence between the algebra properties of $\bN$
and the sets defined by dynamical properties.
The dynamical characterizations of quasi-central sets and D-sets
are special cases of our results.
In Section 5,  we investigate the set's forcing
that is the dynamical properties of a point along a subset of $\N$.
In Section 6, we consider both addition and multiplication in N and $\bN$.
Particularly we show that if $F$ is a quasi-central set or a D-set,
then for every $n\in\N$ both $nF$ and $n^{-1}F$ are also quasi-central sets or D-sets.
In Section 7 using the correspondence which is set up in Section 4 and
some properties of J-sets, we obtain a dynamical characterization of C-sets.
In Section 8, as an application, we give a topological dynamical proof of
the fact that Rado systems are solvable in C-sets.

\section{Furstenberg family}
Let us recall some notations related to a family (for more details see \cite{A97}).
For the set of positive integers $\N$,
denote by $\mathcal{P}=\mathcal{P}(\N)$ the collection of all subsets of $\N$.
A subset $\mathcal{F}$ of $\mathcal{P}$ is called a \emph{Furstenberg family} (or just \emph{family})
if it is hereditary upward, i.e., $F_1\subset F_2$ and $F_1\in\mathcal F$ imply $F_2\in \mathcal F$.
A family $\mathcal F$ is called \emph{proper} if it is a nonempty proper subset of $\mathcal P$,
i.e., neither empty nor all of $\mathcal P$.
For a family $\F$, the \emph{dual family} of $\mathcal F$, denote by $\kappa\F$, is
\[\{F\in \mathcal P: F\cap F'\neq \emptyset, \forall F'\in \F\}.\]
Sometimes the dual family $\kappa\F$ is also denoted by $\F^*$.

A family $\F$ is called a \emph{filter} when it is a proper family closed under intersection,
i.e., if $F_1,F_2\in \F$ then $F_1\cap F_2\in \F$.
A family $\F$ is called a \emph{filterdual} if its dual $\kappa\F$ is a filter.
It is easy to see that a proper family $\F$ is a filterdual if and only if it satisfies the \emph{Ramsey Property},
i.e., if $F_1\cup F_2\in\F$ then either $F_1\in\F$ or $F_2\in\F$.
Since $\kappa(\kappa\F)=\F$, a family $\F$ is a filter if and only if $\kappa\F$ is a filterdual.

Of special interest are filter that are maximal with respect to inclusion.
Such a filter is called an \emph{ultrafilter}. By Zorn's Lemma every filter is contained in some ultrafilter.
For any $n\in\N$ the family $\{A\subset\N: n\in A\}$ is an ultrafilter.
An ultrafilter formed in this way is called a \emph{principal ultrafilter}.
Any other ultrafilter is called a \emph{non-principal ultrafilter}.
The following two lemmas are basic properties of ultrafilter, see \cite{A97,G80, HS98} for example.

\begin{lem}
Let $\F$ be a filter. Then the following conditions are equivalent:
\begin{enumerate}
\item $\F$ is an ultrafilter;
\item $\F=\kappa\F$;
\item $\F$ is a filterdual;
\item For all $F\subset\N$, either $F\in\F$ or $\N\setminus F\in\F$.
\end{enumerate}
\end{lem}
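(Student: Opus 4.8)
The plan is to prove the four statements equivalent through the cycle $(1)\Rightarrow(4)\Rightarrow(2)\Rightarrow(3)\Rightarrow(1)$. Throughout I rely on one preliminary observation about the filter $\F$: being proper and closed under intersection, $\F$ satisfies $\emptyset\notin\F$, so for any $F,F'\in\F$ the set $F\cap F'$ lies in $\F$ and is therefore nonempty. This already shows $F\in\kappa\F$, i.e. $\F\subseteq\kappa\F$ for every filter. I also record that $\N\in\F$, since $\F$ is nonempty and hereditary upward.

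For $(1)\Rightarrow(4)$, which I expect to be the main step, I argue from maximality. Given $F\subseteq\N$ with $F\notin\F$, I form the family $\G=\{A\subseteq\N: A\supseteq(\N\setminus F)\cap B \text{ for some }B\in\F\}$. One checks directly that $\G$ is hereditary upward and closed under intersection (using that $\F$ is closed under intersection), and that $\G$ is proper: if $\emptyset\in\G$ then some $B\in\F$ satisfies $B\subseteq F$, forcing $F\in\F$, a contradiction. Since $\N\in\F$ yields $\N\setminus F\in\G$, and plainly $\F\subseteq\G$, the maximality of the ultrafilter $\F$ forces $\G=\F$, whence $\N\setminus F\in\F$. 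Thus for every $F$ either $F\in\F$ or $\N\setminus F\in\F$.

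The remaining implications are formal. For $(4)\Rightarrow(2)$ it suffices, given the preliminary inclusion $\F\subseteq\kappa\F$, to prove $\kappa\F\subseteq\F$: if $G\in\kappa\F$ but $G\notin\F$, then $(4)$ gives $\N\setminus G\in\F$, contradicting $G\cap(\N\setminus G)=\emptyset$ together with $G\in\kappa\F$. Hence $\F=\kappa\F$. Implication $(2)\Rightarrow(3)$ is immediate, since $\kappa\F=\F$ is a filter and a family whose dual is a filter is by definition a filterdual. For $(3)\Rightarrow(1)$ I invoke the characterization already stated in the text, namely that a proper filterdual satisfies the Ramsey Property. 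Let $\G$ be any proper filter with $\F\subseteq\G$, and let $A\in\G$. From $A\cup(\N\setminus A)=\N\in\F$ the Ramsey Property yields $A\in\F$ or $\N\setminus A\in\F$; the latter is impossible, as it would place both $A$ and $\N\setminus A$ in the proper filter $\G$ and force $\emptyset\in\G$. Hence $A\in\F$, so $\G\subseteq\F$ and therefore $\F=\G$, proving $\F$ is maximal, i.e. an ultrafilter.

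The single genuinely constructive point is the extension $\G$ built in $(1)\Rightarrow(4)$; every other link is bookkeeping with the definitions of $\kappa\F$ and of a filterdual, and with the characterization of filterduals by the Ramsey Property, all of which are available in the preceding text.
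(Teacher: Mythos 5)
Your proof is correct. Note that the paper itself gives no proof of this lemma --- it is stated as a basic property of ultrafilters with a citation to the references \cite{A97,G80,HS98} --- so there is nothing in the text to compare against; your cycle $(1)\Rightarrow(4)\Rightarrow(2)\Rightarrow(3)\Rightarrow(1)$ is the standard textbook argument and all steps check out: the generated-filter construction $\G$ in $(1)\Rightarrow(4)$ (with properness following from $F\notin\F$), the preliminary inclusion $\F\subseteq\kappa\F$ valid for every filter, and the appeal to the Ramsey property of proper filterduals (recorded in the paper just before the lemma) to get maximality in $(3)\Rightarrow(1)$.
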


\begin{lem}\label{lem:filter-dual-ultra}
Let $\F$ be a filterdual and $\mathcal A\subset \F$.
If for any finite elements $A_1, A_2$,$\ldots$, $A_n$ in $\mathcal A$ the intersection
$\bigcap_{i=1}^n A_i$ is  in $\F$, then there exists an ultrafilter $\F'$ such that
$\mathcal A\subset \F'\subset \F$.
\end{lem}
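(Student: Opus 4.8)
The plan is to build the desired ultrafilter in two stages: first I would replace the family $\mathcal A$ by the filter it generates, which the hypothesis forces to sit inside $\F$, and then I would use a Zorn's Lemma argument together with the Ramsey property of the filterdual $\F$ to enlarge this filter to a maximal one, which turns out to be automatically an ultrafilter.

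First I would consider the filter generated by $\mathcal A$,
\[
\F_0=\Big\{F\subset\N:\ \exists\,A_1,\ldots,A_n\in\mathcal A \text{ with } \bigcap_{i=1}^n A_i\subset F\Big\}.
\]
Since every finite intersection of members of $\mathcal A$ lies in $\F$ by hypothesis, and since $\emptyset\notin\F$ (as $\F$ is proper and hereditary upward, $\emptyset\in\F$ would force $\F=\mathcal P$), these intersections are nonempty, so $\F_0$ is a proper filter with $\mathcal A\subset\F_0$. Moreover $\F_0\subset\F$, because every $F\in\F_0$ contains some $\bigcap_{i=1}^n A_i\in\F$ and $\F$ is hereditary upward. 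Thus the problem reduces to extending the filter $\F_0$ to an ultrafilter while staying inside $\F$. To do this I would apply Zorn's Lemma to the collection of all filters $\G$ with $\F_0\subset\G\subset\F$, ordered by inclusion: this collection contains $\F_0$, and the union of any chain of such filters is again a filter lying between $\F_0$ and $\F$, so every chain has an upper bound. Let $\F'$ be a maximal element.

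The main point, and the step where the filterdual hypothesis is essential, is to show that $\F'$ is an ultrafilter. Suppose it is not; then by the characterization of ultrafilters in the previous lemma there is a set $E\subset\N$ with $E\notin\F'$ and $\N\setminus E\notin\F'$. I claim that either $E\cap G\in\F$ for every $G\in\F'$, or $(\N\setminus E)\cap G\in\F$ for every $G\in\F'$. Indeed, if both alternatives failed I could choose $G_1,G_2\in\F'$ with $E\cap G_1\notin\F$ and $(\N\setminus E)\cap G_2\notin\F$, and set $G=G_1\cap G_2\in\F'$; since $\F$ is hereditary upward this gives $E\cap G\notin\F$ and $(\N\setminus E)\cap G\notin\F$, yet $G=(E\cap G)\cup((\N\setminus E)\cap G)$ lies in $\F'\subset\F$, contradicting the Ramsey property of the filterdual $\F$. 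Using whichever alternative holds, the filter generated by $\F'\cup\{E\}$ (respectively $\F'\cup\{\N\setminus E\}$) is a proper filter — properness holding because each relevant intersection is a nonempty member of $\F$ — that still lies inside $\F$ and strictly contains $\F'$, contradicting maximality. Hence $\F'$ is an ultrafilter, and $\mathcal A\subset\F_0\subset\F'\subset\F$, as required.

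The only delicate step is the claim in the last paragraph: reducing an arbitrary $G\in\F'$ to a single set via closure of $\F'$ under intersection, so that the Ramsey property can be invoked, is exactly what upgrades maximality-within-$\F$ to completeness of the filter. Everything else amounts to routine verification that the objects constructed are proper filters contained in $\F$.
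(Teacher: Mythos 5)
Your proof is correct. Note that the paper itself does not prove this lemma at all: it is stated as a ``basic property of ultrafilters'' with a citation to the literature (Akin, Glasner, Hindman--Strauss), so there is no in-paper argument to compare against; your self-contained Zorn's Lemma argument is the standard one. Both stages check out: the filter $\F_0$ generated by $\mathcal A$ lies inside $\F$ by upward heredity and is proper because $\emptyset\notin\F$; Zorn's Lemma applies since the union of a chain of filters between $\F_0$ and $\F$ is again such a filter; and the key step is exactly where you located it --- reducing to a single set $G=G_1\cap G_2\in\F'$ and applying the Ramsey property of $\F$ to the partition $G=(E\cap G)\cup((\N\setminus E)\cap G)$ shows that one of $E$, $\N\setminus E$ can be adjoined to $\F'$ without leaving $\F$, so a filter maximal among those contained in $\F$ must already be an ultrafilter. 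The only degenerate case your construction does not literally cover is $\mathcal A=\emptyset$ (the empty family generates no filter), which is patched by replacing $\mathcal A$ with $\{\N\}$, since $\N\in\F$ for any proper nonempty upward-hereditary family.
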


For $n\in \mZ$ and $F\subset \N$, denote $n+F=\{n+m\in \N: m\in F\}$.
A family $\F$ is called \emph{translation $+$ invariant} if $n+F\in \F$ for every $n\in\Z$ and $F\in\F$,
\emph{translation $-$ invariant} if $-n+F\in\F$ for every $n\in\Z$ and $F\in\F$ and
\emph{translation invariant} if it is both $+$ and $-$ invariant.

Any nonempty collection $\mathcal A$ of
subsets of $\N$ naturally generates a family
\[\F(\mathcal A)=\{F\subset \N:\, F\supset A\textrm{ for some }A \in\mathcal A\}.\]
A collection $\mathcal A$ of subsets of $\N$ is said to have the \emph{finite intersection property}
if the intersection of any finite elements in $\mathcal A$ is not empty.
In this case, the family generated by $\mathcal A$ is a filter.

Let $\mathcal F$ be a family, the \emph{block family of $\F$}, denote by $b\F$, is the
family consisting of sets $F\subset\N$ for which there exists some $F'\in \F$ such that for
every finite subset $W$ of $F'$ one has $m+W \subset F$ for some $m\in \Z$.
It is easy to see that $F\in b\F$ if and only if there exists a sequence $\{a_n\}_{n=1}^\infty$ in $\Z$
and $F'\in\F$ such that $\bigcup_{n=1}^\infty (a_n+F'\cap[1,n])\subset F$.
Clearly, $b(b\F)=b\F$ and $b\F$ is translation $+$ invariant.

\begin{lem} \cite{BF02,HLY09}
If $\F$ is a filterdual, then so is $b\F$.
\end{lem}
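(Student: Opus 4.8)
The plan is to invoke the equivalence recalled just above, namely that a proper family is a filterdual exactly when it has the Ramsey property; thus it suffices to show that $b\F$ is proper and that $F_1\cup F_2\in b\F$ forces $F_1\in b\F$ or $F_2\in b\F$. Properness is immediate: taking $m=0$ in the defining condition shows $\F\subset b\F$, so $b\F$ is nonempty, while the fact that $\F$ is proper gives $\emptyset\notin\F$, so every witness $F'$ is nonempty; a singleton $W\subset F'$ can never satisfy $m+W\subset\emptyset$, whence $\emptyset\notin b\F$ and in particular $b\F\neq\mathcal P$.

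For the Ramsey property I would start from $F_1\cup F_2\in b\F$ and use the second description of the block family: fix $F'\in\F$ and a sequence $\{a_n\}$ in $\Z$ with $\bigcup_{n\ge 1}\bigl(a_n+(F'\cap[1,n])\bigr)\subset F_1\cup F_2$. For each $n$ this says $a_n+(F'\cap[1,n])\subset F_1\cup F_2$, so I can define a $2$-colouring $\chi_n\colon F'\cap[1,n]\to\{1,2\}$ by setting $\chi_n(x)=1$ when $a_n+x\in F_1$ and $\chi_n(x)=2$ otherwise, in which case $a_n+x\in F_2$. The goal is to distil from these finite colourings a single colouring of all of $F'$ to which the Ramsey property of $\F$ can be applied.

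This distillation is the heart of the argument and the step I expect to be the main obstacle. Viewing each $\chi_n$ as a point of the compact metrizable space $\{1,2\}^{F'}$ (extending it arbitrarily off $F'\cap[1,n]$), I would pass to a convergent subsequence with limit $c\colon F'\to\{1,2\}$. The property that matters is the following realisation fact: for every finite $W\subset F'$ there are infinitely many $n$ with both $n\ge\max W$ and $\chi_n|_W=c|_W$, since convergence in the product topology forces agreement with $c$ on any finite coordinate set for all large indices, and those large indices also satisfy $n\ge\max W$. Writing $C_i=c^{-1}(i)$, we get $C_1\cup C_2=F'\in\F$, so the Ramsey property of $\F$ yields $C_1\in\F$ or $C_2\in\F$; say $C_1\in\F$.

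It then remains to check that $C_1$ witnesses $F_1\in b\F$. Given a finite $W\subset C_1$, every point of $W$ has colour $1$ under $c$, so by the realisation fact there is some $n\ge\max W$ with $\chi_n|_W=c|_W$, that is $a_n+x\in F_1$ for every $x\in W$, and hence $a_n+W\subset F_1$; this is precisely the defining condition for $F_1\in b\F$ with witness $C_1$. The case $C_2\in\F$ is symmetric and gives $F_2\in b\F$ instead. Together with properness this establishes the Ramsey property of $b\F$ and therefore the lemma.
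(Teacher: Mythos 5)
Your proposal is correct, and it is worth noting at the outset that the paper itself offers no proof of this lemma at all: it is stated with the citations \cite{BF02,HLY09}, so there is no in-paper argument to compare against. Your proof is a complete, self-contained substitute, and it is essentially the standard compactness argument found in the cited literature. The properness step is right (taking $F'=F$ and $m=0$ gives $\F\subset b\F$, and since $\emptyset\notin\F$ by upward heredity and properness, no witness $F'$ can serve for $\emptyset$, so $\emptyset\notin b\F$). For the Ramsey property, the key technical points are handled correctly: you only ever read the limit colouring $c$ against $\chi_n$ on finite windows $W$ with $n\ge\max W$, so the arbitrary extension of $\chi_n$ off $F'\cap[1,n]$ never enters the argument; sequential compactness of $\{1,2\}^{F'}$ is legitimate because $F'\subset\N$ is countable, hence the product is compact metrizable; and the set $C_1=c^{-1}(1)$ (or $C_2$) produced by the Ramsey property of $\F$ is exactly a witness in the sense of the paper's first-stated defining condition for $b\F$, while the hypothesis $F_1\cup F_2\in b\F$ is unpacked via the equivalent sequence formulation, both of which the paper records as equivalent in Section 2. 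In short: correct proof, filling a gap the paper delegates to references.
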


Now let us recall some important sets and families.
Let $\mathcal F_{inf}$ be the family of all infinite subsets of $\mathbb Z_+$.
It is easy to see that its dual family $\kappa\mathcal F_{inf}$
is the family of all cofinite subsets, denoted by $\mathcal F_{cf}$.

A subset $F$ of $\Z$ is called \emph{thick} if it contains arbitrarily long runs of positive integers,
i.e., there exists a sequence $\{a_n\}_{n=1}^\infty$ in $\Z$ such that $\bigcup_{n=1}^\infty (a_n+[1,n])\subset F$;
\emph{syndetic} if there exists $N\in\mathbb N$ such that $[n,n+N]\cap F\neq \emptyset$ for every $n\in\N$;
\emph{piecewise syndetic} if it is the intersection of a thick set and a syndetic set.
The families of all thick sets, syndetic sets and piecewise syndetic sets
are denoted by $\mathcal F_{t}$, $\mathcal F_{s}$ and $\mathcal F_{ps}$, respectively.
It is easy to see that $\kappa \mathcal F_s=\mathcal F_t$.

Let $F$ be a subset of $\N$, the \emph{upper density of $F$} is
$$\bar{d}(F)=\limsup_{n\to\infty}\frac{|F\cap [1,n]|}{n},$$
where $|\cdot|$ denote the cardinality of the set,
and the \emph{upper Banach density of $F$} is
$$BD^*(F)=\limsup_{|I|\to\infty}\frac{|F\cap I|}{|I|}$$
where $I$ is taken over all nonempty finite intervals of $\N$.
Using density we can define lots of interesting families. For example,
denote $\F_{pud}$ and $\F_{pubd}$ by the family of sets with
positive upper density and positive upper Banach density respectively.

Denote by $\F_{ip}$ and $\F_{cen}$ the family of all IP sets and central sets respectively.
We have the following basic property about the familiar families, see \cite{A97,HS98} for example.

\begin{lem}
\begin{enumerate}
\item $\F_{cen}$, $\F_{ip}$, $\F_{ps}$, $\F_{pud}$ and $\F_{pubd}$ are filterduals.
\item $\F_{ps}$, $\F_{pud}$, $\F_{pubd}$ and $\F_{s}$ are translation invariant.
\item $b\F_{cf}=\F_t$, $b\F_{s}=\F_{ps}$ and $b\F_{pud}=\F_{pubd}$.
\end{enumerate}
\end{lem}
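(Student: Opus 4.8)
The plan is to treat the three parts separately, and in each to reduce to the combinatorial or algebraic fact lying closest to the definition. For part (1) I would first invoke the equivalence noted earlier in this section, namely that a proper family is a filterdual exactly when it has the Ramsey property; since each of the five families is easily seen to be proper (each contains $\N$ but not $\emptyset$), it suffices to show that $A\cup B\in\F$ forces $A\in\F$ or $B\in\F$. For $\F_{pud}$ and $\F_{pubd}$ I would use subadditivity of the densities, $\bar d(A\cup B)\le\bar d(A)+\bar d(B)$ and the analogous inequality for $BD^*$, so that a positive value on the union forces a positive value on one piece. For $\F_{ip}$ I would appeal directly to Hindman's theorem: if $FS(\{x_n\})\subset A\cup B$, then some IP-subsystem lies entirely in $A$ or entirely in $B$, whence $A$ or $B$ is an IP set. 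For $\F_{cen}$ I would use the Bergelson--Hindman characterization that $F$ is central iff $F$ belongs to some minimal idempotent $p$; as $p$ is an ultrafilter, $A\cup B\in p$ yields $A\in p$ or $B\in p$, and either membership exhibits centrality through the same idempotent. For $\F_{ps}$ I would use that the closure of a piecewise syndetic set meets the smallest ideal of $\bN$, so the Ramsey property follows by the same ultrafilter argument.

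For part (2) I would compute directly. Translation by a fixed $n$ changes a set only in a bounded initial window and leaves unchanged the $\limsup$ defining $\bar d$ and $BD^*$, so $\F_{pud}$ and $\F_{pubd}$ are translation invariant. For $\F_s$, if $F$ meets every window of length $N$, then $\pm n+F$ meets every window of length $N+|n|$ (after discarding the finitely many points leaving $\N$), so syndeticity persists. For $\F_{ps}$ I would write $F=S\cap T$ with $S$ syndetic and $T$ thick, and use that translation is injective so $n+F=(n+S)\cap(n+T)$; thick sets are translation invariant since their long runs survive translation, and syndetic sets were just handled, so $n+F$ is again piecewise syndetic, and symmetrically for $-n+F$.

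For part (3) the identity $b\F_{cf}=\F_t$ is immediate from the definitions: for cofinite $F'$ the segments $F'\cap[1,n]$ contain runs of length tending to infinity, so the block condition produces exactly the arbitrarily long runs defining thickness, and conversely every thick set arises by taking $F'=\N$. The two remaining identities $b\F_s=\F_{ps}$ and $b\F_{pud}=\F_{pubd}$ are the substance of the lemma. In each, the easy inclusion is that a block of a syndetic (resp. positive-upper-density) set $F'$ produces, along the windows $a_n+[1,n]$, longer and longer intervals inside $F$ on which $F$ is syndetic with a fixed gap (resp. has relative density approaching $\bar d(F')$); the former makes $F$ the intersection of a thick set and a syndetic set, and the latter forces $BD^*(F)>0$.

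The hard part, and the main obstacle, will be the reverse inclusions: given $F\in\F_{ps}$ (resp. $F\in\F_{pubd}$) I must manufacture a witness $F'\in\F_s$ (resp. $F'\in\F_{pud}$) whose every finite initial segment appears, up to translation, inside $F$. Here I would extract from the long syndetic blocks (resp. from the intervals realizing the Banach density) a sequence of longer and longer configurations and assemble them by a diagonal argument into a single set $F'$ of the required type whose initial segments are exactly those configurations. Maintaining a uniform gap bound throughout the diagonalization in the syndetic case, and a density bounded uniformly below in the Banach-density case, is the delicate bookkeeping on which I expect to spend the most care.
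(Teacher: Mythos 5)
Your parts (1) and (2) are correct, and so are the identities $b\F_{cf}=\F_t$ and $b\F_s=\F_{ps}$ in part (3); note that the paper itself gives no proof of this lemma (it is quoted from \cite{A97,HS98}), so the only question is whether your arguments stand on their own. In the piecewise syndetic case your compactness scheme really is just bookkeeping: syndeticity with a fixed gap $N$ is a local property holding uniformly over the blocks, so any pointwise limit of the left-aligned blocks is again syndetic with gap $N$, and every finite subset of the limit set occurs as a translated subset of $F$. (One small repair: your ``easy'' inclusions only show that $F$ \emph{contains} an intersection of a thick set and a syndetic set; add the one-line remark that $\F_{ps}$ is upward hereditary, e.g.\ $F=(S\cup F)\cap(T\cup F)$.)

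The genuine gap is in the inclusion $\F_{pubd}\subset b\F_{pud}$. Unlike syndeticity, relative density is \emph{not} preserved under pointwise limits of left-aligned blocks: if $F\cap I_n$ happens to be the right half of $I_n$, the relative densities stay at $1/2$ while the limit of the left-aligned shifts is empty. So the extraction exactly as you describe it fails, and no uniform lower bound can be ``maintained'' through the diagonalization without first choosing the anchor point of each window correctly --- that is a missing idea, not delicate bookkeeping. Two standard fixes: (i) combinatorially, inside an interval $I=[b,c]$ with $|F\cap I|\ge(\delta-\epsilon)|I|$, set $S(t)=|F\cap[b,t]|-(\delta-2\epsilon)(t-b+1)$ and re-anchor the window at $a-1:=$ the last minimum of $S$ on $[b-1,c]$; then $|F\cap[a,t]|\ge(\delta-2\epsilon)(t-a+1)$ for every $t\in[a,c]$, and since $S(c)\ge\epsilon|I|$ while the increments of $S$ are at most $1$, the new window has length at least $\epsilon|I|$, so your diagonal argument applies to these re-anchored windows and the limit set has lower density at least $\delta$; or (ii) dynamically, as this paper itself does in the Remark of Section 5: by \cite[Lemma 3.17]{F81} there is a $\sigma$-invariant measure $\mu$ with $\mu(\overline{Orb(\mathbf{1}_F,\sigma)}\cap[1])>0$, one takes an ergodic component $\nu$ and a generic point $y\in[1]$ for $\nu$; then $F'=\{n:y(n)=1\}$ has density $\nu([1])>0$ by the ergodic theorem, and every finite subset of $F'$ translates into $F$ because $y$ lies in the orbit closure of $\mathbf{1}_F$ (see also \cite{S04}).
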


We now introduce the notion of $\F$-limit.
Let $\F$ be a family and $\{x_n\}_{n\in\N}$  be a sequence in a topological space,
we say that $x$ is a $\F$-limit of $\{x_n\}$ if for every open neighborhood $U$ of $x$
the set $\{n\in\N: x_n\in U\}\in\F$.
Then $\F_{cf}$-limit is just the ordinary convergence.
It is easy to check that If $\F$ is a filter then $\F$-$\lim x_n$ exists and is unique in
every compact Hausdorff space.

\section{$\bN$: the Stone-\v{C}ech compactification of $\N$}
Endowing $\N$ with the discrete topology, we take the points of the Stone-\v{C}ech compactification
$\bN$ of $\N$ to be the ultrafilter on $\N$,
the principal ultrafilter being identified with the points in $\N$.
For $A\subset\N$, let $\overline{A}=\{p\in\bN: A\in p\}$.
Then the sets $\{\overline{A}: A\subset\N \}$ forms a
basis for the open sets (and a basis for the closed sets) of $\bN$.

Since $(\N,+)$ is a semigroup, we can extend the operation $+$ to $\bN$ as
\[p+q=\{F\subset \N: \{n\in\N: -n+F\in q\}\in p\}.\]
Then $(\bN, +)$ is a compact Hausdorff right topological semigroup
with $\N$ contained in the topological center of $\bN$.
That is, for each $p\in \bN$ the map $\rho_p: \bN\to\bN$, $q\mapsto q+p$ is continuous,
and for each $n\in\N$ the map $\lambda_n: \bN\to\bN$, $q\mapsto n+q$ is continuous.
It is well known that $\bN$ has a smallest ideal
$K(\bN)=\bigcup\{L: L $ is a minimal left ideal of $\bN\}
= \bigcup\{R: R $ is a minimal right ideal of $\bN\}$ (\cite[Theorem 2.8]{HS98}).

\begin{lem}
Let $\F$ be a filter. If for every $F\in \F$ there exists some $F'\in F$ such that
$-n+F\in \F$ for every $n\in F'$, then $\bigcap_{F\in\F}\overline{F}$ is a closed subsemigroup of $\bN$.
\end{lem}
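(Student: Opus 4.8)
The plan is to identify the set $S := \bigcap_{F\in\F}\overline F$ as the collection of ultrafilters extending $\F$, and then to check the three defining features of a closed subsemigroup separately. Unwinding the definition of $\overline F$, one sees that $p\in S$ exactly when $F\in p$ for every $F\in\F$, so $S=\{p\in\bN:\F\subseteq p\}$. Closedness is free, since $S$ is an intersection of the closed sets $\overline F$; and nonemptiness is also immediate, because $\F$, being a filter, extends to an ultrafilter by Zorn's Lemma, and any such ultrafilter lies in $S$.

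The real content is to show that $S$ is closed under $+$. I would fix $p,q\in S$ and aim to prove $\F\subseteq p+q$, that is, $F\in p+q$ for each $F\in\F$. By the definition of the extended addition, this amounts to showing that the set $A:=\{n\in\N: -n+F\in q\}$ belongs to $p$. This is precisely where the hypothesis should be used: for the given $F$ pick $F'\in\F$ with $-n+F\in\F$ for all $n\in F'$; since $q$ contains $\F$, each such $-n+F$ lies in $q$, so $F'\subseteq A$. Because $F'\in\F\subseteq p$ and $p$ is hereditary upward, it follows that $A\in p$, and hence $F\in p+q$.

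The only step requiring care --- and the one I expect to be the main obstacle --- is matching the hypothesis to the correct unwinding of $p+q$: one must recognize that the witnessing set $F'$ supplied by the assumption is exactly a subset of $\{n: -n+F\in q\}$, and that membership of $F'$ in $p$ together with membership of each $-n+F$ in $q$ (both obtained through $\F$) is what makes the computation close up. No use of the right-topological structure, of minimal ideals, or of idempotents is needed here; the argument rests purely on the filter hypothesis and the definition of the semigroup operation.
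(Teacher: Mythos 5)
Your proof is correct and follows essentially the same route as the paper's: both verify $F\in p+q$ by observing that the hypothesis gives $F'\subseteq\{n\in\N: -n+F\in q\}$, so this set lies in $p$ by upward heredity, which is exactly the definition of $F\in p+q$. The only cosmetic difference is that you justify nonemptiness via an ultrafilter extending $\F$ (Zorn's Lemma), while the paper cites the finite intersection property directly; these are interchangeable.
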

\begin{proof}
Since $\F$ has finite intersection property, $\bigcap_{F\in\F}\overline{F}$ is nonempty.
Let $p,q\in \bigcap_{F\in\F}\overline{F}$, we want to show that $p+q\in \bigcap_{F\in\F}\overline{F}$.
Let $F\in \F$, it suffices to show that $F\in p+q$.
For this $F$, there exists some $F'\in \F$ such that $-n+F\in \F$ for every $n\in F'$.
Then $F'\subset \{n\in \N: -n+F\in q\}$ and $\{n\in\N: -n+F\in q\}\in p$.
By the definition of ``$+$" in $\bN$ we have $F\in p+q$.
\end{proof}

\begin{lem} [\mbox{\cite[Theorem 4.20]{HS98}}] \label{lem:col-semigroup}
Let $\mathcal A$ be a collection of subset of $\N$. If $\mathcal A$ has the finite intersection property and
for every $F\in\mathcal A$ and $n\in F$ there exists $F'\in \mathcal A$ such that $n+F'\subset F$, then
 $\bigcap_{F\in\mathcal A}\overline{F}$ is a closed subsemigroup of $\bN$.
\end{lem}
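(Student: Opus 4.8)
The plan is to verify separately the two defining properties of a closed subsemigroup, following closely the template of the preceding lemma. Closedness is immediate: each $\overline{F}$ is a basic (cl)open set of $\bN$, so the intersection $\bigcap_{F\in\mathcal A}\overline{F}$ is an intersection of closed sets and hence closed. For nonemptiness I would translate the finite intersection property of $\mathcal A$ into the finite intersection property of the closed family $\{\overline{F}\}_{F\in\mathcal A}$: since $\overline{F_1}\cap\cdots\cap\overline{F_n}=\overline{F_1\cap\cdots\cap F_n}$ and any point of $F_1\cap\cdots\cap F_n$ yields a principal ultrafilter lying in this set, each finite subintersection is nonempty; compactness of $\bN$ then gives $\bigcap_{F\in\mathcal A}\overline{F}\neq\emptyset$.

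The substantive part is closure under $+$. I would fix $p,q\in\bigcap_{F\in\mathcal A}\overline{F}$ and an arbitrary $F\in\mathcal A$, aiming to show $F\in p+q$; by the definition of the operation on $\bN$ this reduces to checking that $\{n\in\N:\,-n+F\in q\}\in p$. The key step is to prove that $F$ itself is contained in this set. Given $n\in F$, the hypothesis supplies some $F'\in\mathcal A$ with $n+F'\subset F$, equivalently $F'\subset -n+F$. Because $q\in\overline{F'}$ we have $F'\in q$, and since $q$ is an ultrafilter (hence upward closed) we obtain $-n+F\in q$. As $n\in F$ was arbitrary, this establishes $F\subset\{n\in\N:\,-n+F\in q\}$.

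To finish I would invoke $p\in\overline{F}$, that is $F\in p$; upward closedness of $p$ together with the inclusion just proved yields $\{n\in\N:\,-n+F\in q\}\in p$, and therefore $F\in p+q$. Since $F\in\mathcal A$ was arbitrary, $p+q\in\bigcap_{F\in\mathcal A}\overline{F}$, completing the argument. I do not expect a genuine obstacle here, as the reasoning mirrors that of the previous lemma; the only point requiring care is the reformulation of the combinatorial hypothesis $n+F'\subset F$ as the membership statement $-n+F\in q$, which rests precisely on the fact that $q$ contains every member of $\mathcal A$ and is closed upward.
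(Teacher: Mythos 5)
Your proof is correct and follows the standard argument: the paper itself gives no proof of this lemma (it defers to \cite[Theorem 4.20]{HS98}), and your reduction of $F\in p+q$ to the inclusion $F\subset\{n\in\N:\,-n+F\in q\}$, obtained from the equivalence of $n+F'\subset F$ with $F'\subset -n+F$ and the upward closedness of the ultrafilters $p$ and $q$, is exactly the proof in that source and the natural analogue of the paper's proof of the preceding filter lemma. No gaps.
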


For a filterdual $\F$, the \emph{hull} of $\F$ is defined by
\[h(\F)=\{p\in\bN: p\subset\F\}.\]
Then $h(\F)$ is a nonempty closed subset of $\bN$ and $F\in \F$ if and only if
$\overline{F}\cap h(\F)\neq\emptyset$.
Conversely, for a nonempty closed subset $Z$ of $\bN$, the \emph{kernel} of $Z$ is defined by
\[k(Z)=\{F\subset \N: \overline{F}\cap Z\neq\emptyset \}.\]
Then $k(Z)$ is a filterdual, $h(k(Z))=Z$ and $k(h(\F))=\F$.
In this means, we obtain a one-to-one corresponding between the set of filterduals on $\N$
and the set of nonempty closed subsets of $\bN$ (\cite{E69,G80}).
\begin{lem} \cite{G80,HS98} We have the following correspondences.
\begin{enumerate}
\item $h(\F_{ps})=\overline{K(\bN)}$.
\item $h(\F_{ip})=\overline{\{p\in\bN: p \textrm{ is an idempotent}\}}$.
\item $h(\F_{cen})=\overline{\{p\in\bN: p \textrm{ is a minimal idempotent}\}}$.
\item $h(\F_{pubd})=\overline{\bigcup\{\mathrm{Supp}(\mu): \mu\in \mathcal M\}}$, where
$\mathcal M$ is the set of all $\N$-invariant probability measure on $\bN$.
\end{enumerate}
\end{lem}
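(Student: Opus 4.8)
The plan is to reduce all four correspondences to a single structural observation together with four known characterizations of membership in the respective families. Recall from the discussion preceding the statement that for any nonempty closed $Z\subset\bN$ one has $k(Z)=\{F\subset\N:\overline{F}\cap Z\neq\emptyset\}$, that $k(Z)$ is a filterdual, and that $h(k(Z))=Z$. Thus, to prove an identity of the form $h(\F)=\overline{Z_0}$ for a filterdual $\F$ and a witness set $Z_0\subset\bN$, it suffices to check $\F=k(\overline{Z_0})$; applying $h$ then gives $h(\F)=h(k(\overline{Z_0}))=\overline{Z_0}$. What makes this workable is that every $\overline{F}$ is clopen in $\bN$ (the sets $\overline{A}$ form a basis for both the open and the closed sets): being open, $\overline{F}$ meets the closure $\overline{Z_0}$ if and only if it already meets $Z_0$ itself. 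Hence $k(\overline{Z_0})=\{F:\overline{F}\cap Z_0\neq\emptyset\}$, and each part of the lemma becomes a single assertion of the shape ``$F\in\F$ if and only if $F$ belongs to some ultrafilter lying in $Z_0$.''

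For parts (1)--(3) the needed characterizations are standard and, in the central case, already recalled above. In (1) one uses that $F$ is piecewise syndetic exactly when $\overline{F}\cap K(\bN)\neq\emptyset$; taking $Z_0=K(\bN)$ gives $\F_{ps}=k(\overline{K(\bN)})$, whence $h(\F_{ps})=\overline{K(\bN)}$. In (2) one invokes the Galvin--Glazer theorem, that a set is an IP set if and only if it lies in some idempotent ultrafilter; taking $Z_0$ to be the set of idempotents yields $\F_{ip}=k(\overline{Z_0})$ and the stated equality. In (3) the relevant equivalence is precisely the Bergelson--Hindman characterization recalled above, namely that $F$ is central if and only if $F\in p$ for some minimal idempotent $p$, so with $Z_0$ the set of minimal idempotents we obtain $\F_{cen}=k(\overline{Z_0})$.

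The genuine work is in part (4), whose witness set $Z_0=\bigcup\{\mathrm{Supp}(\mu):\mu\in\mathcal M\}$ is measure-theoretic rather than algebraic. Here I must establish that $F$ has positive upper Banach density if and only if $\mu(\overline{F})>0$ for some $\N$-invariant probability measure $\mu$ on $\bN$; since $\overline{F}$ is open, $\mu(\overline{F})>0$ holds if and only if $\overline{F}$ meets $\mathrm{Supp}(\mu)$, and this identifies $\F_{pubd}$ with $k(\overline{Z_0})$. For the forward direction I would choose intervals $I_n$ with $|I_n|\to\infty$ realizing the $\limsup$ defining $BD^*(F)$, form the averaged point masses $\mu_n=\frac{1}{|I_n|}\sum_{k\in I_n}\delta_k$ on $\bN$, and pass to a weak-$*$ limit $\mu$; writing $\sigma$ for the shift $p\mapsto 1+p$, the F\o lner property of the intervals forces $\|\mu_n-\sigma_*\mu_n\|\to0$, so $\mu$ is shift-invariant, while continuity of the indicator of the clopen set $\overline{F}$ gives $\mu(\overline{F})=BD^*(F)>0$. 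For the converse, given invariant $\mu$ with $\mu(\overline{F})=c>0$, invariance yields $\int_{\bN}\frac1N\sum_{j=0}^{N-1}\mathbf{1}_{\overline{F}}(\sigma^j p)\,d\mu(p)=c$ for every $N$; the integrand is continuous in $p$ and $\N$ is dense in $\bN$, so some integer $n$ satisfies $\frac1N|F\cap[n,n+N)|\geq c$, and letting $N\to\infty$ gives $BD^*(F)\geq c$. I expect this measure--density equivalence, and in particular the construction of the invariant limit measure together with the extraction of long intervals of controlled density from it, to be the main obstacle; once it is in place, the clopen reduction of the first paragraph settles all four items uniformly.
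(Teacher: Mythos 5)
Your proposal is correct. Note, however, that the paper offers no proof of this lemma at all: it is stated with citations to the literature, so there is nothing internal to compare against. Your argument is a legitimate self-contained reconstruction, and its structure is the natural one. The uniform reduction — using $h(k(Z))=Z$ for nonempty closed $Z$ together with the fact that each $\overline{F}$ is clopen, so that $\overline{F}$ meets $\overline{Z_0}$ iff it meets $Z_0$ — correctly converts each item into a membership statement ``$F\in\F$ iff $F\in p$ for some $p\in Z_0$.'' For (1)--(3) you then invoke exactly the theorems the paper's citations point to (piecewise syndetic iff $\overline{F}\cap K(\bN)\neq\emptyset$; Galvin--Glazer; Bergelson--Hindman, the last being Theorem 1.4 of the paper itself, and the first appearing again as Proposition 5.1), so those parts are reductions to known results rather than proofs from scratch, which matches the paper's own level of treatment. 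Part (4) is where you do genuine work, and your Furstenberg-correspondence argument is sound in both directions: the clopenness of $\overline{F}$ makes $\mathbf{1}_{\overline{F}}$ continuous, so the weak-$*$ limit of the averaged point masses over intervals realizing $BD^*(F)$ has $\mu(\overline{F})=BD^*(F)$, and the F\o lner property of intervals gives invariance; conversely, since the ergodic average $g_N(p)=\frac1N\sum_{j=0}^{N-1}\mathbf{1}_{\overline{F}}(j+p)$ is continuous with finitely many values and integrates to $\mu(\overline{F})$, the clopen set where $g_N\geq\mu(\overline{F})$ is nonempty and hence contains an integer by density of $\N$, yielding long intervals of density at least $\mu(\overline{F})$. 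The only cosmetic caveat is that the weak-$*$ limit is taken along a subnet, which preserves both the invariance estimate and the value of $\mu(\overline{F})$, as you implicitly use.
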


\begin{lem}\label{lem:filterdual-tran-invar}
Let $\F$ be a filterdual. Then $\F$ is translation $+$ invariant if and only if
$h(\F)$ is a closed left ideal of $\bN$.
\end{lem}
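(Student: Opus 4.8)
The plan is to lean on the properties of the hull recorded earlier, namely that $h(\F)$ is always a nonempty closed subset of $\bN$ and that, for a filterdual $\F$, one has $F\in\F$ if and only if $\overline{F}\cap h(\F)\neq\emptyset$. Thus the closedness asserted in the statement is automatic, and the real content is the equivalence between translation $+$ invariance of $\F$ and the left ideal property $\bN+h(\F)\subset h(\F)$. Both implications reduce to direct manipulations of the formula $p+q=\{A\subset\N:\{n\in\N:-n+A\in q\}\in p\}$ together with the two elementary set identities $-n+(n+F)=F$ and $n+(-n+F)=F\setminus[1,n]$, valid for $n\in\N$ and $F\subset\N$. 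I would pin these identities down first, taking care that on $\N$ the partial translations $n+\cdot$ and $-n+\cdot$ are \emph{not} mutually inverse.

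For the implication ``$\F$ translation $+$ invariant $\Rightarrow h(\F)$ a left ideal'', I would take $q\in\bN$ and $p\in h(\F)$ and show $q+p\subset\F$, that is $q+p\in h(\F)$. Given $A\in q+p$, the set $\{n\in\N:-n+A\in p\}$ lies in $q$ and so is nonempty; pick $n$ with $-n+A\in p\subset\F$. Translation $+$ invariance then yields $n+(-n+A)=A\setminus[1,n]\in\F$, and since $A\supset A\setminus[1,n]$, hereditary-upwardness of $\F$ gives $A\in\F$. This is precisely where the asymmetry of the translations forces the use of hereditary-upwardness rather than recovering $A$ exactly.

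For the converse ``$h(\F)$ a left ideal $\Rightarrow\F$ translation $+$ invariant'', I would fix $n\in\Z$ and $F\in\F$. The case $n=0$ is immediate since $0+F=F$, so assume $n\geq 1$ and regard $n$ as a principal ultrafilter in $\bN$. As $F\in\F$, choose $p\in h(\F)$ with $F\in p$. Because $h(\F)$ is a left ideal and $n\in\N\subset\bN$, we get $n+p\in h(\F)$, hence $n+p\subset\F$. Since $n$ is principal, the addition formula collapses to $n+p=\{A\subset\N:-n+A\in p\}$, and combined with $-n+(n+F)=F\in p$ this gives $n+F\in n+p\subset\F$, as required.

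The computations are short, so the main obstacle is bookkeeping rather than depth: one must keep the right-topological addition on $\bN$ straight and, above all, respect that on $\N$ the partial translations obey $n+(-n+F)=F\setminus[1,n]$ while $-n+(n+F)=F$. Getting these two identities the right way round is exactly what makes each implication close, and it is the only place where a sign or index slip would derail the argument.
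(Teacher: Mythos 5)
Your proof is correct, and its core computations coincide with the paper's: unwind the ultrafilter sum, use translation $+$ invariance together with hereditary upwardness in one direction, and use the principal-ultrafilter identity $n+p=\{A\subset\N:-n+A\in p\}$ together with the hull correspondence (or equivalently Lemma \ref{lem:filter-dual-ultra}) in the other. The one genuine difference is in the forward direction: the paper only verifies $m+h(\F)\subset h(\F)$ for $m\in\N$ and asserts this ``suffices,'' which tacitly appeals to the closedness of $h(\F)$, the continuity of the right translation $\rho_p$, and the density of $\N$ in $\bN$; you instead verify $q+p\in h(\F)$ for an \emph{arbitrary} $q\in\bN$ directly from the definition of $+$ (picking any $n$ in the nonempty member $\{n\in\N:-n+A\in p\}$ of $q$), which makes the argument self-contained and needs no topological input beyond the automatic closedness of the hull. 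Your explicit identities $-n+(n+F)=F$ and $n+(-n+F)=F\setminus[1,n]$ are both correct and sharpen the inclusion $m+(-m+F)\subset F$ that the paper uses; both feed into hereditary upwardness in the same way. The converse direction of your argument is essentially identical to the paper's.
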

\begin{proof}
Assume that $\F$ is translation $+$ invariant. In order to show that $h(\F)$ is a closed left ideal,
it suffices to show that $m+h(\F)\subset h(\F)$ for every $m\in \N$.
Let $m\in\N$, $p\in h(\F)$ and $F\in m+p$. Then $m\in \{n\in \N: -n+F\in p\}$ and $-m+F\in p\subset \F$.
Since $\F$ is translation $+$ invariant, $m+(-m+F)\subset F$, then $F\in \F$ and
$m+p\subset\F$, i.e., $m+p\in h(\F)$.

Conversely, assume that $h(\F)$ is a closed left ideal of $\bN$. Let $F\in\F$ and $n\in\N$,
we want to show that $n+F\in\F$.
By Lemma \ref{lem:filter-dual-ultra}, there exists some $p\in h(\F)$ with $F\in p$.
Clearly, $n\in \{m\in\N: -m+(n+F)\in p\}$, so $n+F\in n+p\in h(\F)$ and $n+F\in\F$.
\end{proof}

\begin{lem}
Let $\F$ be a filterdual and $b\F=\F$. Then $h(\F)$ is a closed two sided ideal of $\bN$.
\end{lem}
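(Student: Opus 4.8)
The plan is to combine Lemma~\ref{lem:filterdual-tran-invar} with a short direct argument that exploits the block-family hypothesis. Since $b\F$ is always translation $+$ invariant and we are assuming $\F=b\F$, the family $\F$ is translation $+$ invariant; being also a filterdual, Lemma~\ref{lem:filterdual-tran-invar} immediately gives that $h(\F)$ is a closed left ideal of $\bN$. Recall moreover that for any filterdual the set $h(\F)$ is a nonempty closed subset of $\bN$, so all that remains is to show $h(\F)$ is a right ideal, i.e.\ that $p+q\in h(\F)$ whenever $p\in h(\F)$ and $q\in\bN$.

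To establish the right-ideal property I would fix $p\in h(\F)$ and $q\in\bN$ and check that every $F\in p+q$ lies in $\F$. By the definition of $+$ on $\bN$, the set $A=\{n\in\N:\,-n+F\in q\}$ belongs to $p$, and since $p\subset\F$ we get $A\in\F$. The crux is to show that $A$ serves as the witness $F'$ in the definition of $b\F$. Given a finite $W=\{n_1,\dots,n_k\}\subset A$, each $-n_i+F$ lies in the ultrafilter $q$; because $q$ is a proper filter, the intersection $\bigcap_{i=1}^{k}(-n_i+F)$ is again in $q$ and in particular nonempty. Picking any $t$ in this intersection yields $n_i+t\in F$ for every $i$, that is $t+W\subset F$ with $t\in\N\subset\Z$. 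Hence $A\in\F$ witnesses $F\in b\F$, and since $b\F=\F$ we conclude $F\in\F$. As $F\in p+q$ was arbitrary, $p+q\subset\F$, i.e.\ $p+q\in h(\F)$.

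Putting the two halves together shows $h(\F)$ is a closed two sided ideal. The step I expect to carry the weight is the right-ideal verification, and the decisive observation there is that the finite-intersection closure of the ultrafilter $q$ produces, for each finite $W\subset A$, a \emph{single} common translate $t$ with $t+W\subset F$ — precisely the uniform shift demanded by the definition of $b\F$. Both halves ultimately rest on the nontrivial inclusion $b\F\subset\F$ (the reverse inclusion $\F\subset b\F$ being automatic via the shift $m=0$): for the left ideal it delivers translation $+$ invariance so that Lemma~\ref{lem:filterdual-tran-invar} applies, and for the right ideal it converts the conclusion $F\in b\F$ into $F\in\F$.
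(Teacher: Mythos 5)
Your proof is correct and follows essentially the same route as the paper: the left-ideal half comes from Lemma \ref{lem:filterdual-tran-invar} via the automatic translation $+$ invariance of $b\F=\F$, and the right-ideal half uses the finite-intersection property of the ultrafilter $q$ to produce a single translate $t$ with $t+W\subset F$, so that the derived set in $p$ witnesses $F\in b\F=\F$. This matches the paper's argument line for line (up to swapping the names of $A$ and $F$).
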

\begin{proof}
Since $b\F$ is translation $+$ invariant, by Lemma \ref{lem:filterdual-tran-invar},
$h(\F)$ is a closed left ideal of $\bN$. Then it suffices to show that
$h(\F)$ is also a right ideal.

Let $p\in h(\F)$, $q\in \bN$ and $A\in p+q$, we need to show that $A\in\F$.
Let $F=\{n\in\N: -n+A\in q\}$. Then $F\in p\subset \F$.
For every finite subset $E$ of $F$, $\bigcap_{n\in E}(-n+A)\in q$ is not empty,
choose $n_E\in \bigcap_{n\in E}(-n+A)$, then $n_E+E\subset A$. This implies $A\in b\F=\F$.
\end{proof}

Let $\F$ be a filterdual, we call $F\subset \N$ an \emph{essential $\F$-set}
if there is an idempotent $p \in h(\F)$ such that $F\in p$.
Denote by $\widetilde{\F}$ the collection of all essential $\F$-sets.
Then $\widetilde{\F}$ is also a filterdual since
\[h(\widetilde{\F})= \overline{\{p\in\bN: p \textrm{ is an idempotent in } h(\F)\}}.\]

We have the following observations:

Let $F$ be a subset of $\N$. Then
\begin{enumerate}
\item $F$ is an IP set if and only if it is an essential $b\F_{ip}$-set.
\item $F$ is a quasi-central set if and only if it is an essential $\F_{ps}$-set.
\item $F$ is a D-set if and only if it is an essential $\F_{pubd}$-set.
\item $F$ is a C-set if and only if it is an essential $\mathcal J$-set, where $\mathcal J$ is the
collection of all J-sets.
\end{enumerate}

\begin{thm}
Let $\F$ be a translation invariant filterdual and $\{x_n\}_{n=1}^\infty$ be a sequence in $\N$.
If $FS(\{x_n\}_{n=1}^\infty)\in \F$, then
for every $m\in\N$, $FS(\{x_n\}_{n=m}^\infty)$ is an essential $\F$-set.
\end{thm}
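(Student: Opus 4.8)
The plan is to produce a single idempotent $p\in h(\F)$ that contains every tail $FS(\{x_n\}_{n=m}^\infty)$; such a $p$ simultaneously witnesses that each $FS(\{x_n\}_{n=m}^\infty)$ is an essential $\F$-set. The construction rests on the Hindman-type subsemigroup $T=\bigcap_{m=1}^\infty\overline{FS(\{x_n\}_{n=m}^\infty)}$ together with the ideal structure of $h(\F)$, so the argument splits into a membership step and a semigroup step.

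First I would verify that every tail already belongs to $\F$, that is, $FS(\{x_n\}_{n=m}^\infty)\in\F$ for all $m$; this is where translation invariance does the real work. Arguing by induction on $m$, I decompose
\[FS(\{x_n\}_{n=m}^\infty)=\{x_m\}\cup FS(\{x_n\}_{n=m+1}^\infty)\cup\bigl(x_m+FS(\{x_n\}_{n=m+1}^\infty)\bigr).\]
Since $\F$ is a filterdual it has the Ramsey property, so from $FS(\{x_n\}_{n=m}^\infty)\in\F$ one of the three pieces lies in $\F$. The middle piece is exactly what we want; if instead the last piece lies in $\F$, then translation $-$ invariance gives $-x_m+(x_m+FS(\{x_n\}_{n=m+1}^\infty))=FS(\{x_n\}_{n=m+1}^\infty)\in\F$. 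The only remaining possibility is $\{x_m\}\in\F$; but a single translation-invariant finite member forces $\F$ to contain every nonempty subset of $\N$, whence $h(\F)=\bN$ and the conclusion is immediate from the Ellis--Namakura theorem applied to $T$. So we may assume $\F$ has no finite member, and the induction yields $FS(\{x_n\}_{n=m}^\infty)\in\F$ for every $m$.

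With the membership step in hand I assemble the algebraic pieces. The collection $\mathcal A=\{FS(\{x_n\}_{n=m}^\infty):m\geq 1\}$ is a decreasing chain, and for $s=\sum_{n\in\alpha}x_n\in FS(\{x_n\}_{n=m}^\infty)$ with $k=\max\alpha$ one has $s+FS(\{x_n\}_{n=k+1}^\infty)\subset FS(\{x_n\}_{n=m}^\infty)$; hence Lemma \ref{lem:col-semigroup} shows $T$ is a closed subsemigroup of $\bN$. Because $\F$ is translation $+$ invariant, Lemma \ref{lem:filterdual-tran-invar} makes $h(\F)$ a closed left ideal. Now set $Z=T\cap h(\F)$. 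The finite intersections of members of $\mathcal A$ are again tails, and so lie in $\F$ by the membership step, so Lemma \ref{lem:filter-dual-ultra} produces an ultrafilter $p_0$ with $\mathcal A\subset p_0\subset\F$; thus $p_0\in T\cap h(\F)$ and $Z\neq\emptyset$. Moreover $Z$ is a closed subsemigroup, since for $p,q\in Z$ we have $p+q\in T$ (as $T$ is a semigroup) and $p+q\in h(\F)$ (as $h(\F)$ is a left ideal and $q\in h(\F)$).

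Finally, the Ellis--Namakura theorem supplies an idempotent $p\in Z$. By construction $p\in h(\F)$ and $FS(\{x_n\}_{n=m}^\infty)\in p$ for every $m$, so each $FS(\{x_n\}_{n=m}^\infty)$ is an essential $\F$-set. I expect the membership step to be the main obstacle: passing from $FS(\{x_n\}_{n=1}^\infty)\in\F$ to all tails is not automatic for a merely hereditary family, and it is precisely the interplay of the Ramsey property with translation invariance, together with the finite-set degeneration, that makes it go through. Everything after that is the standard Hindman/Ellis--Namakura machinery, relativized to the left ideal $h(\F)$.
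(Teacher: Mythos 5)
Your proof is correct, and its endgame coincides with the paper's: build the Hindman subsemigroup $T=\bigcap_{m=1}^\infty\overline{FS(\{x_n\}_{n=m}^\infty)}$ via Lemma \ref{lem:col-semigroup}, use Lemma \ref{lem:filterdual-tran-invar} to make $h(\F)$ a closed left ideal, and apply Ellis--Namakura to the nonempty closed subsemigroup $T\cap h(\F)$ to obtain one idempotent containing every tail. The difference lies in the key preliminary step. The paper proves the equivalent claim $h(\F)\cap\overline{FS(\{x_n\}_{n=m}^\infty)}\neq\emptyset$ at the ultrafilter level: it fixes $p\in h(\F)$ containing $FS(\{x_n\}_{n=1}^\infty)$, decomposes that set in one shot as $FS(\{x_n\}_{n=1}^{m-1})\cup FS(\{x_n\}_{n=m}^\infty)\cup\bigcup_{t}(t+FS(\{x_n\}_{n=m}^\infty))$, rules out the finite piece because translation invariance forces $p$ to be non-principal, and, when $t+FS(\{x_n\}_{n=m}^\infty)\in p$, writes $p=t+q$ and uses $-$ invariance to show $q\in h(\F)$. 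You instead prove $FS(\{x_n\}_{n=m}^\infty)\in\F$ by induction with a single-step three-piece decomposition, applying the Ramsey property of $\F$ directly and shifting the translated piece back by $-$ invariance; this stays entirely at the level of families and avoids solving $t+q=p$ in $\bN$. Similarly, you get nonemptiness of $T\cap h(\F)$ from Lemma \ref{lem:filter-dual-ultra} (one ultrafilter containing all tails at once), where the paper intersects the decreasing nonempty closed sets $h(\F)\cap\overline{FS(\{x_n\}_{n=m}^\infty)}$ and invokes compactness. Both arguments turn on the same two ingredients, the Ramsey property and translation invariance; yours is more elementary and self-contained in the membership step, while the paper's produces the required points of $h(\F)$ directly. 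One small remark: your singleton case is in fact vacuous rather than merely degenerate --- if $\{x_m\}\in\F$, then $-$ invariance with a shift of size at least $x_m$ puts $\emptyset$ in $\F$, so $\F$ could not be a proper family, contradicting the filterdual hypothesis; this is the family-level counterpart of the paper's observation that $p$ cannot be principal. Your treatment of that case still ends at a true statement, so nothing breaks.
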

\begin{proof}
We first prove the following claim.

{\bf Claim}: For each $m\in\N$, $h(\F)\bigcap \overline{FS(\{x_n\}_{n=m}^\infty)}\neq\emptyset$.

{\bf Proof of the Claim}: Clearly, the claim holds for $m=1$. Now assume that $m\geq 2$, then
\begin{align*}
 FS(\{x_n\}_{n=1}^\infty)= &FS(\{x_n\}_{n=1}^{m-1})\bigcup FS(\{x_n\}_{n=m}^\infty) \\
 & \bigcup \left\{t+FS(\{x_n\}_{n=m}^\infty): t\in  FS(\{x_n\}_{n=1}^{m-1})\right\}.
\end{align*}

Since $\F$ is translation invariant, $p$ cannot be a principle ultrafilter, then
the finite set $FS(\{x_n\}_{n=1}^{m-1})$ is not in $p$.
If $FS(\{x_n\}_{n=m}^\infty)\in p$, then the claim holds.
Now assume that we have some $t\in FS(\{x_n\}_{n=1}^{m-1})$ such that $t+FS(\{x_n\}_{n=m}^\infty)\in p$.
Choose $q\in \overline{FS(\{x_n\}_{n=m}^\infty)}$ such that $t+q=p$. For every $F\in q$,
$t\in \{n\in \N: -n+(t+F)\in q\}$, so $t+F\in p\subset \F$.
Since $\F$ is translation invariant, we have $F\in \F$ and $q\in h(\F)$.
This ends the proof of the claim.

By Lemma \ref{lem:col-semigroup} $\bigcap_{m=1}^\infty \overline{FS(\{x_n\}_{n=m}^\infty)}$
is a closed subsemigroup of $\bN$, and
by Lemma \ref{lem:filterdual-tran-invar} $h(\F)$ is a closed left ideal of $\bN$.
Then by the above claim we have $h(\F)\bigcap \bigcap_{m=1}^\infty \overline{FS(\{x_n\}_{n=m}^\infty)}$
is a nonempty subsemigroup of $\bN$. By the well known Ellis-Namakura Theorem, there exists some
idempotent in $h(\F)\bigcap \bigcap_{m=1}^\infty \overline{FS(\{x_n\}_{n=m}^\infty)}$.
Thus  for every $m\in\N$, $FS(\{x_n\}_{n=m}^\infty)$ is an essential $\F$-sets.
\end{proof}

For convenience, we also consider $\bZ$ the Stone-\v Cech compactification of $\Z$.
There is a nature imbedding map $i:\bN\to \bZ$ by $i(p)=p\bigcup \{A\cup\{0\}: A\in p\}$.
Then we can regard $\bN$ as a subset of $\bZ$ and $\bZ=\bN\cup\{0\}$.
The advantage of $\bZ$ is that it contains the identity element $0$,
but we don't want to take $0$ into account when considering the multiplication.

\section{Relationships between algebra properties of $\bN$ and dynamical properties}

A \emph{topological dynamical system} (or just \emph{system}) is a pair $(X, T)$,
where $X$ is a nonempty compact Hausdorff space and $T$ is a continuous map from $X$ to itself.
When $X$ is metrizable or $T$ is a homeomorphism,
we call $(X,T)$ a \emph{metrizable} or \emph{invertible dynamical system} respectively.

Let $(X,T)$ be a dynamical system and $x\in X$,
denote the \emph{orbit} of $x$ by $Orb(x, T)=\{T^nx: n\in\Z\}$.
Let $\omega(x, T)$ be the \emph{$\omega$-limit set} of $x$,
i.e., $\omega(x,T)$ is the limit set of $Orb(x,T)$.
A point $x\in X$ is called a \emph{recurrent point} if $x\in\omega(x,T)$.
We call a system $(X,T)$ is \emph{minimal} if it contains no proper subsystems,
and $x\in X$ is a \emph{minimal point} if it belongs to some minimal subsystem of $X$.

A \emph{factor map} $\pi: (X,T)\to (Y,S)$ is a continuous surjective
map from $X$ to $Y$ such that $S\circ \pi=\pi\circ T$.
In this situation $(X,T)$ is said to be an \emph{extension} of $(Y,S)$
and $(Y,S)$ the \emph{factor} of $(X,T)$.

Let $\F$ be a family and $(X, T)$ be a system,
a point $x\in X$ is called an \emph{$\F$-recurrent point}
if for every open neighborhood $U$ of $x$
the entering time set $N(x,U)=\{n\in \N: T^nx\in U\}\in \F$.
If $x$ is $\F$-recurrent, then so is $Tx$. Let $\pi: (X,T)\to (Y,S)$ be a factor map,
if $x\in X$ is $\F$-recurrent, then so is $\pi(x)$.
It is well known that $x$ is recurrent if and only if it is $\F_{ip}$-recurrent and
$x$ is a minimal point if and only if it is $\F_{s}$-recurrent.
If $\F$ is a filter, then $x$ is $\F$-recurrent if and only if $\F$-$\lim T^nx=x$.

Now we generalize the notion of $\omega$-limit set.
Let $\F$ be a family, $(X, T)$ be a dynamical system and $x\in X$,
a point $y\in X$ is called an \emph{$\F$-$\omega$-limit point of $x$} if
for every neighborhood $U$ of $y$ the entering time set $N(x,U)\in \F$.
Denote by $\omega_\F(x,T)$ the set of all $\F$-$\omega$-limit points.
Then $x$ is $\F$-recurrent if and only if $x\in\omega_\F(x,T)$.

An \emph{invariant measure} for a dynamical system $(X,T)$ is a regular
Borel probability measure $\mu$ on $X$ such that
$\mu(T^{-1}A)=\mu(A)$ for all Borel subsets $A$ of $X$.

\begin{lem} \label{lem:Fps-Fpubd-rec}
Let $(X,T)$ be a dynamical system and $x\in X$. If $x$ is a recurrent point with $\overline{Orb(x,T)}=X$, then
\begin{enumerate}
\item $x$ is $\F_{ps}$-recurrent if and only if $(X,T)$ has dense minimal points (\cite{HY05}).
\item $x$ is $\F_{pubd}$-recurrent if and only if for every open neighborhood $U$ of $x$
there exists an invariant measure $\mu$ on $(X,T)$ such that $\mu(U)>0$ (\cite{HPY07,BD08}).
\end{enumerate}
\end{lem}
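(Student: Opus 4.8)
The plan is to prove the two equivalences separately; in both the decisive standing hypothesis is transitivity $\overline{Orb(x,T)}=X$, which I would exploit to replace an arbitrary point of $X$ by a suitable iterate $T^m x$. For part (1) I would lean on the correspondences recorded above, namely the block-family identity $b\F_s=\F_{ps}$, the identification $h(\F_{ps})=\overline{K(\bN)}$, and the fact that a point is minimal iff it is $\F_s$-recurrent. For part (2) I would use empirical measures of the orbit of $x$ together with the Birkhoff ergodic theorem. The recurring technical nuisance is the gap between open and closed sets (lower versus upper semicontinuity), which I would absorb using normality of the compact Hausdorff space $X$: whenever needed I shrink an open neighborhood $U$ of $x$ to an open $U'$ with $x\in U'$ and $\overline{U'}\subseteq U$.

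For part (1), the implication from dense minimal points to $\F_{ps}$-recurrence is where I would use $b\F_s=\F_{ps}$. Given an open neighborhood $U$ of $x$, density of minimal points yields a minimal point $z\in U$; since $z$ is $\F_s$-recurrent and $U$ is an open neighborhood of $z$, the set $S:=N(z,U)$ is syndetic. To witness $N(x,U)\in b\F_s$ I must produce, for each finite $W\subseteq S$, some $m$ with $m+W\subseteq N(x,U)$: for $w\in W$ we have $T^w z\in U$, so $O:=\bigcap_{w\in W}T^{-w}U$ is an open neighborhood of $z$, and transitivity gives $m$ with $T^m x\in O$, whence $T^{m+w}x\in U$ for every $w\in W$. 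For the converse, piecewise syndeticity of $N(x,U)$ means $\overline{N(x,U)}\cap K(\bN)\neq\emptyset$, so there is $p\in K(\bN)$ with $N(x,U)\in p$; the point $p\text{-}\lim_n T^n x$ is then a minimal point lying in $\overline U$ by the standard theory of the action of $\bN$ on $X$. Applied to a neighborhood $U$ with $T^k\overline U\subseteq V$ for a given open $V$ and a transition time $k$ with $T^kx\in V$, this places a minimal point in $V$, so minimal points are dense.

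For part (2), the forward implication is the softer one. Given an open neighborhood $U$ of $x$, I pick $U'$ with $x\in U'$ and $\overline{U'}\subseteq U$; $\F_{pubd}$-recurrence gives $BD^*(N(x,U'))>0$, so along intervals $I_k$ realizing this density the empirical measures $\frac{1}{|I_k|}\sum_{n\in I_k}\delta_{T^n x}$ have a weak${}^*$ limit $\mu$ that is $T$-invariant (as $|I_k|\to\infty$), and the portmanteau inequality for the closed set $\overline{U'}$ gives $\mu(U)\ge\mu(\overline{U'})\ge BD^*(N(x,U'))>0$. The reverse implication is the heart of the matter: from an invariant $\mu$ with $\mu(U)>0$, inner regularity produces an open $U''$ with $\overline{U''}\subseteq U$ and $\mu(U'')>0$; passing to an ergodic component $\nu$ with $\nu(U'')>0$ and a $\nu$-generic point $y$ (Birkhoff), the set $\{n:T^n y\in U''\}$ has density $\nu(U'')>0$. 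I would then transfer this to $x$: on a long window $[0,N)$ where $y$ visits $U''$ with frequency at least $\nu(U'')/2$, transitivity plus continuity of $T,\dots,T^{N-1}$ produce $m$ with $T^{m+n}x\in U$ whenever $T^n y\in U''$ for $n<N$, using $\overline{U''}\subseteq U$; letting $N\to\infty$ gives $BD^*(N(x,U))\ge\nu(U'')/2>0$.

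The main obstacle is exactly this final transfer step: the invariant measure records the statistics of a generic point $y$, whereas the conclusion concerns the fixed point $x$, and the two are linked only through transitivity and a finite-window shadowing argument. Making this precise in the non-metrizable setting requires phrasing ``uniform continuity on $[0,N)$'' through the unique uniformity of the compact Hausdorff space $X$, so that a single entourage simultaneously controls $T^0,\dots,T^{N-1}$, and keeping careful track of open versus closed neighborhoods so that visits of $y$ to $U''$ genuinely force visits of $x$ to $U$. Every other step is routine once the correspondences recorded above are in hand.
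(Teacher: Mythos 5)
Your argument is correct in substance, but note that the paper does not prove this lemma at all: it quotes part (1) from \cite{HY05} and part (2) from \cite{HPY07,BD08}, where these facts are established for metric systems by other means. What you have written is therefore a genuinely different route: a self-contained proof built from the paper's own toolkit --- the identity $b\F_s=\F_{ps}$, the correspondence $h(\F_{ps})=\overline{K(\bN)}$, $p$-limits, and empirical measures --- and it works verbatim for compact Hausdorff, not necessarily metrizable, systems, which is the generality in which the lemma is stated. Part (1) is exactly right: the block-family computation for the forward direction (translate each finite piece $W$ of the syndetic set $N(z,U)$ into $N(x,U)$ by sending the dense orbit of $x$ into $\bigcap_{w\in W}T^{-w}U$), and for the converse the minimal point $px\in\overline{U}$ with $p\in K(\bN)$, pushed into an arbitrary nonempty open $V$ by $T^k$ after shrinking $U$ so that $T^k\overline{U}\subseteq V$. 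The forward half of part (2), via empirical measures along intervals realizing the upper Banach density and the portmanteau inequality for the closed set $\overline{U'}$ (with subnets in place of subsequences), is also fine.

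Two repairs are needed in the reverse half of part (2). First, the appeal to ``an ergodic component $\nu$ with $\nu(U'')>0$ and a $\nu$-generic point'' is the one step that does not survive non-metrizability as stated: ergodic decomposition and almost-everywhere genericity are theorems about standard (metrizable) systems, and you flagged the non-metrizable setting yourself elsewhere. But you need neither: apply Birkhoff's pointwise ergodic theorem --- a purely measure-theoretic statement --- to $\mu$ and $f=\mathbf{1}_{U''}$; the averages converge $\mu$-a.e.\ to a limit function whose integral is $\mu(U'')>0$, so some point $y$ has $\{n\in\N: T^ny\in U''\}$ of positive density. (Alternatively, an ergodic $\nu$ with $\nu(U'')>0$ does exist by Krein--Milman, since $\nu\mapsto\nu(U'')$ is lower semicontinuous on the compact convex set of invariant measures, but this detour is unnecessary.) Second, your final ``finite-window shadowing'' is much simpler than you make it: if $W\subseteq[0,N)$ is the set of visit times of $y$ to $U''$, then $O=\bigcap_{n\in W}T^{-n}U''$ is a finite intersection of open sets containing $y$, and density of the orbit of $x$ gives $m$ with $T^mx\in O$, hence $m+W\subseteq N(x,U)$ and $BD^*(N(x,U))>0$; no uniformity, no entourages, and no use of $\overline{U''}\subseteq U$ is needed at this stage. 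With these adjustments the proof is complete in the lemma's stated generality.
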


\begin{lem}\label{lem:F-rec-bN}
Let $\F$ be a family and $p\in\bN$.
\begin{enumerate}
\item If $p$ is an idempotent and $p\subset \F$, then $p$ is $\F$-recurrent in $(\bZ,\lambda_1)$.
\item If $p$ is $\F$-recurrent in $(\bZ,\lambda_1)$, then $p\subset b\F$.
\end{enumerate}
\end{lem}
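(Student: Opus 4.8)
The plan is to unwind the definition of $\F$-recurrence in the system $(\bZ,\lambda_1)$, where $\lambda_1(q)=1+q$ so that $\lambda_1^n(p)=n+p$, and to translate each statement into a purely algebraic condition on the ultrafilter $p$. The bridge between the dynamical and the algebraic sides is the identity $A\in n+p$ if and only if $-n+A\in p$, valid for a principal ultrafilter $n\in\N$. Since the sets $\overline A$ with $A\in p$ form a neighborhood basis at $p$ (Section 3), this yields
\[
N(p,\overline A)=\{n\in\N:\lambda_1^n(p)\in\overline A\}=\{n\in\N:-n+A\in p\}.
\]

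For part (1), I would first reduce to basic neighborhoods. Any open neighborhood $U$ of $p$ contains some $\overline A$ with $A\in p$, and since $N(p,\overline A)\subset N(p,U)$ and $\F$ is hereditary upward, it suffices to show $N(p,\overline A)\in\F$ for each $A\in p$. Now the idempotent relation $p=p+p$ says precisely that $A\in p$ implies $\{n\in\N:-n+A\in p\}\in p$; since $p\subset\F$ by hypothesis and this set equals $N(p,\overline A)$, it lies in $\F$. That settles (1).

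For part (2), fix $A\in p$ and set $F'=N(p,\overline A)=\{n\in\N:-n+A\in p\}$, which belongs to $\F$ by $\F$-recurrence. I would then verify the defining condition for $A\in b\F$ using this $F'$ as witness: given any finite $W\subset F'$, each $-n+A$ with $n\in W$ lies in $p$, so $\bigcap_{n\in W}(-n+A)\in p$ because $p$ is a filter, and in particular this intersection is nonempty; choosing $m$ in it gives $m+n\in A$ for every $n\in W$, i.e.\ $m+W\subset A$ with $m\in\N\subset\mZ$. By the definition of the block family this shows $A\in b\F$, and since $A\in p$ was arbitrary, $p\subset b\F$.

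Both directions are essentially bookkeeping once the entering-time set is identified, so the main point to get right is the computation of $N(p,\overline A)$ and, in part (2), the use of the finite intersection property of the filter $p$ to produce a single shift $m$ that works simultaneously for all elements of $W$. Some care is also needed with the convention $-n+A=\{-n+a\in\N:a\in A\}$, so that $m\in -n+A$ really does give $m+n\in A$ inside $\N$.
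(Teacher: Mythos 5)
Your proof is correct and takes essentially the same route as the paper: part (1) is the paper's argument verbatim, resting on the computation $N(p,\overline{A})=\{n\in\N:-n+A\in p\}$ and the idempotent identity $p=p+p$, and part (2) uses the same block-family witness $F'=N(p,\overline{A})$. The only (cosmetic) difference is in how the shift $m$ is produced in part (2): the paper gets it from continuity of $\lambda_1$ together with density of the principal ultrafilters in $\bZ$, while you get it directly from the filter property of $p$, since $\bigcap_{n\in W}(-n+A)\in p$ and is therefore nonempty --- the same computation phrased algebraically rather than dynamically.
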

\begin{proof}
(1) For every neighborhood $U$ of $p$, there exists some $F\in p$ such that $\overline{F}\subset U$.
Then $N(p, \overline{F})=\{n\in\N: (\lambda_1)^n p\in \overline{F}\}=\{n\in\N: n+p\in\overline{F}\}
=\{n\in\N: -n+F\in p\}$. Since $F\in p=p+p$, then $\{n\in\N: -n+F\in p\}\in p$.
Thus $N(p,\overline{F})\in \F$ and $p$ is $\F$-recurrent.

(2) For every $F\in p$, $\overline{F}$ is an open neighborhood of $p$ and
$N(0,\overline{F})=F$. Let $F'=N(p, \overline{F})$.
Since $p$ is $\F$-recurrent, $F'\in\F$. For every finite subset $W$ of $F'$,
by the continuity of $\lambda_1$, there exists an open neighborhood $U$ of $p$
such that $(\lambda_1)^nU\subset \overline{F}$ for every $n\in W$.
Since $p\in \overline{Orb(0,\lambda_1)}$, there exists some $m\in\Z$ such that $(\lambda_1)^m 0 \in U$.
Then $m+W\subset N(0,\overline{F})$. Thus, $F\in b\F$.
\end{proof}

Let $(X,T)$ be a dynamical system. Then $(X^X,T)$ also forms a dynamical system,
where $X^X$ is endowed with its compact, pointwise convergence topology
and $T$ acts on $X^X$ as composition.
The \emph{enveloping semigroup} of $(X,T)$, denoted by $E(X,T)$, is defined as the
closure of the set $\{T^n: n\in\Z\}$ in $X^X$.

From the algebraic point of view, $E(X,T)$ is a compact Hausdorff right topological semigroup.
On the other hand, $(E(X,T),T)$ is a subsystem of $(X^X,T)$.
Those two structures are closely related.
A subset $L\subset E(X,T)$ is a closed left ideal of $E(X,T)$
if and only if $(L,T)$ is a subsystem of $(E(X,T),T)$,
and $L$ is a minimal left ideal of $E(X,T)$ if and only if
$(L,T)$ is a minimal subsystem of $(E(X,T),T)$.

If $\pi: (X,T)\to (Y,S)$ is a factor map, then there is a unique continuous semigroup
homomorphism $\tilde{\pi}: E(X,T)\to E(Y,S)$ such that
$\pi(px)=\tilde{\pi}(p)\pi(x)$.

Let $(X,T)$ be a dynamical system and $I$ be any nonempty set.
Let $X^I$ be the product space and define
$T^{(I)}:X^I\to X^I$ by $T^{(I)}((x_i)_{i\in I})=(Tx_i)_{i\in I}$.
Then there is a natural isomorphism between $E(X,T)$ and $E(X^I, T^{(I)})$.
For convenience, we regard $E(X,T)$ acting on the factors of $(X,T)$
and the product systems of $(X,T)$.

For each $x\in X$, there is a canonical factor map
\[\varphi_x: E(X,T)\to (\overline{Orb(x,T)},T),\quad q\mapsto qx.\]

Let $(X, T)$ be a dynamical system. $\Z$ acts on $X$ as
\[\Phi: \Z\times X\to X,\quad (n,x)\mapsto T^n x. \]
Since $\bZ$ is the Stone-\v{C}ech compactification of $\Z$, we can extend $\Phi$ to
\[ \bZ\times X\to X,\quad (p,x)\mapsto px. \]
For each $x\in X$, the map $\Phi_x: (\bZ,\lambda_1)\to (\overline{Orb(x,T)},T)$,
$p\mapsto px$ is a factor map
and $\Phi_x(\bN^*)=\omega(x,T)$, where $\bN^*=\bN\setminus \N$.

\begin{lem}\label{lem:p-lim}
Let $(X, T)$ be a system, $x \in X$ and $p \in\bN$. Then $px = p\, \textrm{-}\lim T^nx$.
\end{lem}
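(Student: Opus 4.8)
The plan is to recognize $px$ as the value $\Phi_x(p)$ of the continuous factor map $\Phi_x\colon (\bZ,\lambda_1)\to(\overline{Orb(x,T)},T)$ introduced just above the statement, and then to verify directly that this point satisfies the defining property of the $p$-limit. Since $p$ is an ultrafilter, hence a filter, the limit $p\text{-}\lim T^n x$ exists and is unique in the compact Hausdorff space $X$; thus it suffices to exhibit \emph{some} point $y$ for which $N(x,U)=\{n\in\N: T^n x\in U\}\in p$ for every open neighborhood $U$ of $y$, and to check that $y=px$ is such a point. Uniqueness then forces $px=p\text{-}\lim T^n x$.

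Concretely, I would set $y=px=\Phi_x(p)$ and fix an arbitrary open neighborhood $U$ of $y$. By continuity of $\Phi_x$ the preimage $\Phi_x^{-1}(U)$ is an open subset of $\bZ$ containing $p$. Since the clopen sets $\{\overline A: A\subset\N\}$ form a basis for the topology of $\bN$ and $p\in\bN$, there is some $A\in p$ (equivalently $p\in\overline A$) with $\overline A\subset\Phi_x^{-1}(U)$. Now I would unwind this containment on principal ultrafilters: for each $n\in A$ the principal ultrafilter $n$ lies in $\overline A$, because $A\in n$ is equivalent to $n\in A$, hence $n\in\Phi_x^{-1}(U)$ and $\Phi_x(n)=T^n x\in U$. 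Therefore $A\subset N(x,U)$, and since $p$ is hereditary upward with $A\in p$, we get $N(x,U)\in p$. As $U$ was arbitrary, $px$ satisfies the defining condition of $p\text{-}\lim T^n x$, completing the argument.

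The computation is short, and the only step requiring care is the bookkeeping between the abstract Stone-\v{C}ech extension that defines $px$ and the concrete neighborhood basis $\{\overline A\}$ at $p$: one must select a basic clopen neighborhood $\overline A$ sitting inside $\Phi_x^{-1}(U)$ and then read off the entering-time set from the principal ultrafilters it contains. A minor additional check is that the index $n$ may be taken in $\N$ rather than $\Z$, which is automatic since $A\in p$ forces $A\subset\N$ (and in any case $A\cap\N\in p$). Beyond this translation between the algebraic and dynamical descriptions I do not anticipate a genuine obstacle.
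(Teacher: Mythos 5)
Your proof is correct and follows essentially the same route as the paper: both pass to the factor map $\Phi_x\colon(\bZ,\lambda_1)\to(\overline{Orb(x,T)},T)$, pull back a neighborhood $U$ of $px$, pick a basic clopen set $\overline{A}\subset\Phi_x^{-1}(U)$ with $A\in p$, and conclude $A\subset N(x,U)$, hence $N(x,U)\in p$ by upward heredity. Your explicit unwinding via principal ultrafilters and the remark on uniqueness of $p$-limits are just more detailed versions of steps the paper leaves implicit.
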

\begin{proof}
Clearly, the result holds for principle ultrafilters. Now we assume that $p$ is a non-principle ultrafilter.
Consider the factor map
\[\Phi_x:\, (\bZ, \lambda_1)\to (\overline{Orb(x,T)},T), \quad p\mapsto px.\]
For every neighborhood $U$ of $px$, let $V=\Phi_x^{-1}(U)$,
then $V$ is a neighborhood $p$.
There exists a subset $F$ of $\N$ such that $p\in\overline{F}\subset V$.
Then $F\subset N(0, V)\subset N(x, U)$. Thus, $N(x,U)\in p$.
\end{proof}

We can also extend $\Psi: \Z\to X^X$, $n\mapsto T^n$ to $\bZ\to E(X,T)$.
It is easy to see that $\Psi$ is a semigroup homomorphism and
$\Psi: (\bZ,\lambda_1)\to E(X,T)$ is also a factor map.
For every $x\in X$, $\Phi_x$ and $\varphi_x\circ\Psi$ agree on $\Z$ which is dense in $\bZ$,
then $\Phi_x=\varphi_x\circ\Psi$, i.e., the following diagram commutes.
\[
\xymatrix{
(\bZ,\lambda_1) \ar[r]^-{\Psi} \ar[d]^-{\Phi_x} & (E(X,T),T) \ar[dl]^-{\varphi_x} \\
(\overline{Orb(x,T)},T) &
}
\]

Before continuing discussion, we need some preparation about
symbolic dynamics.
Let $\Sigma_2=\{0,1\}^{\Z}$ and $\sigma: \Sigma_2\to\Sigma_2$ by the shift map,
i.e. the map
\[(x(0),x(1),x(2),\ldots)\mapsto (x(1),x(2),x(3),\ldots).\]
Let $[i_0 i_1 \ldots i_n]=\{x\in \Sigma_2: x(0)=i_0,x(1)=i_1, \cdots, x(n)=i_n\}$
for $i_j\in {0,1}$ and $j=0,1,\ldots, n$.
For any $F\subset \Z$, we denote $\mathbf{1}_F$ be the indicator function from $\Z$ to $\{0,1\}$, i.e.,
$\mathbf{1}_F(n)=1$ if $n\in F$ and
$\mathbf{1}_F(n)=0$ if $n\not\in F$.
In a natural way, each indicator function can be regarded as an element of $\Sigma_2$.
It should be noticed that the enveloping semigroup of $(\{0,1\}^\Z,\sigma)$ is
topologically and algebraically isomorphic to $\bZ$ (\cite{E69,G80}).
Similarly, we can define two sided symbolic dynamics $(\{0,1\}^{\mZ},\sigma)$.

\begin{thm}\label{thm:F-rec-ide}
Let $\F$ be a filterdual. Suppose that $h(\F)$ is a subsemigroup of $\bN$.
Let $(X,T)$ be a system and $x\in X$. Then the following conditions are equivalent:
\begin{enumerate}
\item\label{enum:F-rec} $x\in X$ is an $\F$-recurrent point;
\item\label{enum:idem-KF} there exists an idempotent $u\in h(\F)$ such that $ux=x$;
\item\label{enum:idem-E} there exists an $\F$-recurrent idempotent $v\in E(X,T)$ such that $vx=x$;
\item\label{enume:ess-F-rec} $x$ is an $\widetilde{\F}$-recurrent point, where $\widetilde{\F}$ is the
collection of all essential $\F$-sets.
\end{enumerate}
\end{thm}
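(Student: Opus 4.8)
The plan is to prove the cycle $(\ref{enum:F-rec})\Rightarrow(\ref{enum:idem-KF})\Rightarrow(\ref{enum:idem-E})\Rightarrow(\ref{enum:F-rec})$ and then to close the square with $(\ref{enum:idem-KF})\Rightarrow(\ref{enume:ess-F-rec})\Rightarrow(\ref{enum:F-rec})$. The workhorse throughout is the associativity identity $(p+q)x=p(qx)$ for $p,q\in\bN$, which I would record first: it follows from the commuting triangle $\Phi_x=\varphi_x\circ\Psi$ together with the fact that $\Psi\colon(\bZ,\lambda_1)\to E(X,T)$ is a semigroup homomorphism, since then $(p+q)x=\Psi(p+q)(x)=\Psi(p)(\Psi(q)(x))=\Psi(p)(qx)=p(qx)$. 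I would also use the reformulation, immediate from Lemma \ref{lem:p-lim}, that $px=x$ if and only if $N(x,U)\in p$ for every open neighborhood $U$ of $x$.

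The heart of the argument is $(\ref{enum:F-rec})\Rightarrow(\ref{enum:idem-KF})$. Assume $x$ is $\F$-recurrent and consider the collection $\mathcal A=\{N(x,U): U\text{ an open neighborhood of }x\}$. Each member lies in $\F$ by hypothesis, and because $N(x,U_1)\cap N(x,U_2)\supset N(x,U_1\cap U_2)\in\mathcal A$, every finite intersection of members of $\mathcal A$ again lies in $\F$. As $\F$ is a filterdual, Lemma \ref{lem:filter-dual-ultra} produces an ultrafilter $p$ with $\mathcal A\subset p\subset\F$; thus $p\in h(\F)$ and $px=x$. Next I would introduce $\mathcal U_x=\{p\in\bN: px=x\}=\bigcap_{U\ni x}\overline{N(x,U)}$, which is closed and, by the associativity identity, a subsemigroup of $\bN$. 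Consequently $h(\F)\cap\mathcal U_x$ is a closed subsemigroup of $\bN$ — this is exactly where the standing hypothesis that $h(\F)$ is a subsemigroup is used — and it is nonempty by the previous sentence. Being a compact right topological semigroup, it contains an idempotent $u$ by the Ellis--Namakura Theorem, and this $u$ satisfies $u\in h(\F)$ and $ux=x$.

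The remaining steps are soft transport arguments. For $(\ref{enum:idem-KF})\Rightarrow(\ref{enum:idem-E})$ I would set $v=\Psi(u)$; since $\Psi$ is a homomorphism, $v$ is idempotent, $vx=\varphi_x(\Psi(u))=ux=x$, and because $u$ is $\F$-recurrent in $(\bZ,\lambda_1)$ by Lemma \ref{lem:F-rec-bN}(1) and $\F$-recurrence is preserved by the factor map $\Psi$, the point $v$ is $\F$-recurrent. For $(\ref{enum:idem-E})\Rightarrow(\ref{enum:F-rec})$ I would apply the factor map $\varphi_x\colon(E(X,T),T)\to(\overline{Orb(x,T)},T)$, which carries the $\F$-recurrent point $v$ to $\varphi_x(v)=vx=x$, so $x$ is $\F$-recurrent. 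Finally, every essential $\F$-set belongs to $\F$, so $\widetilde{\F}\subset\F$ and $(\ref{enume:ess-F-rec})\Rightarrow(\ref{enum:F-rec})$ is trivial; conversely, if $u$ is as in $(\ref{enum:idem-KF})$ then $N(x,U)\in u$ for every neighborhood $U$ of $x$, whence each $N(x,U)$ is by definition an essential $\F$-set, giving $(\ref{enum:idem-KF})\Rightarrow(\ref{enume:ess-F-rec})$.

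I expect the only real obstacle to be $(\ref{enum:F-rec})\Rightarrow(\ref{enum:idem-KF})$, and specifically the assembly of $h(\F)\cap\mathcal U_x$ as a \emph{nonempty closed subsemigroup}: nonemptiness rests on the filterdual structure of $\F$ through Lemma \ref{lem:filter-dual-ultra}, while the subsemigroup property is precisely the point at which the hypothesis on $h(\F)$ becomes indispensable. Once this semigroup is in hand, Ellis--Namakura does the rest, and all other implications reduce to pushing idempotency and $\F$-recurrence along the maps $\Psi$ and $\varphi_x$.
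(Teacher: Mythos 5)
Your proposal is correct and follows essentially the same route as the paper: the collection $\mathcal A=\{N(x,U)\}$ together with Lemma \ref{lem:filter-dual-ultra} to get $p\in h(\F)$ with $px=x$, then the closed subsemigroup $L\cap h(\F)$ and Ellis--Namakura for the idempotent, then transport along $\Psi$ and $\varphi_x$ for the remaining implications. The only difference is that you spell out the steps the paper labels ``obvious'' (the subsemigroup property via $(p+q)x=p(qx)$, and the implications involving $\widetilde{\F}$), which is fine.
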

\begin{proof}
(\ref{enum:F-rec}) $\Rightarrow$ (\ref{enum:idem-KF})
Let
\[\mathcal A=\{N(x,U): U\textrm{ is an open neighborhood of }x\}.\]
Then $\mathcal A\subset \F$ and the intersection of any finite elements is also in $\mathcal A$.
By Lemma \ref{lem:filter-dual-ultra} there exists some $p\in h(\F)$ such that $\mathcal A\subset p$,
thus $px=x$.

Let $L=\{q\in \bN: qx=x\}$. Then $L$ is a closed subsemigroup of $\bN$ and so is $L\cap h(\F)$
since $p\in L\cap h(\F)$. By Ellis-Namakura Theorem there exists an idempotent $u\in L\cap h(\F)$.

(\ref{enum:idem-KF}) $\Rightarrow$ (\ref{enum:idem-E})
Let $v=\Psi(u)$. Since $u$ is $\F$-recurrent, $v$ is also $\F$-recurrent.
Since $\Psi$ is a semigroup homomorphism,
$vv=\Psi(u)\Psi(u)=\Psi(u u)=\Psi(u)=v$.
By $\Phi_x=\varphi_x\circ\Psi$, $x=ux=\Phi_x(u)=\varphi_x(\Psi(u))=\varphi_x(v)=vx$.

(\ref{enum:idem-KF}) $\Rightarrow$ (\ref{enume:ess-F-rec}), (\ref{enum:idem-E}) $\Rightarrow$ (\ref{enum:F-rec}) and
(\ref{enume:ess-F-rec}) $\Rightarrow$ (\ref{enum:F-rec}) are obvious.
\end{proof}

\begin{prop}\label{prop:F-rec-lift}
Let $\F$ be a filterdual. Suppose that $h(\F)$ is a subsemigroup of $\bN$.
Let $\pi: (X,T)\to (Y,S)$ is a factor map. If $y\in Y$ is an $\F$-recurrent point,
then there is an $\F$-recurrent point $x$ in $\pi^{-1}(y)$.
\end{prop}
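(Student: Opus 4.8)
The plan is to reduce the whole statement to the equivalence established in Theorem \ref{thm:F-rec-ide} and then to exploit the absorbing property of idempotents. First I would apply Theorem \ref{thm:F-rec-ide} to the \emph{factor} system $(Y,S)$ and the point $y$: since $y$ is $\F$-recurrent and $h(\F)$ is a subsemigroup of $\bN$, the implication (\ref{enum:F-rec})$\Rightarrow$(\ref{enum:idem-KF}) furnishes an idempotent $u\in h(\F)$ with $uy=y$. The entire goal then collapses to producing a point $x\in\pi^{-1}(y)$ with $ux=x$: once such $x$ is in hand, applying the reverse implication (\ref{enum:idem-KF})$\Rightarrow$(\ref{enum:F-rec}) of the same theorem, now to $(X,T)$ and $x$, immediately shows that $x$ is $\F$-recurrent, which is exactly what is wanted.

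To construct $x$ I would choose any point $x_0\in\pi^{-1}(y)$ (the fibre is nonempty because $\pi$ is surjective) and simply set $x=ux_0$. Two things must be checked. For the first, that $x$ lands in the right fibre, I would use that the action of $\bN$ commutes with the factor map. Concretely, by Lemma \ref{lem:p-lim} one has $ux_0=u\text{-}\lim T^n x_0$; applying the continuous map $\pi$ and the intertwining relation $\pi\circ T=S\circ\pi$, and using that $u$, being a filter, has a unique $\F$-limit in the compact Hausdorff space $Y$, we get
\[\pi(x)=\pi(ux_0)=u\text{-}\lim \pi(T^n x_0)=u\text{-}\lim S^n y=uy=y,\]
so $x\in\pi^{-1}(y)$. (Equivalently this is the identity $\pi(px')=p\,\pi(x')$ coming from the induced enveloping-semigroup homomorphism $\tilde\pi$.)

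For the second check, that $ux=x$, I would invoke that the $\bN$-action is genuinely a semigroup action, $(pq)x'=p(qx')$, which holds because $\Psi$ is a semigroup homomorphism and $px'=\varphi_{x'}(\Psi(p))$. Since $u$ is idempotent,
\[ux=u(ux_0)=(uu)x_0=ux_0=x,\]
which is precisely the absorbing property of the idempotent $u$. Thus $x\in\pi^{-1}(y)$ satisfies $ux=x$ with $u$ an idempotent of $h(\F)$, and Theorem \ref{thm:F-rec-ide} completes the proof.

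The only genuinely delicate points are the commutation $\pi(ux_0)=uy$ and the associativity $(uu)x_0=u(ux_0)$, where the precise meaning of the $\bN$-action must be pinned down. I expect this to be the crux precisely because the self-map $x'\mapsto ux'$ of $X$ is in general badly discontinuous, so no topological fixed-point theorem is available on the fibre $\pi^{-1}(y)$; the argument instead lets the idempotent $u$ manufacture its own fixed point via $ux_0$. Everything beyond these two identities is a direct citation of Theorem \ref{thm:F-rec-ide}.
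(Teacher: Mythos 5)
Your proof is correct and is essentially the paper's own argument: both obtain the idempotent $u\in h(\F)$ with $uy=y$ from Theorem \ref{thm:F-rec-ide}, pick a point $z$ (your $x_0$) in the fibre, set $x=uz$, and verify $\pi(x)=u\pi(z)=y$ and $ux=uuz=uz=x$ before citing the theorem again. The only difference is that the paper states the identities $\pi(uz)=u\pi(z)$ and $(uu)z=u(uz)$ without comment, whereas you spell out their justification via Lemma \ref{lem:p-lim} and the homomorphism $\Psi$.
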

\begin{proof}
By Theorem \ref{thm:F-rec-ide} there exists an idempotent $u\in h(\F)$ such that $uy=y$.
Choose $z\in \pi^{-1}(y)$ and let $x=uz$. Then $\pi(x)=\pi(uz)=u\pi(z)=uy=y$
and $ux=uuz=uz=x$, so $x$ is $\F$-recurrent and $x\in\pi^{-1}(y)$.
\end{proof}

\begin{rem}
Recall that a point $x\in X$ is a minimal point if and only if it is $\F_s$-recurrent.
Unfortunately, $\F_s$ is not a filterdual. Can we use some
filterdual instead of $\F_s$ to characterize minimal points?
Intuitively, $\F_{cen}$ may be a good choice. But this is not true,
it is shown in \cite{L11} that there exists an $\F_{cen}$-recurrent point which is not a minimal point.
\end{rem}

Let $(X,T)$ be a system and $x,y\in X$. We call $x,y$ are \emph{proximal} if there exists
some point $z\in X$ such that $(z,z)\in\omega((x,y),T\times T)$.

\begin{prop}[\cite{E69,F81,BH90}] \label{prop:min-id-cen}
Let $(X,T)$ be a system and $x,y\in X$. Then the following conditions are equivalent:
\begin{enumerate}
\item $x,y$ are proximal and $y$ is a minimal point;
\item there exists a minimal idempotent $u\in\bN$ such that $ux=uy=y$;
\item there exists a minimal idempotent $v\in E(X,T)$ such that $vx=vy=y$;
\item $(y,y) \in\omega_{\F_{cen}}((x,y),T\times T)$.
\end{enumerate}
\end{prop}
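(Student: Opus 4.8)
The plan is to treat the equivalences (1)$\Leftrightarrow$(2)$\Leftrightarrow$(3) as the classical algebra of proximality and minimality in $\bN$ and $E(X,T)$, and to bridge to the dynamical condition (4) through the identity $h(\F_{cen})=\overline{\{p\in\bN:\,p\text{ is a minimal idempotent}\}}$ together with the $\F$-limit description $pw=p\text{-}\lim(T\times T)^nw$ of Lemma \ref{lem:p-lim}. It is convenient to pass to the product system $(X\times X,T\times T)$ and the single point $w=(x,y)$, so that (2) reads $uw=(y,y)$ and (4) reads $(y,y)\in\omega_{\F_{cen}}(w)$. I would prove the cycle (1)$\Rightarrow$(2)$\Rightarrow$(3)$\Rightarrow$(1) and adjoin (2)$\Rightarrow$(4)$\Rightarrow$(1).

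For (1)$\Rightarrow$(2) I would use that proximality makes $P=\{q\in\bN:\,qx=qy\}$ nonempty: if $(z,z)\in\omega(w)=\Phi_w(\bN\setminus\N)$, then the witnessing ultrafilter $q$ satisfies $qx=qy=z$ and so lies in $P$. By associativity $(rq)x=r(qx)$ and continuity of $q\mapsto qx$, the set $P$ is a closed left ideal, hence contains a minimal left ideal $L$. Minimality of $y$ gives $Ly=\overline{Orb(y,T)}\ni y$, so $\{q\in L:\,qy=y\}$ is a nonempty closed subsemigroup and by Ellis--Namakura contains an idempotent $u$, which is minimal since $u\in L$; as $u\in L\subseteq P$ we get $ux=uy$, and with $uy=y$ this yields $ux=uy=y$. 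The passage (2)$\Rightarrow$(3) comes from the continuous surjective homomorphism $\Psi\colon(\bZ,\lambda_1)\to E(X,T)$, which maps minimal idempotents to minimal idempotents and satisfies $\Phi_x=\varphi_x\circ\Psi$, so $v=\Psi(u)$ works. Both (3)$\Rightarrow$(1) and (2)$\Rightarrow$(1) follow the same pattern: a minimal idempotent is non-principal, so $v(x,y)=(y,y)\in\omega(w)$ gives proximality, while $vy=y$ for a minimal idempotent forces the orbit closure of $y$ to be minimal by the standard minimal left ideal argument.

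The direction (2)$\Rightarrow$(4) is a short computation: a minimal idempotent $u$ lies in $h(\F_{cen})$, so $u\subseteq\F_{cen}$; since $uw=(y,y)=u\text{-}\lim(T\times T)^nw$ by Lemma \ref{lem:p-lim}, every entering time set $N(w,U)$ with $(y,y)\in U$ belongs to $u$ and is therefore a central set, which is exactly $(y,y)\in\omega_{\F_{cen}}(w)$.

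The real work is (4)$\Rightarrow$(1). Setting $\mathcal A=\{N(w,U):\,U\text{ a neighborhood of }(y,y)\}$, I observe that $\mathcal A\subseteq\F_{cen}$ and that $\mathcal A$ is closed under finite intersection, so Lemma \ref{lem:filter-dual-ultra} produces an ultrafilter $p$ with $\mathcal A\subseteq p\subseteq\F_{cen}$; thus $p\in h(\F_{cen})$ and $pw=(y,y)$, i.e. $px=py=y$. Proximality of $x,y$ is then immediate, since central sets are infinite and hence $(y,y)\in\omega(w)$. The hard part will be extracting that $y$ is minimal: the ultrafilter $p$ obtained this way sits only in the closure $h(\F_{cen})=\overline{\{\text{minimal idempotents}\}}$ and need not be a minimal idempotent itself, and, as the Remark after Proposition \ref{prop:F-rec-lift} indicates, bare $\F_{cen}$-recurrence does not yield minimality. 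I would therefore lean on the product structure: because $px=py=y$, the set $P=\{q:\,qx=qy\}$ is a closed left ideal containing a minimal left ideal $L$, and my aim would be to locate inside $L$ an idempotent fixing $y$, which is then automatically a minimal idempotent realizing (2). Reconciling the choice of this idempotent with the constraint $py=y$ is the delicate point and the main obstacle of the argument.
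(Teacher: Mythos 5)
The paper itself offers no proof of this proposition --- it is quoted from \cite{E69,F81,BH90} --- so your attempt can only be judged on its own merits. The parts you complete are correct and are the standard arguments: for (1)$\Rightarrow$(2), the closed left ideal $P=\{q\in\bN: qx=qy\}$, a minimal left ideal $L\subseteq P$, the closed subsemigroup $\{q\in L: qy=y\}$ (nonempty because $Ly=\overline{Orb(y,T)}\ni y$ by minimality of $y$), and Ellis--Namakura; for (2)$\Leftrightarrow$(3), the homomorphism $\Psi$, which carries minimal left ideals onto minimal left ideals; and for (2)$\Rightarrow$(4), Lemma \ref{lem:p-lim} together with the Bergelson--Hindman theorem that every member of a minimal idempotent is central. (One cosmetic point: in (3)$\Rightarrow$(1) the phrase ``a minimal idempotent is non-principal'' does not quite make sense in $E(X,T)$, where $v$ could be an actual power $T^k$; but then $T^ky=y$ makes $y$ periodic and $(y,y)$ still lies in $\omega((x,y),T\times T)$, so the implication survives.)

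The gap you flagged in (4)$\Rightarrow$(1) is genuine, and it is not one you could have closed: as stated, that implication is irreconcilable with the paper's own Remark following Proposition \ref{prop:F-rec-lift}. Specialize to $x=y$. Every neighborhood of $(y,y)$ contains some $U\times U$, and $N((y,y),U\times U)=N(y,U)$, so by upward heredity of $\F_{cen}$ condition (4) then says precisely that $y$ is an $\F_{cen}$-recurrent point, while condition (1) says precisely that $y$ is a minimal point (proximality of $y$ with itself is automatic). The Remark, citing \cite{L11}, asserts that there exists an $\F_{cen}$-recurrent point which is not minimal; any such point is a counterexample to (4)$\Rightarrow$(1). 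So either condition (4) is misstated or that Remark is false --- they cannot both stand, and the classical references contain only (1)--(3). Your diagnosis of where the proof breaks is exactly right: Lemma \ref{lem:filter-dual-ultra} gives $p\in h(\F_{cen})$ with $p(x,y)=(y,y)$, and Ellis--Namakura applied to the closed subsemigroup $S=\{q\in\bN: q(x,y)=(y,y)\}$ (a semigroup since $qx=qy=y$ and $q'x=q'y=y$ imply $qq'(x,y)=q(y,y)=(y,y)$) produces an idempotent $v\in S$; but this only shows that $x$ is strongly proximal to $y$, and nothing places $v$, or any element of $S$, inside $K(\bN)$. Condition (4) only locates ultrafilters in $h(\F_{cen})=\overline{\{u\in\bN: u \textrm{ is a minimal idempotent}\}}\subseteq\overline{K(\bN)}$, and the passage from $\overline{K(\bN)}$ to $K(\bN)$ is exactly what fails in general --- the same phenomenon that separates quasi-central sets from central sets. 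The defect is in the statement, not in your technique.
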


Let $(X,T)$ be a system and $x,y\in X$. We call $x$ is \emph{strongly proximal} to $y$
if $(y,y)\in\omega((x,y),T\times T)$.
It is easy to see that if $y$ is a minimal point then $x,y$ are proximal if and only if
$x$ is strongly proximal to $y$.

\begin{lem}\label{lem:str-porx-IP}
Let $(X,T)$ be a dynamical system and $x,y\in X$.
Then the following conditions are equivalent:
\begin{enumerate}
\item $x$ is strongly proximal to $y$;
\item $(y,y)\in \omega_{\F_{ip}}((x,y),T\times T)$;
\item for every $n\in\N$, $x$ is strongly proximal to $y$ in $(X,T^n)$.
\end{enumerate}
\end{lem}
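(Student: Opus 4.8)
The plan is to route everything through a single idempotent ultrafilter that carries $(x,y)$ to $(y,y)$, and then read off (2) and (3) from it. Throughout write $S=T\times T$ and $z=(x,y)$. First I would dispose of the two easy implications. A point lies in an ordinary $\omega$-limit set exactly when every neighborhood is entered infinitely often, so $\omega(z,S)=\omega_{\F_{inf}}(z,S)$; since every IP set is infinite we have $\F_{ip}\subset\F_{inf}$, hence $\omega_{\F_{ip}}(z,S)\subset\omega(z,S)$, which is (2)$\Rightarrow$(1). The implication (3)$\Rightarrow$(1) is immediate by taking $n=1$.

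The substantive implication is (1)$\Rightarrow$(2). Strong proximality says $(y,y)\in\omega(z,S)=\Phi_z(\bN^*)$, so the fibre $L=\{p\in\bN:pz=(y,y)\}=\Phi_z^{-1}((y,y))$ is nonempty, and it is closed because $\Phi_z$ is continuous. The key point is that $L$ is a \emph{subsemigroup}: as $\bN$ acts coordinatewise on the product, $pz=(y,y)$ forces $py=y$ on the second coordinate, so for $p,p'\in L$ one computes $(p\cdot p')z=p(p'z)=p(y,y)=(py,py)=(y,y)$, i.e. $p\cdot p'\in L$. By the Ellis-Namakura Theorem $L$ contains an idempotent $p$. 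Now for every open neighborhood $U$ of $(y,y)$ we have $pz=(y,y)\in U$, so Lemma \ref{lem:p-lim} gives $N(z,U)\in p$; being a member of an idempotent, $N(z,U)$ is an IP set (this is exactly the equivalence coming from $h(\F_{ip})=\overline{\{p\in\bN:p\text{ is an idempotent}\}}$ together with the clopenness of $\overline{N(z,U)}$). Hence every such entering time set is an IP set, i.e. $(y,y)\in\omega_{\F_{ip}}(z,S)$.

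For (1)$\Rightarrow$(3) I would reuse the very same idempotent $p$. The one extra ingredient is the standard fact that every idempotent of $\bN$ lies in $\overline{n\N}$, i.e. $n\N\in p$ for every $n\in\N$: pushing $p$ through the continuous homomorphism $\bN\to\mZ/n\mZ$ extending reduction mod $n$ sends $p$ to an idempotent of the group $\mZ/n\mZ$, which must be $0$, so $p$ concentrates on the multiples of $n$. Then, fixing $n$ and an open neighborhood $U$ of $(y,y)$, the set $N(z,U)\cap n\N$ belongs to $p$ and hence is infinite (idempotents of $\bN$ are non-principal, as no $m\in\N$ satisfies $m+m=m$). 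Its elements are precisely the integers $nk$ with $(T^n\times T^n)^k z=S^{nk}z\in U$, so the $T^n\times T^n$-orbit of $z$ enters $U$ infinitely often. As $U$ was arbitrary, $(y,y)\in\omega(z,T^n\times T^n)$, which is exactly strong proximality of $x$ to $y$ in $(X,T^n)$.

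I expect the main obstacle to be isolating that the fibre $L$ is closed under the semigroup operation. Everything hinges on the diagonal position of $(y,y)$: it is what forces $py=y$ and thereby upgrades the statement ``$(y,y)$ is an $\omega$-limit point of $z$'' to ``$(y,y)=pz$ for an \emph{idempotent} $p$''. Once that idempotent is produced, both (2) and (3) are short, the latter needing only the divisibility property $n\N\in p$.
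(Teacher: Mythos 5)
Your proof is correct, and its centerpiece coincides with the paper's: for (1)$\Rightarrow$(2) both arguments take the fibre $L=\{p\in\bN : p(x,y)=(y,y)\}$, observe it is a nonempty closed subsemigroup (the diagonal position forcing $py=y$ is exactly the computation the paper makes), extract an idempotent by Ellis--Namakura, and conclude via Lemma \ref{lem:p-lim} and the fact that every member of an idempotent is an IP set. Where you genuinely diverge is in how condition (3) is obtained. The paper closes the cycle $(1)\Rightarrow(2)\Rightarrow(3)\Rightarrow(1)$, deducing (2)$\Rightarrow$(3) from the combinatorial fact, stated without proof, that if $F$ is an IP set then $\{m\in\N : mn\in F\}$ is again an IP set. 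You instead prove (1)$\Rightarrow$(3) directly from the same idempotent $p$, using the standard algebraic fact that every idempotent of $(\bN,+)$ lies in $\overline{n\N}$ (justified correctly by pushing $p$ through the continuous homomorphic extension of reduction mod $n$ onto the finite group $\mZ/n\mZ$, whose only idempotent is $0$), so that $N((x,y),U\times U)\cap n\N\in p$ is infinite and the $T^n\times T^n$-orbit returns to $U\times U$ infinitely often. Both routes are sound and are really two faces of the same divisibility phenomenon; the paper's combinatorial route actually yields slightly more than (3) (namely $\F_{ip}$-strong proximality for every power $T^n$), whereas your route anchors the whole lemma to a single idempotent and replaces the unproved combinatorial divisibility lemma with an algebraic fact that you can verify in two lines.
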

\begin{proof}
(2)$\Rightarrow$(1) and (3)$\Rightarrow$(1) are obvious.

(2)$\Rightarrow$(3) follows from the fact that if $F$ is an IP set then
for every $n\in \N$ the set $\{m\in\N: mn\in F\}$ is also an IP set.

(1)$\Rightarrow$(2)
Consider the factor map
\[\Phi_{(x,y)}: (\bZ,\lambda_1)\to (\overline{Orb((x,y),T\times T)},T\times T),\quad q\mapsto q(x,y).\]
Let $L=\{p\in\bN: p(x,y)=(y,y)\}=\Phi_{(x,y)}^{-1}(y,y)\bigcap \bN$.
Then $L$ is not empty closed subset of $\bN$, since $(y,y)\in \omega((x,y),T\times T)$.
We show that $L$ is subsemigroup of $\bN$. Let $p,q\in L$. Then $p(x,y)=(px,py)=(y,y)$ and
$q(x,y)=(qx,qy)=(y,y)$, so $pq(x,y)=(pqx,pqy)=(py,py)=(y,y)$.
By Ellis-Namakura Theorem there exists an idempotent $p$ in $L$.
Then by Lemma \ref{lem:p-lim} and $p\subset \F_{ip}$ one has
$(y,y)\in \omega_{\F_{ip}}((x,y),T\times T)$.
\end{proof}

Let $\F$ be a family, $(X,T)$ be a system and $x,y\in X$,
we call $x$ is \emph{$\F$-strongly proximal} to $y$ if $(y,y)\in\omega_\F((x,y),T\times T)$ (\cite{A97}).

\begin{thm}\label{thm:fd-sp-fr}
Let $\F$ be a filterdual. Suppose that $h(\F)$ is a subsemigroup of $\bN$.
Let $(X,T)$ be a system and $x,y\in X$. Then the following conditions are equivalent:
\begin{enumerate}
\item\label{enum:F-str-pox} $x$ is $\F$-strongly proximal to $y$;
\item\label{enum:id-KF} there exists an idempotent $u\in h(\F)$ such that $ux=uy=y$;
\item\label{enum:id-EXT} there exists an $\F$-recurrent idempotent $v\in E(X,T)$ such that $vx=vy=y$;
\item\label{enum:neighor-ess-F} $x$ is $\widetilde{\F}$-strongly proximal to $y$.
\end{enumerate}

\end{thm}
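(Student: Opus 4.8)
The plan is to mirror the proof of Theorem \ref{thm:F-rec-ide}, carried out in the product system $(X\times X,T\times T)$ with distinguished point $(x,y)$ and target $(y,y)$; in this language condition (\ref{enum:F-str-pox}) reads $(y,y)\in\omega_\F((x,y),T\times T)$. I would establish the cycle (\ref{enum:F-str-pox})$\Rightarrow$(\ref{enum:id-KF})$\Rightarrow$(\ref{enum:id-EXT})$\Rightarrow$(\ref{enum:F-str-pox}) together with (\ref{enum:id-KF})$\Leftrightarrow$(\ref{enum:neighor-ess-F}). For (\ref{enum:F-str-pox})$\Rightarrow$(\ref{enum:id-KF}), set $\mathcal A=\{N((x,y),W): W \text{ an open neighborhood of }(y,y)\}$. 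Condition (\ref{enum:F-str-pox}) gives $\mathcal A\subset\F$, and $\mathcal A$ is closed under finite intersections since $N((x,y),W_1\cap W_2)=N((x,y),W_1)\cap N((x,y),W_2)$. By Lemma \ref{lem:filter-dual-ultra} there is $p\in h(\F)$ with $\mathcal A\subset p$, and by Lemma \ref{lem:p-lim} this means $p(x,y)=(y,y)$. Exactly as in the proof of Lemma \ref{lem:str-porx-IP}, the set $L=\{q\in\bN: q(x,y)=(y,y)\}$ is a closed subsemigroup of $\bN$: if $q\in L$ then $qy=y$, so for $p,q\in L$ one has $pq(x,y)=p(y,y)=(y,y)$. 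Since $h(\F)$ is a subsemigroup, $L\cap h(\F)$ is a nonempty closed subsemigroup, so by the Ellis-Namakura Theorem it contains an idempotent $u$; this $u\in h(\F)$ satisfies $ux=uy=y$, which is (\ref{enum:id-KF}).

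For (\ref{enum:id-KF})$\Rightarrow$(\ref{enum:id-EXT}) I would put $v=\Psi(u)$: as $u\in h(\F)$ is an idempotent it is $\F$-recurrent in $(\bZ,\lambda_1)$ by Lemma \ref{lem:F-rec-bN}(1), whence $v$ is an $\F$-recurrent idempotent in $E(X,T)$ (idempotent because $\Psi$ is a homomorphism, $\F$-recurrent because $\Psi$ is a factor map), and using $\Phi_x=\varphi_x\circ\Psi$ one gets $vx=\Phi_x(u)=ux=y$ and $vy=uy=y$. For (\ref{enum:id-KF})$\Rightarrow$(\ref{enum:neighor-ess-F}), the idempotent $u$ lies in $h(\widetilde{\F})=\overline{\{p:\,p\text{ idempotent in }h(\F)\}}$, so $u\subset\widetilde{\F}$; since $u(x,y)=(y,y)$, each $N((x,y),W)$ lies in $u\subset\widetilde{\F}$, giving (\ref{enum:neighor-ess-F}). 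For (\ref{enum:neighor-ess-F})$\Rightarrow$(\ref{enum:F-str-pox}), note $h(\widetilde{\F})\subset h(\F)$ forces $\widetilde{\F}=k(h(\widetilde{\F}))\subset k(h(\F))=\F$, so $\omega_{\widetilde{\F}}\subset\omega_\F$.

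The implication out of (\ref{enum:id-EXT}) is the main obstacle, and it is genuinely harder than its analogue in Theorem \ref{thm:F-rec-ide}. There the factor map $\varphi_x$ transported $\F$-recurrence of $v$ directly to $\F$-recurrence of $x=\varphi_x(v)$, which was the conclusion; here the same push-forward through $\varphi_{(x,y)}:E(X,T)\to\overline{Orb((x,y),T\times T)}$ only shows that $(y,y)=\varphi_{(x,y)}(v)$ is $\F$-recurrent, i.e. $\{n:(T\times T)^n(y,y)\in W\}\in\F$, whereas (\ref{enum:F-str-pox}) requires the return times $\{n:(T^nx,T^ny)\in W\}$ of the off-diagonal point $(x,y)$ to be in $\F$. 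The discrepancy is that $\F$-recurrence of $v$ controls the composites $T^n\circ v$ rather than $T^n$ itself, and $v$ collapses $x$ onto $y$. To overcome this I would prove (\ref{enum:id-EXT})$\Rightarrow$(\ref{enum:id-KF}) by lifting. Since $v$ is idempotent, $\Psi^{-1}(v)$ is a nonempty closed subsemigroup of $\bZ$, and by Proposition \ref{prop:F-rec-lift} applied to the factor map $\Psi$ there is an $\F$-recurrent $p\in\Psi^{-1}(v)$; as $\Psi(p)=v$ it already satisfies $p(x,y)=v(x,y)=(y,y)$, so $p$ does the right thing and the only remaining task is to arrange $p\in h(\F)$. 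Running Theorem \ref{thm:F-rec-ide} in $(\bZ,\lambda_1)$ on the $\F$-recurrent $p$ yields an idempotent $w\in h(\F)$ with $w+p=p$; thus $w+p=p\in\Psi^{-1}(v)$ lies in $h(\F)$ as soon as $h(\F)$ absorbs $p$ on the right. This is precisely the delicate point: I expect to need that $h(\F)$ is an ideal rather than merely a subsemigroup, equivalently $b\F=\F$, a property enjoyed by all the intended families $\F_{ps}$, $\F_{pubd}$ and $\mathcal J$ (one can also read it off directly, since $\F$-recurrence of $p$ gives $p\subset b\F$ by Lemma \ref{lem:F-rec-bN}(2)). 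Granting it, $\Psi^{-1}(v)\cap h(\F)$ is a nonempty closed subsemigroup, the Ellis-Namakura Theorem provides an idempotent $u'\in h(\F)$ with $\Psi(u')=v$, and then $u'(x,y)=(y,y)$ delivers (\ref{enum:id-KF}), completing the cycle.
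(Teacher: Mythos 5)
Your proofs of (\ref{enum:F-str-pox})$\Rightarrow$(\ref{enum:id-KF})$\Rightarrow$(\ref{enum:id-EXT}) and of (\ref{enum:id-KF})$\Leftrightarrow$(\ref{enum:neighor-ess-F}) are correct and essentially identical to the paper's. The issue is (\ref{enum:id-EXT})$\Rightarrow$(\ref{enum:id-KF}): your diagnosis of where the difficulty sits is exactly right, but the argument you give, as you yourself concede, needs the extra hypothesis $b\F=\F$ (so that the $\F$-recurrent lift $p\in\Psi^{-1}(v)$ supplied by Proposition \ref{prop:F-rec-lift} lands in $h(\F)$ via Lemma \ref{lem:F-rec-bN}). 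Since Theorem \ref{thm:fd-sp-fr} assumes only that $h(\F)$ is a subsemigroup of $\bN$, your proposal does not prove the statement as it stands; that is a genuine gap.

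What you should know, however, is that the gap coincides with a leap in the paper's own proof. For (\ref{enum:id-EXT})$\Rightarrow$(\ref{enum:id-KF}) the paper applies Theorem \ref{thm:F-rec-ide} to the $\F$-recurrent point $v$ of the system $(E(X,T),T)$ and asserts that this yields an idempotent $u\in h(\F)$ with ``$v=uv=\Psi(u)$'', after which $ux=uy=y$ follows from $\Phi_{(x,y)}=\varphi_{(x,y)}\circ\Psi$. But what Theorem \ref{thm:F-rec-ide} actually produces is an idempotent $u\in h(\F)$ with $uv=v$, where $uv$ is the dynamical action of the ultrafilter $u$ on the point $v$, i.e.\ $\Psi(u)\circ v=v$; evaluating this identity at $x$ and at $y$ gives only $uy=y$, because $v$ has already collapsed $x$ onto $y$. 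The further equality $\Psi(u)=v$ --- precisely the lifting of the idempotent $v$ through $\Psi$ into $h(\F)$ that you isolated as the delicate point --- is asserted without justification, and I do not see how to obtain it from the subsemigroup hypothesis alone. So your instinct is sound rather than overcautious: the paper's two-line argument hides exactly the obstacle you identified. Note also that your lifting argument is literally the mechanism by which the paper proves Proposition \ref{prop:bF-st-pro}; that every family to which the theorem is subsequently applied ($\F_{ps}$, $\F_{pubd}$ and $\mathcal J$) satisfies $b\F=\F$; and that the implications of Theorem \ref{thm:fd-sp-fr} actually invoked later in the paper are (\ref{enum:F-str-pox})$\Rightarrow$(\ref{enum:id-KF}) and (\ref{enum:id-KF})$\Rightarrow$(\ref{enum:neighor-ess-F}), never (\ref{enum:id-EXT})$\Rightarrow$(\ref{enum:id-KF}). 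So your weakened version of the theorem suffices for all of the paper's applications, while a proof of (\ref{enum:id-EXT})$\Rightarrow$(\ref{enum:id-KF}) in the stated generality is, on the evidence of both your attempt and the paper's text, still wanting.
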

\begin{proof}
(\ref{enum:F-str-pox}) $\Rightarrow$ (\ref{enum:id-KF})
Let \[\mathcal A=\{N((x,y), U\times U): U\textrm{ is an open neighborhood of }y\}.\]
By the definition of $\F$-strong-proximity, we have $\mathcal A\subset \F$
and the intersection of finite elements in $\mathcal A$ is also in $\mathcal A$.
Then by Lemma \ref{lem:filter-dual-ultra} there exists some $p\in h(\F)$
such that $\mathcal A\subset p$, so $p(x,y)=(y,y)$.
Let $L=\{q\in \bN: qx=qy=y\}$. Then $L\cap h(\F)$ is a nonempty closed subsemigroup of $\bN$.
By Ellis-Namakura Theorem there exists an idempotent $u\in L\cap h(\F)$.

(\ref{enum:id-KF}) $\Rightarrow$ (\ref{enum:id-EXT})
Let $v=\Psi(u)$. Since $u$ is $\F$-recurrent, $v$ is also $\F$-recurrent.
Then by $\Phi_{(x,y)}=\varphi_{(x,y)}\circ\Psi$ we have $vx=vy=y$.

(\ref{enum:id-EXT}) $\Rightarrow$ (\ref{enum:id-KF})
By Theorem \ref{thm:F-rec-ide} there exists an idempotent $u\in h(\F)$
such that $v=uv=\Psi(u)$.
Then by $\Phi_{(x,y)}=\varphi_{(x,y)}\circ\Psi$ we have $ux=uy=y$.

(\ref{enum:id-KF}) $\Rightarrow$ (\ref{enum:neighor-ess-F})
Since $u(x,y)=(y,y)$ and $u$ is an idempotent in $h(\F)$,
by Lemma \ref{lem:p-lim} $(y,y)\in\omega_{\widetilde{\F}}((x,y),T\times T)$.

(\ref{enum:neighor-ess-F}) $\Rightarrow$ (\ref{enum:F-str-pox}) is obvious.
\end{proof}

\begin{prop}\label{prop:bF-st-pro}
Let $\F$ be a filterdual. Suppose that $b\F=\F$. Let $(X,T)$ be a system and $x,y\in X$.
Then $x$ is $\F$-strongly proximal to $y$ if and only if
$y$ is an $\F$-recurrent point and $x$ is strongly proximal to $y$.
\end{prop}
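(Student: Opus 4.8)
The plan is to reduce both implications to the existence of a single idempotent in $h(\F)$ that sends the relevant point(s) to $y$, exploiting the hypothesis $b\F=\F$ only through the lemma showing that it forces $h(\F)$ to be a closed two sided ideal of $\bN$. In particular $h(\F)$ is then a subsemigroup, so both Theorem \ref{thm:F-rec-ide} and Theorem \ref{thm:fd-sp-fr} apply. I will therefore use freely that $y$ is $\F$-recurrent iff there is an idempotent $u\in h(\F)$ with $uy=y$, and that $x$ is $\F$-strongly proximal to $y$ iff there is an idempotent $u\in h(\F)$ with $ux=uy=y$.

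For the forward implication, suppose $x$ is $\F$-strongly proximal to $y$. By Theorem \ref{thm:fd-sp-fr} there is an idempotent $u\in h(\F)$ with $ux=uy=y$. The equality $uy=y$ gives, by Theorem \ref{thm:F-rec-ide}, that $y$ is $\F$-recurrent. For strong proximality, $u(x,y)=(ux,uy)=(y,y)$ together with Lemma \ref{lem:p-lim} yields $(y,y)=u\textrm{-}\lim (T\times T)^n(x,y)$; since every idempotent of $\bN$ is non-principal, the set $N((x,y),W)$ lies in $u$ and is hence infinite for each neighborhood $W$ of $(y,y)$, so $(y,y)\in\omega((x,y),T\times T)$, i.e. $x$ is strongly proximal to $y$. (Alternatively, restricting attention to neighborhoods of the form $X\times V$ shows directly that $\F$-strong proximality forces $\F$-recurrence of $y$, since $N((x,y),X\times V)=N(y,V)$.)

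For the converse, assume $y$ is $\F$-recurrent and $x$ is strongly proximal to $y$. Theorem \ref{thm:F-rec-ide} provides an idempotent $u\in h(\F)$ with $uy=y$, while strong proximality gives some $p\in\bN$ with $p(x,y)=(y,y)$ (for instance via Lemma \ref{lem:str-porx-IP}, or directly since $\omega((x,y),T\times T)=\Phi_{(x,y)}(\bN^*)$). The decisive step is to combine these: set $w=u+p$. Using the right ideal property $h(\F)+\bN\subset h(\F)$ we obtain $w\in h(\F)$, and since $(u+p)z=u(pz)$ we compute $w(x,y)=u(p(x,y))=u(y,y)=(y,y)$. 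Thus $w$ lies in $L\cap h(\F)$, where $L=\{q\in\bN: q(x,y)=(y,y)\}$ is the nonempty closed subsemigroup identified in the proof of Lemma \ref{lem:str-porx-IP}. The intersection $L\cap h(\F)$ is therefore a nonempty closed subsemigroup of $\bN$, so by the Ellis--Namakura Theorem it contains an idempotent $u'$; this $u'\in h(\F)$ satisfies $u'x=u'y=y$, and Theorem \ref{thm:fd-sp-fr} concludes that $x$ is $\F$-strongly proximal to $y$.

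The main obstacle is precisely the combination step in the converse. One cannot expect the idempotent $u$ witnessing $\F$-recurrence to also move $x$ onto $y$, nor the proximality witness $p$ to lie in $h(\F)$; the element $u+p$ repairs both defects simultaneously, but only because $h(\F)$ absorbs $p$ on the right. This is exactly where the hypothesis $b\F=\F$ enters: it upgrades $h(\F)$ from a mere left ideal (which would already follow from translation $+$ invariance via Lemma \ref{lem:filterdual-tran-invar}) to a two sided ideal. The order of the sum is essential — $p+u$ would require controlling $u$ on the $x$-coordinate, which is not available — so it is the \emph{right} ideal property that does the work.
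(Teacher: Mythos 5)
Your proof is correct, but it is routed through different results of the paper than the paper's own argument. For the converse (the only nontrivial direction), the paper applies Proposition \ref{prop:F-rec-lift} to the factor map $\Phi_{(x,y)}:(\bZ,\lambda_1)\to(\overline{Orb((x,y),T\times T)},T\times T)$ to lift the $\F$-recurrent point $(y,y)$ to an $\F$-recurrent point $q\in\bN$ with $q(x,y)=(y,y)$, and then uses Lemma \ref{lem:F-rec-bN}(2) to conclude $q\subset b\F=\F$, after which Lemma \ref{lem:p-lim} finishes; you instead build $w=u+p$ by hand and place it in $h(\F)$ via the lemma stating that $b\F=\F$ makes $h(\F)$ a closed two sided ideal of $(\bN,+)$. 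The two arguments in fact converge at a deeper level: unpacking the proof of Proposition \ref{prop:F-rec-lift}, the paper's lifted point $q$ is exactly $u+z$ with $u\in h(\F)$ an idempotent fixing $(y,y)$ and $z\in\bZ$ a preimage of $(y,y)$, which is the very element you construct. What genuinely differs is which lemma converts the hypothesis $b\F=\F$ into the containment $u+p\subset\F$: you use the algebraic (right ideal) statement, while the paper uses the dynamical characterization of $\F$-recurrence in the universal system $(\bZ,\lambda_1)$ in terms of block families; your closing diagnosis that the right-ideal property, not the left-ideal property coming from translation invariance, is the crux is accurate and arguably more transparent than the paper's packaging. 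Two small remarks: your final detour through Ellis--Namakura and Theorem \ref{thm:fd-sp-fr} is not needed, since once $w=u+p\in h(\F)$ satisfies $w(x,y)=(y,y)$, Lemma \ref{lem:p-lim} gives $N((x,y),W)\in w\subset\F$ for every neighborhood $W$ of $(y,y)$, which is precisely the definition of $\F$-strong proximality; and your treatment of the forward direction (via non-principality of idempotents) is more careful than the paper's one-line ``by the definition,'' which silently uses that entering time sets in $\F$ witnessing a return to a neighborhood infinitely often yield genuine $\omega$-limit membership.
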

\begin{proof}
By the definition, if $x$ is $\F$-strongly proximal to $y$,
then $y$ is an $\F$-recurrent point and $x$ is strongly proximal to $y$.

Conversely, assume that $y$ is an $\F$-recurrent point and $x$ is strongly proximal to $y$.
Consider the factor map
\[\Phi_{(x,y)}: (\bZ,\lambda_1)\to (\overline{Orb((x,y), T\times T)},T\times T),\ p\mapsto p(x,y).\]
Since $(y,y)\in \overline{Orb((x,y),T\times T)}$ and $(y,y)$ is $\F$-recurrent,
by Proposition \ref{prop:F-rec-lift} there exists an $\F$-recurrent point $q$ in $\bN$
with $q(x,y)=(y,y)$. By Lemma \ref{lem:F-rec-bN} we have $q\subset b\F=\F$,
then $(y,y) \in\omega_\F((x,y),T\times T)$.
\end{proof}

Now we can set up a general correspondence between essential $\F$-sets and the sets defined by $\F$-strong proximity.

\begin{thm}\label{thm:ess-F-dyna}
Let $\F$ be a filterdual. Suppose that $h(\F)$ is a subsemigroup of $\bN$.
Then a subset $F$ of $\N$ is an essential $\F$-set
if and only if there exists a dynamical system $(X,T)$, a pair of points $x,y\in X$ where $x$ is $\F$-strongly proximal to $y$,
and an open neighborhood $U$ of $y$ such that $F=N(x,U)$.
\end{thm}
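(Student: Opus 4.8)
The plan is to reduce both directions to the idempotent characterization of $\F$-strong proximity furnished by Theorem \ref{thm:fd-sp-fr}. Recall that, by definition, $F$ is an essential $\F$-set precisely when $F$ belongs to some idempotent $p\in h(\F)$, and that under the standing hypothesis ``$h(\F)$ is a subsemigroup of $\bN$'' the condition that $x$ be $\F$-strongly proximal to $y$ is equivalent to the existence of an idempotent $u\in h(\F)$ with $ux=uy=y$. So the whole equivalence collapses to: \emph{$F$ lies in some idempotent of $h(\F)$ if and only if $F$ arises as an entering time set $N(x,U)$ for a pair $(x,y)$ admitting such an idempotent.} Each direction then amounts to passing between ``$F\in u$'' and ``$F=N(x,U)$'' through the limit formula $ux=u\,\textrm{-}\lim T^n x$ of Lemma \ref{lem:p-lim}.

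For the sufficiency direction, suppose a system $(X,T)$, points $x,y$ and an open neighborhood $U$ of $y$ are given with $x$ being $\F$-strongly proximal to $y$ and $F=N(x,U)$. By Theorem \ref{thm:fd-sp-fr} there is an idempotent $u\in h(\F)$ with $ux=uy=y$. Since $ux=y\in U$ and, by Lemma \ref{lem:p-lim}, $ux=u\,\textrm{-}\lim T^n x$, the defining property of the $u$-limit gives $N(x,U)\in u$. Hence $F\in u$ with $u$ an idempotent of $h(\F)$, that is, $F$ is an essential $\F$-set. This direction is thus immediate from the earlier theorem.

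For the necessity direction I would build a symbolic model. Given an idempotent $p\in h(\F)$ with $F\in p$, take $X=\Sigma_2=\{0,1\}^{\Z}$ with $T=\sigma$, put $x=\mathbf{1}_F$, and choose $U=[1]$. A direct computation gives $N(x,U)=\{n\in\N:(\sigma^n x)(0)=1\}=\{n\in\N:x(n)=1\}=F$, so the required form $F=N(x,U)$ holds automatically. Now set $y=px$; using the action relation $(p+q)x=p(qx)$ (legitimate via the commuting diagram $\Phi_x=\varphi_x\circ\Psi$ and the fact that $\Psi$ is a semigroup homomorphism into the enveloping semigroup) together with $p+p=p$, one has $py=p(px)=(p+p)x=px=y$. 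Thus $u:=p$ satisfies $ux=uy=y$ with $u$ an idempotent of $h(\F)$, and Theorem \ref{thm:fd-sp-fr} yields that $x$ is $\F$-strongly proximal to $y$. It remains only to verify that $U=[1]$ really is a neighborhood of $y$: by Lemma \ref{lem:p-lim}, $y=p\,\textrm{-}\lim\sigma^n x$, so applying the continuous coordinate projection $z\mapsto z(0)$ gives $y(0)=p\,\textrm{-}\lim x(n)=1$, since $\{n:x(n)=1\}=F\in p$. Hence $y\in[1]=U$ and the construction is complete.

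The argument is essentially an application of Theorem \ref{thm:fd-sp-fr}, so there is no deep obstacle. The only genuinely delicate point is the last one: ensuring that the limit point $y=px$ actually lands inside the chosen cylinder $U$. This is exactly where the hypothesis $F\in p$ enters, and it is what forces the use of the indicator $\mathbf{1}_F$ rather than an arbitrary point of the system. Care should also be taken that the associativity identity $p(px)=(p+p)x$ is applied in the correct order, but since $p$ is idempotent the outcome is insensitive to the composition convention in $E(X,T)$.
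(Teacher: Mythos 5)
Your proof is correct and takes essentially the same approach as the paper: both directions reduce to Theorem \ref{thm:fd-sp-fr}, and your necessity argument uses the identical symbolic construction $x=\mathbf{1}_F\in\{0,1\}^{\Z}$, $y=px$, $U=[1]$. The only cosmetic differences are that the paper gets sufficiency from $N((x,y),U\times U)\subset N(x,U)$ together with upward heredity of $\widetilde{\F}$ (rather than your idempotent-plus-Lemma \ref{lem:p-lim} route), and it checks $y\in[1]$ by the disjointness of $N(x,[0])$ and $N(x,[1])$ rather than by your coordinate-projection limit argument.
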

\begin{proof}
The sufficiency follows from Theorem \ref{thm:fd-sp-fr} and $ N((x,y), U\times U)\subset N(x,U)$.

Now we show the necessity. If $F$ is an essential $\F$-set,
there exists an idempotent $u\in h(\F)$ such that $F\in u$.
Let $x=\mathbf{1}_F\in \{0,1\}^{\Z}$ and $y=ux$.
Then $ux=y=y$, so $x$ is $\F$-strongly proximal to $y$.
Clearly, $N(x,[1])=F$. Then it suffices to show that $y\in [1]$.
If not, then $y\in [0]$. Thus, $N(x,[0])\in p$ and $N(x,[0])\cap N(x,[1])\neq\emptyset$,
This is a contradiction.
\end{proof}

\begin{rem}
(1) In the proof of Theorem \ref{thm:ess-F-dyna}, if we use $\{0,1\}^{\mZ}$ instead of $\{0,1\}^\Z$,
then it shows that every essential $\F$-set can be realized by an invertible metrizable system.

(2) Since $\F_{ps}$ and $\F_{pubd}$ are filterduals, and $b\F_{ps}=\F_{ps}$, $b\F_{pubd}=\F_{pubd}$,
Theorem \ref{thm:quasi-cen-dy} and Theorem \ref{thm:D-set-dyn}
are special cases of Theorem \ref{thm:ess-F-dyna}.
\end{rem}

We now give a combinatorial characterization of essential $\F$-set.
\begin{prop}
Let $\F$ be a filterdual. Suppose that $h(\F)$ is a subsemigroup of $\bN$.
Then a subset $F$ of $\N$ is an essential $\F$-set if and only if
there is a decreasing sequence $\{C_n\}_{n=1}^\infty$ of subsets of $F$ such that
for every $n\in \N$, $C_n \in \F$  and for every $r\in C_n$
there exists $m\in\N$ such that $r+C_m\subset C_n$.
\end{prop}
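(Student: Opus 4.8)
The plan is to prove the two implications separately. The ``if'' direction will be a direct application of Lemma \ref{lem:col-semigroup} together with the Ellis-Namakura Theorem, while the ``only if'' direction will be a recursive construction based on the standard star operation for idempotents in $\bN$.

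For the sufficiency, suppose such a decreasing sequence $\{C_n\}$ exists, and set $\mathcal A=\{C_n:n\in\N\}$. Since $\F$ is proper and hereditary upward we have $\emptyset\notin\F$ (otherwise $\F=\mathcal P$), so each $C_n\neq\emptyset$; being decreasing, $\mathcal A$ has the finite intersection property. The second hypothesis of Lemma \ref{lem:col-semigroup}, namely that for every $C_n\in\mathcal A$ and every $r\in C_n$ there is some $C_m\in\mathcal A$ with $r+C_m\subset C_n$, is exactly our assumption. Hence $S:=\bigcap_{n}\overline{C_n}$ is a closed subsemigroup of $\bN$. I would then intersect with the hull: because each $C_n\in\F$ we have $\overline{C_n}\cap h(\F)\neq\emptyset$, and since $\{C_n\}$ is decreasing the closed sets $\{\overline{C_n}\cap h(\F)\}_n$ have the finite intersection property, so by compactness $S\cap h(\F)=\bigcap_n\bigl(\overline{C_n}\cap h(\F)\bigr)\neq\emptyset$. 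As $h(\F)$ is a subsemigroup by hypothesis, $S\cap h(\F)$ is a nonempty closed subsemigroup, and the Ellis-Namakura Theorem yields an idempotent $p\in S\cap h(\F)$. Finally $C_1\subset F$ gives $S\subset\overline{C_1}\subset\overline{F}$, so $F\in p$; thus $F$ is an essential $\F$-set.

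For the necessity, suppose $F$ is an essential $\F$-set, so there is an idempotent $p\in h(\F)$ with $F\in p$. I would use the star operation: for $A\in p$ put $A^{\star}=\{x\in A:-x+A\in p\}$; then $A^{\star}\in p$ and $-x+A^{\star}\in p$ for every $x\in A^{\star}$ (see \cite[Lemma 4.14]{HS98}), so in particular $(A^{\star})^{\star}=A^{\star}$, i.e.\ $A^{\star}$ is star-closed and $-r+A^{\star}\in p$ for every $r\in A^{\star}$. Set $C_1=F^{\star}$ and recursively define
\[
C_{m+1}=\Bigl(C_m\cap\bigcap\{-r+C_n:\,1\le n\le m,\ r\in C_n,\ r\le m\}\Bigr)^{\star}.
\]
At each stage the intersection is finite and every set $-r+C_n$ lies in $p$ (since $C_n$ is star-closed and $r\in C_n$), so $C_{m+1}\in p$ is star-closed and $C_{m+1}\subset C_m$. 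Hence $\{C_n\}$ is decreasing, each $C_n\in p\subset\F$ gives $C_n\in\F$, and $C_n\subset C_1\subset F$. For the covering property, given $n$ and $r\in C_n$, set $M=\max(n,r)$; then the pair $(n,r)$ is among those imposed in the intersection defining $C_{M+1}$, so $C_{M+1}\subset -r+C_n$, that is $r+C_{M+1}\subset C_n$, with $m=M+1$ the desired index.

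The routine verifications (finite intersection property, membership in $p$, the equivalence $F\in\F\iff\overline F\cap h(\F)\neq\emptyset$) are immediate. The only genuine content lies in organizing the recursion in the necessity so that every constraint $-r+C_n$ is eventually captured: enumerating the pairs $(n,r)$ by $\max(n,r)$ and imposing at stage $m$ only the finitely many constraints then available is what makes all requirements simultaneously satisfiable, and this diagonal bookkeeping is where I expect the (modest) difficulty to lie. I note that the hypothesis that $h(\F)$ is a subsemigroup is used only in the sufficiency direction, to ensure the idempotent produced by Ellis-Namakura actually lands in $h(\F)$.
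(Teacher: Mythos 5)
Your proposal is correct, and its sufficiency half is essentially the paper's own argument: both pass to $L=\bigcap_{n}\overline{C_n}$, invoke Lemma \ref{lem:col-semigroup}, and apply the Ellis--Namakura Theorem to the nonempty closed subsemigroup $L\cap h(\F)$; the only cosmetic difference is that the paper produces a point of $L\cap h(\F)$ via Lemma \ref{lem:filter-dual-ultra} (an ultrafilter $p\subset\F$ containing every $C_n$), while you get one by compactness from the decreasing nonempty closed sets $\overline{C_n}\cap h(\F)$, using the hull correspondence $C_n\in\F\iff\overline{C_n}\cap h(\F)\neq\emptyset$. Your necessity half, however, takes a genuinely different route. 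The paper argues dynamically: it sets $x=\mathbf{1}_F\in\{0,1\}^{\Z}$ and $y=ux$ for an idempotent $u\in h(\F)$ with $F\in u$, takes $C_n=N((x,y),U_n\times U_n)$ for the cylinder neighborhoods $U_n=[y(0)y(1)\ldots y(n)]$, concludes from Theorem \ref{thm:fd-sp-fr} that each $C_n$ is an essential $\F$-set (in particular $C_n\in\F$), and obtains the covering property $r+C_m\subset C_n$ from continuity of the shift. You instead run the purely algebraic Hindman--Strauss star-operation recursion ($A^{\star}=\{x\in A:-x+A\in p\}$, \cite[Lemma 4.14]{HS98}), with the diagonal bookkeeping over pairs $(n,r)$ enumerated by $\max(n,r)$ guaranteeing that every constraint $-r+C_n$ is eventually imposed; this is a complete and correct argument. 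Each approach buys something: the paper's stays inside its dynamical framework and shows the stronger fact that every $C_n$ is itself an essential $\F$-set, whereas yours needs nothing beyond the star lemma, keeps all the $C_n$ inside the single idempotent $p$, and thus delivers the paper's subsequent Corollary (the version for one idempotent, where $\F$ is replaced by membership in $p$) with no additional work. Your closing observation that the subsemigroup hypothesis on $h(\F)$ is needed only for sufficiency is also accurate; in the paper's necessity argument the appeal to Theorem \ref{thm:fd-sp-fr} really only uses Lemma \ref{lem:p-lim} and the definition of $\widetilde{\F}$, so no subsemigroup assumption is genuinely required there either.
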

\begin{proof}
If $F$ is an essential $\F$-set,
there exists an idempotent $u\in h(\F)$ such that $F\in u$.
Let $x=\mathbf{1}_F\in\{0,1\}^{\Z}$ and $y=ux$.
Then $u(x,y)=(y,y)$, $y\in [1]$ and $N(x,[1])=F$.
For each $n\in \N$, let $U_n=[y(0)y(1)\ldots y(n)]$ and $C_n=N((x,y), U_n\times U_n)$,
then by Theorem \ref{thm:fd-sp-fr} each $C_n$ is an essential $\F$-set.
For every $r\in C_n$, we have  $(\sigma\times \sigma)^r (y,y)\in U_n\times U_n$.
By the continuity of $\sigma$, there exists $m\in\N$ such that
$(\sigma\times \sigma)^r (U_m\times U_m)\subset U_n\times U_n$,
then $r+C_m\subset C_n$.

Conversely, assume that there is a decreasing sequence $\{C_n\}_{n=1}^\infty$ satisfying the condition.
By Lemma \ref{lem:filter-dual-ultra} there exists some $p\in h(\F)$ such that $\{C_n: n\in\N\}\subset p$.
Let $L=\bigcap_{n=1}^\infty \overline{C_n}$.
By Lemma \ref{lem:col-semigroup} $L$ is a closed subsemigroup of $\bN$.
Then $p\in L\cap h(\F)$ and $L\cap h(\F)$ is nonempty closed subsemigroup of $\bN$.
By Ellis-Namakura Theorem there exists an idempotent in $L\cap h(\F)$.
Thus, each $C_n$ is an essential $\F$-set.
In particular, $F$ is an essential $\F$-set.
\end{proof}

\begin{cor}
Let $p$ be an idempotent $\bN$ and $F\subset \N$. Then $F\in p$ if and only if
there is a decreasing sequence $\{C_n\}_{n=1}^\infty$ of subsets of $F$ such that
for every $n\in \N$, $C_n\in p$ and for every $r\in C_n$
there exists $m\in\N$ such that $r+C_m\subset C_n$.
\end{cor}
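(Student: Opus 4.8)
The plan is to obtain this corollary as the special case $\F=p$ of the preceding Proposition. The first step is to observe that an idempotent $p\in\bN$, viewed as a family of subsets of $\N$, is itself a filterdual: by the characterization of ultrafilters (a filter $\F$ is an ultrafilter if and only if $\F=\kappa\F$, equivalently $\F$ is a filterdual), every $p\in\bN$ is a filterdual. Hence we may legitimately substitute $\F=p$ into the Proposition, once its standing hypothesis is checked.

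Next I would compute the hull $h(p)$. By definition $h(p)=\{q\in\bN:\,q\subset p\}$, and $q\subset p$ just means $q\subseteq p$ as collections of subsets of $\N$. Since both $q$ and $p$ are ultrafilters, hence maximal filters, the inclusion $q\subseteq p$ forces $q=p$: otherwise $q$ would be a proper subfilter of the proper filter $p$, contradicting the maximality of $q$. Therefore $h(p)=\{p\}$. The crucial point is that $p$ is assumed idempotent, so $p+p=p$, which says exactly that the singleton $\{p\}=h(p)$ is closed under $+$; that is, $h(p)$ is a (trivially closed) subsemigroup of $\bN$. This verifies the one nontrivial hypothesis needed to apply the Proposition.

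With $\F=p$ the Proposition now applies verbatim. On the one hand, ``$F$ is an essential $p$-set'' unwinds to ``there is an idempotent $q\in h(p)=\{p\}$ with $F\in q$'', and since $p$ is the unique element of $h(p)$ and is itself idempotent, this is precisely the condition $F\in p$. On the other hand, the clause ``$C_n\in\F$'' in the Proposition becomes ``$C_n\in p$'', which is exactly the clause appearing in the corollary, while the remaining combinatorial data (that $\{C_n\}$ is a decreasing sequence of subsets of $F$ satisfying the shift condition $r+C_m\subset C_n$) is unchanged. Thus the equivalence furnished by the Proposition for $\F=p$ is literally the equivalence claimed here, and nothing further is required.

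Because this is a direct reduction, there is no genuine obstacle in the argument; the only point that demands a moment's care is the identification $h(p)=\{p\}$, which rests on the maximality of ultrafilters, together with the observation that idempotency of $p$ is exactly what promotes this singleton to a subsemigroup, thereby unlocking the Proposition.
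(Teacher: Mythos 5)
Your proposal is correct and matches the paper's intent exactly: the corollary is stated without proof precisely because it is the specialization $\F=p$ of the preceding Proposition, and your verification of the hypotheses (that an ultrafilter is a filterdual, that $h(p)=\{p\}$ by maximality, and that idempotency of $p$ makes this singleton a subsemigroup of $\bN$) supplies the details the paper leaves implicit.
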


\section{The set's forcing}
In this section, we discuss the set's forcing. This terminology was first introduced in \cite{BF02},
the idea goes back at least to \cite{EK72} and \cite{G80}.
We say that a subset $F$ of $\N$ \emph{forces $\F$-recurrence} if for every dynamical system $(X,T)$ and
$x\in X$ there exists some
$\F$-recurrent point in $\overline{T^Fx}$, where $T^Fx=\{T^nx: n\in F\}$.

In  \cite{EK72} and \cite{G80}, the authors call a subset $F$ of $\N$ is \emph{big} if
there exists a minimal point in $\overline{Orb(x,\sigma)}\cap[1]$, where $x=\mathbf{1}_F \in\Sigma$.

\begin{prop}[\cite{G80,BF02}]
Let $F$ be a subset of $\N$. Then the following conditions are equivalent:
\begin{enumerate}
\item $F$ is big;
\item $F$ is piecewise syndetic;
\item $F$ forces $\F_s$-recurrence;
\item there exists a minimal left ideal $L$ of $\bN$ such that $\overline{F}\cap L\neq\emptyset$.
\end{enumerate}
\end{prop}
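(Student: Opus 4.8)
The plan is to prove the cycle of equivalences $(1)\Rightarrow(2)\Rightarrow(4)\Rightarrow(3)\Rightarrow(1)$, exploiting the correspondence between dynamical recurrence along $\overline{\mathrm{Orb}}$ and the algebraic structure of $\bN$ developed in the preceding sections. The key conceptual point is that the shift system $(\Sigma_2,\sigma)$ (or $(\overline{\mathrm{Orb}(x,\sigma)},\sigma)$ with $x=\mathbf 1_F$) is universal enough that the abstract dynamical definitions of ``big'' and ``forces $\F_s$-recurrence'' can be translated into statements about minimal left ideals of $\bN$, since $E(\Sigma_2,\sigma)\cong\bZ$ and minimal left ideals of $E$ correspond exactly to minimal subsystems.

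First I would dispose of $(1)\Leftrightarrow(4)$. Recall that $F$ is big means there is a minimal point in $\overline{\mathrm{Orb}(x,\sigma)}\cap[1]$ where $x=\mathbf 1_F$. A minimal point $y$ in $\overline{\mathrm{Orb}(x,\sigma)}$ is exactly a point of a minimal subsystem, and by the material just before Lemma~\ref{lem:p-lim}, minimal subsystems of $(\overline{\mathrm{Orb}(x,\sigma)},\sigma)$ are the images $\varphi_x(L)$ of minimal left ideals $L$ of $E(\Sigma_2,\sigma)\cong\bZ$ under $\varphi_x\colon p\mapsto px$. Intersecting with $\bN$, minimal left ideals of $\bZ$ restrict to minimal left ideals of $\bN$. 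Thus $y\in[1]$ is a minimal point iff $y=px$ for some $p$ in a minimal left ideal $L$ with $px\in[1]$; by Lemma~\ref{lem:p-lim}, $px\in[1]$ means $N(x,[1])=F\in p$, i.e.\ $p\in\overline F$. So $F$ is big iff some minimal left ideal $L$ meets $\overline F$, which is precisely $(4)$.

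Next, $(4)\Leftrightarrow(2)$ is essentially the characterization $h(\F_{ps})=\overline{K(\bN)}$ from Lemma (the correspondence list). Indeed $\overline F\cap L\neq\emptyset$ for some minimal left ideal $L$ is equivalent to $\overline F\cap K(\bN)\neq\emptyset$ (as $K(\bN)$ is the union of all minimal left ideals), and since $\F_{ps}=k(\overline{K(\bN)})$ one checks $F$ is piecewise syndetic iff $\overline F$ meets $\overline{K(\bN)}$; a short argument (using that $\overline{K(\bN)}$ is the closure) upgrades ``meets the closure'' to ``meets $K(\bN)$ itself'' because piecewise syndeticity of $F$ is equivalent to $F$ belonging to some ultrafilter in $K(\bN)$. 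For the implication $(2)\Rightarrow(3)$ I would argue directly: given any system $(X,T)$ and $x\in X$, piecewise syndeticity of $F$ lets one find, inside $\overline{T^Fx}$, a point whose return-time set to every neighborhood is syndetic, i.e.\ an $\F_s$-recurrent (minimal) point; concretely, pick $p\in\overline F\cap K(\bN)$ and consider $px\in\overline{T^Fx}$, which is minimal because $p$ lies in a minimal left ideal, hence $\F_s$-recurrent. Finally $(3)\Rightarrow(1)$ is immediate by specializing the forcing property to the universal example $(\overline{\mathrm{Orb}(\mathbf 1_F,\sigma)},\sigma)$ with the point $x=\mathbf 1_F$: the guaranteed $\F_s$-recurrent (hence minimal) point in $\overline{T^Fx}$ must lie in $[1]$ because $F\in p$ forces $px\in[1]$, giving a minimal point in $\overline{\mathrm{Orb}(x,\sigma)}\cap[1]$.

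The main obstacle I anticipate is the careful bookkeeping in $(4)\Leftrightarrow(2)$, specifically passing between $\overline F\cap\overline{K(\bN)}\neq\emptyset$ and $\overline F\cap K(\bN)\neq\emptyset$. The hull--kernel correspondence gives $F\in\F_{ps}$ iff $\overline F\cap h(\F_{ps})=\overline F\cap\overline{K(\bN)}\neq\emptyset$, but ``big'' and statement $(4)$ refer to an actual minimal left ideal, i.e.\ to $K(\bN)$ rather than its closure. Bridging this requires the standard fact that if $\overline F$ meets the closed set $\overline{K(\bN)}$ then, because $\overline F$ is clopen, $F$ belongs to some ultrafilter arbitrarily close to $K(\bN)$, and piecewise syndeticity is then exactly equivalent to membership in some ultrafilter actually in $K(\bN)$; this is where I would lean on \cite[Theorem 2.8]{HS98} and the translation-invariance of $\F_{ps}$ (Lemma~\ref{lem:filterdual-tran-invar}) to guarantee $h(\F_{ps})$ is a left ideal and that membership can be realized in $K(\bN)$ itself.
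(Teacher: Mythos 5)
The paper never proves this proposition: it is stated as a quoted result from \cite{G80,BF02}, so there is no internal argument to compare yours against; I assess your proof on its own merits and against the paper's neighboring machinery. Your cycle $(1)\Leftrightarrow(4)$, $(4)\Leftrightarrow(2)$, $(2)\Rightarrow(3)$, $(3)\Rightarrow(1)$ does cover all the equivalences, and each step is essentially sound, built from tools the paper itself develops: minimal points of $\overline{Orb(x,\sigma)}$ are exactly the points $px$ with $p$ in a minimal left ideal (via $E(\{0,1\}^{\Z},\sigma)\cong\bZ$ and the left-ideal/subsystem correspondence of Section 4, plus the standard fact, which you elide, that every closed left ideal contains a minimal one); $px\in[1]$ if and only if $F\in p$ by Lemma \ref{lem:p-lim}; $F\in\F_{ps}$ if and only if $\overline{F}\cap\overline{K(\bN)}\neq\emptyset$ by the hull--kernel correspondence together with $h(\F_{ps})=\overline{K(\bN)}$; and ``pick $p\in\overline{F}\cap K(\bN)$ and show $px\in\overline{T^Fx}$'' is exactly the device the paper uses in its own forcing results in Section 5. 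Note also a shortcut available inside the paper: its general (unnumbered) theorem of Section 5, that $F$ forces $\F$-recurrence if and only if $F$ lies in some $\F$-recurrent ultrafilter, applied to $\F=\F_s$ gives $(3)\Leftrightarrow(4)$ at once, since the $\F_s$-recurrent (i.e.\ minimal) points of $(\bZ,\lambda_1)$ are precisely the points of minimal left ideals.

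Two blemishes, neither fatal. First, the step you single out as the main obstacle, passing from $\overline{F}\cap\overline{K(\bN)}\neq\emptyset$ to $\overline{F}\cap K(\bN)\neq\emptyset$, needs none of the apparatus you invoke (translation invariance, \cite[Theorem 2.8]{HS98}), and your phrasing ``piecewise syndeticity is then exactly equivalent to membership in some ultrafilter actually in $K(\bN)$'' is circular, since that equivalence is the very thing being proved. The correct reason is the one you half-state: $\overline{F}$ is open in $\bN$, and an open set meets a set if and only if it meets its closure, so meeting $\overline{K(\bN)}$ already forces meeting $K(\bN)$. Second, in $(3)\Rightarrow(1)$ your justification that the guaranteed $\F_s$-recurrent point lies in $[1]$ (``because $F\in p$ forces $px\in[1]$'') appeals to an ultrafilter that plays no role at that step; the right reason is simply that $\overline{\sigma^F x}\subset[1]$, because $[1]$ is closed and $\sigma^n x\in[1]$ for every $n\in F$.
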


Let $\F$ be a family, denote by $\ff$ the collection of all sets that force $\F$-recurrence.
Clearly, $\ff$ is a family. It is easy to see that $\ff$ is not empty if and only if
there exists some $\F$-recurrent point in $(\bZ,\lambda_1)$.

\begin{thm}
Let $\F$ be a family and $F$ be a subset of $\N$.
Then $F\in \ff$ if and only if there exists an $\F$-recurrent point $p\in \bN$
such that $F\in p$.
\end{thm}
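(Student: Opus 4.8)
The plan is to exploit the universal system $(\bZ,\lambda_1)$ together with the factor maps $\Phi_x\colon(\bZ,\lambda_1)\to(\overline{Orb(x,T)},T)$, $q\mapsto qx$, which serve as the bridge between ultrafilters and arbitrary orbit closures. The two directions then split naturally: sufficiency is a ``push forward'' of a fixed $\F$-recurrent ultrafilter into every system, while necessity is a single ``test'' of the forcing hypothesis against $(\bZ,\lambda_1)$ at its identity element $0$. Throughout, ``$\F$-recurrent $p\in\bN$'' is read in the system $(\bZ,\lambda_1)$, consistent with the rest of this section.

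For sufficiency, suppose $p\in\bN$ is $\F$-recurrent and $F\in p$. Given an arbitrary system $(X,T)$ and $x\in X$, I would take the candidate point to be $px\in X$ (defined since $p\in\bN\subset\bZ$). First I would check $px\in\overline{T^Fx}$: for every open neighborhood $U$ of $px$, Lemma~\ref{lem:p-lim} gives $N(x,U)\in p$, and since $F\in p$ and $p$ is a filter we get $F\cap N(x,U)\in p$, in particular nonempty; thus some $n\in F$ has $T^nx\in U$, so $T^Fx$ meets every neighborhood of $px$. Next I would check that $px$ is $\F$-recurrent: since $\Phi_x$ is a factor map with $\Phi_x(p)=px$ and $p$ is $\F$-recurrent, the fact recorded earlier that a factor map carries $\F$-recurrent points to $\F$-recurrent points yields that $px$ is $\F$-recurrent in $(\overline{Orb(x,T)},T)$, hence in $(X,T)$. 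This produces the required $\F$-recurrent point in $\overline{T^Fx}$, so $F\in\ff$.

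For necessity, suppose $F\in\ff$. The idea is to feed the universal system $(\bZ,\lambda_1)$ and the base point $0$ into the definition of forcing; this is legitimate because $\bZ$ is compact Hausdorff and $\lambda_1$ is continuous. Since $\lambda_1^n(0)=n$, one has $T^F\cdot 0=\{n:n\in F\}=F$ as a subset of $\N\subset\bZ$, whence $\overline{T^F\cdot 0}=\overline{F}=\{q\in\bZ:F\in q\}$. Forcing then supplies an $\F$-recurrent point $p\in\overline{F}$, so $F\in p$. Finally, because $F\subset\N$ contains no $0$, we have $F\notin 0$, so $0\notin\overline{F}$ and hence $p\in\bZ\setminus\{0\}=\bN$. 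This is exactly an $\F$-recurrent $p\in\bN$ with $F\in p$.

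The genuinely load-bearing step is the membership $px\in\overline{T^Fx}$ in the sufficiency direction: it is here, and only here, that the hypothesis $F\in p$ is used, and it works precisely because intersecting the entering-time set $N(x,U)$ with $F$ inside the filter $p$ keeps the result nonempty. Everything else is bookkeeping, namely that $(\bZ,\lambda_1)$ is a legitimate test system and that $\overline{F}=\{q:F\in q\}\subset\bN$ by the standard Stone-\v Cech description of closures. I expect no deeper structure of $\F$ (such as its being a filterdual) to be required, since the statement treats $\F$ as an arbitrary family.
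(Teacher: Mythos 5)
Your proof is correct and takes essentially the same route as the paper: necessity by testing the forcing hypothesis on $(\bZ,\lambda_1)$ at the point $0$ so that the $\F$-recurrent point lands in $\overline{F}$, and sufficiency by pushing $p$ forward through the factor map $\Phi_x$ to $y=px$, using Lemma~\ref{lem:p-lim} and $F\in p$ to get $y\in\overline{T^Fx}$. The only differences are cosmetic: you spell out why the recurrent point avoids the principal ultrafilter $0$ and hence lies in $\bN$, a detail the paper leaves implicit.
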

\begin{proof}
Let $F\in \ff$. Consider the system $(\bZ,\lambda_1)$ and $0\in \bZ$,
since $F$ forces $\F$-recurrence, there exists an $\F$-recurrent point
$p\in \overline{(\lambda_1)^F0}=\overline{\{(\lambda_1)^n0: n\in F\}}=\overline{F}$.
Thus, $F\in p$.

Conversely, assume that there exists an $\F$-recurrent point $p\in \bN$ such that $F\in p$.
For every dynamical system $(X,T)$ and $x\in X$,
consider the factor map $\Phi_x: (\bZ,\lambda_1)\to (\overline{Orb(x,T)},T)$.
Let $y=px$. Then $y$ is $\F$-recurrent. Then it suffices to show that $y\in \overline{T^Fx}$.
For every open neighborhood $U$ of $y$, $N(x,U)\in p$. Since $F\in p$, $N(x,U)\cap F\neq\emptyset$,
thus $y\in \overline{T^Fx}$.
\end{proof}

\begin{cor}
Let $\F$ be a family. Then
\[h(\ff)=\overline{\bigcup\{\bZ+p: p \textrm{ is an }\F\textrm{-recurrent point}\}}.\]
\end{cor}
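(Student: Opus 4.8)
The plan is to prove the two inclusions separately, using only the preceding Theorem, which says that $F\in\ff$ precisely when some $\F$-recurrent point $p\in\bN$ (in the system $(\bZ,\lambda_1)$) satisfies $F\in p$. Write $R$ for the set of all such $\F$-recurrent points, so that the right-hand side is $\overline{\bigcup_{p\in R}(\bZ+p)}$. Before starting I would record that $h(\ff)$ is closed: by the Theorem $\ff$ has the Ramsey property (if $F_1\cup F_2$ lies in an $\F$-recurrent ultrafilter $p$, then $F_1\in p$ or $F_2\in p$), so $\ff$ is a filterdual and its hull is closed; directly, one also has $h(\ff)=\bigcap_{F\notin\ff}\overline{\N\setminus F}$. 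If $R=\emptyset$ both sides are empty, so I may assume $R\neq\emptyset$.

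For the inclusion $\overline{\bigcup_{p\in R}(\bZ+p)}\subset h(\ff)$, since $h(\ff)$ is closed it suffices to show $q+p\subset\ff$ for every $q\in\bZ$ and $p\in R$. Fix $F\in q+p$. If $q=0$, then $F\in p$ and $F\in\ff$ immediately. If $q\in\bN$, then $\{n\in\N:-n+F\in p\}\in q$ is nonempty; picking $n_0$ in it gives $-n_0+F\in p$, i.e. $F\in n_0+p$. The key point is that $n_0+p=(\lambda_1)^{n_0}p$ is again $\F$-recurrent, because the image of an $\F$-recurrent point under $T=\lambda_1$ is $\F$-recurrent (applied $n_0$ times). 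Hence $F$ belongs to an $\F$-recurrent ultrafilter, so $F\in\ff$ by the Theorem.

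For the reverse inclusion, take $r\in h(\ff)$, i.e. $r\subset\ff$. The sets $\overline{A}$ with $A\in r$ form a neighborhood base at $r$, so I only need each of them to meet $\bigcup_{p\in R}(\bZ+p)$. Given $A\in r\subset\ff$, the Theorem supplies an $\F$-recurrent point $p$ with $A\in p$; then $p\in\overline{A}$ and $p=0+p\in\bZ+p$, so $p\in\overline{A}\cap\bigl(\bigcup_{p'\in R}(\bZ+p')\bigr)$. Therefore every neighborhood of $r$ meets that union, and $r\in\overline{\bigcup_{p\in R}(\bZ+p)}$.

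I expect the only genuine step to be the case $q\in\bN$ of the first inclusion: one must convert the abstract sum $q+p$ into an honest integer translate $n_0+p$ and invoke the stability of $\F$-recurrence under the shift $\lambda_1$. Everything else is bookkeeping with the definition of the hull together with a direct appeal to the Theorem.
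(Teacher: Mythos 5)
Your proof is correct. The paper states this corollary without proof, as an immediate consequence of the preceding theorem, and your argument is exactly the natural derivation: the only substantive step is the one you isolate, namely converting $F\in q+p$ into $F\in n_0+p$ for some integer $n_0$ and using that $n_0+p=(\lambda_1)^{n_0}p$ is again $\F$-recurrent, so that the theorem applies; the reverse inclusion via the neighborhood base $\{\overline{A}:A\in r\}$ and $p=0+p$ is likewise the intended bookkeeping.
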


\begin{prop}\label{prop:ff-prop}
Let $\F$ be a family. If $\ff$ is not empty,
then $\ff$ is a filterdual and $\ff=b(\ff)\subset b\F$.
\end{prop}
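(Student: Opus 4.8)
The plan is to prove the statement in three independent pieces: that $\ff$ is a filterdual, that $\ff=b(\ff)$, and that $\ff\subset b\F$. The last two combine to give $\ff=b(\ff)\subset b\F$, since $b$ is monotone and idempotent. Throughout I would lean on the theorem just proved, namely that $F\in\ff$ if and only if some $\F$-recurrent ultrafilter $p\in\bN$ (in $(\bZ,\lambda_1)$) contains $F$.

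First, for the filterdual property, recall that a proper family is a filterdual exactly when it has the Ramsey property. Properness is easy: $\ff\neq\emptyset$ by hypothesis, while $\emptyset\notin\ff$ because $\overline{T^\emptyset x}=\emptyset$ holds no recurrent point, so $\ff\neq\mathcal P$. For the Ramsey property, suppose $F_1\cup F_2\in\ff$; by the characterization theorem there is an $\F$-recurrent $p\in\bN$ with $F_1\cup F_2\in p$, and since $p$ is an ultrafilter (hence itself a filterdual) one of $F_1,F_2$ lies in $p$; that $F_i$ is then in $\ff$ again by the characterization theorem. Next, $\ff\subset b\F$ is immediate from the machinery: given $F\in\ff$, pick an $\F$-recurrent $p\ni F$; Lemma \ref{lem:F-rec-bN}(2) gives $p\subset b\F$, so $F\in b\F$. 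Since $\G\subset b\G$ and $b(b\G)=b\G$ for every family $\G$, it remains only to prove the reverse inclusion $b(\ff)\subset\ff$ to conclude $\ff=b(\ff)$, and hence the full chain $\ff=b(\ff)\subset b\F$.

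The inclusion $b(\ff)\subset\ff$ is the crux, and I expect it to be the main obstacle. I would argue directly from the dynamical definition of forcing rather than through the ultrafilter characterization. Take $F\in b(\ff)$, so there are $F'\in\ff$ and non-negative integers $a_n$ with $a_n+(F'\cap[1,n])\subset F$ for every $n$. Given any system $(X,T)$ and any $x\in X$, let $y$ be a cluster point of the sequence $(T^{a_n}x)_n$ in the compact space $X$. For a fixed $k\in F'$ one has $a_n+k\in F$ whenever $n\geq k$, so $T^{k}(T^{a_n}x)=T^{a_n+k}x\in\overline{T^Fx}$ for cofinally many $n$; by continuity of $T^k$ and closedness of $\overline{T^Fx}$, the cluster point $T^k y$ lies in $\overline{T^Fx}$. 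Hence $T^{F'}y\subset\overline{T^Fx}$, so $\overline{T^{F'}y}\subset\overline{T^Fx}$; and since $F'$ forces $\F$-recurrence there is an $\F$-recurrent point in $\overline{T^{F'}y}\subset\overline{T^Fx}$. As $(X,T)$ and $x$ were arbitrary, $F\in\ff$.

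The delicate point to get right in that last paragraph is the passage to the cluster point in a possibly non-metrizable $X$: one must argue with nets, using that every sequence in a compact space has a cluster point, that cluster points are carried by the continuous maps $T^k$ to cluster points of the image sequence, and that a cluster point of a sequence eventually lying in the closed set $\overline{T^Fx}$ belongs to $\overline{T^Fx}$. Everything else is bookkeeping with the block-family definition and the already-established characterization of $\ff$.
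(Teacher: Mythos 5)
Your proof is correct, and it is a hybrid of the paper's argument and a genuinely different route. The crux, $b(\ff)\subset\ff$, is essentially the paper's own proof: the paper likewise takes a limit $y$ of a convergent subnet of $(T^{a_n}x)$, applies the forcing property of $F'$ at the point $y$, and places the resulting $\F$-recurrent point inside $\overline{T^Fx}$ by the same continuity/cofinality estimate; your only reorganization is to establish the containment $T^{F'}y\subset\overline{T^Fx}$ first and then pick the recurrent point, rather than picking the point first and verifying membership afterwards, and your explicit care with cluster points of sequences in a possibly non-metrizable $X$ matches what the paper does implicitly with subnets. The other two assertions you prove differently: you funnel both through the ultrafilter characterization of $\ff$ established in the theorem immediately preceding the proposition ($F\in\ff$ iff some $\F$-recurrent $p\in\bN$ contains $F$), deducing the Ramsey property from the fact that the ultrafilter $p$ containing $F_1\cup F_2$ must contain some $F_i$, and deducing $\ff\subset b\F$ by combining that theorem with Lemma \ref{lem:F-rec-bN}(2). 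The paper instead argues dynamically in both places: for the Ramsey property it takes putative counterexample systems $(X,T)$, $(Y,S)$ for $F_1$, $F_2$ and derives a contradiction in the product system $(X\times Y,T\times S)$, using that projections of $\F$-recurrent points are $\F$-recurrent; for $\ff\subset b\F$ it runs a direct symbolic argument with $x=\mathbf{1}_F\in\{0,1\}^{\Z}$, reading off $F\in b\F$ from a recurrent point in $\overline{\sigma^F x}\cap[1]$. Your route is shorter, reuses machinery already paid for, and has the incidental merit of checking properness of $\ff$ explicitly (needed for ``filterdual''; note $\emptyset\notin\ff$), a point the paper leaves tacit. What the paper's route buys is a proof that stays entirely on the dynamical side: the product-system trick is the standard reusable device for Ramsey-type properties of dynamically defined families, and its symbolic argument for $\ff\subset b\F$ avoids even the dependence on Lemma \ref{lem:F-rec-bN}, whose own proof is just the $(\bZ,\lambda_1)$ incarnation of that same argument.
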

\begin{proof}
Let $F\in\ff$ and $F=F_1\cup F_2$. If both $F_1$ and $F_2$ are not in $\ff$,
then there exist two dynamical systems $(X,T)$ , $(Y,S)$ and two points $x\in X$, $y\in Y$
such that both $\overline{T^{F_1}x}$ and $\overline{S^{F_2}x}$ contain no $\F$-recurrent points.
Consider the system $(X\times Y, T\times S)$ and $(x,y)\in X\times Y$.
Since $F$ forces $\F$-recurrence, there exists an $\F$-recurrent point
\[(z_1,z_2)\in \overline{(T\times S)^F(x,y)}=
\overline{(T\times S)^{F_1}(x,y)}\bigcup\overline{(T\times S)^{F_2}(x,y)}.\]
Without loss of generality, assume that $(z_1,z_2)\in \overline{(T\times S)^{F_1}(x,y)}$.
Then $z_1\in \overline{T^{F_1}x}$ and $z_1$ is $\F$-recurrent, this is a contradiction.
Thus, $\ff$ is a filterdual.

Let $F\in b(\ff)$. Then there exists a sequence $\{a_n\}$ in $\Z$  and $F'\in \ff$
such that $\bigcup_{n=1}^\infty(a_n+ F'\cap [1,n])\subset F$.
Let $(X,T)$ be a dynamical system and $x\in X$.
Since $X$ is compact, there is a subnet $\{a_{n_i}\}$ of $\{a_n\}$
such that $\lim T^{a_{n_i}}x=y$.
Since $F$ forces $\F$-recurrence, there exists an $\F$-recurrent point $z\in \overline{T^{F'}y}$.
Then it suffices to show that $z\in \overline{T^F x}$.
For every open neighborhood $U$ of $z$, there exists $k\in F'$ such that $T^k y\in U$.
By the continuity of $T$, choose an open neighborhood $V$ of $y$ such that $T^k V\subset U$.
Since $\lim T^{a_{n_i}}x=y$ and $\{a_{n_i}\}$ is a subnet of $\{a_n\}$,
there exists some $n\geq k$ such that $T^{a_n} x\in V$.
Then $a_n+k\in F$ and $T^{a_n+k}x\in U$, so $z\in\overline{T^Fx}$.

Let $F\in \ff$, we show that $F\in b\F$. Let $x=\mathbf{1}_F\in \{0,1\}^\Z$.
Since $F$ forces $\F$-recurrence, there exists an $\F$-recurrent point
$y\in \overline{T^Fx}$.
Clearly, $y\in[1]$ and $N(x,[1])=F$. Let $N(y,[1])=F'$. Then $F'\in\F$.
For every finite subset $W$ of $F'$, by the continuity of $\sigma$,
there exists an open neighborhood $U$ of $y$ such that
$\sigma^n(U)\subset [1]$ for every $n\in W$.
Since $y\in \overline{Orb(x,\sigma)}$,
choose $m\in\Z$ such that $\sigma^mx\in U$, then $m+W\subset N(x,[1])$.
So $F\in b\F$.
\end{proof}

\begin{thm}\label{thm:forc-F-rec}
Let $\F$ be a filterdual and $F$ be a subset of $\N$.
Suppose that $h(\F)$ is a subsemigroup of $\bN$.
Then the following conditions are equivalent:
\begin{enumerate}
\item \label{enum:F-force-F-rec}$F$ forces $\F$-recurrence;
\item \label{enum:F-in-Sigma} let $x=\mathbf{1}_F\in\{0,1\}^{\Z}$, there exists an $\F$-recurrent point in
        $\overline{Orb(x, \sigma)}\bigcap [1]$;
\item \label{enum:F-block-ess} $F$ is a block essential $\F$-set, i.e., $F\in b\widetilde{\F}$.
\end{enumerate}
\end{thm}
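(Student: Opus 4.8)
The plan is to prove the cycle of implications condition~(\ref{enum:F-force-F-rec}) $\Rightarrow$ (\ref{enum:F-in-Sigma}) $\Rightarrow$ (\ref{enum:F-block-ess}) $\Rightarrow$ (\ref{enum:F-force-F-rec}), so that each step can draw on the characterizations of $\F$-recurrence, of the block family, and of $\ff$ already developed above. For (\ref{enum:F-force-F-rec}) $\Rightarrow$ (\ref{enum:F-in-Sigma}) I would simply feed the system $(\{0,1\}^{\Z},\sigma)$ and the point $x=\mathbf{1}_F$ into the definition of forcing, obtaining an $\F$-recurrent point $y\in\overline{\sigma^Fx}$. Since $x(n)=1$ for every $n\in F$, each $\sigma^nx$ with $n\in F$ lies in the clopen cylinder $[1]$, so $\overline{\sigma^Fx}\subset\overline{Orb(x,\sigma)}\cap[1]$; thus $y$ is the required point and this step is immediate.

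For (\ref{enum:F-in-Sigma}) $\Rightarrow$ (\ref{enum:F-block-ess}), let $y\in\overline{Orb(x,\sigma)}\cap[1]$ be $\F$-recurrent. The first move is to apply Theorem \ref{thm:F-rec-ide} (this is where the hypothesis that $h(\F)$ is a subsemigroup enters) to conclude that $y$ is in fact $\widetilde\F$-recurrent; this upgrade is precisely what will yield membership in $b\widetilde\F$ rather than merely in $b\F$. Putting $F'=N(y,[1])$, the $\widetilde\F$-recurrence of $y$ gives $F'\in\widetilde\F$. To witness $F\in b\widetilde\F$ I would take an arbitrary finite $W\subset F'$, form the basic neighborhood $U=\{z:z(n)=1\text{ for all }n\in W\}$ of $y$, and use $y\in\overline{Orb(x,\sigma)}$ to pick $m$ with $\sigma^mx\in U$; then $x(m+n)=1$ for all $n\in W$, i.e.\ $m+W\subset F$. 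This is exactly the definition of $F\in b\widetilde\F$, and the computation parallels the block argument already used in the proof of Proposition \ref{prop:ff-prop}.

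For (\ref{enum:F-block-ess}) $\Rightarrow$ (\ref{enum:F-force-F-rec}) I would first record the inclusion $\widetilde\F\subset\ff$: given $F'\in\widetilde\F$, choose an idempotent $u\in h(\F)$ with $F'\in u$; by Lemma \ref{lem:F-rec-bN}(1) the point $u$ is $\F$-recurrent in $(\bZ,\lambda_1)$, so the characterization of $\ff$ (that $F'\in\ff$ exactly when some $\F$-recurrent $p\in\bN$ contains $F'$) gives $F'\in\ff$. Since $h(\F)$ is a nonempty closed subsemigroup, Ellis--Namakura furnishes an idempotent, so $\ff\neq\emptyset$ and Proposition \ref{prop:ff-prop} applies to give $\ff=b\ff$. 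As the block operation $b$ is monotone, $b\widetilde\F\subset b\ff=\ff$; hence $F\in b\widetilde\F$ implies $F\in\ff$, that is, $F$ forces $\F$-recurrence.

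I expect the genuine content to sit in the implication (\ref{enum:F-in-Sigma}) $\Rightarrow$ (\ref{enum:F-block-ess}): the crucial point is to recognize that Theorem \ref{thm:F-rec-ide} must be invoked to replace $\F$ by $\widetilde\F$ \emph{before} the block computation is carried out, since the block argument on its own delivers only $F\in b\F$. The other two implications are routine unwindings of definitions combined with the already-established properties of $\ff$ and the lemmas relating idempotents of $h(\F)$ to $\F$-recurrence in $(\bZ,\lambda_1)$.
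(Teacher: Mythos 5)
Your proposal is correct and takes essentially the same approach as the paper's proof: the same cycle of implications, the same invocation of Theorem \ref{thm:F-rec-ide} to upgrade the $\F$-recurrent point in $\overline{Orb(x,\sigma)}\cap[1]$ to an $\widetilde{\F}$-recurrent one before the block computation, and the same reduction of the final implication to Proposition \ref{prop:ff-prop} together with the inclusion $\widetilde{\F}\subset \ff$. The only difference is bookkeeping: you get $\widetilde{\F}\subset\ff$ by citing Lemma \ref{lem:F-rec-bN}(1) and the Section 5 characterization of $\ff$, whereas the paper reruns that same idempotent argument (take $u\in h(\F)$ with $F\in u$ and set $y=ux$) inline.
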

\begin{proof}

(\ref{enum:F-force-F-rec}) $\Rightarrow$ (\ref{enum:F-in-Sigma})
Let $x=\mathbf{1}_F\in\{0,1\}^{\Z}$.
Since $F$ forces $\F$-recurrence,
there exists an $\F$-recurrent point $y$ in $\overline{\sigma^Fx}\subset [1]$.

(\ref{enum:F-in-Sigma}) $\Rightarrow$ (\ref{enum:F-block-ess})
Choose an $\F$-recurrent point $y$ in $\overline{Orb(x, \sigma)}\bigcap [1]$.
By Theorem \ref{thm:F-rec-ide} $y$ is also $\widetilde{\F}$-recurrent.
Since $N(x,[1])=F$ and $N(y,[1])\in \widetilde{\F}$,
by the continuity of $\sigma$ we have $F\in b\widetilde{\F}$.

(\ref{enum:F-block-ess}) $\Rightarrow$ (\ref{enum:F-force-F-rec})
By Proposition \ref{prop:ff-prop}, it suffices to show every essential $\F$-set forces $\F$-recurrence.
Let $F\in \widetilde{\F}$. Then there exists an idempotent $u\in h(\F)$ such that $F\in u$.
Let $(X,T)$ be a dynamical system and $x\in X$. Let $y=ux$. Then $uy=y$, so $y$ is $\F$-recurrent.
For every open neighborhood $U$ of $y$, $N(x,U)\in u$. Since $F\in u$, $F\cap N(x,U)\neq\emptyset$,
thus $y\in \overline{T^Fx}$.
\end{proof}

\begin{cor}
Let $\F$ be a filterdual and $F$ be a subset of $\N$.
Suppose that $h(\F)$ is a subsemigroup of $\bN$.
Let $(X,T)$ be a dynamical system and $x\in X$.
Then $x$ is the unique $\F$-recurrent point in $(X,T)$ if and only if
for every $y\in X$, $\kappa(b\widetilde{\F})$-$\lim T^n y=x$.
\end{cor}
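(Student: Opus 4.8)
The plan is to reduce the whole statement to two facts already available: the identity $\ff = b\widetilde{\F}$ furnished by Theorem \ref{thm:forc-F-rec}, and the elementary dual-family observation that for \emph{any} family $\G$ one has $A\in\kappa\G$ if and only if $\N\setminus A\notin\G$ (this uses only that $\G$ is hereditary upward). Applying the latter with $\G=b\widetilde{\F}$, the condition ``$\kappa(b\widetilde{\F})$-$\lim T^n y=x$'' unwinds, for a fixed $y$, into the statement that for every open neighborhood $U$ of $x$ the set $N(y,X\setminus U)=\N\setminus N(y,U)$ does \emph{not} force $\F$-recurrence, i.e. $N(y,X\setminus U)\notin b\widetilde{\F}$. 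I would work with this reformulation throughout.

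For the forward implication, I would assume $x$ is the unique $\F$-recurrent point, fix $y\in X$ and an open $U\ni x$, and argue by contradiction: if $F:=N(y,X\setminus U)$ forced $\F$-recurrence, then applying the very definition of forcing to the system $(X,T)$ at the point $y$ would yield an $\F$-recurrent point $z\in\overline{T^F y}$. Since $T^n y\in X\setminus U$ for all $n\in F$ and $X\setminus U$ is closed, $z\in X\setminus U$, so $z\neq x$, contradicting uniqueness. Hence $N(y,X\setminus U)\notin b\widetilde{\F}$ for every such $U$, which is exactly $\kappa(b\widetilde{\F})$-$\lim T^n y=x$.

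For the converse, I would first prove uniqueness. If $z$ is any $\F$-recurrent point with $z\neq x$, choose by Hausdorffness disjoint open sets $U\ni x$ and $V\ni z$. By Theorem \ref{thm:F-rec-ide} there is an idempotent $u\in h(\F)$ with $uz=z$; by Lemma \ref{lem:p-lim}, $uz=u\textrm{-}\lim T^n z=z$, so $N(z,V)\in u$, which exhibits $N(z,V)$ as an essential $\F$-set, i.e. $N(z,V)\in\widetilde{\F}\subset b\widetilde{\F}$. Since $V\subset X\setminus U$ and $b\widetilde{\F}$ is hereditary upward, $N(z,X\setminus U)\in b\widetilde{\F}$; but the hypothesis applied with $y=z$ gives $N(z,X\setminus U)\notin b\widetilde{\F}$, a contradiction, so $z=x$.

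The step I expect to require the most care is showing that $x$ is \emph{actually} $\F$-recurrent, rather than merely the only candidate, without circularly assuming its recurrence. The plan is to produce an $\F$-recurrent point of $X$ independently of $x$: since $h(\F)$ is a closed, hence compact, subsemigroup of $\bN$, the Ellis--Namakura theorem provides an idempotent $u_0\in h(\F)$, and for any $w\in X$ the point $z_0=u_0 w$ satisfies $u_0 z_0=z_0$ and so is $\F$-recurrent by Theorem \ref{thm:F-rec-ide}. By the uniqueness just proved, $z_0=x$, so $x$ is $\F$-recurrent and is the unique such point. The only subtlety to watch is keeping this existence argument routed through an arbitrary idempotent of $h(\F)$ so that it never presupposes the recurrence of $x$ itself.
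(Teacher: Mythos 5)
Your proof is correct, and its two main steps travel essentially the same road as the paper's own proof: your forward direction is the paper's argument run in contrapositive form (the paper invokes Theorem \ref{thm:forc-F-rec} positively --- every $F\in b\widetilde{\F}$ forces $\F$-recurrence, so by uniqueness the forced recurrent point in $\overline{T^Fy}$ is $x$, hence $N(y,U)$ meets every member of $b\widetilde{\F}$ --- whereas you show that the complement $N(y,X\setminus U)$ cannot force $\F$-recurrence; these coincide via your observation that $A\in\kappa\G$ iff $\N\setminus A\notin\G$), and your uniqueness step is exactly the paper's disjoint-neighborhood contradiction, except that you re-derive $N(z,V)\in\widetilde{\F}$ from an idempotent together with Lemma \ref{lem:p-lim} instead of directly quoting the implication ``$\F$-recurrent $\Rightarrow$ $\widetilde{\F}$-recurrent'' from Theorem \ref{thm:F-rec-ide}. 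The one genuine difference is your final Ellis--Namakura step, and it is a real improvement: the paper's proof of the converse only rules out $\F$-recurrent points other than $x$ and never verifies that $x$ itself is $\F$-recurrent, which the phrase ``the unique $\F$-recurrent point'' requires. Your argument --- take an idempotent $u_0\in h(\F)$, which exists because $h(\F)$ is a nonempty closed (hence compact) right topological subsemigroup of $\bN$; set $z_0=u_0w$ for arbitrary $w\in X$; note $u_0z_0=z_0$, so $z_0$ is $\F$-recurrent by Theorem \ref{thm:F-rec-ide}; and conclude $z_0=x$ from the uniqueness already proved --- closes this gap correctly and without circularity. (Equivalently, one could note that $\N\in\widetilde{\F}\subset b\widetilde{\F}$, so $\N$ forces $\F$-recurrence and $X$ must contain some $\F$-recurrent point.) So your proposal is, if anything, slightly more complete than the proof in the paper.
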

\begin{proof}
Since $\F$ is a filterdual, $\kappa(b\widetilde{\F})$ is a filter.
If $x$ is the unique $\F$-recurrent point,
then by Theorem \ref{thm:forc-F-rec} for every $y\in X$ and every $F\in b\widetilde{\F}$
we have $x\in \overline{T^Fy}$, so $\kappa(b\widetilde{\F})$-$\lim T^n y=x$.

Conversely, assume that there exists another $\F$-recurrent point $y\in X$.
Choose open subsets $U$, $V$ of $X$ such that $x\in U$, $y\in V$ and $U\cap V=\emptyset$.
Then $N(y,U)\in \kappa(b\widetilde{\F})$ and $N(y, V)\in \widetilde{\F}\subset b\widetilde{\F}$.
Thus, $N(y,U)\cap N(y,V)\neq\emptyset$. This is a contradiction.
\end{proof}

\begin{rem}
(1) Since $\F_{ip}=\widetilde{\F_{ip}}$, $b\F_{ip}=b\widetilde{\F_{ip}}$.
Then a subset $F$ of $\N$ forces recurrence if and only if $F\in b\F_{ip}$ (\cite{BF02}).

(2) It is shown in \cite{S04} that a subset $F$ of $\N$ forces $\F_{pubd}$-recurrence if and only if $F\in \F_{pubd}$,
i.e., $b\widetilde{\F_{pubd}}=\F_{pubd}$. For completeness, we include a proof.
Let $F\in \F_{pubd}$ and $x=\mathbf{1}_F\in\{0,1\}^\Z$.
By \cite[Lemma 3.17]{F81}, there exists a $\sigma$-invariant measure $\mu$ such that
$\mu(\overline{Orb(x,\sigma)}\bigcap [1])>0$. By the ergodic decomposition theorem, choose
an ergodic $\sigma$-invariant measure $\nu$ such that $\nu(\overline{Orb(x,\sigma)}\bigcap [1])>0$.
Then pick a generic point $y$ in $\overline{Orb(x,\sigma)}\bigcap [1]$ for $\nu$ and so
$y$ is $\F_{pubd}$-recurrent (\cite[pp. 62-64]{F81}).
Thus, $F$ forces $\F_{pubd}$-recurrent.
\end{rem}

It is interesting that central sets also have some kind of forcing.

\begin{prop}
Let $F$ be a subset of $\N$. Then the following conditions are equivalent:
\begin{enumerate}
\item $F$ is central;
\item let $x=\mathbf{1}_F\in\{0,1\}^\Z$, there exists some minimal point $y\in\overline{Orb(x,\sigma)}\cap[1]$
such that $x$, $y$ are proximal;
\item for every dynamical system $(X,T)$ and $x\in X$ there exists some minimal point
$y\in \overline{T^Fx}$ such that $x$, $y$ are proximal.
\end{enumerate}
\end{prop}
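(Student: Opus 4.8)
The plan is to prove the cycle of implications $(3)\Rightarrow(2)\Rightarrow(1)\Rightarrow(3)$, with essentially all of the content sitting in the last step. For $(3)\Rightarrow(2)$ I would simply instantiate the hypothesis at the shift system $(\{0,1\}^{\Z},\sigma)$ with $x=\mathbf{1}_F$. For each $n\in F$ one has $(\sigma^n x)(0)=x(n)=1$, so $\sigma^F x\subset[1]$, and since $[1]$ is clopen this gives $\overline{\sigma^F x}\subset[1]$; as also $\overline{\sigma^F x}\subset\overline{Orb(x,\sigma)}$, any minimal point $y\in\overline{\sigma^F x}$ proximal to $x$ lands in $\overline{Orb(x,\sigma)}\cap[1]$, which is exactly $(2)$. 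The implication $(2)\Rightarrow(1)$ is then immediate from the definition of a central set: take the system $(\{0,1\}^{\Z},\sigma)$, the point $x=\mathbf{1}_F$, the minimal proximal point $y$, and the (cl)open neighborhood $U=[1]$ of $y$. Since $N(x,[1])=\{n\in\N: x(n)=1\}=F$, the set $F$ is central by definition.

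The heart of the matter is $(1)\Rightarrow(3)$. By the Bergelson--Hindman characterization \cite{BH90}, centrality of $F$ furnishes a minimal idempotent $p\in\bN$ with $F\in p$. Given an arbitrary system $(X,T)$ and $x\in X$, I would set $y=px$. Because $p$ is idempotent, $py=ppx=px=y$, so $px=py=y$ with $p$ a minimal idempotent; Proposition \ref{prop:min-id-cen} (the passage from its condition~(2) to its condition~(1)) then delivers at once that $y$ is a minimal point and that $x,y$ are proximal. It remains only to place $y$ inside $\overline{T^F x}$, and this is where the combinatorial input $F\in p$ is used.

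For that last point I would invoke Lemma \ref{lem:p-lim}, which gives $px=p\textrm{-}\lim T^n x$, so that $N(x,U)\in p$ for every open neighborhood $U$ of $y$. Since $F\in p$ and $p$ is an ultrafilter, $N(x,U)\cap F\in p$ and in particular is nonempty, i.e.\ some $T^n x$ with $n\in F$ lies in $U$; as $U$ was arbitrary, $y\in\overline{T^F x}$. This closes the cycle. The only genuinely delicate step is precisely this verification that $y\in\overline{T^F x}$: minimality and proximality are handed to us mechanically by Proposition \ref{prop:min-id-cen}, so the real content is that the membership $F\in p$ of the \emph{combinatorial} set in the idempotent forces the limit point $y=px$ to be approached along $F$ itself, which is nothing but the filter intersection property $N(x,U)\cap F\neq\emptyset$.
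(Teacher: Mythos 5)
Your proof is correct and follows essentially the same route as the paper: the easy implications $(3)\Rightarrow(2)\Rightarrow(1)$ via the shift system with $U=[1]$ and $N(x,[1])=F$, and $(1)\Rightarrow(3)$ by taking a minimal idempotent $u$ with $F\in u$, setting $y=ux$ so that $ux=uy=y$ gives minimality and proximality, and then using $F\in u$ together with $N(x,U)\in u$ to conclude $y\in\overline{T^Fx}$. The only difference is cosmetic: you cite Lemma \ref{lem:p-lim} and Proposition \ref{prop:min-id-cen} explicitly where the paper uses them silently.
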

\begin{proof}
(2) $\Rightarrow$ (1) follows from the definition of central sets and $N(x,[1])=F$.

(3) $\Rightarrow$ (2) follows from $\overline{T^Fx}\subset [1]$.

(1) $\Rightarrow$ (3) If $F$ is central, then there exists a minimal idempotent $u\in\bN$ such that $F\in u$.
Let $(X,T)$ be a dynamical system and $x\in X$. Let $y=ux$. Then $ux=uy=y$,
so $y$ is a minimal point and $x$, $y$ are proximal. Thus it suffices to show that $y\in \overline{T^Fx}$.
For every open neighborhood $U$ of $y$, $N(x,U)\in u$. Since $F\in u$, $F\cap N(x,U)\neq\emptyset$,
so $y\in \overline{T^Fx}$.
\end{proof}

We say  a subset $F$ of $\N$ \emph{forces $\F$-strong proximity}
if for every dynamical system $(X,T)$ and $x\in X$ there exists some point $y$
in $\overline{T^Fx}$ such that $x$ is $\F$-strongly proximal to $y$.

\begin{prop} \label{prop:F-force-sp}
Let $\F$ be a filterdual. Suppose that $h(\F)$ be a subsemigroup of $\bN$.
Let $F$ be a subset of $\N$. Then the following conditions are equivalent:
\begin{enumerate}
\item $F$ is an essential $\F$-set;
\item let $x=\mathbf{1}_F\in\{0,1\}^\Z$, there exists some point $y\in \overline{Orb(x,\sigma)}\cap[1]$
such that $x$ is $\F$-strongly proximal to $y$;
\item $F$ forces $\F$-strong proximity.
\end{enumerate}
\end{prop}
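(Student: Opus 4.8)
The plan is to prove the cycle of three equivalences $(1)\Rightarrow(3)\Rightarrow(2)\Rightarrow(1)$, leaning heavily on the machinery already established in the excerpt. The statement is the natural ``forcing'' analogue of Theorem~\ref{thm:ess-F-dyna}, so the structure should mirror the forcing results of Section~5 (for instance Proposition~\ref{prop:ff-prop} and Theorem~\ref{thm:forc-F-rec}) together with the strong-proximity characterization in Theorem~\ref{thm:fd-sp-fr}.

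\emph{Proof of $(1)\Rightarrow(3)$.} Suppose $F$ is an essential $\F$-set, so there is an idempotent $u\in h(\F)$ with $F\in u$. Let $(X,T)$ be any dynamical system and $x\in X$. Set $y=ux$. Since $u$ is an idempotent, $u(x,y)=(ux,uy)=(ux,uux)=(ux,ux)=(y,y)$, so by Theorem~\ref{thm:fd-sp-fr} the point $x$ is $\F$-strongly proximal to $y$. It remains to check $y\in\overline{T^Fx}$: for every open neighborhood $U$ of $y$ we have $N(x,U)\in u$ by Lemma~\ref{lem:p-lim}, and since $F\in u$ as well we get $F\cap N(x,U)\neq\emptyset$, whence $y\in\overline{T^Fx}$. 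This is exactly the argument used in the final paragraph of the previous proposition, specialized through $\F$-strong proximity instead of proximality.

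\emph{Proof of $(3)\Rightarrow(2)$.} This is immediate by specialization: apply the forcing hypothesis to the system $(\{0,1\}^{\Z},\sigma)$ and the point $x=\mathbf{1}_F$. We obtain a point $y\in\overline{\sigma^Fx}$ to which $x$ is $\F$-strongly proximal. Since $\overline{\sigma^Fx}\subset[1]$ (every shift of $x$ by an element of $F$ lies in the cylinder $[1]$ because $\mathbf{1}_F$ takes value $1$ on $F$), and $\overline{\sigma^Fx}\subset\overline{Orb(x,\sigma)}$, the point $y$ lies in $\overline{Orb(x,\sigma)}\cap[1]$, as required.

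\emph{Proof of $(2)\Rightarrow(1)$.} Let $x=\mathbf{1}_F$ and let $y\in\overline{Orb(x,\sigma)}\cap[1]$ be such that $x$ is $\F$-strongly proximal to $y$. By Theorem~\ref{thm:fd-sp-fr} there is an idempotent $u\in h(\F)$ with $u(x,y)=(y,y)$; in particular $ux=y$. Since $y\in[1]$ we have $y(0)=1$, i.e.\ $0\in N(y,[1])$, and one checks $N(x,[1])=F$, so that $F=N(x,[1])\in ux=y$. The cleaner route, which I would take, is to observe that $u$ is an idempotent in $h(\F)$ and $F\in u$: indeed $F=N(x,[1])$, and since $y=ux\in[1]$ and $ux=\text{$u$-}\lim\sigma^nx$ by Lemma~\ref{lem:p-lim}, the set $N(x,[1])$ belongs to $u$. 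Thus $F$ is an essential $\F$-set by definition. The only point requiring care—and the step I expect to be the main obstacle—is extracting the \emph{idempotent} $u\in h(\F)$ with $F\in u$ from the mere existence of a strongly-proximal pair, rather than an arbitrary ultrafilter; but this is precisely what the equivalence $(\ref{enum:F-str-pox})\Leftrightarrow(\ref{enum:id-KF})$ of Theorem~\ref{thm:fd-sp-fr} supplies, using the hypothesis that $h(\F)$ is a subsemigroup so that Ellis--Namakura yields the idempotent inside $L\cap h(\F)$. Once the idempotent is in hand, membership $F\in u$ is automatic from $y\in[1]$.
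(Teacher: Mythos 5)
Your proof is correct and follows essentially the same route as the paper: the cycle $(1)\Rightarrow(3)\Rightarrow(2)\Rightarrow(1)$, with $(1)\Rightarrow(3)$ via the idempotent $u\in h(\F)$ and $y=ux$, $(3)\Rightarrow(2)$ by specializing to $(\{0,1\}^{\Z},\sigma)$ and $\overline{\sigma^Fx}\subset[1]$, and $(2)\Rightarrow(1)$ by the content of Theorem~\ref{thm:ess-F-dyna} (which you re-derive from Theorem~\ref{thm:fd-sp-fr} and Lemma~\ref{lem:p-lim} rather than cite, a harmless variation). The only blemish is the garbled clause ``$F=N(x,[1])\in ux=y$'' (membership in the point $y$ is meaningless), but your ``cleaner route'' immediately replaces it with the correct statement $F\in u$, so nothing is lost.
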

\begin{proof}
(2) $\Rightarrow$ (1) follows from Theorem \ref{thm:ess-F-dyna} and $N(x,[1])=F$.

(3) $\Rightarrow$ (2) follows from $\overline{T^Fx}\subset [1]$.

(1) $\Rightarrow$ (3) If $F$ is an essential $\F$-set,
then there exists an idempotent $u\in h(\F)$ such that $F\in u$.
Let $(X,T)$ be a dynamical system and $x\in X$. Let $y=ux$. Then $ux=uy=y$
and by Theorem \ref{thm:fd-sp-fr} $x$ is $\F$-strongly proximal to $y$.
Thus it suffices to show that $y\in \overline{T^Fx}$.
For every open neighborhood $U$ of $y$, $N(x,U)\in u$.
Since $F\in u$, $F\cap N(x,U)\neq\emptyset$, so $y\in \overline{T^Fx}$.
\end{proof}


\section{Multiplication in $\N$ and $\bN$}
In this section, we consider both addition and multiplication in $\N$ and $\bN$.
For $n\in\N$ and $F\subset \N$, let $nF=\{nm: m\in F\}$ and $n^{-1}F=\{m\in\N: nm\in F\}$.
For $p,q\in\bN$, the multiplication $p\cdot q$ in $\bN$ is
\[\{A\subset\N: \{n\in\N: n^{-1}A\in q\}\in p\}.\]

A family $\F$ is called \emph{multiplication invariant} if for each $n\in\N$ and $F\in\F$ one has $nF\in\F$.
It is easy to see that $\F_{ip}$, $\F_{s}$ and $\F_{pubd}$ are multiplication invariant.
Similarly to Lemma \ref{lem:filterdual-tran-invar}, we have

\begin{lem}
Let $\F$ be a filterdual. Then $\F$ is multiplication invariant if and only if
$h(\F)$ is a left ideal of $(\bN,\cdot)$.
\end{lem}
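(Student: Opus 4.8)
The plan is to mirror the proof of Lemma \ref{lem:filterdual-tran-invar}, replacing additive translation by multiplication and the left-multiplication map $\lambda_n$ by the map $m \mapsto nm$ on $\N$. The statement has two implications, and I would prove each by unwinding the definition of the multiplicative product $p \cdot q = \{A \subset \N : \{n \in \N : n^{-1}A \in q\} \in p\}$ in tandem with the hull correspondence $h(\F) = \{p \in \bN : p \subset \F\}$.

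First I would prove the forward direction: if $\F$ is multiplication invariant, then $h(\F)$ is a left ideal of $(\bN, \cdot)$. It suffices to show $n \cdot p \in h(\F)$ for every $n \in \N$ and $p \in h(\F)$, since $\N$ is dense in $\bN$ and, as recorded in Section 3 for the additive structure, multiplication by a fixed element of $\N$ is continuous on the right; more precisely, $h(\F)$ is closed, so once we know $\N \cdot p \subset h(\F)$ and $h(\F) \supset \bN \cdot p$ would follow by taking closures using right-continuity of $q \mapsto q \cdot p$ together with compactness. Concretely, let $n \in \N$, $p \in h(\F)$ and $F \in n \cdot p$. By the definition of multiplication, $n \in \{m \in \N : m^{-1}F \in p\}$, so $n^{-1}F \in p \subset \F$. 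Since $n(n^{-1}F) \subset F$ and $\F$ is multiplication invariant and hereditary upward, $F \in \F$; hence $n \cdot p \subset \F$, i.e.\ $n \cdot p \in h(\F)$.

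For the converse, I would assume $h(\F)$ is a left ideal of $(\bN,\cdot)$ and deduce multiplication invariance. Given $F \in \F$ and $n \in \N$, I want $nF \in \F$. By the remark following the definition of the hull, $F \in \F$ is equivalent to $\overline{F} \cap h(\F) \neq \emptyset$, so (invoking Lemma \ref{lem:filter-dual-ultra}) pick $p \in h(\F)$ with $F \in p$. Then I claim $nF \in n \cdot p$: indeed $n^{-1}(nF) \supset F \in p$, so $n^{-1}(nF) \in p$ and thus $n \in \{m \in \N : m^{-1}(nF) \in p\}$, whence $nF \in n \cdot p$. Since $h(\F)$ is a left ideal, $n \cdot p \in h(\F) \subset \F$, and therefore $nF \in \F$.

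The main obstacle is the standard subtlety about the $n^{-1}$ operation: because $n^{-1}A = \{m \in \N : nm \in A\}$ is a preimage rather than a literal inverse image of elements, the inclusions $n(n^{-1}F) \subset F$ and $n^{-1}(nF) \supset F$ go in opposite directions, and one must pair each with hereditary-upwardness of $\F$ and of the ultrafilter $p$ in the correct direction. I expect this bookkeeping to be the only delicate point; the closedness/right-continuity argument needed to pass from $\N \cdot p$ to $\bN \cdot p$ in the forward direction is the sole place where topology enters, and it is exactly parallel to the additive case already handled in Lemma \ref{lem:filterdual-tran-invar}.
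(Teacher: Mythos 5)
Your proof is correct and is exactly the argument the paper intends: the paper omits the proof, stating only that it is ``similar to Lemma \ref{lem:filterdual-tran-invar}'', and your adaptation (replacing $-n+F$ by $n^{-1}F$, using $n(n^{-1}F)\subset F$ with upward heredity in one direction and $F\subset n^{-1}(nF)$ with the ultrafilter in the other, plus the density/right-continuity/closedness argument to pass from $\N\cdot p$ to $\bN\cdot p$) is precisely that adaptation, with the bookkeeping done correctly.
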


\begin{prop}[\cite{F81,BHK96}] \label{prop:cen-set-n}
Let $F$ be a subset of $\N$. If $F$ is a central set,
then for each $n\in\N$ both $nF$ and $n^{-1}F$ are also central sets.
\end{prop}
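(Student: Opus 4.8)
The plan is to prove both halves through the algebraic characterization of central sets (a set is central iff it is a member of a minimal idempotent of $(\bN,+)$) together with the Stone--\v Cech extension of multiplication by $n$. Fix $n\in\N$ and let $f_n\colon\N\to\N$ be $f_n(m)=nm$. Since $f_n$ is an injective homomorphism of $(\N,+)$, its continuous extension $\mu_n:=\beta f_n\colon(\bN,+)\to(\bN,+)$ is an injective continuous homomorphism, and directly from the definition of multiplication one has $\mu_n(p)=n\cdot p$ for every $p\in\bN$. Being a continuous injection of the compact space $\bN$ into the Hausdorff space $\bN$, $\mu_n$ is a homeomorphism onto its image $\mu_n(\bN)=\overline{n\N}=\{r\in\bN:n\N\in r\}$; hence $\mu_n$ is an isomorphism of the compact right topological semigroup $(\bN,+)$ onto the closed subsemigroup $(\overline{n\N},+)$.

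The key observation, which is what makes the division statement work, is that \emph{every} idempotent of $(\bN,+)$ lies in $\overline{n\N}$. Indeed, the residue map $\pi\colon\N\to\mZ/n\mZ$ is a homomorphism onto a finite group, so its extension $\beta\pi$ sends any idempotent $p$ to an idempotent of the group $\mZ/n\mZ$, which can only be the identity $0$; thus $n\N=\pi^{-1}(\{0\})\in p$. In particular every minimal idempotent belongs to $\overline{n\N}$, so $\overline{n\N}\cap K(\bN)\neq\emptyset$. Using the standard fact that a compact subsemigroup $S$ meeting the smallest ideal satisfies $K(S)=S\cap K(\bN)$, I would then get $K(\overline{n\N})=\overline{n\N}\cap K(\bN)\subseteq K(\bN)$, and since an isomorphism carries the smallest ideal onto the smallest ideal, $\mu_n(K(\bN))=K(\overline{n\N})=\overline{n\N}\cap K(\bN)$.

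With these tools both claims are short. Suppose $F$ is central and pick a minimal idempotent $p$ with $F\in p$. For $nF$: the element $n\cdot p=\mu_n(p)$ is an idempotent lying in $\mu_n(K(\bN))\subseteq K(\bN)$, hence a minimal idempotent; and $n^{-1}(nF)=F\in p$ shows $nF\in n\cdot p$, so $nF$ is central. For $n^{-1}F$: by the key observation $p\in\overline{n\N}\cap K(\bN)=\mu_n(K(\bN))$, so $q:=\mu_n^{-1}(p)$ is well defined, is an idempotent (as $\mu_n$ is an isomorphism) and lies in $K(\bN)$, hence is a minimal idempotent; since $\mu_n(q)=n\cdot q=p$ and $F\in p$, we have $n^{-1}F\in q$, and so $n^{-1}F$ is central.

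The only genuinely nontrivial ingredient is the transfer of the smallest ideal through $\mu_n$, i.e. the identity $\mu_n(K(\bN))=\overline{n\N}\cap K(\bN)$; this is where the interaction between $+$ and $\cdot$ is really used, and it rests on the two facts that $\mu_n$ is an isomorphism onto $\overline{n\N}$ and that $\overline{n\N}$ meets $K(\bN)$ (which itself follows from the residue-map observation). Everything else is bookkeeping. A parallel dynamical proof is also available: writing $F=N(x,U)$ with $y$ minimal and strongly proximal to $x$, one passes to $(X,T^n)$, using Lemma \ref{lem:str-porx-IP} to retain strong proximality and the fact that a minimal point of $(X,T)$ stays minimal for $T^n$; but the algebraic route above seems shorter and cleaner.
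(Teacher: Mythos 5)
Your algebraic proof is correct, and it takes a genuinely different route from the paper. The paper itself states Proposition \ref{prop:cen-set-n} without proof (citing \cite{F81,BHK96}), and its own machinery for results of this type is dynamical: $nF$ is handled by the tower construction over $\{1,\ldots,n\}$ in Theorem \ref{thm:nF-ess}, and $n^{-1}F$ by passing to $T^n$ together with iterative invariance of recurrence in Theorem \ref{thm:n-1F-ess}; note that central sets do not literally fall under the paper's ``essential $\F$-set'' scheme (idempotents in $h(\F_{ps})$ give quasi-central, not central, sets), which is presumably why the paper quotes the classical result instead of deriving it. Your argument is instead the algebraic one, in the spirit of \cite{BHK96}, and every step checks out: $\mu_n=\beta f_n$ is a continuous injective homomorphism of $(\bN,+)$ onto the closed subsemigroup $\overline{n\N}$ with $\mu_n(p)=n\cdot p$; the residue-map argument correctly shows every idempotent contains $n\N$; the identity $K(T)=T\cap K(\bN)$ for a closed subsemigroup $T$ meeting $K(\bN)$ is \cite[Theorem~1.65]{HS98}; and the memberships $nF\in n\cdot p$ and $n^{-1}F\in\mu_n^{-1}(p)$ follow from $n^{-1}(nF)=F$ and the definition of $n\cdot q$. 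What each approach buys: yours is short and self-contained because minimal idempotents are exactly the idempotents of $K(\bN)$, a purely algebraic notion that transfers perfectly through the isomorphism $\mu_n$; the paper's dynamical route is longer but generalizes to families like $\mathcal J$, where no such algebraic transfer of the relevant idempotents is known --- indeed the paper's closing remark poses precisely the question of whether Lemma \ref{lem:n-1c-set} (the C-set analogue) admits an algebraic proof of the kind you gave. One small caveat on your final aside: the parallel dynamical proof you sketch only covers $n^{-1}F$ (minimality and proximality survive passage to $T^n$); the $nF$ half requires the suspension construction of Theorem \ref{thm:nF-ess}, not just a change of power.
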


The main purpose of this section is to extend Proposition \ref{prop:cen-set-n} to
more general settings. In particular, similar results hold for quasi-center sets and D-sets.

\begin{thm}\label{thm:nF-ess}
Let $\F$ be a filterdual and $F$ be a subset of $\N$.
Suppose that $\F$ is multiplication invariant and $h(\F)$ is a subsemigroup of $(\bN,+)$.
If $F$ is an essential $\F$-set, then for each $n\in\N$, $nF$ is also an essential $\F$-set.
\end{thm}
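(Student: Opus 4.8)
The plan is to produce, starting from the data witnessing that $F$ is an essential $\F$-set, an additive idempotent lying in $h(\F)$ that contains $nF$. By definition of essential $\F$-set there is an idempotent $p\in h(\F)$ (with respect to $+$) such that $F\in p$. I will show that the image $n\cdot p$ is the required witness for $nF$.

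The key observation is that left multiplication by the principal ultrafilter $n$, namely the map $\nu_n\colon(\bN,\cdot)\to(\bN,+)$ sending $q\mapsto n\cdot q$, is in fact a homomorphism of the \emph{additive} semigroup $(\bN,+)$. Indeed, $\nu_n$ is the continuous extension of the map $m\mapsto nm$ on $\N$, which is a homomorphism of $(\N,+)$; concretely one checks the left distributive law $n\cdot(p+q)=(n\cdot p)+(n\cdot q)$ by a direct ultrafilter computation, using the identity $n^{-1}(-nm+A)=-m+n^{-1}A$ to compare the two descriptions of the member sets. Granting this, since $p$ is an additive idempotent we get $(n\cdot p)+(n\cdot p)=n\cdot(p+p)=n\cdot p$, so $n\cdot p$ is again an idempotent of $(\bN,+)$.

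It then remains to check two membership facts. First, $n\cdot p\in h(\F)$: because $\F$ is multiplication invariant, the lemma preceding the theorem gives that $h(\F)$ is a left ideal of $(\bN,\cdot)$, and $n\cdot p\in\bN\cdot h(\F)\subset h(\F)$. Second, $nF\in n\cdot p$: by definition $nF\in n\cdot p$ if and only if $n^{-1}(nF)\in p$, and since multiplication by $n$ is injective on $\N$ one has $n^{-1}(nF)=F\in p$. Thus $n\cdot p$ is an idempotent in $h(\F)$ containing $nF$, which is precisely the statement that $nF$ is an essential $\F$-set.

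The only genuinely delicate point is the homomorphism/distributivity step, since $\bN$ is not a ring and most distributive laws fail there; what saves us is that the distributed factor $n$ is \emph{principal}, so that left multiplication by $n$ coincides with the Stone--\v Cech extension of an additive homomorphism of $\N$ and is therefore itself additively homomorphic. Everything else is a routine application of the Ellis--Namakura machinery already set up (here we do not even need a fresh Ellis--Namakura argument, since $n\cdot p$ is manifestly idempotent), so the proof should be short once this law is recorded.
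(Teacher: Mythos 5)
Your proof is correct, but it is genuinely different from the one in the paper. The paper argues dynamically: it realizes $F$ as $N(x,[1])$ for $x=\mathbf{1}_F\in\{0,1\}^{\mathbb{Z}_+}$ with a point $y$ to which $x$ is $\F$-strongly proximal (Proposition 5.8), builds the height-$n$ tower $X=\{0,1\}^{\mathbb{Z}_+}\times\{1,\dots,n\}$ with $T(z,i)=(z,i+1)$ for $i\le n-1$ and $T(z,n)=(\sigma z,1)$, observes that return times in the tower are exactly $n$ times the return times in the base, and then invokes multiplication invariance of $\F$ together with the dynamical characterization of essential $\F$-sets (Theorem 4.14) to conclude that $nF=N((x,1),[1]\times\{1\})$ is an essential $\F$-set. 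You instead stay entirely inside $(\bN,+,\cdot)$: your key step, the left distributive law $n\cdot(p+q)=n\cdot p+n\cdot q$ for principal $n$, is a true and standard fact (it is Lemma 13.1 in Hindman--Strauss, and your direct computation via $n^{-1}(-nm+A)=-m+n^{-1}A$ is a complete proof of it), from which $n\cdot p$ is immediately an additive idempotent; membership $n\cdot p\in h(\F)$ follows either from the left-ideal lemma you cite or directly (if $A\in n\cdot p$ then $n^{-1}A\in p\subset\F$, so $n(n^{-1}A)\in\F$ and $A\supset n(n^{-1}A)$ gives $A\in\F$), and $nF\in n\cdot p$ since $n^{-1}(nF)=F$. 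Two things your route buys: it is shorter and self-contained, and it never uses the hypothesis that $h(\F)$ is a subsemigroup of $(\bN,+)$, so you actually prove a slightly stronger statement (multiplication invariance of the filterdual $\F$ alone suffices). It is also directly relevant to the paper's closing remark, which asks whether Lemma 8.1 (the C-set case) admits an algebraic proof: since $\mathcal J$ is multiplication invariant, your argument settles the $nF$ half of that question algebraically, leaving only the $n^{-1}F$ half to the dynamical machinery. What the paper's proof buys, in exchange, is consistency with its overall program: it exercises the correspondence between idempotents in $h(\F)$ and $\F$-strong proximity that the paper has set up, at the cost of needing that extra standing hypothesis.
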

\begin{proof}
Let $x=\mathbf{1}_F\in \{0,1\}^\Z$. Then by Proposition \ref{prop:F-force-sp}  there exists a point
$y\in \overline{\sigma^Fx}\subset [1]$ such that $\F$-$\lim (\sigma\times \sigma)^m(x,y)=(y,y)$.
Fix $n\in\N$ and let $Y=\{1,2,\ldots,n\}$ endowing with discrete topology and  $X=\{0,1\}^\Z \times Y$.
Define $T: X\to X$ by $T(z,i)=(z,i+1)$ for $i\leq n-1$ and $T(z,n)=(\sigma z,1)$.

For every neighborhood $U$ of $y$, we have
\[N((x,1,y,1), U\times\{1\}\times U\times \{1\})= n N((x,y),U\times U).\]
Since $\F$ is multiplication invariant, $\F$-$\lim (T\times T)^m(x,1,y,1)=(y,1,y,1)$.
Thus, $nF=N((x,1), [1]\times \{1\})$ is also an essential $\F$-set.
\end{proof}

We call $\F$-recurrence is  \emph{iteratively invariant} if
for every dynamical system $(X,T)$ and every $\F$-recurrent point $x$ in $(X,T)$,
$x$ is also an $\F$-recurrent point in $(X,T^n)$ for each $n\in \N$.
It is well known that $\F_{ip}$-recurrence and $\F_{s}$-recurrence are iteratively invariant.
We show that

\begin{thm}\label{thm:n-1F-ess}
Let $\F$ be a filterdual and $F$ be a subset of $\N$.
Suppose that $b\F=\F$ and $\F$-recurrence is iteratively invariant.
If $F$ is an essential $\F$-set, then for each $n\in\N$, $n^{-1}F$ is also an essential $\F$-set.
\end{thm}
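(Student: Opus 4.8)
The plan is to realize $F$ as an entering-time set of a symbolic system under $\sigma$, decompose the $\F$-strong proximity that witnesses ``$F$ is essential'' into its two constituent pieces, transport each piece to the power $\sigma^n$, and then read off $n^{-1}F$ as an entering-time set under $\sigma^n$. First I would note that $b\F=\F$ forces $h(\F)$ to be a closed two-sided ideal of $(\bN,+)$, in particular a subsemigroup, so that Theorem \ref{thm:ess-F-dyna}, Proposition \ref{prop:bF-st-pro} and Proposition \ref{prop:F-force-sp} are all available. Setting $x=\mathbf{1}_F\in\{0,1\}^{\Z}$, Proposition \ref{prop:F-force-sp} yields a point $y\in\overline{Orb(x,\sigma)}\cap[1]$ such that $x$ is $\F$-strongly proximal to $y$ in $(\{0,1\}^{\Z},\sigma)$. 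Since $b\F=\F$, Proposition \ref{prop:bF-st-pro} splits this into two statements: $y$ is an $\F$-recurrent point of $(\{0,1\}^{\Z},\sigma)$, and $x$ is strongly proximal to $y$ in $(\{0,1\}^{\Z},\sigma)$.

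The next step is to promote both properties to the system $(\{0,1\}^{\Z},\sigma^n)$. Strong proximity transports for free: by the equivalence of conditions (1) and (3) in Lemma \ref{lem:str-porx-IP}, strong proximity of $x$ to $y$ in $(X,T)$ already entails strong proximity in $(X,T^n)$ for every $n$, so here $x$ is strongly proximal to $y$ in $(\{0,1\}^{\Z},\sigma^n)$. For the recurrence this is exactly where the hypothesis enters: since $\F$-recurrence is iteratively invariant, the $\F$-recurrent point $y$ of $(\{0,1\}^{\Z},\sigma)$ remains $\F$-recurrent in $(\{0,1\}^{\Z},\sigma^n)$. Having both pieces in the single system $(\{0,1\}^{\Z},\sigma^n)$, a second application of Proposition \ref{prop:bF-st-pro} recombines them, giving that $x$ is $\F$-strongly proximal to $y$ in $(\{0,1\}^{\Z},\sigma^n)$.

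Finally, since $y\in[1]$ and $[1]$ is an open neighborhood of $y$, Theorem \ref{thm:ess-F-dyna} applied to $(\{0,1\}^{\Z},\sigma^n)$ shows that the entering-time set $\{m\in\N:(\sigma^n)^m x\in[1]\}$ is an essential $\F$-set. A direct computation identifies it: $(\sigma^n)^m x\in[1]$ means $x(nm)=1$, i.e.\ $nm\in F$, whence $\{m\in\N:(\sigma^n)^m x\in[1]\}=\{m\in\N:nm\in F\}=n^{-1}F$, which is therefore an essential $\F$-set, as desired. I expect the only genuine subtlety to be the transport of $\F$-recurrence from $\sigma$ to $\sigma^n$: unlike strong proximity, which passes to every power automatically via Lemma \ref{lem:str-porx-IP}, $\F$-recurrence need not survive passage to $\sigma^n$ for an arbitrary filterdual, and it is precisely the iterative-invariance hypothesis (together with $b\F=\F$, which is what licenses the splitting and recombining of $\F$-strong proximity) that makes the argument close up.
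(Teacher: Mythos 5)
Your proof is correct and follows essentially the same route as the paper's: both realize $F$ as $N(\mathbf{1}_F,[1])$ in the shift system, split the $\F$-strong proximity supplied by Proposition \ref{prop:F-force-sp} into $\F$-recurrence plus strong proximity, transport the two pieces to $\sigma^n$ via the iterative-invariance hypothesis and Lemma \ref{lem:str-porx-IP}, and recombine them with Proposition \ref{prop:bF-st-pro} (this is where $b\F=\F$ enters) to read off $n^{-1}F=\{m\in\N:(\sigma^n)^m\mathbf{1}_F\in[1]\}$ as an essential $\F$-set. The only cosmetic difference is that you conclude by citing Theorem \ref{thm:ess-F-dyna} where the paper cites Theorem \ref{thm:fd-sp-fr}, which amounts to the same thing.
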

\begin{proof}
Let $x=\mathbf{1}_F\in \{0,1\}^\Z$. Then by Proposition \ref{prop:F-force-sp}
there exists an $\F$-recurrent point
$y\in \overline{\sigma^Fx}\subset [1]$ such that $x$ is strongly proximal to $y$.
For each $n\in \N$, since $\F$-recurrence is iteratively invariant,
$y$ is also an $\F$-recurrent point in $(\{0,1\}^\Z, \sigma^n)$.
By Lemma \ref{lem:str-porx-IP}, $x$ also is strongly proximal to $y$ in $(\{0,1\}^\Z, \sigma^n)$.
Then by Proposition \ref{prop:bF-st-pro} and Theorem \ref{thm:fd-sp-fr}
$n^{-1}F=\{m\in\N: (\sigma^n)^m x\in [1]\}$ is an essential $\F$-set.
\end{proof}

A system $(X,T)$ is called \emph{topologically transitive}
if for every two nonempty open subset $U$, $V$ of $X$ there exists some $n\in\N$ such that
$T^n U\cap V\neq\emptyset$. A point $x\in X$ is called a \emph{transitive point}
if the orbit of $x$ is dense in $X$.
The system $(X,T)$ is called \emph{point transitive} if there exists some transitive point in $X$.
In general, there is no implicational relation between topological transitivity and
point transitivity. For example, $(\bZ,\lambda_1)$ is point transitive but not topologically transitive.
The system $(X,T)$ is called \emph{recurrent transitive}
if there exists some recurrent transitive point,
i.e., there exists $x\in X$ such that $\omega$-limit set of $x$ is $X$.
It is easy to see that every recurrent point transitive system is topologically transitive.

The following result is a ``folk'' result, for similar results, see \cite{B97} for example.

\begin{lem}\label{lem:trans-decom}
Let $(X,T)$ be a recurrent transitive system. Then for every $n\in\N$
there is $k\in\N$ with $k|n$ and a decomposition $X=X_0\cup X_1\cup\cdots\cup X_{k-1}$
satisfying
\begin{enumerate}
\item $X_i\neq X_j$, $0\leq i<j\leq k-1$,
\item $TX_i=X_{i+1\pmod k}$,
\item $(X_i, T^n)$ is recurrent transitive, $i=0,\ldots,k-1$,
\item the interior of $X_i$ is dense in $X_i$, $i=0,\ldots,k-1$.
\end{enumerate}
\end{lem}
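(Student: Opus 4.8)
The plan is to manufacture the whole decomposition from a single recurrent transitive point, reading off the period $k$ from an ultrafilter computation. Fix $x\in X$ with $\omega(x,T)=X$; since $x\in\omega(x,T)$ this $x$ is recurrent. For $i=0,1,\ldots,n-1$ put $Z_i=\overline{Orb(T^ix,T^n)}$. First I would record the formal facts: $TZ_i=Z_{i+1\bmod n}$ (because $TZ_i$ is compact, hence closed, and both contains and is contained in $Z_{i+1}$), and $\bigcup_{i=0}^{n-1}Z_i=\overline{Orb(x,T)}=X$, the last equality because $\overline{Orb(x,T)}\supseteq\omega(x,T)=X$. Since $Z_a=Z_b$ implies $Z_{a+c}=Z_{b+c}$ for all $c$, the set $H=\{j\in\mZ/n\mZ:Z_j=Z_0\}$ is a subgroup of $\mZ/n\mZ$; writing $H=\langle k\rangle$ gives $k\mid n$ and $Z_i=Z_j\iff i\equiv j\pmod k$. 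Letting $X_0,\ldots,X_{k-1}$ be the distinct sets $Z_0,\ldots,Z_{k-1}$ gives at once property (1), the covering $X=\bigcup_{i=0}^{k-1}X_i$, and property (2), since $TX_i=Z_{i+1}=X_{(i+1)\bmod k}$.

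The hard part will be the recurrence asserted in (3): passing from $T$-recurrence of $x$ to $T^n$-recurrence, and here the ultrafilter framework of Sections 3--4 is exactly what is needed. Because $x$ is recurrent, $L=\{p\in\bN:px=x\}$ meets $\bN^*$ (as $\omega(x,T)=\Phi_x(\bN^*)$), and $L\cap\bN^*$ is a nonempty closed subsemigroup of $\bN$, so by the Ellis-Namakura Theorem it contains an idempotent $p$. The decisive point is that reduction modulo $n$ extends to a continuous homomorphism $\bZ\to\mZ/n\mZ$ (as $\mZ/n\mZ$ is a finite, hence compact, group); an idempotent is forced to the identity $0$, so $n\N\in p$. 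Since $p$ commutes with every $T^j$, we get $p(T^ix)=T^ix$ for each $i$, and because $n\N\in p$ this $p$-limit runs over multiples of $n$, so $T^ix=p\textrm{-}\lim_{m\in n\N}T^m(T^ix)\in\omega(T^ix,T^n)$. Hence each $T^ix$ is $T^n$-recurrent and $\omega(T^ix,T^n)=\overline{Orb(T^ix,T^n)}=X_i$, which is (3).

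For (4) I would first rule out nesting: if $X_a\subseteq X_b$ with $a\neq b$ in $\mZ/k\mZ$, then applying $TX_c=X_{c+1}$ repeatedly yields a cycle of inclusions returning to $X_a$, forcing $X_a=X_b$ and contradicting (1). Thus $W_i=X\setminus\bigcup_{j\neq i}X_j$ is an open set contained in $X_i$, so $W_i\subseteq\mathrm{int}(X_i)$. I would then show that the entire $T^n$-orbit of $T^ix$ sits inside $W_i$: for $z=(T^n)^\ell T^ix$ one has $\omega(z,T^n)=\omega(T^ix,T^n)=X_i$, so if $z$ lay in some $X_j$ with $j\neq i$ (a closed $T^n$-invariant set) we would get $X_i=\omega(z,T^n)\subseteq X_j$, contradicting the non-nesting just proved. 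As this orbit is dense in $X_i$, it follows that $X_i\subseteq\overline{W_i}\subseteq\overline{\mathrm{int}(X_i)}\subseteq X_i$, giving (4).

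In short, everything except the recurrence is bookkeeping with orbit closures and the elementary geometry of the cover; the one substantive step is the transfer of recurrence from $T$ to $T^n$, which I would handle through the idempotent $p$ and the observation that it must contain $n\N$ because idempotents die under any homomorphism into a finite group. I expect the argument as sketched to make no use of metrizability.
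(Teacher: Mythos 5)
Your proof is correct, and its skeleton is the same as the paper's: both take a point $x$ with $\omega(x,T)=X$, set $Z_i=\overline{Orb(T^ix,T^n)}$, and extract the period $k$ (your subgroup $H$ is just a repackaging of the paper's choice of $k$ as the least positive integer with $T^kZ_0=Z_0$). The differences lie in the two substantive steps. For (3), the paper simply cites the classical fact that recurrence passes to powers (iterative invariance of $\F_{ip}$-recurrence, stated earlier in Section 6), whereas you prove it inside the ultrafilter framework: an idempotent $p$ with $px=x$ must contain $n\N$, because the canonical extension $\bN\to\mZ/n\mZ$ of reduction mod $n$ is a homomorphism into a finite group, so idempotents land on the identity. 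This makes the lemma self-contained, at the cost of invoking the (standard, but not stated in the paper) fact that such extensions are homomorphisms. For (4), the paper shows each intersection $X_i\cap X_j$ ($j\neq i$) is nowhere dense in $X_i$ using topological transitivity of $(X_i,T^n)$, and then bounds the boundary $\partial X_i$ by the union of these intersections; you instead show that the $T^n$-orbit of $T^ix$ lies in the open set $W_i=X\setminus\bigcup_{j\neq i}X_j\subseteq\mathrm{int}(X_i)$ and is dense in $X_i$, which is arguably cleaner. Both versions pivot on excluding $X_i\subseteq X_j$; the paper gets this from minimality of $k$, you get it from the cyclic chain of inclusions together with (1). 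One point you should repair in the write-up: the wrap-around equality $TZ_{n-1}=Z_0$ is not a purely formal consequence of compactness, since $TZ_{n-1}=\overline{Orb(T^nx,T^n)}$, and the inclusion $Z_0\subseteq TZ_{n-1}$ requires $x\in\overline{Orb(T^nx,T^n)}$ --- which is exactly the $T^n$-recurrence of $x$ that you establish only later. Since your idempotent argument for that recurrence uses nothing from your first paragraph, simply run it first and then record $TZ_i=Z_{i+1\bmod n}$; after that reordering everything goes through. (The paper's own proof has the same ordering quirk, resolved the same way.)
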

\begin{proof}
Let $x\in X$ with $\omega(x,T)=X$. Let $Y_i=\overline{Orb(T^ix, T^n)}$ for $i=0,1,\ldots, n-1$.
Then $X=Y_0\cup Y_1\cup \cdots \cup Y_{n-1}$ and $T Y_i=Y_{i+1 \pmod n}$.
Since $x$ is recurrent in $(X,T)$, $T^ix$ is also recurrent in $(X,T^n)$. Then $(Y_i,T^n)$
is recurrent transitive for $i=0,1,\ldots, n-1$.
Let $k$ be the smallest positive integer such that $T^k Y_0=Y_0$.
Let $X_i=Y_i$ for $i=0,1,\ldots,k-1$. Now we show that those $X_i$ satisfy the requirement.

Clearly, $k\leq n$. Let $n=lk+r$ with $l>0$ and $0\leq r <k$.
Then $X_0=T^n(X_0)=T^r(T^{lk}X_0)=T^r(X_0)$, by the minimality of $k$, we have $r=0$, so $k|n$.

If there exist $0\leq i<j\leq k-1$ such that $X_i=X_j$, then
$T^{j-i}X_0=T^{j-i}(T^nX_0)=T^{n-i}(T^jX_0)=T^{n-i}(T^iX_0)=T^nX_0=X_0$.
This contradicts  the minimality of $k$. So $X_i\neq X_j$ for $0\leq i<j\leq k-1$.

For $0\leq i\neq j\leq k-1$  let $Z_{ij}=X_i\cap X_j$, then $Z_{ij}$ is a $T^n$-invariant closed subset of $X_i$.
Since $(X_i,T^n)$ is topologically transitive, $Z_{ij}$ either equals to $X_i$ or is nowhere dense in $X_i$.
If $Z_{ij}=X_i$, then $X_i\subset X_j$.
Without loss of  generality, assume $i< j$, then $X_0=T^{k-i}X_i\subset T^{k-i}X_j=X_{j-i}$.
Thus,
\[X_0\subset X_{j-i}\subset X_{2(j-i) \pmod k}\subset \cdots\subset X_{k(j-i) \pmod k} =X_0.\]
This contradicts the minimality of $k$. So $Z_{ij}$ is nowhere dense in $X_i$.

Now fix $i\in\{0,1,\ldots,k-1\}$ and let $Z_i=\bigcup_{j\neq i}Z_{ij}$. Then $Z_i$ is also nowhere dense in $X_i$.
The boundary of $X_i$ in $X$ is
\[\partial X_i= X_i\cap(\overline{X\setminus X_i})\subset X_i\bigcap (\bigcup_{j\neq i}X_j)=
\bigcup_{j\neq i}(X_i\bigcap X_j)=Z_i.\]
By $X_i=int(X_i)\bigcup Z_i$, we have the interior of $X_i$ is dense in $X_i$.
\end{proof}

\begin{lem} \label{lem:Fps-Fpubd-rec-inv}
$\F_{ps}$-recurrence and $\F_{pubd}$-recurrence are iteratively invariant.
\end{lem}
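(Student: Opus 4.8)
The plan is to treat both families uniformly through an orbit-closure reduction followed by the cyclic decomposition of Lemma \ref{lem:trans-decom}, and then to read off the conclusion from the characterizations in Lemma \ref{lem:Fps-Fpubd-rec}. Let $x$ be an $\F$-recurrent point of $(X,T)$, where $\F$ is $\F_{ps}$ or $\F_{pubd}$. Since every piecewise syndetic set and every set of positive upper Banach density is infinite, $x$ is recurrent, so I may replace $X$ by $\overline{Orb(x,T)}=\omega(x,T)$ and assume that $(X,T)$ is recurrent transitive with transitive recurrent point $x$. Fix $n\in\N$ and apply Lemma \ref{lem:trans-decom} to obtain $k\mid n$ and a decomposition $X=X_0\cup\cdots\cup X_{k-1}$ with $x\in X_0$, $TX_i=X_{i+1\pmod k}$, each $(X_i,T^n)$ recurrent transitive, and $\mathrm{int}(X_i)$ dense in $X_i$. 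Because the boundary $Z_0=\partial X_0$ is closed, nowhere dense and $T^n$-invariant, the transitive point $x$ of $(X_0,T^n)$ cannot lie in $Z_0$; hence $x\in\mathrm{int}(X_0)$ and $x\notin X_j$ for $j\neq0$, so every sufficiently small neighborhood $U$ of $x$ satisfies $U\subseteq\mathrm{int}(X_0)$ and $U\cap X_j=\emptyset$ for $j\neq0$. As entering-time sets are unchanged on passing to the orbit closure $X_0=\overline{Orb(x,T^n)}$, it suffices to prove that $x$ is $\F$-recurrent in $(X_0,T^n)$.

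For $\F_{ps}$ this is quick. By Lemma \ref{lem:Fps-Fpubd-rec}(1) the minimal points of $(X,T)$ are dense. Minimal points are unchanged on passing from $T$ to the power $T^n$ (this is the iterative invariance of $\F_s$-recurrence, which the text records as well known), and a minimal point lying in $\mathrm{int}(X_0)\subseteq X_0$ is a minimal point of the subsystem $(X_0,T^n)$, since $X_0$ is closed and $T^n$-invariant. Density of the minimal points in the open set $\mathrm{int}(X_0)$, together with the density of $\mathrm{int}(X_0)$ in $X_0$, then shows that $(X_0,T^n)$ has dense minimal points; Lemma \ref{lem:Fps-Fpubd-rec}(1) yields that $x$ is $\F_{ps}$-recurrent in $(X_0,T^n)$.

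For $\F_{pubd}$ I would invoke Lemma \ref{lem:Fps-Fpubd-rec}(2): it suffices to produce, for each small neighborhood $U\subseteq\mathrm{int}(X_0)$ of $x$, a $T^n$-invariant measure on $X_0$ charging $U$. Starting from a $T$-invariant measure $\mu$ on $X$ with $\mu(U)>0$, I would split $\mu$ into its $T^n$-ergodic components and select a component $\eta$ with $\eta(U)>0$. The crucial point is that a generic point $z$ for $\eta$ may be chosen inside $U\subseteq\mathrm{int}(X_0)$; its $T^n$-orbit closure equals $\mathrm{supp}(\eta)$ and is contained in the $T^n$-invariant closed set $X_0$, so $\eta$ is carried by $X_0$. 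Thus $\eta$ is a $T^n$-invariant measure on $X_0$ with $\eta(U)>0$, and Lemma \ref{lem:Fps-Fpubd-rec}(2) gives that $x$ is $\F_{pubd}$-recurrent in $(X_0,T^n)$.

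The main obstacle is precisely this measure transfer in the $\F_{pubd}$ case: a $T$-invariant measure need not restrict to a $T^n$-invariant measure on $X_0$, because distinct pieces $X_i$ overlap on the nowhere-dense sets $X_i\cap X_j$, which could a priori carry mass and destroy invariance. The device that removes this difficulty is to work only with neighborhoods contained in $\mathrm{int}(X_0)$ (legitimate exactly because $x\in\mathrm{int}(X_0)$) and to pin the support of an ergodic component inside $X_0$ by means of a generic point, which simultaneously forces $T^n$-invariance on $X_0$ and positivity on $U$. I would also note that metrizability is harmless, since in the applications the ambient system is the subshift $(\{0,1\}^{\Z},\sigma)$, where ergodic decomposition and generic points are available.
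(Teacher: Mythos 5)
Your reduction to the orbit closure, your appeal to Lemma \ref{lem:trans-decom}, and your treatment of $\F_{ps}$ follow the paper's own proof essentially step for step, and that half is correct. The divergence is in the $\F_{pubd}$ half, and it is driven by a misconception: the ``main obstacle'' you describe --- that a $T^n$-invariant measure $\mu$ on $X$ need not restrict to a $T^n$-invariant measure on $X_0$ because the overlaps $X_i\cap X_j$ might carry mass --- is not an obstacle at all. Since $X_0$ is closed with $T^nX_0\subseteq X_0$, we have $X_0\subseteq (T^n)^{-1}X_0$, so invariance of $\mu$ gives
\[
\mu\bigl((T^n)^{-1}X_0\setminus X_0\bigr)=\mu\bigl((T^n)^{-1}X_0\bigr)-\mu(X_0)=0 .
\]
Hence for every Borel $A\subseteq X_0$ one has $(T^n)^{-1}A\setminus X_0\subseteq (T^n)^{-1}X_0\setminus X_0$, and therefore
$\mu\bigl((T^n)^{-1}A\cap X_0\bigr)=\mu\bigl((T^n)^{-1}A\bigr)=\mu(A)$. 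Thus the normalized restriction $\nu(\cdot)=\mu(\cdot)/\mu(X_0)$ is $T^n$-invariant on $X_0$ no matter how much mass the overlaps carry; mass simply cannot leak across a closed forward-invariant set. This two-line restriction argument is exactly what the paper does: take a nonempty open $U\subseteq V$ with $U$ open in $X$ (possible since $\mathrm{int}(X_0)$ is dense in $X_0$), obtain a $T$-invariant $\mu$ with $\mu(U)>0$ from Lemma \ref{lem:Fps-Fpubd-rec}(2) together with transitivity, and restrict.

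Your workaround --- ergodic decomposition with respect to $T^n$ plus a generic point chosen inside $U$ to pin the component to $X_0$ --- is sound as far as it goes, but it only goes as far as metrizable systems: ergodic decomposition and the almost-everywhere existence of generic points (Birkhoff's theorem applied to a countable dense family in $C(X)$) need metrizability, whereas the paper's dynamical systems are compact Hausdorff and the definition of iterative invariance quantifies over all of them. Your closing remark that metrizability is harmless ``since in the applications the ambient system is the subshift'' is a statement about where the lemma is applied (and there it is true: Theorem \ref{thm:n-1F-ess} only invokes iterative invariance on $(\{0,1\}^{\Z},\sigma)$), not a proof of the lemma as stated; as written, your argument establishes a strictly weaker statement. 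The restriction fact above removes both the metrizability hypothesis and the entire ergodic-theoretic machinery, which is what the paper's shorter route buys. Two minor further points: for a generic point $z$ one only gets $\mathrm{supp}(\eta)\subseteq\overline{Orb(z,T^n)}$, not equality (fortunately only that inclusion is used); and $\partial X_0$ itself is not obviously $T^n$-invariant --- what your argument for $x\in\mathrm{int}(X_0)$ actually uses is that $\partial X_0$ is contained in the closed, nowhere dense, forward $T^n$-invariant set $\bigcup_{j\neq 0}(X_0\cap X_j)$, as in the proof of Lemma \ref{lem:trans-decom}.
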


\begin{proof}
Let $(X,T)$ be a dynamical system and $x\in X$ be an $\F_{ps}$-recurrent point.
Without loss of  generality, assume that $\overline{Orb(x,T)}=X$.
By Lemma \ref{lem:Fps-Fpubd-rec}, $(X,T)$ has dense minimal points.
For every $n\in \N$, $(X,T^n)$ also has dense minimal points.
By Lemma \ref{lem:trans-decom}, the interior of $\overline{Orb(x,T^n)}$ is dense in $\overline{Orb(x,T^n)}$,
so $(\overline{Orb(x,T^n)}, T^n)$ also has dense minimal points.
Thus $x$ is $\F_{ps}$-recurrent in $(X,T^n)$.

Let $(X,T)$ be a dynamical system and $x\in X$ be an $\F_{pubd}$-recurrent point.
Without loss of  generality, assume that $\overline{Orb(x,T)}=X$.
By Lemma \ref{lem:Fps-Fpubd-rec} and $(X,T)$ is transitive, for every nonempty open subset $U$ of $X$
there exists a $T$-invariant measure $\mu$ on $X$ such that $\mu(U)>0$.
For every $n\in \N$, by Lemma \ref{lem:trans-decom},
the interior of $\overline{Orb(x,T^n)}$ is dense in $\overline{Orb(x,T^n)}$.
Then for every nonempty open subset $V$ of $\overline{Orb(x,T^n)}$
there exists an open subset $U$ of $X$ such that $U\subset V$.
So there exists a $T$-invariant measure $\mu$ on $X$ such that $\mu(U)>0$.
Clearly, $\mu$ is also a $T^n$-invariant measure on $X$.
Define a measure $\nu$ on $\overline{Orb(x,T^n)}$  by $\nu(A)=\mu(A)/\mu(\overline{Orb(x,T^n)})$
for every Borel subset $A$ of $\overline{Orb(x,T^n)}$.
Then $\nu$ is a $T^n$-invariant measure on $\overline{Orb(x,T^n)}$ with $\nu(V)>0$.
Thus $x$ is $\F_{pubd}$-recurrent in  $(X,T^n)$.
\end{proof}

\begin{prop}
Let $F$ be a subset of $\N$ and $n\in\N$.
\begin{enumerate}
\item If $F$ is a quasi-central set, then both $nF$ and $n^{-1}F$ are also quasi-central sets.
\item If $F$ is a D-set, then both $nF$ and $n^{-1}F$ are also D-sets
\end{enumerate}
\end{prop}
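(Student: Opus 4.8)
The plan is to recognize the two classes of sets as essential $\F$-sets and then invoke Theorem \ref{thm:nF-ess} (for $nF$) and Theorem \ref{thm:n-1F-ess} (for $n^{-1}F$) with $\F=\F_{ps}$ and with $\F=\F_{pubd}$. By the observations recorded just before the definition of essential $\F$-sets, a subset of $\N$ is quasi-central exactly when it is an essential $\F_{ps}$-set, and is a D-set exactly when it is an essential $\F_{pubd}$-set. Hence part (1) is nothing but the two theorems applied to $\F_{ps}$, and part (2) the same two theorems applied to $\F_{pubd}$. Everything reduces to checking that each of these families satisfies the hypotheses of both theorems, so this proposition is, in effect, a corollary of the machinery of Section 6.

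First I would record that $\F_{ps}$ and $\F_{pubd}$ are filterduals, which is listed among the basic properties of the familiar families, and that $b\F_{ps}=\F_{ps}$ and $b\F_{pubd}=\F_{pubd}$. For the latter, the same property lemma gives $b\F_s=\F_{ps}$ and $b\F_{pud}=\F_{pubd}$, and since the block operation is idempotent ($b(b\F)=b\F$) we get $b\F_{ps}=b(b\F_s)=b\F_s=\F_{ps}$, and likewise $b\F_{pubd}=\F_{pubd}$. The identity $b\F=\F$ does double duty: it is exactly the hypothesis demanded by Theorem \ref{thm:n-1F-ess}, and, through the lemma asserting that a filterdual $\F$ with $b\F=\F$ has $h(\F)$ a closed two sided ideal of $\bN$, it shows that $h(\F)$ is in particular a subsemigroup of $(\bN,+)$, which is the hypothesis shared by both theorems. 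The remaining hypothesis of Theorem \ref{thm:n-1F-ess}, namely iterative invariance of $\F_{ps}$- and $\F_{pubd}$-recurrence, is precisely Lemma \ref{lem:Fps-Fpubd-rec-inv}.

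The one input that is not immediate is the multiplication invariance required by Theorem \ref{thm:nF-ess}, and I expect this to be the only genuine point to verify. For $\F_{pubd}$ it is already noted at the opening of this section. For $\F_{ps}$ it is absent from that list, so I would derive it from $\F_{ps}=b\F_s$ together with the multiplication invariance of $\F_s$, via the observation that the block operation preserves multiplication invariance: given $A\in\F_{ps}=b\F_s$ with a witness $A'\in\F_s$ as in the block-family definition, I take $nA'\in\F_s$ as a witness for $nA$; any finite $W\subseteq nA'$ has the form $nW_0$ with $W_0\subseteq A'$, and a translate $m_0+W_0\subseteq A$ dilates to $nm_0+W\subseteq nA$, so $nA\in b\F_s=\F_{ps}$. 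With multiplication invariance of $\F_{ps}$ established, Theorem \ref{thm:nF-ess} applies to both $\F_{ps}$ and $\F_{pubd}$, and assembling the four citations yields the proposition.
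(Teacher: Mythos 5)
Your proposal is correct and is essentially the paper's own proof: the paper likewise deduces the proposition directly from Theorem \ref{thm:nF-ess}, Theorem \ref{thm:n-1F-ess}, Lemma \ref{lem:Fps-Fpubd-rec-inv}, and the multiplication invariance of $\F_{ps}$ and $\F_{pubd}$, after identifying quasi-central sets with essential $\F_{ps}$-sets and D-sets with essential $\F_{pubd}$-sets. Your one addition is a welcome one: the paper asserts the multiplication invariance of $\F_{ps}$ without proof (its list in Section 6 mentions only $\F_{ip}$, $\F_s$ and $\F_{pubd}$), whereas you derive it from $\F_{ps}=b\F_s$, the multiplication invariance of $\F_s$, and the observation that the block operation preserves multiplication invariance.
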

\begin{proof}
It follows from Theorem \ref{thm:n-1F-ess}, Theorem \ref{thm:nF-ess}, Lemma \ref{lem:Fps-Fpubd-rec-inv}
and the fact that $\F_{ps}$ and $\F_{pubd}$ are multiplication invariant.
\end{proof}

\section{Dynamical characterization of C-sets}
In this section, we show the following dynamical characterization of C-sets.

\begin{thm}
Let $F$ be a subset of $\N$. Then $F$ is a C-set if and only if
there exists a dynamical system $(X,T)$, a pair of points $x,y\in X$ where $y$ is $\mathcal J$-recurrent and $x$ is strongly proximal to $y$,
and an open neighborhood $U$ of $y$ such that $N(x,U)=F$.
\end{thm}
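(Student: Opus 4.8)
The plan is to derive this characterization of C-sets as an instance of the general correspondence established earlier. Recall that $F$ is a C-set if and only if $F$ is an essential $\mathcal J$-set, where $\mathcal J$ is the collection of all J-sets (this is recorded among the observations in Section~3). The natural strategy is to invoke Theorem~\ref{thm:ess-F-dyna}, which says precisely that a subset $F$ is an essential $\F$-set if and only if there is a system $(X,T)$, points $x,y$ where $x$ is $\F$-strongly proximal to $y$, and a neighborhood $U$ of $y$ with $F=N(x,U)$. Applied with $\F=\mathcal J$, this would give: $F$ is a C-set if and only if there is a system with $x$ being $\mathcal J$-strongly proximal to $y$ and $F=N(x,U)$. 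The remaining task is then to show that $x$ being $\mathcal J$-strongly proximal to $y$ is equivalent to the conjunction stated in the theorem, namely that $y$ is $\mathcal J$-recurrent and $x$ is strongly proximal to $y$.

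\medskip

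\noindent\textbf{Verifying the hypotheses.} Before applying Theorem~\ref{thm:ess-F-dyna}, I must check its standing assumptions for $\F=\mathcal J$: that $\mathcal J$ is a filterdual and that $h(\mathcal J)$ is a subsemigroup of $\bN$. The filterdual property should follow from the combinatorial structure of J-sets (a J-set is characterized by a partition-regular-style condition, and the Ramsey property — if $F_1\cup F_2$ is a J-set then one of $F_1,F_2$ is — is the expected key fact). That $h(\mathcal J)$ is a subsemigroup should be established via Lemma~\ref{lem:col-semigroup} or the earlier subsemigroup criteria, using that J-sets are closed under the relevant shifted-intersection operations. I expect these verifications to draw on properties of J-sets proved elsewhere (the paper signals in the introduction that Section~7 uses ``some properties of J-sets''), so I would cite or prove that $\mathcal J$ is a filterdual with $h(\mathcal J)$ a subsemigroup as a preliminary step.

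\medskip

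\noindent\textbf{Replacing $\mathcal J$-strong proximity by the stated conditions.} To convert the conclusion of Theorem~\ref{thm:ess-F-dyna} into the form of the statement, I would apply Proposition~\ref{prop:bF-st-pro}, which asserts that when $b\F=\F$, the point $x$ is $\F$-strongly proximal to $y$ if and only if $y$ is $\F$-recurrent and $x$ is strongly proximal to $y$. This is exactly the desired reformulation, provided one knows $b\mathcal J=\mathcal J$. Thus the one genuinely C-set-specific ingredient is the block-invariance identity $b\mathcal J=\mathcal J$, i.e.\ that the block family of the J-sets is again the J-sets. This should reduce to the observation that the defining condition of a J-set is translation-invariant and already ``block-stable'': an IP-system together with the shift operations used in the block family construction stays inside the scope of the J-set condition.

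\medskip

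\noindent\textbf{The main obstacle.} The conceptual machinery is entirely in place, so the substantive work is the two structural facts about $\mathcal J$: that $h(\mathcal J)$ is a subsemigroup of $\bN$ and that $b\mathcal J=\mathcal J$. The harder of these is likely $h(\mathcal J)$ being a subsemigroup, since it requires showing that the J-sets are closed under the operation $F\mapsto\{n:-n+F\in q\}$ in the sense needed for Lemma~\ref{lem:col-semigroup} or Lemma~\ref{lem:filterdual-tran-invar}; this is where the combinatorial definition of J-sets (the existence of $r$ and $\alpha$ with $\bar r^{(m)}+s_\alpha\in F^m$) must be manipulated directly. Once those two lemmas on $\mathcal J$ are secured, the theorem follows immediately by specializing Theorem~\ref{thm:ess-F-dyna} and Proposition~\ref{prop:bF-st-pro}, with no further dynamical argument required.
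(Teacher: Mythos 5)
Your overall architecture is exactly the paper's: identify C-sets with essential $\mathcal J$-sets, specialize Theorem~\ref{thm:ess-F-dyna} to $\F=\mathcal J$, and convert $\mathcal J$-strong proximity into ``$y$ is $\mathcal J$-recurrent and $x$ is strongly proximal to $y$'' via Proposition~\ref{prop:bF-st-pro}. The paper does precisely this and isolates the C-set-specific content into two lemmas: that $\mathcal J$ is a filterdual (Lemma~\ref{lem:J-f-d}) and that $b\mathcal J=\mathcal J$, with multiplication invariance and the ideal/subsemigroup consequences (Lemma~\ref{lem:bJ-J}). So the reduction is right and matches the paper.

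The genuine gap is that you never prove, nor indicate any mechanism for proving, that $\mathcal J$ is a filterdual, and this is where essentially all the work of the paper's Section~7 lies. Your remark that ``the Ramsey property --- if $F_1\cup F_2$ is a J-set then one of $F_1,F_2$ is --- is the expected key fact'' is circular: for a proper family the Ramsey property \emph{is} the filterdual property, so this restates the goal rather than arguing for it. In the paper, the proof of Lemma~\ref{lem:J-f-d} is not a routine closure argument; given a partition $F=F_1\cup F_2$ and an IP-system $\{s_\alpha\}$ in $\mZ^m$, it invokes the Hales--Jewett theorem (following \cite[Theorem 2.14]{HS10}) to encode the $m$ coordinates as letters of length-$n$ words, builds from these an auxiliary IP-system indexed by the set $W$ of all such words, extracts a variable word whose instances are monochromatic, and converts that monochromaticity back into a single pair $(r',\alpha')$ with $\bar r'^{(m)}+s_{\alpha'}\in F_i^m$; a further device (appending the coordinate $s^{(0)}_\alpha=-|\alpha|$) is needed to force $r$ to be positive. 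Nothing in the ``partition-regular-style'' shape of the definition of J-sets, nor in the elementary shifted-intersection manipulations you allude to, produces this argument.

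A secondary misjudgment: you locate the hard step in the wrong place. You expect $h(\mathcal J)$ being a subsemigroup of $(\bN,+)$ to be the difficult part, requiring direct work with the operation $F\mapsto\{n\in\N: -n+F\in q\}$. In fact that part is soft: once $\mathcal J$ is a filterdual and $b\mathcal J=\mathcal J$ (your block-stability sketch for the latter is essentially the paper's short argument in Lemma~\ref{lem:bJ-J}), the paper's earlier lemma on filterduals with $b\F=\F$ gives that $h(\mathcal J)$ is a closed two-sided ideal of $(\bN,+)$, hence in particular a subsemigroup, with no ultrafilter computation at all. So your scaffolding is correct, but the one ingredient you wave through --- the partition regularity of J-sets, i.e.\ Lemma~\ref{lem:J-f-d} via Hales--Jewett --- is precisely the theorem-level content that a complete proof must supply.
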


By Proposition \ref{prop:bF-st-pro} and Theorem \ref{thm:ess-F-dyna}, it suffices to show the following two lemmas.

\begin{lem}\label{lem:J-f-d}
$\mathcal J$ is a filterdual.
\end{lem}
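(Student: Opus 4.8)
The plan is to show that $\mathcal J$ is a proper family satisfying the Ramsey Property, since a proper family is a filterdual exactly when it has the Ramsey Property (as recalled in Section~2). First I would verify that $\mathcal J$ is a genuine family: if $F\in\mathcal J$ and $F\subset G$, then for any IP-system $\{s_\alpha\}$ in $\mZ^m$ the witnessing $r\in\N$ and $\alpha\in\pn$ with $\bar r^{(m)}+s_\alpha\in F^m$ automatically give $\bar r^{(m)}+s_\alpha\in G^m$, so $G\in\mathcal J$; hereditary upwardness is immediate. Properness follows because $\N$ itself is clearly a J-set while $\emptyset$ is not (take any IP-system and note the emptyset cannot contain the required point).

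The main work is the Ramsey Property: given $F_1\cup F_2\in\mathcal J$, I must show $F_1\in\mathcal J$ or $F_2\in\mathcal J$. The natural strategy is a contradiction argument combined with a diagonal/tree construction. Suppose neither $F_1$ nor $F_2$ is a J-set. Then there exist witnesses: an $m_1$ and an IP-system $\{s^{(1)}_\alpha\}$ in $\mZ^{m_1}$ defeating $F_1$, and an $m_2$ and IP-system $\{s^{(2)}_\alpha\}$ in $\mZ^{m_2}$ defeating $F_2$. The idea is to combine these into a single IP-system in $\mZ^{m_1+m_2}$ (or a suitable dimension) that defeats $F_1\cup F_2$, contradicting the hypothesis. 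The delicate point is that the defeating witnesses for $F_1$ and $F_2$ must be made compatible: for a single pair $(r,\alpha)$ we need $\bar r^{(m)}+s_\alpha$ to land in $(F_1\cup F_2)^m$, and to derive a contradiction we want to force every such vector to have some coordinate outside both $F_1$ and $F_2$ simultaneously. I expect the clean route is to build a product IP-system whose coordinates interleave the two given systems, so that membership of $\bar r^{(m)}+s_\alpha$ in $(F_1\cup F_2)^m$ would force, by a pigeonhole on the finitely many coordinates, enough coordinates into one of $F_1$ or $F_2$ to contradict that $F_i$ is not a J-set.

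The hard part will be organizing this combination correctly, since the definition of J-set quantifies over \emph{all} IP-systems and asks for the \emph{existence} of a single good $(r,\alpha)$; negating it gives, for each $F_i$, one fixed bad IP-system, but the pigeonhole must be arranged so that a good $(r,\alpha)$ for $F_1\cup F_2$ produces a good $(r,\alpha)$ for at least one $F_i$ against its own bad system. I anticipate this requires taking the dimension $m=m_1=m_2$ after padding (or passing to $\max(m_1,m_2)$ and reusing coordinates) and forming the IP-system $\{(s^{(1)}_\alpha,s^{(2)}_\alpha)\}$ in $\mZ^{2m}$, then observing that if $\bar r^{(2m)}+(s^{(1)}_\alpha,s^{(2)}_\alpha)\in(F_1\cup F_2)^{2m}$, each of the $2m$ coordinates lies in $F_1$ or $F_2$; choosing $m$ large relative to the structure forces a full block in one $F_i$. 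I would also keep in mind the alternative, and possibly cleaner, route suggested by the earlier machinery: use the established fact that J-sets are exactly the sets forced by the appropriate dynamical property, but since the excerpt treats $\mathcal J$ combinatorially, I expect the author's proof to proceed by the direct IP-system argument sketched here, with the pigeonhole on coordinates being the technical crux.
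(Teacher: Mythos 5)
Your preliminary reductions are fine (upward heredity, properness, and the reduction of the Ramsey Property to combining the two witness systems by concatenation -- the paper's last step does exactly this concatenation). But the crux of your argument has a genuine gap: the ``pigeonhole on coordinates'' step fails. Suppose neither $F_1$ nor $F_2$ is a J-set, with bad systems $\{t_\alpha\}$ in $\mZ^{m_1}$ and $\{t'_\alpha\}$ in $\mZ^{m_2}$, and apply the J-set property of $F_1\cup F_2$ to the concatenated system. You get $r,\alpha$ with every coordinate of $\bar r^{(m_1+m_2)}+(t_\alpha,t'_\alpha)$ in $F_1\cup F_2$. The negation of ``$F_i$ is a J-set'' only tells you that \emph{some} coordinate of $\bar r^{(m_1)}+t_\alpha$ misses $F_1$ (hence lies in $F_2$) and \emph{some} coordinate of $\bar r^{(m_2)}+t'_\alpha$ misses $F_2$ (hence lies in $F_1$). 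That is perfectly consistent; there is no contradiction. To contradict the badness of, say, $F_1$ you would need \emph{all} $m_1$ coordinates of the first block to lie in $F_1$ \emph{for a single pair} $(r,\alpha)$, and no pigeonhole argument, nor any choice of ``large $m$,'' forces all coordinates into the same cell: membership can be scattered arbitrarily between $F_1$ and $F_2$ across the coordinates.

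What is actually needed is the much stronger partition statement that the paper isolates as its Claim: if $F=F_1\cup F_2$ is a J-set, then for \emph{every} IP-system $\{s_\alpha\}$ in $\mZ^m$ there exist $i\in\{1,2\}$, $r$ and $\alpha$ with $\bar r^{(m)}+s_\alpha\in F_i^m$, i.e.\ the whole shifted vector lands in one cell. The paper proves this with the Hales--Jewett Theorem: it builds an auxiliary IP-system in $\mZ^{|W|}$ indexed by all length-$n$ words $w$ over the alphabet $\{1,\ldots,m\}$ (each word prescribing which generator $f_{b_i}$ to use on which positions), applies the J-set property of $F$ to that word-indexed system, 2-colors the words according to whether the resulting element falls in $F_1$, and extracts a monochromatic variable word; the variable positions then reassemble into an IP-subsystem $\{s_{\alpha'}\}$ and a shift $r'$ landing entirely in one $F_i$. (A further small step, which your sketch also omits, upgrades $r\in\mZ$ to $r\in\N$ by adjoining the coordinate $s^{(0)}_\alpha=-|\alpha|$.) So the missing idea is precisely Hales--Jewett; without it, or some equivalent ``monochromatic combinatorial line'' input, the contradiction you aim for cannot be reached.
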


\begin{lem}\label{lem:bJ-J}
$\mathcal J=b\mathcal J$ and it is multiplication invariant.
Then $h(\mathcal J)$ is a closed two sided ideal in $(\bN,+)$ and left ideal in $(\bN,\cdot)$.
\end{lem}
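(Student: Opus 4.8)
The goal is to establish the two structural facts about $\mathcal J$ that, via Proposition~\ref{prop:bF-st-pro} and Theorem~\ref{thm:ess-F-dyna}, yield the dynamical characterization of C-sets. I would prove these as the stated Lemmas~\ref{lem:J-f-d} and~\ref{lem:bJ-J}, working purely at the combinatorial level of J-sets and then invoking the earlier correspondence machinery. The heart of the matter is that a J-set is defined by a \emph{partition-regular-type} Ramsey condition quantified over all IP-systems, and the properties we want — hereditary upward, the Ramsey property, closure under the block operation $b(\cdot)$, and multiplication invariance — should all follow from elementary manipulations of the defining condition, once one is careful about the quantifier structure.

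First I would verify that $\mathcal J$ is a proper family that is hereditary upward: if $F\subset F'$ and $F$ is a J-set, then any $\bar r^{(m)}+s_\alpha\in F^m$ also lies in $(F')^m$, so $F'\in\mathcal J$; and $\N$ itself is trivially a J-set while $\emptyset$ is not. The substantive content of Lemma~\ref{lem:J-f-d} is the Ramsey property: if $F_1\cup F_2$ is a J-set then $F_1$ or $F_2$ is. By the remark in Section~2, a proper family is a filterdual iff it has the Ramsey property, so this is exactly what must be shown. The natural approach is a diagonal/compactness argument: given an IP-system $\{s_\alpha\}$ in $\mZ^m$, I would consider the finitely many "colors" obtained by recording, for each witnessing pair $(r,\alpha)$, whether the point $\bar r^{(m)}+s_\alpha$ lands in $F_1^m$ or $F_2^m$. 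Since $F_1\cup F_2$ is a J-set, witnesses exist for every IP-subsystem; the task is to pass to a single IP-subsystem on which one of the two colors persists. I expect this to require iterating the J-set property along nested IP-subsystems and extracting a homomorphism $\phi$ via a König-type or ultrafilter argument — this is the \textbf{main obstacle}, since the set of witnessing data lives over all of $\pn$ and one must control the two colorings simultaneously across infinitely many stages.

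For Lemma~\ref{lem:bJ-J}, I would first show $\mathcal J=b\mathcal J$. The inclusion $\mathcal J\subset b\mathcal J$ holds because $b$ always enlarges a family, so only $b\mathcal J\subset\mathcal J$ is at issue: given $F\in b\mathcal J$ with witness $F'\in\mathcal J$ and shifts $\{a_n\}$, and given any IP-system $\{s_\alpha\}$ in $\mZ^m$, I would apply the J-set property of $F'$ to the shifted system to locate $r,\alpha$ with $\bar r^{(m)}+s_\alpha\in (F')^m$, then translate by an appropriate $a_n$ (chosen large enough that the whole relevant finite block of $F'$ embeds into $F$) to land inside $F^m$; adjusting $r$ by $a_n$ keeps it in $\N$. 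Multiplication invariance is easier: if $F$ is a J-set and $n\in\N$, then for an IP-system $\{s_\alpha\}$ in $\mZ^m$ apply the J-set property of $F$ to the scaled system $\{n s_\alpha\}$ to get $\bar r^{(m)}+n s_\alpha\in F^m$, whence $n^{-1}$ of these coordinates realize the witness for $nF$ — so $nF$ is a J-set (one checks the roles of $F$ and $nF$ carefully, using that scaling preserves IP-systems). Finally, having $\mathcal J$ a filterdual with $b\mathcal J=\mathcal J$, the conclusion that $h(\mathcal J)$ is a closed two-sided ideal of $(\bN,+)$ is immediate from the lemma stating that $b\F=\F$ forces $h(\F)$ to be a two-sided ideal, and the left-ideal statement for $(\bN,\cdot)$ follows from multiplication invariance together with the multiplicative analogue of Lemma~\ref{lem:filterdual-tran-invar} recorded at the start of Section~6. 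The only genuinely delicate point throughout remains the Ramsey property, where the interplay of the quantifiers "for every IP-system, there exists a witness" must be handled by a careful subsystem-extraction argument rather than a one-line counting observation.
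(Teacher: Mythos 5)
Your argument for $b\mathcal J=\mathcal J$ is correct and is essentially the paper's: the inclusion $\mathcal J\subset b\mathcal J$ is trivial, and for the converse you apply the J-set property of $F'$ directly to the given IP-system (no ``shifted system'' is needed), obtain $r\in\N$ and $\alpha\in\pn$ with $r+s^{(i)}_\alpha\in F'$ for all $i$, choose $n$ so large that each $r+s^{(i)}_\alpha$ lies in $F'\cap[1,n]$, and set $r'=r+a_n$, which is still in $\N$ because the shifts $a_n$ are non-negative. Your deduction of the two ideal statements is also exactly the paper's: combine Lemma~\ref{lem:J-f-d} with the lemma of Section~3 asserting that a filterdual $\F$ with $b\F=\F$ has $h(\F)$ a closed two-sided ideal of $(\bN,+)$, and with the multiplicative analogue of Lemma~\ref{lem:filterdual-tran-invar} stated at the beginning of Section~6.

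The gap is in multiplication invariance. You apply the J-set property of $F$ to the scaled-up system $\{ns_\alpha\}$, getting $r+ns^{(i)}_\alpha\in F$ for all $i$, and then want to take ``$n^{-1}$ of these coordinates.'' This fails twice over. First, $(r+ns^{(i)}_\alpha)/n$ need not be an integer: the definition of a J-set hands you some $r\in\N$ with no control whatsoever on $r\bmod n$. Second, even when $n\mid r$, the integer $r/n+s^{(i)}_\alpha$ satisfies $n\bigl(r/n+s^{(i)}_\alpha\bigr)\in F$, i.e.\ it lies in $n^{-1}F$, not in $nF$; so at best your computation would show that $n^{-1}F$ is a J-set, which is not the assertion being proved (and not what the left-ideal statement for $(\bN,\cdot)$ requires, since by the Section~6 lemma that statement corresponds precisely to closure under $F\mapsto nF$). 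The obstruction is genuine: every element of $nF$ is divisible by $n$, so any witness $r'+s^{(i)}_\alpha\in nF$, $i=1,\dots,m$, forces all the $s^{(i)}_\alpha$ to be congruent to one another modulo $n$, and an arbitrary IP-system gives you no such $\alpha$ for free --- you must produce one. That is what the paper's ``without loss of generality $\{s_\alpha\}\subset n\mZ^m$'' accomplishes, and it is the step missing from your sketch: by pigeonhole applied to the partial sums $s_{\{1,\dots,k\}}$ modulo $n$, one extracts a homomorphism $\phi:\pn\to\pn$ with $s_{\phi(\alpha)}\in n\mZ^m$ for every $\alpha$; then $\{n^{-1}s_{\phi(\alpha)}\}$ is an IP-system, the J-set property of $F$ yields $r$ and $\alpha$ with $r+n^{-1}s^{(i)}_{\phi(\alpha)}\in F$, and multiplying by $n$ gives $nr+s^{(i)}_{\phi(\alpha)}\in nF$, a witness for the original system at the index $\phi(\alpha)\in\pn$. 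In short, one must scale the system \emph{down} (after passing to a divisible subsystem) and scale the witness \emph{up}, not the other way around. (Separately, the Ramsey property you flag as the ``main obstacle'' is Lemma~\ref{lem:J-f-d}, which the paper proves via the Hales--Jewett theorem rather than a diagonal subsystem extraction; but that lemma is outside the proof under review here.)
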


\begin{proof}[Proof of Lemma \ref{lem:J-f-d}]
Let $F$ be a J-set and $F=F_1\cup F_2$.
Using an argument due to \cite[Theorem 2.14]{HS10}, we first show the following claim.

{\bf Claim}: For every IP-system $\{s_\alpha=(s^{(1)}_{\alpha},\ldots,s^{(m)}_\alpha)\}$ in $\mZ^m$,
there exists an $i\in\{1,2\}$, $r\in \mZ$ and $\alpha\in \pn$
such that $\bar r^{(m)}+s_{\alpha}\in F_i^m$.

{\bf Proof of the Claim}: For $j=1,2,\ldots,m$, define $f_j:\N\to\mZ$ by $f_j(n)=s^{(j)}_{\{n\}}$,
then $s^{(j)}_\alpha=\sum_{n\in\alpha}f_j(n)$ for $\alpha\in\pn$.

Pick by Hales-Jewett Theorem (\cite{HJ63}) some $n\in\N$ such that
whenever the length $n$ words over the alphabet $\{1,\ldots,m\}$ are $2$-colored,
there exists a variable word $w(v)$ such that $\{w(j): j=1,\ldots,m\}$ is monochromatic.

Let $W$ be the set of length $n$ words over $\{1,\ldots,m\}$.
For $w=b_1b_2\cdots b_n\in W$
define $g_w:\N\to\mZ$ by for $l\in\Z$ and $i=1,2,\ldots,n$, $g_w(ln+i)=f_{b_i}(ln+i)$.
For $l\in\Z$, let $H_l=\{ln+1,ln+2,\ldots,ln+n\}$.
For every $w\in W$ and $\alpha\in\pn$, let
$h^{(w)}_\alpha=\sum_{l\in\alpha}\sum_{t\in H_l}g_w(t)$.
Then $(h_\alpha)=(h^{(w)}_\alpha: w\in W)$ is an IP-system in $\mZ^{|W|}$.
Then there exists a $r\in\mZ$ and $\alpha\in \pn$
such that $r+h^{(w)}_\alpha\in F$ for every $w\in W$.
Define $\phi: W\to\{0,1\}$ by $\phi(w)=1$ if $r+h^{(w)}_\alpha\in F_1$.
Pick a variable word $w(v)$ such that $\{w(j): j=1,2,\ldots,m\}$ is monochromatic with respect to $\phi$.
Without loss of  generality assume that $\phi(w(j))=1$ for $j=1,2,\ldots,k$.
Let $w(v)=c_1c_2\cdots c_n$ where each $c_i\in\{1,2,\ldots,m\}\cup\{v\}$.
Let $A=\{i\in\{1,2,\ldots,n\}: c_i=v\}\neq\emptyset$ and $B=\{1,2,\ldots,n\}\setminus A$.
For $l\in\Z$, let $H_l^A=H_l\cap(ln+A)$ and $H_l^B=H_l\cap(ln+B)$.
For $j=1,2,\ldots,m$, rewrite $h^{(w(j))}_\alpha$ as
\[h^{(w(j))}_\alpha= \sum_{l\in \alpha}\sum_{t\in H_l}g_{w(j)}(t)
=\sum_{l\in\alpha}\sum_{t\in H_l^A}g_{w(j)}(t)
+\sum_{l\in\alpha}\sum_{t\in H_l^B}g_{w(j)}(t).\]
Then $\sum_{t\in H_l^A}g_{w(j)}(t)=\sum_{t\in H_l^A}f_j(t)$ and
$\sum_{t\in H_l^B}g_{w(j)}(t)$ does not depend on $j$.
Let $\alpha'=\bigcup_{l\in\alpha}H_l^A$
and $r'=r+\sum_{l\in\alpha}\sum_{t\in H_l^B}g_{w(j)}(t)$.
Then $r+h^{(w(j))}_\alpha =r'+s^{(j)}_{\alpha'}$.
So $\bar r'^{(m)}+s_{\alpha'}\in F_1^m$. This ends the proof of the Claim.

We now show that in the Claim we can pick $r\in\N$ instead of $r\in\mZ$.
For every IP-system $\{s_\alpha=(s^{(1)}_{\alpha},\ldots,s^{(m)}_\alpha)\}$ in $\mZ^m$,
let $s^{(0)}_\alpha= - |\alpha|$ for each $\alpha\in\pn$ and
$\{s'_\alpha= (s^{(0)}_\alpha, s^{(1)}_{\alpha},\ldots,s^{(m)}_\alpha)\}$.
Applying the Claim to $\{s'_\alpha\}$,
there exists an $i\in\{1,2\}$, $r\in \mZ$ and $\alpha\in \pn$
such that $\bar r^{(m+1)}+s'_{\alpha}\in F_i^{m+1}$.
Since $r+s^{(0)}_\alpha\in F_i$ and $s^{(0)}_\alpha$ is negative, $r$ must be positive.

If both $F_1$ and $F_2$ are not J-sets, let
$\{s_\alpha=(s^{(1)}_{\alpha},\ldots,s^{(m)}_\alpha)\}$
and $\{s'_\alpha=(s'^{(1)}_{\alpha},\ldots,s'^{(m')}_\alpha)\}$
be witnesses to the fact that $F_1$ and $F_2$ are not J-sets.
Let $s''_\alpha=(s^{(1)}_{\alpha},\ldots,s^{(m)}_\alpha, s'^{(1)}_{\alpha},\ldots,s'^{(m')}_\alpha)$.
Applying the Claim to $\{s''_\alpha\}$, we get a contradiction.
\end{proof}

\begin{proof}[Proof of Lemma \ref{lem:bJ-J}]
If $F$ is a block J-set, then there exists a sequence $\{a_n\}$ in $\Z$ and $F'\in \mathcal J$
such that $\bigcup_{n=1}^\infty(a_n+F'\cap[1,n])\subset F$.
For every IP-system $\{s_\alpha\}$ in $\mZ^m$, there exists $r\in\N$ and $\alpha\in\pn$
such that $\bar r^{(m)}+s_\alpha\in F'^{(m)}$.
Choose $n$ large enough such that $\bar r^{(m)}+s_\alpha\in (F'\cap[1,n])^{(m)}$ and
let $r'=r+a_n$. Then $\bar r'^{(m)}+s_\alpha\in F^{m}$. Hence, $F$ is also a J-set.

Let $F$ be a J-set and $n\in\N$, we want to show that $nF$ is also a J-set.
Let $\{s_\alpha\}$ be an IP-system in $\mZ^m$.
Without loss of  generality, assume that $\{s_\alpha\}\subset n\mZ^m$.
Let $s'_{\alpha}=n^{-1}s_\alpha$. Then $\{s'_\alpha\}$ is also an IP-system in $\mZ^m$.
Since $F$ is a J-set, there exists $r\in\N$ and $\alpha\in \pn$ such that
$\bar r^{(m)}+s'_{\alpha}\in F^{(m)}$, then $\bar{nr}^{(m)}+s_\alpha\in nF^{(m)}$.
Hence, $nF$ is also a J-set.
\end{proof}

\begin{rem}
It is shown in \cite{H09} that there exists a C-set with upper Banach density $0$.
Then there exists a dynamical system $(X,T)$ and $x\in X$ such that
$x$ is $\mathcal J$-recurrent but not $\F_{pubd}$-recurrent.
\end{rem}

\section{Solvability of Rado systems in C-sets}

In order to show that Rado systems are solvable in C-sets, by the method developed in \cite[pp.\@169--174]{F81},
it suffices to show the following two results.

\begin{lem}\label{lem:n-1c-set}
If $F$ is a C-set, then for each $n\in\N$, $nF$ and $n^{-1}F$ are also C-sets.
\end{lem}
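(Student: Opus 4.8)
The statement to prove is Lemma~\ref{lem:n-1c-set}: if $F$ is a C-set then for each $n\in\N$ both $nF$ and $n^{-1}F$ are C-sets. Recall from the fourth observation in Section~3 that $F$ is a C-set if and only if it is an essential $\mathcal J$-set, i.e.\ there is an idempotent $p\in h(\mathcal J)$ with $F\in p$. So the whole lemma reduces to verifying that the hypotheses of the general machinery in Section~6 apply with $\F=\mathcal J$. The plan is therefore to invoke Theorem~\ref{thm:nF-ess} for the $nF$ part and Theorem~\ref{thm:n-1F-ess} for the $n^{-1}F$ part, and to check their hypotheses using Lemma~\ref{lem:J-f-d} and Lemma~\ref{lem:bJ-J}.

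\textbf{Handling $nF$.} Theorem~\ref{thm:nF-ess} requires that $\mathcal J$ be a filterdual (Lemma~\ref{lem:J-f-d}), that $\mathcal J$ be multiplication invariant (the second half of Lemma~\ref{lem:bJ-J}), and that $h(\mathcal J)$ be a subsemigroup of $(\bN,+)$ (also recorded in Lemma~\ref{lem:bJ-J}, since $b\mathcal J=\mathcal J$ forces $h(\mathcal J)$ to be a closed two-sided ideal, hence in particular a subsemigroup). With all three hypotheses verified, Theorem~\ref{thm:nF-ess} directly gives that $nF$ is an essential $\mathcal J$-set, i.e.\ a C-set. This half is essentially a bookkeeping application.

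\textbf{Handling $n^{-1}F$.} Theorem~\ref{thm:n-1F-ess} requires $b\mathcal J=\mathcal J$ (Lemma~\ref{lem:bJ-J}) together with the condition that $\mathcal J$-recurrence be iteratively invariant. The first condition is in hand, so the real content of this half is establishing iterative invariance of $\mathcal J$-recurrence: for every system $(X,T)$ and every $\mathcal J$-recurrent point $x$, one must show $x$ is $\mathcal J$-recurrent in $(X,T^n)$ for each $n\in\N$. I expect this to be the main obstacle, since unlike the $\F_{ps}$ and $\F_{pubd}$ cases (handled in Lemma~\ref{lem:Fps-Fpubd-rec-inv} via dense minimal points and invariant measures), there is no measure- or minimality-based shortcut for $\mathcal J$. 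The natural route is combinatorial: translating $\mathcal J$-recurrence of $x$ in $(X,T^n)$ into a statement about the entering-time sets $\{m\in\N: (T^n)^m x\in U\}=n^{-1}N(x,U)$, and proving directly from the definition of a J-set that if $N(x,U)$ is a J-set then so is its preimage $n^{-1}N(x,U)$ under multiplication by $n$. That in turn hinges on the fact that for any IP-system $\{s_\alpha\}$ in $\mZ^m$, one can rescale to produce an IP-system whose witnessing $r$ and $\alpha$ for $N(x,U)$ pull back to a valid witness for $n^{-1}N(x,U)$ — the same rescaling idea already used in the proof of Lemma~\ref{lem:bJ-J} to show multiplication invariance, run in the opposite direction. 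Once iterative invariance of $\mathcal J$-recurrence is secured, Theorem~\ref{thm:n-1F-ess} yields that $n^{-1}F$ is an essential $\mathcal J$-set, completing the proof.
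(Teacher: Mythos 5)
Your reduction and the $nF$ half are correct and match the paper: Theorem~\ref{thm:nF-ess} applies because $\mathcal J$ is a filterdual (Lemma~\ref{lem:J-f-d}), is multiplication invariant, and satisfies $b\mathcal J=\mathcal J$, so that $h(\mathcal J)$ is a subsemigroup of $(\bN,+)$ (Lemma~\ref{lem:bJ-J}). You also correctly locate the real content of the $n^{-1}F$ half: Theorem~\ref{thm:n-1F-ess} needs $b\mathcal J=\mathcal J$ together with iterative invariance of $\mathcal J$-recurrence.

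The gap is in how you propose to prove that iterative invariance. You want to show, ``directly from the definition of a J-set,'' that if $A$ is a J-set then so is $n^{-1}A$, by running the rescaling of Lemma~\ref{lem:bJ-J} in the opposite direction. That implication is false as a statement about individual J-sets. Take $A$ to be the set of odd numbers and $n=2$: then $A$ is a J-set (given any IP-system $\{s_\alpha\}$ in $\mZ^m$, by pigeonhole two generators have the same parity vector in $(\mZ/2\mZ)^m$, so their union $\alpha$ makes every coordinate of $s_\alpha$ even, and any odd $r>\max_i|s^{(i)}_\alpha|$ gives $\bar r^{(m)}+s_\alpha\in A^m$), yet $2^{-1}A=\emptyset$ is not a J-set. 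The failure is exactly where your rescaling must break down: applying the J-set property of $A$ to $\{ns_\alpha\}$ yields a witness $r$ with $r+ns^{(i)}_\alpha\in A$, but to descend to $n^{-1}A$ you need $r\in n\N$, and the definition of a J-set gives no control over $r$ modulo $n$ (in the example every witness is forced to be odd). This is the asymmetry with the $nF$ direction, where one simply multiplies the witness by $n$. Consequently iterative invariance of $\mathcal J$-recurrence cannot be obtained set-by-set; it genuinely uses the joint structure of all entering-time sets of $x$ at once. (Note the odd numbers do arise as $N(x,[1])$ for $x=\mathbf{1}_{A}$, but $[1]$ is a neighborhood of $\sigma x$, not of $x$; entering-time sets into neighborhoods of a recurrent point are IP sets, which the odds are not.) The paper's Proposition~\ref{prop:J-rec-inv} supplies the missing argument dynamically: $\mathcal J$-recurrence of $x$ is equivalent to the multiple IP-recurrence property of $(\overline{Orb(x,T)},T)$; that property passes from $(X,T)$ to $(X,T^n)$ by Lemma~\ref{lem:mul-ip-sys-eq}, where the rescaling is legitimate precisely because no positive additive witness $r$ appears in its statement; and the decomposition Lemma~\ref{lem:trans-decom} (density of the interior of $\overline{Orb(x,T^n)}$) localizes the property to $(\overline{Orb(x,T^n)},T^n)$, giving $\mathcal J$-recurrence of $x$ under $T^n$. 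Without this, or an equivalent substitute, the $n^{-1}F$ half of your proof does not go through.
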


\begin{thm}\label{thm:C-set-thm}
Let $F$ be a C-set. Then for every $m\in\N$ and every IP-system $\{s_\alpha\}$ in $\mZ^m$
there exists an IP-system $\{r_\alpha\}$ in $\N$ and an IP-subsystem $\{s_{\phi(\alpha)}\}$
such that for every $\alpha\in\pn$, $\bar r_\alpha^{(m)}+s_{\phi(\alpha)} \in F^m$.
\end{thm}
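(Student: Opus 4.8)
The plan is to prove the statement by the inductive construction that underlies the Central Sets Theorem, but driven by the idempotent witnessing that $F$ is a C-set rather than by minimality. By definition there is an idempotent $p\in\bN$ all of whose members are J-sets, with $F\in p$. Write $C=F$ and recall the standard idempotent lemma (see \cite{HS98}): for any $A\in p$ the set $A^\star=\{x\in A:\,-x+A\in p\}$ again lies in $p$, and for each $x\in A^\star$ one has $-x+A^\star\in p$. The reason for working with $p$ rather than with a bare J-set is precisely that this $\star$-operation keeps the relevant shifted sets inside $p$, hence keeps them J-sets, so that the construction can be iterated; this is where the full strength of the C-set hypothesis, and not merely ``$F$ is a J-set'', is used.

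I would build sequences $\{r_n\}_{n\geq 1}$ in $\N$ and $\{\phi(\{n\})\}_{n\geq 1}$ in $\pn$ one term at a time, maintaining the invariant that $\phi(\{1\})<\phi(\{2\})<\cdots$ and that for every nonempty $\gamma\subseteq\{1,\ldots,k\}$ each coordinate of the vector $b_\gamma:=\bar r_\gamma^{(m)}+s_{\phi(\gamma)}$ lies in $C^\star$, where $r_\gamma=\sum_{n\in\gamma}r_n$ and $\phi(\gamma)=\bigcup_{n\in\gamma}\phi(\{n\})$. Since the supports $\phi(\{n\})$ are pairwise disjoint with increasing minima, $\phi$ extends to a homomorphism and $\{s_{\phi(\alpha)}\}$ is a genuine IP-subsystem, while $\{r_\alpha\}$ is an IP-system in $\N$. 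As $C^\star\subseteq C=F$, the invariant is stronger than required, so once all terms are built we obtain $\bar r_\alpha^{(m)}+s_{\phi(\alpha)}\in F^m$ for every $\alpha\in\pn$, which is the assertion.

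The inductive step is the crux, and it is exactly here that the multidimensional J-set property enters. Given the data up to stage $k$, let $E$ be the collection of all subsets of $\{1,\ldots,k\}$, including $\emptyset$ with $b_\emptyset=0$. For nonempty $\gamma\in E$ the invariant gives $b_\gamma^{(j)}\in C^\star$, so $-b_\gamma^{(j)}+C^\star\in p$, while $-b_\emptyset^{(j)}+C^\star=C^\star\in p$. Hence the finite intersection $G=\bigcap_{\gamma\in E}\bigcap_{j=1}^m(-b_\gamma^{(j)}+C^\star)$ belongs to $p$ and is therefore a J-set. To force the new support above $N=\max\phi(\{k\})$, I would apply the J-set property of $G$ not to $\{s_\alpha\}$ itself but to the IP-subsystem $\{s_{\psi(\alpha)}\}$, where $\psi$ is the homomorphism determined by $\psi(\{i\})=\{N+i\}$; this yields $r\in\N$ and $\alpha\in\pn$ with $\bar r^{(m)}+s_{\psi(\alpha)}\in G^m$. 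Setting $r_{k+1}=r$ and $\phi(\{k+1\})=\psi(\alpha)$, the condition $\min\phi(\{k+1\})>N$ guarantees the splittings $r_{\gamma\cup\{k+1\}}=r_\gamma+r_{k+1}$ and $s_{\phi(\gamma\cup\{k+1\})}=s_{\phi(\gamma)}+s_{\phi(\{k+1\})}$. Then for each $j$ the coordinate $r+s_{\psi(\alpha)}^{(j)}$ lies in $G\subseteq -b_\gamma^{(j)}+C^\star$, whence $b_{\gamma\cup\{k+1\}}^{(j)}=b_\gamma^{(j)}+r+s_{\psi(\alpha)}^{(j)}\in C^\star$; as $\gamma$ ranges over $E$ this covers every nonempty subset of $\{1,\ldots,k+1\}$ meeting $k+1$, and together with the stage-$k$ invariant it reestablishes the invariant at stage $k+1$. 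The base case is the same computation with $E=\{\emptyset\}$, applying the J-set property of $C^\star$ to $\{s_\alpha\}$.

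The main obstacle I anticipate is bookkeeping rather than conceptual: one must check that the $\star$-lemma is used consistently with the paper's convention $A\in p+q\iff\{n:\,-n+A\in q\}\in p$, so that $-b_\gamma^{(j)}+C^\star\in p$ is legitimate, and one must verify the additive decompositions of $s_{\phi(\gamma\cup\{k+1\})}$ and $r_{\gamma\cup\{k+1\}}$, which rest entirely on the increasing supports arranged through $\psi$. The genuinely essential point is that intersecting the $2^k$ shifted copies of $C^\star$ over all $\gamma\in E$ still produces a member of $p$, hence a J-set, so that one application of the $m$-dimensional J-set property simultaneously resolves all $m$ coordinates and all partial sums indexed by $E$.
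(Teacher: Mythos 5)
Your proposal is correct, but it follows a genuinely different route from the paper's proof. The paper argues dynamically: from the idempotent $p$ with $F\in p$ it forms $x=\mathbf{1}_F\in\{0,1\}^{\mZ}$ and $y=px\in[1]$, observes via Lemma \ref{lem:p-lim} that $N((x,y),U\times U)$ is a J-set for every neighborhood $U$ of $y$, and then runs a Furstenberg-style induction on shrinking neighborhoods $U_1\supset U_2\supset\cdots$ of $y$, using continuity of $\sigma$ to propagate the containments $\sigma^{r_\beta+s^{(i)}_{\phi(\beta)}}U_{n+1}\subset U_\beta$, so that membership of all partial sums in $F$ is read off from $U_\beta\subset[1]$. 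You work instead entirely inside $\bN$: the star operation $A\mapsto A^\star$ of the idempotent $p$ replaces the nested neighborhoods, and your set $G=\bigcap_{\gamma\in E}\bigcap_{j=1}^m\bigl(-b_\gamma^{(j)}+C^\star\bigr)$, being a finite intersection of members of $p$, is again a member of $p$ and hence a J-set. The pivotal observation is the same in both proofs --- one application of the $m$-dimensional J-set property per stage settles all coordinates and all partial sums simultaneously, and it is the C-set hypothesis (not mere J-set-ness of $F$) that keeps the iteration alive --- but you implement the bookkeeping algebraically where the paper implements it topologically. Your route is essentially the algebraic proof that the paper's closing remark attributes to \cite{BBDF09,HS09}; what it buys is independence from the dynamical machinery (it needs neither the dynamical characterization of C-sets nor Lemmas \ref{lem:J-f-d} and \ref{lem:bJ-J}, only the definition of a C-set, the ultrafilter property of $p$, and the standard star lemma), whereas the paper's version exhibits the theorem as a direct consequence of its dynamical characterization, in keeping with the theme of the paper. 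One detail where you are in fact more careful than the paper: the paper simply asserts that one may choose $\alpha_{n+1}>\alpha_n$ when invoking the J-set property, while you justify forcing $\min\phi(\{k+1\})>\max\phi(\{k\})$ by applying the J-set property to the tail subsystem $\{s_{\psi(\alpha)}\}$ with $\psi(\{i\})=\{N+i\}$ --- the same device the paper isolates in Lemma \ref{lem:mul-ip-sys-eq} but does not invoke explicitly at that point.
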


To discuss $\mathcal J$-recurrence, we first introduce a new kind of dynamical system.
Let $(X,T)$ be an invertible dynamical system, we say that $(X,T)$ satisfies the
\emph{multiple IP-recurrence property} if for every
IP-system $\{s_\alpha=(s^{(1)}_\alpha,\ldots,s^{(m)}_\alpha)\}$ in $\mZ^m$
and every open subset $U$ of $X$, there exists some $\alpha\in\pn$ such that
\[\bigcap_{i=1}^m T^{-s^{(i)}_\alpha} U\neq\emptyset.\]

Let $(X,T)$ be an invertible dynamical system, if it is a minimal system or
there exists an invariant measure with full support, then it
satisfies the multiple IP-recurrence property (\cite{F81,FK85}).

\begin{lem}\label{lem:mul-ip-sys-eq}
Let $(X,T)$ be an invertible dynamical system and $n\in\N$.
Then the following conditions are equivalent:
\begin{enumerate}
\item $(X,T)$ satisfies the multiple IP-recurrence property;
\item for every IP-system $\{s_\alpha=(s^{(1)}_\alpha,\ldots,s^{(m)}_\alpha)\}$ in $\mZ^m$,
every open subset $U$ of $X$ and $k\in \N$, there exists some $\alpha\in\pn$  with $\min \alpha >k$ such that
\[U\bigcap \left(\bigcap_{i=1}^m T^{-s^{(i)}_\alpha} U \right)\neq\emptyset;\]
\item $(X,T^n)$ satisfies the multiple IP-recurrence property.
\end{enumerate}
\end{lem}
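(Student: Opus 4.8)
The plan is to prove the equivalences by the implications $(2)\Rightarrow(1)$, $(1)\Rightarrow(2)$, $(1)\Rightarrow(3)$ and $(3)\Rightarrow(1)$, of which only the last carries real content. Throughout, invertibility of $T$ is what makes $T^{-s}$ meaningful for $s\in\mZ$. The implication $(2)\Rightarrow(1)$ is immediate: condition $(2)$ is literally $(1)$ together with the extra demands that $U$ itself lie in the intersection and that $\min\alpha>k$, so simply forgetting these recovers $(1)$.

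For $(1)\Rightarrow(2)$ I would build both extra features into the IP-system before applying $(1)$. To force $U$ into the intersection, augment $\{s_\alpha\}$ to an IP-system $\{\tilde s_\alpha\}$ in $\mZ^{m+1}$ by prepending a zeroth coordinate with all-zero generator, so that $\tilde s^{(0)}_\alpha\equiv 0$ and hence $T^{-\tilde s^{(0)}_\alpha}U=U$. To force $\min\alpha>k$, first pass to the IP-subsystem defined by the shift homomorphism $\phi(\{i\})=\{i+k\}$: the family $\{\tilde s_{\phi(\beta)}\}$ is again an IP-system (with generators $\tilde s_{\{i+k\}}$), and every index $\phi(\beta)$ satisfies $\min\phi(\beta)>k$. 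Applying $(1)$ to $\{\tilde s_{\phi(\beta)}\}$ and $U$ then yields some $\beta$ with $U\cap\bigcap_{i=1}^m T^{-s^{(i)}_{\phi(\beta)}}U\neq\emptyset$; setting $\alpha=\phi(\beta)$ gives exactly the assertion of $(2)$.

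The implication $(1)\Rightarrow(3)$ is a scaling argument: given an IP-system $\{s_\alpha\}$ and an open $U$ for $(X,T^n)$, the rescaled family $\{n\,s_\alpha\}$ is still an IP-system in $\mZ^m$, and applying $(1)$ to it produces an $\alpha$ with $\bigcap_{i=1}^m T^{-n s^{(i)}_\alpha}U=\bigcap_{i=1}^m (T^n)^{-s^{(i)}_\alpha}U\neq\emptyset$, which is $(3)$.

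The crux is $(3)\Rightarrow(1)$, where I must turn an IP-system $\{s_\alpha\}$ for $(X,T)$, whose entries are arbitrary integers, into one for $(X,T^n)$. The key step is to pass to an IP-subsystem all of whose entries are divisible by $n$. Writing $s^{(i)}_\alpha=\sum_{j\in\alpha}f_i(j)$ and setting $v_j=(f_1(j),\dots,f_m(j))\bmod n\in(\mZ/n\mZ)^m$, I examine the partial sums $P_j=\sum_{t\le j}v_t$ in the finite group $(\mZ/n\mZ)^m$. By pigeonhole one value is attained at infinitely many indices $j_1<j_2<\cdots$; then the consecutive disjoint blocks $\phi(\{l\})=\{j_l+1,\dots,j_{l+1}\}$ satisfy $\sum_{t\in\phi(\{l\})}v_t\equiv 0$, so the induced homomorphism $\phi\colon\pn\to\pn$ makes every $s^{(i)}_{\phi(\beta)}$ a multiple of $n$. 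Hence $t^{(i)}_\beta:=s^{(i)}_{\phi(\beta)}/n$ defines an IP-system $\{t_\beta\}$ in $\mZ^m$, and applying $(3)$ to $\{t_\beta\}$ and $U$ returns a $\beta$ with $\bigcap_{i=1}^m (T^n)^{-t^{(i)}_\beta}U=\bigcap_{i=1}^m T^{-s^{(i)}_{\phi(\beta)}}U\neq\emptyset$; taking $\alpha=\phi(\beta)$ gives $(1)$. I expect this divisibility reduction, namely spotting the mod-$n$ pigeonhole on partial sums and checking that the divided family is genuinely an IP-system, to be the main obstacle, the other three implications being routine weakenings, augmentations and scalings.
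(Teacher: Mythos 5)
Your proposal is correct and takes essentially the same route as the paper: the zero-coordinate augmentation composed with the shift homomorphism $\phi(\{i\})=\{i+k\}$ for $(1)\Rightarrow(2)$, rescaling by $n$ for $(1)\Rightarrow(3)$, and dividing by $n$ for $(3)\Rightarrow(1)$. The only difference is one of detail: where the paper's $(3)\Rightarrow(1)$ simply declares ``without loss of generality, assume $\{s_\alpha\}\subset n\mZ^m$'', you make that reduction explicit via the pigeonhole argument on partial sums in $(\mZ/n\mZ)^m$, producing the IP-subsystem with entries divisible by $n$ that the paper's WLOG tacitly relies on.
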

\begin{proof}(1)$\Rightarrow$(3) and (2)$\Rightarrow$ (1) are obvious.

(1)$\Rightarrow$(2) Let $\{s_\alpha=(s^{(1)}_\alpha,\ldots,s^{(m)}_\alpha)\}$ be an IP-system $\mZ^m$ and $k\in\N$.
Define a homomorphism $\phi: \pn\to \pn$ determined by $\phi(\{i\})=\{i+k\}$ for any $i\in\N$.
Let $s^{(0)}_\alpha=0$ for any $\alpha\in\pn$.
Then $\{s'_{\alpha}=(s^{(0)}_\alpha, s^{(1)}_{\phi(\alpha)},\ldots,s^{(m)}_{\phi(\alpha)})\}$ is an IP-system
in $\mZ^{m+1}$. Now (2) follows from applying to $\{s'_{\alpha}\}$.

(3)$\Rightarrow$(1) Let $\{s_\alpha\}$ be an IP-system in $\mZ^m$.
Without loss of  generality, assume that $\{s_\alpha\}\subset n\mZ^m$.
Let $s'_{\alpha}=n^{-1}s_\alpha$. Then $\{s'_\alpha\}$ is also an IP-system in $\mZ^m$.
Then (1) follows from applying to $\{s'_{\alpha}\}$ in $(X,T^n)$.
\end{proof}

Let $\{x_\alpha\}_{\alpha\in\pn}$ be a sequence in topological space $X$ and $x\in X$,
we say that $x_\alpha\to x$ as a $\pn$-sequence if for every neighborhood $U$ of $x$
there exists $\alpha_U\in\pn$ such that $x_\alpha\in U$ for all $\alpha>\alpha_U$.
If $\{x_\alpha\}$ is a $\pn$-sequence in a compact metric space,
then there exists a $\pn$-subsequence $\{x_{\phi(\alpha)}\}$ which converges as
$\pn$-sequence (\cite[Theorem 8.14]{F81}).

\begin{prop}
Let $(X,T)$ be an invertible metrizable dynamical system.
Then $(X,T)$ satisfies the multiple IP-recurrent property if and only if
for every IP-system $\{s_\alpha\}$ in $\mZ^m$ and every open subset $U$ of $X$
there exists $x\in U$ and an IP-subsystem $\{s_{\phi(\alpha)}\}$ such that
$T^{s^{(i)}_{\phi(\alpha)}}x\to x$ for $i=1,\ldots,m$.
\end{prop}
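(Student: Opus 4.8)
The plan is to prove the two implications separately, disposing of the reverse one quickly and locating the real work in the forward direction. For the reverse implication, assume the stated convergence property and fix an IP-system $\{s_\alpha\}$ in $\mZ^m$ and an open set $U$. Choose $x\in U$ and an IP-subsystem $\{s_{\phi(\alpha)}\}$ with $T^{s^{(i)}_{\phi(\alpha)}}x\to x$ for each $i$. Since $U$ is open and each of the finitely many $\pn$-sequences $\alpha\mapsto T^{s^{(i)}_{\phi(\alpha)}}x$ converges to $x\in U$, there is $\alpha_0\in\pn$ so that for every $\alpha>\alpha_0$ one has $T^{s^{(i)}_{\phi(\alpha)}}x\in U$ for all $i=1,\ldots,m$. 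Fixing one such $\alpha$ and putting $\beta=\phi(\alpha)\in\pn$, the point $x$ lies in $\bigcap_{i=1}^m T^{-s^{(i)}_\beta}U$, so this intersection is nonempty; as $\beta$ is a genuine element of $\pn$ and $s_\beta$ is the value of the original system at $\beta$, this is exactly the multiple IP-recurrence property.

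For the forward implication, assume $(X,T)$ satisfies the multiple IP-recurrence property and fix $\{s_\alpha\}$ in $\mZ^m$ and open $U$. I would construct a homomorphism $\phi:\pn\to\pn$, determined by a sequence of blocks $\beta_1<\beta_2<\cdots$ via $\phi(\{k\})=\beta_k$, together with a nested sequence of nonempty open sets $U=V_0\supseteq V_1\supseteq V_2\supseteq\cdots$ with $\overline{V_1}\subset U$, $\overline{V_{n+1}}\subset V_n$ and $\mathrm{diam}(V_n)\to 0$, while maintaining at every stage $n$ the invariant that for each nonempty $\alpha\subseteq\{1,\ldots,n\}$ and each $i$ one has $T^{s^{(i)}_{\phi(\alpha)}}\overline{V_{\max\alpha}}\subseteq\overline{V_{\min\alpha-1}}$. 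Granting this, the intersection $x=\bigcap_n\overline{V_n}$ is a single point of $U$, and for any nonempty finite $\alpha$ with $\min\alpha=j$ the invariant gives $T^{s^{(i)}_{\phi(\alpha)}}x\in\overline{V_{j-1}}$ while also $x\in\overline{V_{j-1}}$; hence $d(T^{s^{(i)}_{\phi(\alpha)}}x,x)\le \mathrm{diam}(V_{j-1})\to 0$ as $\min\alpha\to\infty$, which is precisely $T^{s^{(i)}_{\phi(\alpha)}}x\to x$ as a $\pn$-sequence.

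The crux, and the step I expect to be the main obstacle, is the inductive passage from stage $n-1$ to stage $n$, where only the sets $\alpha\subseteq\{1,\ldots,n\}$ containing $n$ are new. Writing such an $\alpha=\gamma\cup\{n\}$ with $\gamma\subseteq\{1,\ldots,n-1\}$, one has $s^{(i)}_{\phi(\alpha)}=s^{(i)}_{\phi(\gamma)}+s^{(i)}_{\beta_n}$, and by the stage-$(n-1)$ invariant the pre-shifted points $w_{\gamma,i}:=T^{s^{(i)}_{\phi(\gamma)}}z$ already lie in $\overline{V_{\min\gamma-1}}$ whenever $z\in V_{n-1}$. Thus the new inclusions reduce to finding a single point $z\in V_{n-1}$ and a single block $\beta_n$ (which Lemma \ref{lem:mul-ip-sys-eq}(2) permits me to take with $\min\beta_n$ arbitrarily large, hence $\beta_n>\beta_{n-1}$) such that the block shift $T^{s^{(i)}_{\beta_n}}$ carries each of the finitely many points $w_{\gamma,i}$ back into the open set $V_{\min\gamma-1}$ that already contains it; continuity and invertibility of $T$ then produce a small enough $V_n\ni z$ inside $V_{n-1}$ realizing all the inclusions simultaneously.

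The delicate point is that one late block $\beta_n$ must serve as an approximate return time for the \emph{whole} finite configuration $\{w_{\gamma,i}\}$ at once, at the various scales $V_{\min\gamma-1}$ and in all coordinates $i$. This is itself a finite multiple IP-recurrence statement for the configuration, which I would meet by applying the multiple IP-recurrence property in the strengthened form of Lemma \ref{lem:mul-ip-sys-eq}(2) to an auxiliary IP-system assembled from the already-fixed partial shifts $s_{\phi(\gamma)}$ in a suitable finite power of $(X,T)$, using Furstenberg's $\pn$-subsequence convergence theorem (\cite[Theorem 8.14]{F81}) to organize the extraction of the limiting point $z$. It is exactly this simultaneous realization over all sub-configurations, rather than a single occasional return, that forces the use of the full hypothesis and constitutes the heart of the argument.
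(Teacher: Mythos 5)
Your skeleton — nested closed neighborhoods $\overline{V_{n+1}}\subset V_n$ with shrinking diameters, blocks $\beta_n$ with $\min\beta_n$ large supplied by Lemma \ref{lem:mul-ip-sys-eq}(2), the invariant $T^{s^{(i)}_{\phi(\alpha)}}\overline{V_{\max\alpha}}\subseteq\overline{V_{\min\alpha-1}}$, and the extraction of the unique limit point — is exactly the structure of the paper's proof, and both your reverse implication and your passage from the invariant to the conclusion are correct. The genuine gap is in the step you yourself call the heart of the argument. You reduce the induction to a simultaneous ``configuration return'': one block $\beta_n$ and one point $z\in V_{n-1}$ such that $T^{s^{(i)}_{\beta_n}}$ sends every $w_{\gamma,i}=T^{s^{(i)}_{\phi(\gamma)}}z$ into its own target set $V_{\min\gamma-1}$, and you propose to obtain this by applying the multiple IP-recurrence property ``in a suitable finite power of $(X,T)$'' to ``an auxiliary IP-system assembled from the already-fixed partial shifts $s_{\phi(\gamma)}$''. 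Neither ingredient is available. First, the hypothesis is a property of $(X,T)$ alone, formulated for a \emph{single} open set $U$; it is not known to pass to finite powers $(X^M,T\times\cdots\times T)$, which is what would be required to handle several distinct target sets $V_{\min\gamma-1}$ in one application, and nothing in the paper (or in any obvious argument) provides such closure under products. Second, the natural candidate for your auxiliary system, $t^{(\gamma,i)}_\beta=s^{(i)}_{\phi(\gamma)}+s^{(i)}_\beta$, is not an IP-system: for disjoint $\beta,\beta'$ one has $t_{\beta\cup\beta'}\neq t_\beta+t_{\beta'}$, since the constant offsets are counted twice. So the mechanism invoked for the crucial step is unsupported and, as stated, does not work.

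The irony is that the stronger statement is unnecessary: since powers of $T$ commute, the configuration return is automatic once $z$ itself returns. Apply Lemma \ref{lem:mul-ip-sys-eq}(2) to the single open set $V_{n-1}$ to get $\beta_n>\beta_{n-1}$ and a point $z\in V_{n-1}\cap\bigcap_{i=1}^m T^{-s^{(i)}_{\beta_n}}V_{n-1}$; by continuity choose $V_n\ni z$ with $\overline{V_n}\subset V_{n-1}$, $\mathrm{diam}(V_n)<1/n$ and $T^{s^{(i)}_{\beta_n}}\overline{V_n}\subseteq V_{n-1}$ for all $i$. Then for $\alpha=\gamma\cup\{n\}$ with $\gamma\neq\emptyset$,
\[
T^{s^{(i)}_{\phi(\alpha)}}\overline{V_n}
=T^{s^{(i)}_{\phi(\gamma)}}\bigl(T^{s^{(i)}_{\beta_n}}\overline{V_n}\bigr)
\subseteq T^{s^{(i)}_{\phi(\gamma)}}V_{n-1}
\subseteq T^{s^{(i)}_{\phi(\gamma)}}\overline{V_{\max\gamma}}
\subseteq\overline{V_{\min\gamma-1}},
\]
where the last inclusion is the stage-$(n-1)$ invariant applied to the \emph{set} $\overline{V_{\max\gamma}}\supseteq V_{n-1}$, not to individual points. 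Thus the only input the hypothesis must supply at each stage is the single-set return of $z$ to $V_{n-1}$, and all composite inclusions follow by this telescoping; this is precisely how the paper's proof runs. With that replacement your argument closes up and coincides with the published one.
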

\begin{proof}
The sufficiency is obvious.

We now show the necessity. Let $\{s_\alpha=(s^{(1)}_\alpha, s^{(2)}_\alpha, \ldots, s^{(m)}_\alpha)\}$
be an IP-system in $\mZ^m$ and $U$ be an open subset of $X$.
Let $U_0=U$.
By Lemma \ref{lem:mul-ip-sys-eq}, there exists $\alpha_1\in\pn$ such that
\[U_0\bigcap \left(\bigcap_{i=1}^m T^{-s^{(i)}_{\alpha_1}} U_0 \right)\neq\emptyset.\]
Then choose an open subset $U_1$ with $\overline{U_1}\subset U_0$ and $\mathrm{diam}(U_1)<1$
such that
\[\bigcup_{i=1}^m T^{s^{(i)}_{\alpha_1}} U_1\subset U_0.\]
We now proceed inductively to define a sequence of open subset $U_1$, $U_2$,
$\ldots, U_n,\ldots $ in $X$ and a sequence $\alpha_1<\alpha_2<\cdots<\alpha_n<\cdots$ in $\pn$
such that
\[\overline{U_{n+1}}\subset U_n,\quad \mathrm{diam}(U_n)<\frac{1}{n}, \quad
\bigcup_{i=1}^m T^{s^{(i)}_{\alpha_n}} U_n\subset U_{n-1}.\]
Then there will be a unique point $x$ in $\bigcap_{n=1}^\infty\overline{U_n}$.
Now set $\phi(\{n\})=\alpha_n$ for each $n\in\N$. For every $\beta=\{r_1<r_2<\cdots<r_k\}$ if $\min \beta>n+1$ then
\[\bigcup_{i=1}^m  T^{s^{(i)}_{\phi(\beta)}} U_{r_k}= \bigcup_{i=1}^m T^{s^{(i)}_{\alpha_{r_1}}}
T^{s^{(i)}_{\alpha_{r_2}}}\cdots T^{s^{(i)}_{\alpha_{r_k}}}U_{r_k}\subset U_{r_1-1}\subset U_n.\]
Hence, for $i=1,2,\ldots, m$, $T^{s^{(i)}_{\phi(\beta)}} x\in U_n$ if $\min \beta>n+1$.
It follows that $T^{s^{(i)}_{\phi(\alpha)}}x\to x$ for $i=1,2,\ldots,m$.
\end{proof}

\begin{thm}
Let $(X,T)$ be an invertible dynamical system and $x\in X$.
Then $x$ is $\mathcal J$-recurrent if and only if $(\overline{Orb(x,T)},T)$ satisfies
the  multiple IP-recurrence property.
\end{thm}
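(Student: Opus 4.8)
The plan is to prove both implications by realizing the relevant IP‑configurations along the orbit of $x$ in $Y:=\overline{Orb(x,T)}$, exploiting that $x$ is a transitive point of $(Y,T)$ and that, by the very definition of $\F$‑recurrence, $\mathcal J$‑recurrence of $x$ means exactly that $N(x,U)$ is a J‑set for every open neighborhood $U$ of $x$.

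For the ``only if'' direction I would proceed as follows. Fix an IP‑system $\{s_\alpha\}$ in $\mZ^m$ and a nonempty open $U\subseteq Y$; the goal is to produce $\alpha\in\pn$ with $\bigcap_{i=1}^m T^{-s^{(i)}_\alpha}U\neq\emptyset$. Since the orbit of $x$ is dense, pick $n_0\in\mZ$ with $T^{n_0}x\in U$, so that $W:=T^{-n_0}U$ is an open neighborhood of $x$. By hypothesis $N(x,W)$ is a J‑set, so there are $r\in\N$ and $\alpha\in\pn$ with $r+s^{(i)}_\alpha\in N(x,W)$ for all $i$, i.e.\ $T^{r+s^{(i)}_\alpha}x\in W$. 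Setting $z=T^{n_0+r}x$ gives $T^{s^{(i)}_\alpha}z=T^{n_0+r+s^{(i)}_\alpha}x\in U$ for every $i$, hence $z\in\bigcap_{i=1}^m T^{-s^{(i)}_\alpha}U$. The point is that the positive anchor $r\in\N$ is handed to us for free by the J‑set definition, while the multiple IP‑recurrence conclusion is insensitive to the sign of $n_0+r$; this is what makes this direction routine.

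For the ``if'' direction it suffices, by definition of $\mathcal J$‑recurrence, to show that each neighborhood $U$ of $x$ yields a J‑set $N(x,U)$. Given an IP‑system $\{s_\alpha\}$ in $\mZ^m$, I would apply Lemma~\ref{lem:mul-ip-sys-eq}(2) to $(Y,T)$ and $U$ to obtain, for a prescribed $k$, some $\alpha$ with $\min\alpha>k$ and a point $z\in U$ with $T^{s^{(i)}_\alpha}z\in U$ for all $i$. Choosing an open $V\subseteq U$ with $z\in V$ and $T^{s^{(i)}_\alpha}V\subseteq U$ for all $i$, density of the orbit provides $r\in\mZ$ with $T^rx\in V$, so that $T^{r+s^{(i)}_\alpha}x\in U$ for every $i$. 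This already shows that the two‑sided visit set $\{n\in\mZ:T^nx\in U\}$ carries the required configuration.

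The main obstacle is positivity: the J‑set definition requires $r\in\N$ and, because $N(x,U)\subseteq\N$, also $r+s^{(i)}_\alpha\geq 1$ for all $i$. Hence I must find the approximating orbit point $T^rx\in V$ at an arbitrarily large positive time; equivalently, $\bigcap_i T^{-s^{(i)}_\alpha}U$ must meet $\omega(x,T)$. I would secure this by first proving that the multiple IP‑recurrence property forces the transitive point $x$ to be recurrent, whence $\omega(x,T)=Y$ and the forward orbit enters every nonempty open set at arbitrarily large times; I then take $r$ beyond the finite threshold $\max_i\bigl(-s^{(i)}_\alpha\bigr)$, making every $r+s^{(i)}_\alpha$ positive while keeping $r\in\N$. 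To establish recurrence of $x$ I would pass, via the preceding Proposition, to an IP‑recurrent point produced in a neighborhood of a point of $\omega(x,T)$, and use the idempotent it determines in $E(Y,T)$ together with transitivity to relocate a return to $x$ itself. I expect this passage, from ``some point returns at positive IP‑times'' to ``the transitive point $x$ returns,'' to be the delicate heart of the argument; once it is in place, the J‑set property of each $N(x,U)$, and hence the $\mathcal J$‑recurrence of $x$, follows.
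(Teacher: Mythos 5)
Your two directions have the same skeleton as the paper's proof. The ``only if'' argument is essentially identical: the paper pushes a neighborhood $V$ of $x$ forward into $U$ by some $T^k$, while you pull $U$ back to the neighborhood $W=T^{-n_0}U$ of $x$; either way one applies the J-set property of the entering-time set and translates by the positive anchor, and the sign of the exponent is irrelevant for the conclusion (just note that density of the \emph{forward} orbit lets you take $n_0\geq 0$, so that the point $z=T^{n_0+r}x$ really lies in $\overline{Orb(x,T)}$). Your ``if'' direction also matches the paper: find $z\in U\cap\bigcap_{i=1}^m T^{-s^{(i)}_\alpha}U$, thicken to an open $V\ni z$ with $T^{s^{(i)}_\alpha}V\subset U$, land $T^rx$ in $V$ by density, and settle positivity by recurrence of $x$, which gives $\omega(x,T)=\overline{Orb(x,T)}$ and hence arbitrarily large admissible $r$.

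The gap is exactly the step you defer: proving that $x$ is recurrent. The mechanism you propose --- extract an IP-recurrent point near a point of $\omega(x,T)$ via the preceding Proposition, take the idempotent $u\in E(X,T)$ fixing it, and ``use transitivity to relocate a return to $x$'' --- is not an argument. An idempotent satisfying $uz=z$ for some other point $z$ says nothing about returns of $x$ to its own neighborhoods, and point transitivity by itself never upgrades a transitive point to a recurrent one: in the paper's own example $(\bZ,\lambda_1)$ the point $0$ is transitive but not recurrent. (Moreover, the Proposition you invoke is proved only for metrizable systems, whereas the theorem concerns arbitrary compact Hausdorff ones.) The step is in fact elementary, which is why the paper dismisses it with ``it is easy to see'': given open $U\ni x$, apply Lemma~\ref{lem:mul-ip-sys-eq}(2) (equivalently, adjoin the coordinate $s^{(0)}_\alpha=0$ to the IP-system) to produce $z\in U$ and $n\geq 1$ with $T^nz\in U$; by continuity choose open $W\ni z$ with $W\subset U$ and $T^nW\subset U$; by density of the forward orbit pick $j\geq 0$ with $T^jx\in W$; then $T^{j+n}x\in U$ with $j+n\geq 1$. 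The returning point must be produced \emph{inside the given neighborhood $U$ of $x$}, not near an arbitrary point of $\omega(x,T)$, and the relocation is by continuity plus density --- no idempotents are needed. With this substituted for your deferred step, your proof coincides with the paper's.
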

\begin{proof}
Without loss of  generality, assume that $\overline{Orb(x,T)}=X$.
If $x$ is $\mathcal J$-recurrent, then for every open subset $U$ of $X$
there exists $k\in\N$ and an open neighborhood $V$ of $x$ such that $T^k V\subset U$.
Since $x$ is $\mathcal J$-recurrent, $N(x,V)$ is a J-set.
Then for every IP-system $\{s_\alpha=(s^{(1)}_\alpha,\ldots,s^{(m)}_\alpha)\}$ in $\mZ^m$
there exists $r\in\N$ and $\alpha\in\pn$ such that $T^{r+s^{(i)}_\alpha} x\in V$ for $i=1,\ldots,m$.
Let $y=T^{r+k}x$. Then $T^{s^{(i)}_\alpha}y=T^{k}(T^{r+s^{(i)}_\alpha} x)\in T^k V\subset U$
for $i=1,\ldots, m$. So $y\in \bigcap_{i=1}^m T^{-s^{(i)}_\alpha} U$.

Conversely, assume that $(X,T)$ satisfies the  multiple IP-recurrence property.
It is easy to see that $x$ is recurrent. For every open neighborhood $U$ of $x$ and
every IP-system $\{s_\alpha=(s^{(1)}_\alpha,\ldots,s^{(m)}_\alpha)\}$ in $\mZ^m$,
there exists some $\alpha\in\pn$ such that $\bigcap_{i=1}^m T^{-s^{(i)}_\alpha} U\neq\emptyset$.
Choose $y\in \bigcap_{i=1}^m T^{-s^{(i)}_\alpha} U$,
then $T^{s^{(i)}_\alpha}y\in U$ for $i=1,\ldots,m$.
By the continuity of $T$, choose an open neighborhood $V$ of $y$ such that
$T^{s^{(i)}_\alpha}V\in U$ for $i=1,\ldots,m$.
Since $y\in\omega(x,T)$, there exists $r\in\N$ such that $T^rx\in U$ and $\bar r^{(m)}+s_\alpha\in \N^m$.
Then $\bar r^{(m)}+s_\alpha \in N(x,U)^m$. Therefore, $N(x,U)$ is a J-set.
\end{proof}

\begin{prop}\label{prop:J-rec-inv}
Let $(X,T)$ be an invertible dynamical system, $x\in X$ and $n\in\N$.
Then $x$ is $\mathcal J$-recurrent in $(X,T)$ if and only if
it is $\mathcal J$-recurrent in $(X,T^n)$.
\end{prop}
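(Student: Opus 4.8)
The plan is to translate both sides into the multiple IP-recurrence property and then play off the change of time $T\leftrightarrow T^n$ against the change of orbit closure. Write $Y=\overline{Orb(x,T)}$ and $Z=\overline{Orb(x,T^n)}$. Since $T$ is invertible and $\mZ=\bigcup_{j=0}^{n-1}(n\mZ+j)$, I get the finite decomposition $Y=\bigcup_{j=0}^{n-1}T^{j}Z$, in which each $T^{j}Z$ is a closed $T^n$-invariant set with $T^nZ=Z$, and the homeomorphism $T^{j}$ conjugates $(Z,T^n)$ onto $(T^{j}Z,T^n)$. By the preceding theorem, $x$ is $\mathcal{J}$-recurrent in $(X,T)$ if and only if $(Y,T)$ satisfies the multiple IP-recurrence property, and $x$ is $\mathcal{J}$-recurrent in $(X,T^n)$ if and only if $(Z,T^n)$ satisfies it. Moreover, Lemma \ref{lem:mul-ip-sys-eq} applied to the invertible system $(Y,T)$ shows $(Y,T)$ satisfies the property exactly when $(Y,T^n)$ does. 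Hence it suffices to prove that $(Y,T^n)$ and $(Z,T^n)$ satisfy the multiple IP-recurrence property simultaneously.

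The easy inclusion is that $(Z,T^n)$ satisfying the property forces $(Y,T^n)$ to satisfy it. Indeed, given a nonempty open $U\subseteq Y$ and an IP-system $\{s_\alpha\}$ in $\mZ^m$, the set $U$ meets some $T^{j}Z$, and since $(T^{j}Z,T^n)$ is conjugate to $(Z,T^n)$ it satisfies the property; applying it to the relatively open set $U\cap T^{j}Z$ yields $\alpha\in\pn$ and a point of $T^{j}Z\subseteq Y$ lying in $\bigcap_{i=1}^{m}T^{-ns^{(i)}_\alpha}U$.

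The reverse inclusion is where the main obstacle lies: a nonempty open $V\subseteq Z$ need not be open in $Y$, because $Z$ is closed (not open) in $Y$, so $V$ cannot be fed directly into the property for $(Y,T^n)$. The hard part will be to replace $V$ by a nonempty subset that is genuinely open in $Y$, and for this I would invoke recurrence. The multiple IP-recurrence property of $(Y,T)$ forces $x$ to be recurrent (as already observed in the proof of the preceding theorem), so $(Y,T)$ is recurrent transitive and Lemma \ref{lem:trans-decom} applies, giving that the interior of $Z$ relative to $Y$ is dense in $Z$. Thus any nonempty open $V\subseteq Z$ contains a nonempty set $\tilde V$ that is open in $Y$ with $\tilde V\subseteq Z$; applying the property of $(Y,T^n)$ to $\tilde V$ and using $T^nZ=Z$ to see that the returning point again lies in $Z$ then shows $(Z,T^n)$ satisfies the property. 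To keep the recurrence hypothesis available on each side, I would split the desired equivalence into its two implications: the argument just sketched is used when $x$ is assumed $\mathcal{J}$-recurrent in $(X,T)$, whereas the easy inclusion of the previous paragraph settles the case when $x$ is assumed $\mathcal{J}$-recurrent in $(X,T^n)$ (so that $(Z,T^n)$ is recurrent transitive and $(Y,T)$ inherits the property through Lemma \ref{lem:mul-ip-sys-eq}).
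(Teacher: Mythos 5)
Your proof is correct, and in the harder direction ($\mathcal{J}$-recurrence under $T$ implies $\mathcal{J}$-recurrence under $T^n$) it is exactly the paper's argument: pass to the multiple IP-recurrence property of $(Y,T)$ via the characterization theorem, to $(Y,T^n)$ via Lemma \ref{lem:mul-ip-sys-eq}, and then down to $(Z,T^n)$ using Lemma \ref{lem:trans-decom}; your step with $\tilde V$ and the bi-invariance of $Z$ fills in precisely what the paper dismisses with ``it is easy to see''. Where you genuinely diverge is the other direction. The paper handles it in one line, with no dynamics at all: $\mathcal{J}$ is multiplication invariant (Lemma \ref{lem:bJ-J}), and for every neighborhood $U$ of $x$ one has $N_T(x,U)\supset n\cdot\{m\in\N: T^{nm}x\in U\}$, so $\mathcal{J}$-recurrence under $T^n$ passes to $T$ directly, without invertibility, orbit closures, or the IP-recurrence property. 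You instead stay inside the dynamical framework: decompose $Y=\bigcup_{j=0}^{n-1}T^jZ$, transport the property from $(Z,T^n)$ to each conjugate piece $(T^jZ,T^n)$, conclude it for $(Y,T^n)$, and return to $(Y,T)$ by Lemma \ref{lem:mul-ip-sys-eq}. Your route is pleasingly uniform (both implications run through the same characterization), but it is costlier, needing invertibility and bi-invariance of $Z$ even for the easy implication, whereas the paper's combinatorial shortcut is shorter and works for arbitrary systems.

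One small inaccuracy to fix: in your setup you assert $T^nZ=Z$ as a consequence of invertibility of $T$ and the decomposition of $\mZ$ alone. For forward orbit closures this can fail without recurrence: for a wandering point, $T^nZ$ is a proper subset of $Z$. It is harmless here, because each implication carries a $\mathcal{J}$-recurrence hypothesis, hence recurrence of $x$ under both $T$ and $T^n$, which does give $T^nZ=Z$ (e.g.\ via $Z=\omega(x,T^n)$); but the justification should be recurrence, not invertibility.
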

\begin{proof}
Without loss of generality, assume that $\overline{Orb(x,T)}=X$.
Since $\mathcal J$ is multiplication invariant, if $x$ is $\mathcal J$-recurrent in $(X,T^n)$,
then so is in $(X,T)$.

Conversely, if $x$ is $\mathcal J$-recurrent in $(X,T)$,
then $(X,T)$ satisfies the  multiple IP-recurrence property,  so does $(X,T^n)$.
Since the interior of $\overline{Orb(x,T^n)}$ is dense in $\overline{Orb(x,T^n)}$,
it is easy to see that $(\overline{Orb(x,T^n)},T^n)$ also satisfies the  multiple IP-recurrence property.
Then $x$ is $\mathcal J$-recurrent in $(X,T^n)$.
\end{proof}


\begin{proof}[Proof of Lemma \ref{lem:n-1c-set}]
It follows from Theorem \ref{thm:n-1F-ess}, Theorem \ref{thm:nF-ess},
Lemma \ref{lem:bJ-J} and Proposition \ref{prop:J-rec-inv}.
\end{proof}

\begin{proof}[Proof of Theorem \ref{thm:C-set-thm}]
Since $F$ is a C-set, there exists an idempotent $p\in h(\mathcal J)$ such that $F\in p$.
Let $x=\mathbf{1}_F\in \{0,1\}^\mZ$ and $y=px\in[1]$. Then $y$ is $\mathcal J$-recurrent,
$x$ is strongly proximal to $y$ and $N(x,[1])=F$.

Let $\{s_{\phi(\alpha)}=(s^{(1)}_\alpha,\ldots,s^{(m)}_\alpha)\}$ be an IP-system in $\mZ^m$.
Let $U_1=[1]$. Since $N((x,y),U_1\times U_1)$ is a J-set,
there exists $r_1\in\N$ and $\alpha_1\in\pn$ such that
$\sigma\times\sigma^{r_1+s^{(i)}_{\alpha_1}}(x,y)\in U_1\times U_1$ for $i=1,2,\ldots,m$.
By continuity of $\sigma$, choose a neighborhood $U_2$ of $y$ such that $U_2\subset U_1$ and
\[\bigcup_{i=1}^m \sigma^{r_1+s^{(i)}_{\alpha_1}}U_2\subset U_1.\]

Now suppose that we have choose neighborhood $U_1,U_2,\ldots U_n, U_{n+1}$ of $y$, $r_1,r_2,\ldots, r_n$ in $\N$
and $\alpha_1<\alpha_2<\cdots<\alpha_n$ in $\pn$ satisfying the following conditions.
For every $\beta\subset\{1,2,\ldots, n\}$,
let $r_\beta=\sum_{j\in\beta}r_j$, $\phi(\beta)=\bigcup_{j\in\beta}\alpha_j$ and
$U_\beta=U_{\min \beta}$, we have
\begin{enumerate}
\item $\sigma^{r_\beta+s^{(i)}_{\phi(\beta)}}x \in U_\beta$ for $i=1,\ldots,m$.
\item $\sigma^{r_\beta+s^{(i)}_{\phi(\beta)}}U_{n+1} \subset U_\beta$ for $i=1,\ldots,m$
\end{enumerate}

Since $N((x,y),U_{n+1}\times U_{n+1})$ is a J-set,
there exists $r_{n+1}\in\N$ and $\alpha_{n+1}>\alpha_n$ such that
$\sigma\times\sigma^{r_{n+1}+s^{(i)}_{\alpha_{n+1}}}(x,y)\in U_{n+1}\times U_{n+1}$ for $i=1,2,\ldots,m$.
Choose a neighborhood $U_{n+2}$ of $y$ such that $U_{n+2}\subset U_{n+1}$ and
\[\bigcup_{i=1}^m \sigma^{r_{n+1}+s^{(i)}_{\alpha_{n+1}}}U_{n+2}\subset U_{n+1}.\]

Now we show that (1) and (2) are satisfied with $\beta$ replaced by $\beta'=\beta\cup\{n+1\}$
and $n+1$ replaced by $n+2$. This in fact follows from

\[\sigma^{r_{\beta'}+s^{(i)}_{\phi(\beta')}}x \in
\sigma^{r_{\beta}+s^{(i)}_{\phi(\beta)}}(\sigma^{r_{n+1}+s^{(i)}_{\alpha_{n+1}}} x)
\in \sigma^{r_{\beta}+s^{(i)}_{\phi(\beta)}} U_{n+1}\subset U_\beta\]
and
\[\sigma^{r_{\beta'}+s^{(i)}_{\phi(\beta')}}U_{n+2}\subset
\sigma^{r_{\beta}+s^{(i)}_{\phi(\beta)}}(\sigma^{r_{n+1}+s^{(i)}_{\alpha_{n+1}}}U_{n+2})
\subset\sigma^{r_{\beta}+s^{(i)}_{\phi(\beta)}} U_{n+1}\subset U_\beta.\]
Then by induction, we have that $\sigma^{r_{\beta}+s^{(i)}_{\phi(\beta)}}x\in [1]$
for every $\beta\in\pn$ and $i=1,2,\ldots,m$.
Thus, we obtain that for every $\alpha\in\pn$, $\bar r_\alpha^{(m)}+s_{\phi(\alpha)} \in F^m$.
\end{proof}

\begin{rem}
One can use the algebraic properties of $\bN$ to prove Theorem \ref{thm:C-set-thm} (\cite{BBDF09,HS09}).
It is interesting that whether we can prove Lemma \ref{lem:n-1c-set} by algebraic properties of $\bN$.
\end{rem}

\subsection*{Acknowledgement}
The author would like to thank Prof.\@ Xiangdong Ye
for helpful suggestions which significantly improved the presentation.
This work was partly supported  by the National Natural Science Foundation of China (Nos 11171320, 11001071, 11071231).

\end{document}